\theoremstyle{plain}
\newtheorem{thm}{Theorem}[section]
\newtheorem{cor}[thm]{Corollary}
\newtheorem{pro}[thm]{Proposition}
\newtheorem{lem}[thm]{Lemma}
\newtheorem{fact}[thm]{Fact}
\theoremstyle{definition}
\newtheorem{que}[thm]{Question}
\newtheorem{eg}[thm]{Example}
\newtheorem{rem}[thm]{Remark}
\newenvironment{thm-0}
{{\vv \noindent \bf Main Theorem.$\,$}\it}{\vv}
\newenvironment{thm-A}
{{\vv \noindent \bf Theorem A.$\,$}\it}{\vv}
\newenvironment{thm-*}
{{\vv \noindent \bf Theorem.$\,$}\it}{\vv}
\newenvironment{thm-B}
{{\vv \noindent \bf Theorem B.$\,$}\it}{\vv}
\newenvironment{thm-C}
{{\vv \noindent \bf Theorem C.$\,$}\it}{\vv}
\newenvironment{thm-D}
{{\vv \noindent \bf Theorem D.$\,$}\it}{\vv}
\def\vv{\vspace{0.2cm}}
\newcommand{\mygraph}[1]{\xybox{\xygraph{#1}}}
\newcommand{\sizex}{1cm}
\newcommand{\sizey}{0.6cm}
\def\C{\mathbf{C}}
\def\R{\mathbf{R}}
\def\Z{\mathbf{Z}}
\def\P{\mathbb{P}}
\def\Hilb{{H}}
\def\Hyp{{\mathbb{H}}}
\def\GH{{\mathcal{H}}}
\def\hd{{\sf{dist}}}
\def\dist{{\sf{d}}}
\def\Vect{{\sf{Vect}}}
\def\Tub{{\text{Tub}}}
\def\axe{{\text{Ax}}}
\def\End{{\text{End}}}
\def\Min{{{\rm Min}}}
\def\J{{\sf{J}}}
\def\V{{\sf{V}}}
\def\da{\dasharrow}
\def\dd{{\lambda}}
\def\lgt{L}
\def\eps{{\varepsilon}}
\def\cpd{{\mathbb{P}^2_\C}}
\def\NS{{\sf{N}^1}}
\def\ten{_\R} % pourrait eventuellement etre tenseur R
\def\mod{\mathcal{B}}
\def\pts{\mathcal{V}}
\def\PM{\mathcal{Z}}
\def\man{\bar{\mathcal{Z}}}
\def\hypman{{\mathbb{H}}_{\man}}
\def\Isom{{\text{Isom}}}
\def\good{{tight}}
\def\Good{{Tight}}
\def\Aut{{\sf{Aut}}}
\def\Bir{{\sf{Bir}}}
\def\PGL{{\sf{PGL}}}
\def\PSL{{\sf{PSL}}}
\def\GL{{\sf{GL}}}
\def\SL{{\sf{SL}}}
\def\diam{{\text{\rm diam}}}
\def\Ind{{\text{\rm Ind}}}
\def\Exc{{\text{\rm Exc}}}
\def\lld{\left\langle\!\left\langle}
\def\rrd{\right\rangle\!\right\rangle}
\numberwithin{equation}{section}       % Number formulas within sections
\let\@wraptoccontribs\wraptoccontribs
\begin{document}

\setlength{\baselineskip}{0.52cm}        % Previous 0.47
%
%%%%%%%%%%%%%%%%%%%%%%%%%%%%%%%%%%%%%%%%%%%%%%%%%%%%%%%%%%%%%%%%%%
%

\title[Normal Subgroups in the Cremona Group]
{Normal Subgroups in the Cremona Group \\ (long version)}
\date{July 2012}
\contrib[with an appendix by]{Yves de Cornulier}
\author{Serge Cantat}
\address{D\'epartement de math\'ematiques \\
        Universit\'e de Rennes\\
        Rennes\\
        France}
\email{serge.cantat@univ-rennes1.fr}
\author{St\'ephane Lamy}
\address{Mathematics Institute\\
        University of Warwick \\
        Coventry \\
        United Kingdom }
\curraddr{Universit\'e Paul Sabatier  \\
Institut de Math\'ematiques de Toulouse \\
118 route de Narbonne, 31062 Toulouse Cedex 9 \\
France}
\email{slamy@math.univ-toulouse.fr}
\thanks{The second author was supported by a Marie Curie Intra European Fellowship, on leave from the Institut Camille Jordan, Universit\'e Lyon 1.}
\thanks{Main difference with the submitted version: proofs of Corollary \ref{cor:TGromov} and Lemmas \ref{lem:obtuse}, \ref{lem:closer}, \ref{lem:weak}, \ref{lem:canoeing}, \ref{lem:serge} are given}

%
%%%%%%%%%%%%%%%%%%%%%%%%%%%%%%%%%%%%%%%%%%%%%%%%%%%%%%%%%%%%%%%%%%
%

%
%%%%%%%%%%%%%%%%%%%%%%%%%%%%%%%%%%%%%%%%%%%%%%%%%%%%%%%%%%%%%%%%%%
%

%
%%%%%%%%%%%%%%%%%%%%%%%%%%%%%%%%%%%%%%%%%%%%%%%%%%%%%%%%%%%%%%%%%%
%

\begin{abstract} 
Let ${\mathbf{k}}$ be an algebraically closed field.
We show that the Cremona group of all birational transformations of
the  projective plane $\P^2_{\mathbf{k}}$ is not a simple group. The strategy makes use 
of hyperbolic geometry, geometric group theory, and algebraic geometry 
to produce elements in the Cremona group that generate non trivial normal subgroups.

\vv

\noindent{\sc{R\'esum\'e.}} Soit ${\mathbf{k}}$ un corps alg\'ebriquement clos.
Nous montrons que le groupe de Cremona, 
form\'e des transformations birationnelles du plan projectif $\P^2_{\mathbf{k}}$, 
n'est pas un groupe simple. Pour cela nous produisons des \'el\'ements
engendrant des sous-groupes normaux non triviaux \`a l'aide de 
th\'eorie g\'eom\'etrique des groupes, de g\'eom\'etrie hyperbolique et
de g\'eom\'etrie alg\'ebrique. 
\end{abstract}

\maketitle

%\begin{figure}[h]\label{fig:intro}
%\input{intro-julia.pstex_t}
%\caption{{\sf{Dynamics on character surfaces.}} {{Left: Dynamics on the real part of a cubic
%surface. Right: A slice of the set of complex points with bounded orbit.}}}
%\end{figure}

%\begin{figure}[t]
%\input{stables.pstex_t}
%\caption{{\sf{Deformation of singularities.}} }
%\label{fig:stables}
%\end{figure}

\setcounter{tocdepth}{1}
\tableofcontents

\section{Introduction}

%%%%%%%%%%%

%%
\subsection{The Cremona group}

The Cremona group in $n$ variables over a field $\mathbf{k}$ is the
group $\Bir(\P^n_\mathbf{k})$ of birational transformations of the projective space $\P^n_{\mathbf{k}}$
or, equivalently, the group of ${\mathbf{k}}$-automorphisms of the field ${\mathbf{k}}(x_1,\cdots,x_n)$ of 
rational functions of $\P^n_{\mathbf{k}}$. In dimension $n=1,$ the Cremona group coincides
with the group $\Aut(\P^1_{\mathbf{k}})$ of regular automorphisms of $\P^1_{\mathbf{k}}$, that is with the
group $\PGL_2({\mathbf{k}})$ of linear projective transformations. 
When $n\geq 2,$ the group $\Bir(\P^n_{\mathbf{k}})$ is infinite dimensional; it contains algebraic 
groups of arbitrarily large dimensions (see \cite{Blanc:2009,Serre:Bourbaki}). In this article, we prove the following theorem.

\begin{thm-0}
If $\mathbf{k}$ is an algebraically closed field,
the Cremona group $\Bir(\P^2_{\mathbf{k}})$  is not a simple group.
\end{thm-0}

{\noindent}This answers one of the main open problems concerning 
the algebraic structure of this group: According to Dolgachev, Manin asked whether
$\Bir(\P^2_\C)$ is a simple group in the sixties; Mumford mentionned also this question
in the early seventies (see \cite{Mumford:1974}); in fact, one can trace this problem back 
to 1894, since Enriques already mentions it in \cite{Enriques:1894}: 
{\sl{%Tuttavia altre questioni d'indole gruppale relative al gruppo Cremona
%nel piano (ed a pi\`u forte ragione in $S_n, n > 2$) rimangono ancora insolute; ad esempio 
``l'importante questione se il gruppo Cremona contenga alcun sottogruppo invariante (questione alla quale sembra probabile si debba rispondere negativamente)"}}.

The proof of this theorem makes use of geometric group theory and isometric actions 
on infinite dimensional hyperbolic spaces. It provides stronger results when one
works over the field of complex numbers $\C$; before stating these results, a few remarks 
are in order. 

\begin{rem}{\bf{a.--}}
There is a natural topology on the Cremona
group which induces the Zariski topology on the spaces $\Bir_d(\P^2_{\mathbf{k}})$
of birational transformations of degree $d$ (see \cite{Serre:Bourbaki, Blanc:2009} and references therein).
Blanc proved in \cite{Blanc:2010} that $\Bir(\P^2_{\mathbf{k}})$ has only two \textit{closed} normal subgroups
for this topology, 
namely $\{ {\text{Id}} \}$ and $\Bir(\P^2_{\mathbf{k}})$ itself. 
D\'eserti proved that $\Bir(\cpd)$ is perfect, hopfian, and co-hopfian, 
and that its automorphism group is generated
by inner automorphisms and the action of automorphisms of the field of complex numbers
(see \cite{Deserti:2006bis,Deserti:2006,Deserti:2007}). In particular, there is no obvious algebraic reason which explains 
why $\Bir(\cpd)$ is not simple. 

\noindent{\bf{b.--}} An interesting example is given by the group $\Aut[{\mathbb{A}}^2_{\mathbf{k}}]_1$ of polynomial automorphisms of the 
affine plane ${\mathbb{A}}^2_{\mathbf{k}}$ with Jacobian determinant equal to $1$. From Jung's
theorem, one knows that this group is the amalgamated product of the group $\SL_2({\mathbf{k}})\ltimes {\mathbf{k}}^2$ of
affine transformations  with the group of elementary transformations
\[
f(x,y)= (ax+ p(y), y/a), \; a\in \mathbf{k}^*,\; p(y) \in \mathbf{k}[y],
\]
over their intersection (see \cite{Jung:1942,Lamy:2002} and references therein); as such, $\Aut[{\mathbb{A}}^2_{\mathbf{k}}]_1$ acts faithfully on a simplicial tree without fixing any edge or vertex.
Danilov used this action, together with small cancellation theory, to prove that  $\Aut[{\mathbb{A}}^2_{\mathbf{k}}]_1$ is not simple
(see \cite{Danilov:1974} and  \cite{Furter-Lamy:Preprint}).

\noindent{\bf{c.--}} The group $\Bir(\P^2_\C)$ contains a linear group with Kazhdan property 
(T), namely $\PGL_3(\C)$, and it cannot be written as a non trivial free product with amalgamation: see Appendix \ref{deCornulier} of this paper.
Thus, even if our strategy is similar to Danilov's proof, it has to be more involved (see \S \ref{par:Danilovbad}). 
\end{rem}

\subsection{General elements}\label{intro:generic}

Let $[x:y:z]$ be homogeneous coordinates for the projective plane $\P^2_{\mathbf{k}}$.
Let $f$ be a birational transformation of $\P^2_\mathbf{k}$. There
are three homogeneous polynomials $P,$ $Q,$ and $R\in {\mathbf{k}}[x,y,z]$ of the same degree $d,$ 
and without common factor of degree $\geq 1$, such that 
\[
f[x:y:z]=[P:Q:R].
\]
The degree $d$ is called the {\bf{degree}} of $f,$ and is denoted by $\deg(f)$. The space $\Bir_d(\P^2_{\mathbf{k}})$
of birational transformations of degree $d$ is a quasi-projective variety. 
For instance, Cremona transformations of degree $1$ correspond to the automorphism group $\Aut(\P^2_{\mathbf{k}})=\PGL_3({\mathbf{k}})$
and Cremona transformations
of degree  $2$ form an irreducible variety of dimension $14$ (see \cite{Cerveau-Deserti:2009}). 

We define the {\bf{Jonqui\`eres group}} $\J$ as the group of birational transformations
of the plane $\P^2_{\mathbf{k}}$ that preserve the pencil of lines through the point $[1:0:0]$. Let $\J_d$ 
be the subset of $\J$ made of birational transformations of degree $d$, and let $\V_d$ 
be the subset of $\Bir_d(\P^2_{\mathbf{k}})$ whose elements are compositions $h_1\circ f\circ h_2$ 
where $h_1$ and $h_2$ are in $\Aut(\P^2_{\mathbf{k}})$ and $f$ is in $\J_d$. The dimension of
$\Bir_d (\P^2_{\mathbf{k}})$ is equal to $4d+6$ and $\V_d$ is its unique irreducible component
of maximal dimension (see~\cite{Nguyen:these}).

On an algebraic variety $W$, a property is said to be {\bf{generic}} if it is satisfied on the complement 
of a Zariski closed subset of $W$ of codimension~$\geq1$, and is said to be {\bf{general}} if it is satisfied
on the complement of countably many  Zariski closed subsets of $W$ of codimension~$\geq1$.
 
\begin{thm-A}
There exists a positive integer $k$  with the following property. 
Let $d \ge 2$ be an integer.
If $g$ is a general
element of $\Bir_d(\cpd)$, and $n$ is an integer with $n\geq k$, then $g^n$ generates
a normal subgroup of the Cremona group $\Bir(\cpd)$ that does not contain any element
$f$ of $\Bir(\cpd)\setminus \{\rm Id\}$ with $\deg(f)<\deg(g)^n$. 
\end{thm-A}

To prove Theorem A we choose  $g$  in the unique irreducible component of maximal dimension $\V_d$. 
Thus, the proof does not provide any information concerning general elements of the remaining components
of the variety $\Bir_d(\cpd)$. 
As a corollary of Theorem A, {\sl{the group $\Bir(\cpd)$ contains an uncountable number of normal subgroups}} (see \S \ref{par:SQ}).

\begin{rem}{\bf{a.--}}
M. Gizatullin proved that any non trivial element $g \in \Bir(\cpd)$ preserving a pencil of lines generates a normal subgroup equal to $\Bir(\cpd)$; the same conclusion holds for any $g \in \Bir(\cpd)$ with $\deg(g) \le 7$ (see \cite{Gizatullin:1994}, or \cite{Cerveau-Deserti:2009} when $\deg(g)\leq 2$).

\noindent{\bf{b.--}} 
We provide an explicit upper bound for the best possible constant $k$ in Theorem A, namely $k\leq 201021$. 
We do so to help the reader follow the proof. According to Gizatullin's result, if
$g$ is an element of $\Bir(\cpd)\setminus\{\rm Id\}$ with $\deg(g)\leq 7$, then the smallest normal
subgroup of $\Bir(\cpd)$ containing $g$ coincides with $\Bir(\cpd)$. Thus, the best constant
$k$ is larger than $1$. 
Our method gives a rather small constant for large enough degrees, namely $k\leq 375$, 
but does not answer the following question: Does
a general element of $\Bir_d(\P^2_\C)$ generates a strict normal subgroup of $\Bir(\P^2_\C)$
for all large enough degrees $d$ ? 
Note that $g^n$ is not an element of the maximal dimensional component $\V_{d^n}$  
if $n\geq 2$ and $g\in \Bir_d(\cpd)$.
\end{rem}

%\vspace{-0.13cm}

%%%
\subsection{Automorphisms of Kummer and Coble surfaces}
%%%

If $X$ is a rational surface, then the group of automorphisms $\Aut(X)$ is conjugate to a subgroup of $\Bir(\P^2_\C)$. 
In \S \ref{par:goodauto}, we study two classes of examples. 
The first one is a (generalized) Kummer surface. 
Let $\Z[i]\subset \C$ be the lattice of Gaussian integers.
We start with the abelian surface $Y=E\times E$ where $E$ is the
elliptic curve $\C/\Z[i]$. The group $\SL_2(\Z[i])$ acts by linear automorphisms on $Y,$ 
and commutes with the order $4$ homothety $\eta(x,y)=(i x, iy)$. The quotient 
$Y/\eta$ is a (singular) rational surface on which $\PSL_2(\Z[i])$ acts by automorphisms.
Since $Y/\eta$ is rational, the conjugacy by any birational map $\phi\colon Y/\eta\da \cpd$ 
provides an isomorphism between $\Bir(Y/\eta)$ and $\Bir(\cpd)$, and therefore an embedding of
$\PSL_2(\Z[i])$  into $\Bir(\cpd)$. In the following statement, we restrict this homomorphism to the
subgroup $\PSL_2(\Z)$ of $\PSL_2(\Z[i])$.

\begin{thm-B} There is an integer $k\geq 1$ with the following property.
Let $M$ be an element of $\SL_2(\Z)$, the  trace of which satisfies $\vert {\text{tr}} (M) \vert \geq 3$. Let $g_M$ 
be the automorphism of the rational Kummer surface $Y/\eta$ which 
is induced by $M$. Then,  ${g_M}^k$ generates 
a proper normal subgroup of  the Cremona group $\Bir(\cpd) \simeq \Bir(Y/\eta)$.
\end{thm-B}

Theorems A and B provide examples of normal subgroups coming respectively 
from general and from highly non generic elements in the Cremona group. In \S 
\ref{par:coble}, we also describe  automorphisms of Coble surfaces that generate non trivial 
normal subgroups of the Cremona group; Coble surfaces are quotients of K3 surfaces, 
while Kummer surfaces are quotient of abelian surfaces. Thanks to a result due to
Coble and Dolgachev, for any algebraically closed field ${\mathbf{k}}$
 we obtain automorphisms of Coble surfaces that generate
proper normal subgroups in the Cremona group $\Bir(\P^2_{\mathbf{k}})$.
The Main Theorem follows from this construction (see Theorem \ref{thm:coblegood}).

\subsection{An infinite dimensional hyperbolic space (see \S \ref{sec:cremona-PM})}

The Cremona group $\Bir(\P^2_{\mathbf{k}})$ acts faithfully on an infinite dimensional hyperbolic space
which is the exact analogue of the classical finite dimensional hyperbolic spaces $\Hyp^n$.
This action is at the heart of the proofs of Theorems A and B.
To describe it, let us consider all rational surfaces $\pi\colon X\to \P^2_{\mathbf{k}}$ obtained from the projective plane 
by successive blow-ups. If $\pi'\colon X'\to \P^2_{\mathbf{k}}$ is obtained from $\pi\colon X\to \P^2_{\mathbf{k}} $ by blowing up more points, 
then there is a natural birational morphism $\varphi\colon X'\to X$, defined by $\varphi=\pi^{-1}\circ\pi'$,  
and the pullback operator $\varphi^*$ embeds the N\'eron-Severi group $\NS(X)\otimes \R$  into $\NS(X')\otimes \R$
(note that $\NS(X)\otimes \R$ can be identified with the second cohomology group $H^2(X,\R)$ when ${\mathbf{k}}=\C$).  
The direct limit of all these  groups $\NS(X)\otimes \R$ is called
the {\bf{Picard-Manin space}} of $\P^2_{\mathbf{k}}$. This infinite dimensional vector space comes together with an
intersection form of signature $(1,\infty)$, induced by the intersection form on divisors; we shall denote this quadratic form by
\[
([\alpha], [\beta])\mapsto [\alpha]\cdot [\beta].
\]
 We obtain in this way an infinite dimensional 
Minkowski space. The set of elements $[\alpha]$ in this space with self intersection $[\alpha]\cdot [\alpha]=1$ 
is a hyperboloid with two sheets, one of them containing classes
of ample divisors of rational surfaces; this connected component is an infinite 
hyperbolic space for the distance $\hd$ defined in terms of the intersection form by
\[
\cosh(\hd([\alpha],[\beta]))= [\alpha]\cdot [\beta].
\]
Taking the completion of this metric space, we get an infinite dimensional, complete, hyperbolic space~$\hypman$.
The Cremona group acts faithfully on the Picard-Manin space, preserves the intersection form, 
and acts by isometries on the hyperbolic space $\hypman$.

\subsection{Normal subgroups in isometry groups}

In Part \ref{part:hyperbolic}, we study the general situation of a group $G$ acting by isometries on a $\delta$-hyperbolic space $\GH$. 
Let us explain the content of the central result of Part \ref{part:hyperbolic}, namely Theorem \ref{thm:criterion}, in the particular case of the Cremona 
group $\Bir(\P^2_{\mathbf{k}})$ acting by isometries on  the hyperbolic space $\hypman$. Isometries of hyperbolic spaces fall into three types: elliptic, parabolic, and hyperbolic. A Cremona transformation $g\in \Bir(\P^2_{\mathbf{k}})$ determines a hyperbolic 
isometry $g_*$ of $\hypman$ if and only if the following equivalent properties are
satisfied: 
\begin{itemize}
\item The sequence of degrees $\deg(g^n)$ grows exponentially fast: 
\[
\dd(g):=\limsup_{n\to \infty} \left( \deg(g^n) ^{1/n} \right) > 1;
\]
\item There is a $g_*$-invariant plane $V_g$ in the Picard-Manin space that intersects $\hypman$ on a curve $\axe(g_*)$ (a geodesic line) on which $g_*$ acts by a non-trivial translation; more precisely, 
$\hd(x, g_*(x))= \log(\dd(g))$ for all $x$ in $\axe(g_*)$.
\end{itemize}
The curve $\axe(g_*)$ is uniquely determined and is called the {\bf{axis}} of $g_*$. 
We shall say that an element $g$ of the Cremona group is {\bf{\good}} if it satisfies the following three properties:
\begin{itemize}
\item The isometry $g_*\colon \hypman\to \hypman$ is hyperbolic;
\item There exists a positive number $B$ such that: If $f$ is an element of $\Bir(\P^2_{\mathbf{k}})$ and $f_*(\axe(g_*))$ contains two points at distance $B$ 
which are at distance at most $1$ from $\axe(g_*)$  then $f_*(\axe(g_*))=\axe(g_*)$;
\item If $f$ is in $\Bir(\P^2_{\mathbf{k}})$ and $f_*(\axe(g_*))=\axe(g_*)$, then $fgf^{-1}=g$ or $g^{-1}$.
\end{itemize}
The second property is a rigidity property of  $\axe(g)$ with respect to isometries $f_*$, for $f$ in $\Bir(\P^2_{\mathbf{k}})$. 
 The third property means that the stabilizer of $\axe(g)$ coincides with the normalizer of the cyclic group $g^\Z$. Applied to the Cremona group, Theorem~\ref{thm:criterion} gives
the following statement. 

\begin{thm-C}
Let $g$ be an element of the Cremona group $\Bir(\P^2_{\mathbf{k}})$. If the isometry
$g_*\colon \hypman\to \hypman$ is tight, there exists a positive integer $k$ such that $g^k$ generates a non trivial normal subgroup of $\Bir(\P^2_{\mathbf{k}})$.
\end{thm-C}

Theorems~A and B are then deduced from the fact that general elements of $\Bir_d(\cpd)$ (resp. automorphisms $g_M$ on rational Kummer surfaces
with $\vert {\text{tr}} (M) \vert \geq 3$) are \good\ elements of the Cremona group
 (see Theorems \ref{thm:kummergood} and \ref{thm:coblegood}).

\begin{rem}
Theorem~C will be proved in the context of groups of isometries of Gromov's $\delta$-hyperbolic spaces ; 
the strategy is similar to the one used by Delzant in \cite{Delzant:1996} to construct normal subgroups in Gromov 
hyperbolic groups. As such, Theorem~C is part of the so-called small cancellation theory; we shall explain and comment on
this in Section \ref{sec:GromovHyp}.
\end{rem}

\subsection{Description of the paper}

The paper starts with the proof of Theorem~C in the general context of $\delta$-hyperbolic 
spaces (section \ref{sec:GromovHyp}), and explains how this general statement can be used in the case of isometry groups of spaces with constant negative curvature (section \ref{sec:NegCurvature}). 

Section \ref{sec:cremona-PM} provides an overview on the Picard-Manin space, the associated hyperbolic space $\hypman$,  and the isometric action of $\Bir(\cpd)$ on this space. Algebraic geometry and geometric group theory are then put together to prove Theorem A (\S \ref{par:generic}), Theorem B, and the Main Theorem (\S \ref{par:goodauto}). At the end of the paper we list a few remarks and comments.
An appendix by de Cornulier proves that the Cremona group cannot be written as a non-trivial amalgamated product.

\subsection{Acknowledgment}

Thanks to Thomas Delzant, Koji Fujiwara, and Vincent Guirardel for interesting discussions
regarding hyperbolic groups and geometric group theory, and to J\'er\'emy Blanc,  Dominique Cerveau, Julie D\'eserti, Igor Dolgachev,
and Jean-Pierre Serre for nice discussions on the Cremona group and Coble surfaces. Marat Gizatullin and the anonymous referees made important comments on the first version of this article, which helped clarify the proof and the exposition.

%\vfill

%\pagebreak

%%%%%%%%%%%%%%%%%%%%%%%%%%%%%%%%%%%%%%
%%%%%%%%%%%%%%%%%%%%%%%%%%%%%%%%%%%%%%
%%%%%%%%%%%%%%%%%%%%%%%%%%%%%%%%%%%%%%

%
%%%%%%%%%%%%%%%%%%%%%%%%%%%%%%%%%%%%%%%%%%%%%%%%%%%%%%%%%%%%%%%%%%
%

%\vfill
%\pagebreak

\part{Hyperbolic Geometry and Normal Subgroups}
\label{part:hyperbolic}

%
%%%%%%%%%%%%%%%%%%%%%%%%%%%%%%%%%%%%%%%%%%%%%%%%%%%%%%%%%%%%%%%%%%
%
%%%%%
%%NEW SECTION 
%%%%%

\section{Gromov hyperbolic spaces}
\label{sec:GromovHyp}
%
%%%%%%%%%%%%%%%%%%%%%%%%%%%%%%%%%%%%%%%%%%%%%%%%%%%%%%%%%%%%%%%%%%
%

This section is devoted to the proof of Theorem~\ref{thm:criterion}, a result similar to Theorem~C but in the general 
context of isometries of Gromov hyperbolic spaces. 

%%%
\subsection{Basic definitions}\label{par:basicdef}%%%

%%%%
\subsubsection{Hyperbolicity}
%%% 
Let $(\GH, \dist)$ be a metric space. If $x$ is a base point of $\GH,$
the Gromov product of two points  $y, z\in \GH$ is
\[
(y\vert z)_x = \frac{1}{2} \left\{\dist(y,x)+\dist(z,x)-\dist(y,z)\right\}.
\]
The triangle inequality implies that  $(y\vert z)_x$ is  non negative. Let $\delta$ 
be a non negative real number.
The metric space $(\GH, \dist)$ is {\bf{$\delta$-hyperbolic}} in the sense of Gromov  if
\[
(x\vert z)_w \geq \min\{ (x\vert y)_w, \, (y\vert z)_w\}-\delta
\]
for all $x,y,w,$ and $z$ in $\GH$. Equivalently, the space $\GH$ is $\delta$-hyperbolic if, for any $w,x,y,z \in \GH$, we have
\begin{equation}
\label{eq:defequiv}
\dist(w,x) + \dist(y,z) \le \max \left\lbrace \dist(w,y) + \dist(x,z), \dist(w,z) + \dist(x,y) \right\rbrace  + 2\delta.
\end{equation}
Two fundamental examples of $\delta$-hyperbolic spaces are given by 
simplicial metric trees, or more generally by  $\R$-trees, since they are  0-hyperbolic, 
and by the $n$-dimensional hyperbolic space $\Hyp^n$, or more generally {\sc{cat}}(-1) spaces, since both are  $\delta$-hyperbolic for $\delta = \log(3)$ (see \cite[chapter 2]{Ghys-delaHarpe:Book}, \cite{CDP:Book}).

%%%%
\subsubsection{Geodesics}
%%% 
A {\textbf{geodesic segment}} from $x$ to $y$ in $\GH$ is an isometry
$\gamma$ from an interval $[s,t]\subset \R$ into $\GH$ such that $\gamma(s)=x$ and $\gamma(t)=y$.
The metric space $(\GH, \dist)$ is a {\bf{geodesic space}} if any pair of points in $\GH$ can be joined by a geodesic segment.
Let $[x,y]\subset \GH$ be a geodesic segment, and let $\gamma\colon[0, l]\to \GH$
be an arc length parametrization of $[x,y]$, with $\gamma(0) = x$ and $\gamma(l) = y$. By convention, for $0 \le s \leq l-t \le l$, we denote by $[x+s,y-t]$ the geodesic segment $\gamma([s,l-t])$.

A  {\textbf{geodesic line}} is an isometry $\gamma$ from $\R$ to its image $\Gamma \subset \GH$.
If $x$ is a point of $\GH$ and $\Gamma$ is (the image of) a geodesic line, a {\textbf{projection}} of $x$ onto
$\Gamma$ is a point $z\in \Gamma$ which realizes the distance from $x$ to $\Gamma$.
Projections exist but are not necessarily unique (in a tree, or in $\Hyp^n$, the projection is unique). 
If $\GH$ is $\delta$-hyperbolic and if $z$ and $z'$ are two projections of a point $x$ on a geodesic $\Gamma,$ the
distance between $z$ and $z'$ is at most $4\delta$.  In what follows, we shall denote
by $\pi_\Gamma(x)$ any projection of $x$ on $\Gamma$. Since all small errors will be written in terms of 
$\theta=8\delta$, we note that the projection $\pi_\Gamma(x)$ is well defined up to an error of $\theta/2$.

%%%
\subsection{Classical results from hyperbolic geometry}
\label{par:classical}

%%%

In what follows, $\GH$ will be a $\delta$-hyperbolic and geodesic metric space.

%%%
\subsubsection{Approximation by trees }\label{par:approxbytrees}
%%%
In a $\delta$-hyperbolic space, the geometry of finite subsets is well
approximated by finite subsets of metric trees; we refer to \cite{Ghys-delaHarpe:Book} chapter 2, or \cite{CDP:Book}, for a proof of this basic
and crucial fact.

\begin{lem}[Approximation by trees]
Let $\GH$ be a $\delta$-hyperbolic space, and $(x_0$, $x_1$, $\cdots, x_n)$ a finite list of points in $\GH$.
Let $X$ be the union of the $n$ geodesic segments $[x_0, x_i]$ from the base point $x_0$ to each other $x_i,$ $1\leq i\leq n$.
Let $k$ be the smallest integer such that $2n\leq 2^k+1$.
Then there exist a metric tree $T$ and a map $\Phi: X \to T$ such that:
\begin{enumerate}
\item $\Phi$ is an isometry from $[x_0,x_i]$ to $[\Phi(x_0),\Phi(x_i)]$ for all $1\leq i \leq n$;
\item for all $x,y \in X$
\[
\dist(x,y) - 2k\delta \leq \dist(\Phi(x),\Phi(y)) \leq \dist(x,y).
\]
\end{enumerate}
\end{lem}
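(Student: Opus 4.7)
The strategy is to build $T$ by gluing $n$ isometric copies of the segments $[x_0, x_i]$ along common initial subarcs, with $\Phi$ the induced quotient map. Then $\Phi$ restricts to an isometry on each $[x_0, x_i]$ by construction, making clause~(1) automatic. All the content lies in clause~(2).

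A rooted tree that is the union of $n$ segments $[\Phi(x_0), \Phi(x_i)]$ emanating from a common root $o := \Phi(x_0)$ is determined by its \emph{tripod data} $\tau_{ij}$, $1 \le i < j \le n$: these are the numbers telling at what distance from $o$ the branches to $\Phi(x_i)$ and $\Phi(x_j)$ split off from one another, subject to the constraints $0 \le \tau_{ij} \le \min(\dist(x_0, x_i), \dist(x_0, x_j))$ and the \emph{tripod condition} that among $\tau_{ij}, \tau_{i\ell}, \tau_{j\ell}$ the two smallest coincide, for every triple $i, j, \ell$. In the tree case $\delta = 0$, the choice $\tau_{ij} := (x_i \mid x_j)_{x_0}$ satisfies this condition exactly and yields an isometric $\Phi$. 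When $\delta > 0$, the four-point inequality~\eqref{eq:defequiv} only guarantees the tripod condition up to $\delta$, so the Gromov products must be perturbed.

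I would produce the required tripod data by a balanced binary recursion on the index set. Split $\{1, \ldots, n\}$ into halves $A_1, A_2$ of size $\le \lceil n/2 \rceil$ and apply the recursion to $\{x_0\} \cup \{x_i\}_{i \in A_1}$ and $\{x_0\} \cup \{x_j\}_{j \in A_2}$ to obtain subtrees $T_1, T_2$ with tripod data satisfying the claim with $k$ replaced by $k-1$. Merge them by gluing $T_1$ and $T_2$ along a common initial subarc of length $\tau$ approximating $\min_{i \in A_1, j \in A_2}(x_i \mid x_j)_{x_0}$, possibly after trimming internal confluences to restore the tripod condition across the two halves. The upper bound in~(2) is automatic, since every identification performed in the gluing can only shorten distances. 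For the lower bound, two applications of~\eqref{eq:defequiv} (first through $x_i$, then through $x_j$) yield the key estimate $(x \mid y)_{x_0} \ge \min(\dist(x_0, x), \dist(x_0, y), \tau) - 2\delta$ for any cross-half pair $x, y$, while the inductive hypothesis handles within-half pairs. Tracking the accumulation of error over the $k$ recursion levels---where $k$ is exactly the smallest integer with $2n \le 2^k + 1$---yields the required total bound $2k\delta$.

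The main obstacle is the consistency of the gluings at each recursion step: the merge depth $\tau$ must be compatible with the internal tripod data already produced in $T_1$ and $T_2$, and enforcing this compatibility through trimming must not degrade the distortion beyond the per-level budget. This is precisely what the four-point characterization of $\delta$-hyperbolicity delivers, and it is what produces the logarithmic dependence of $k$ on $n$.
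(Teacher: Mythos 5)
The paper does not prove this lemma; it explicitly defers to \cite{Ghys-delaHarpe:Book} (Chapter~2) and \cite{CDP:Book}, so there is no in-text proof to compare against. Your balanced dyadic recursion on the index set is the right high-level shape: it matches the Gromov argument behind the logarithmic bound $k$, and clause~(1) is indeed built in by construction.

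There is, however, a genuine gap in the step you call automatic. You assert that the upper bound $\dist(\Phi(x),\Phi(y))\leq\dist(x,y)$ is ``automatic, since every identification performed in the gluing can only shorten distances.'' That reasoning compares the quotient metric on $T$ to the \emph{disjoint-union} metric on $\bigsqcup_i[x_0,x_i]$, not to the metric that $X$ inherits from $\GH$, and the two are incomparable. Concretely, if you glue $[x_0,x_1]$ and $[x_0,x_2]$ along an initial subarc of length $\tau_{12}$, then for $x\in[x_0,x_1]$ and $y\in[x_0,x_2]$ both beyond depth $\tau_{12}$ one has
\[
\dist(\Phi(x),\Phi(y))=\dist(x_0,x)+\dist(x_0,y)-2\tau_{12},\qquad
\dist(x,y)=\dist(x_0,x)+\dist(x_0,y)-2(x\vert y)_{x_0},
\]
so non-expansion holds if and only if $\tau_{12}\geq (x\vert y)_{x_0}$ for \emph{every} such pair. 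The four-point inequality only gives $(x\vert y)_{x_0}\leq (x_1\vert x_2)_{x_0}+2\delta$; hence taking $\tau_{12}=(x_1\vert x_2)_{x_0}$, and a fortiori the even smaller $\tau=\min_{i\in A_1,j\in A_2}(x_i\vert x_j)_{x_0}$ that you propose, can make $\Phi$ expand some cross-segment distances by as much as $4\delta$. So the upper bound in~(2) must be \emph{arranged}, by choosing gluing depths slightly larger than the bare Gromov products so that $\Phi$ is $1$-Lipschitz by construction, and the entire $2k\delta$ loss is then pushed into the lower bound. That calibration, the trimming step you mention but do not carry out, and the inductive error bookkeeping across the $k$ levels are exactly where the content of the lemma lives; as written, the sketch does not establish either inequality in~(2).
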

The map $\Phi \colon X\to T$ is called an {\textbf{approximation tree}}. 
For the sake of simplicity, the distance in the tree is also denoted 
$\dist(\cdot, \cdot)$. However, to avoid any confusion, we stick to the convention that a point in the tree will always be written under the form $\Phi(x)$, with $x \in X$. \\

\noindent{\textbf{Convention.}} When $n \leq 4$ we can choose $k=3$.  Let $\theta$ be the positive real number 
defined by 
\[
\theta = 8\delta.
\] 
we have
\begin{equation}
\dist(x,y) - \theta \leq \dist(\Phi(x),\Phi(y)) \leq \dist(x,y).
\end{equation}
From now on we fix such a $\theta$, and we always use the approximation lemma in this way
for at most $5$ points $(x_0, \cdots, x_4)$. 
The first point in the list will always be taken as the base point and will be denoted by a white circle in the pictures.
Since two segments with the same extremities are $\delta$-close, the specific choice of the segments between $x_0$ and the $x_i$ is not
important. We may therefore forget which segments are chosen, and refer to an approximation tree as a pair $(\Phi, T)$ associated
to $(x_0, \cdots, x_n)$. In general, the choice of the segments between $x_0$ and the $x_i$ is either clear, or irrelevant.\\

In the remaining of \S \ref{par:classical}, well known facts from hyperbolic geometry are listed; complete proofs can also be found in \cite{Ghys-delaHarpe:Book}, \cite{CDP:Book}. First, the following corollary is an immediate consequence of the approximation lemma, and Lemma \ref{lem:obtuse} follows
from the corollary.

\begin{cor}
\label{cor:TGromov}
Let $\Phi\colon X\to T$ be an approximation tree for at most $5$ points. Then
\[
(\Phi(x)\vert \Phi(y) )_{\Phi(z)} - \frac{\theta}{2}
\leq
( x \vert y )_z
\leq
(\Phi (x)\vert \Phi (y) )_{\Phi(z)} + \theta
\]
for all $x,$ $y,$ and $z$ in $X$.
In particular $( x\vert y )_z \leq  \theta$ as soon as $\Phi(z) \in [\Phi(x),\Phi(y)]$.
\end{cor}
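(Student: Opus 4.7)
The plan is to expand both Gromov products using the definition
$(u\vert v)_w = \frac{1}{2}(\dist(u,w)+\dist(v,w)-\dist(u,v))$,
and then compare, one by one, the three pairwise distances appearing in $(x\vert y)_z$ with their tree counterparts appearing in $(\Phi(x)\vert \Phi(y))_{\Phi(z)}$, applying the approximation lemma to each pair. Since $x,y,z$ lie among at most five points, the convention fixed just before the corollary places us in the regime where
\[
0 \;\leq\; \dist(u,v)-\dist(\Phi(u),\Phi(v)) \;\leq\; \theta
\]
for every pair $u,v \in \{x,y,z\}$; this is the only external ingredient used in the argument.

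Set $a=\dist(x,z)-\dist(\Phi(x),\Phi(z))$, $b=\dist(y,z)-\dist(\Phi(y),\Phi(z))$, and $c=\dist(x,y)-\dist(\Phi(x),\Phi(y))$, so that $a,b,c\in[0,\theta]$. A one-line expansion of the definition of the Gromov product gives
\[
(x\vert y)_z - (\Phi(x)\vert \Phi(y))_{\Phi(z)} \;=\; \frac{a+b-c}{2}.
\]
The right-hand side is maximized at $\theta$ (take $a=b=\theta$, $c=0$) and minimized at $-\theta/2$ (take $a=b=0$, $c=\theta$), and these two extremal values translate immediately into the two displayed inequalities of the corollary.

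For the final assertion it suffices to note that in the tree $T$ the relation $\Phi(z)\in[\Phi(x),\Phi(y)]$ forces $\dist(\Phi(x),\Phi(z))+\dist(\Phi(z),\Phi(y))=\dist(\Phi(x),\Phi(y))$, so that $(\Phi(x)\vert \Phi(y))_{\Phi(z)}=0$ and the upper bound of the main estimate collapses to $(x\vert y)_z\leq\theta$. There is really no obstacle to overcome: the statement is the algebraic shadow of the approximation lemma, and the only point worth verifying is that the three points $x,y,z$ actually belong to the domain $X$ of $\Phi$, which is guaranteed by the hypothesis that they appear among the list of at most five prescribed points.
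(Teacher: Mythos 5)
Your proof is correct and takes essentially the same route as the paper: both expand the Gromov products via their definition and then apply the tree-approximation estimate to the three pairwise distances, with the only difference being that you package the bookkeeping into the single quantity $(a+b-c)/2$ rather than writing out the two inequalities separately. One small inaccuracy in your closing remark: the corollary asserts the estimate for \emph{all} $x,y,z\in X$, where $X$ is the union of the geodesic segments $[x_0,x_i]$, not merely for the five prescribed endpoints; this is harmless, since the approximation lemma controls $\dist(\Phi(u),\Phi(v))$ for every pair $u,v\in X$, but the justification you give (``they appear among the list of at most five prescribed points'') is not what actually makes the argument go through.
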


\begin{proof} By definition of the Gromov product we have
\begin{eqnarray*}
2 (x\vert y)_z & = &  \dist(x,z) + \dist(y,z) - \dist(x,y)   \\
& \ge &  \dist(\Phi(x),\Phi(z)) + \dist(\Phi(y),\Phi(z)) - \dist(\Phi(x),\Phi(y)) - \theta   \\
& = & 2 (\Phi(x)\vert \Phi(y) )_{\Phi(z)} -\theta.
\end{eqnarray*}
On the other hand
\begin{eqnarray*}
2( x\vert y )_z & = & \dist(x,z) + \dist(y,z) - \dist(x,y)   \\
& \leq &  \dist(\Phi(x),\Phi(z))+\theta + \dist(\Phi(y),\Phi(z))+\theta - \dist(\Phi(x),\Phi(y))  \\
& = &  2(\Phi(x)\vert\Phi(y) )_{\Phi(z)} + 2\theta.
\end{eqnarray*}
\end{proof}

\begin{lem}[Obtuse angle implies thinness]
\label{lem:obtuse}
Let $\Gamma \subset \GH$ be a  geodesic line. Let $x$ be a point of $\GH$ and  $a \in \Gamma$ be a projection of $x$ onto $\Gamma$.  For all
$b$ in $\Gamma$ and  $c$ in the segment $[a,b]\subset \Gamma$, we have
$( x\vert b)_{c} \leq 2\theta$.
\end{lem}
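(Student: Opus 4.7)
The plan is to apply the approximation-by-trees lemma to the four points $\{a,c,x,b\}$, taking $a$ as the base point, and to exploit that $c\in[a,b]$. Since segments from the base are mapped isometrically, $\Phi(c)$ sits on $[\Phi(a),\Phi(b)]$ at distance $\dist(a,c)$ from $\Phi(a)$, so the tree spanned by the four images is just the segment $[\Phi(a),\Phi(b)]$ with $\Phi(x)$ attached through a (possibly trivial) spur meeting the segment at a unique point~$q$. Set $p:=\dist(\Phi(a),q)$ and $r:=\dist(\Phi(x),q)$.

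A direct computation from the tree distances splits into two subcases. When $p\le\dist(a,c)$, the point $\Phi(c)$ lies on the tree-geodesic from $\Phi(x)$ to $\Phi(b)$, so the last assertion of Corollary~\ref{cor:TGromov} yields $(x\vert b)_c\le\theta$. When $p>\dist(a,c)$, the same kind of direct computation gives $(\Phi(x)\vert\Phi(b))_{\Phi(c)}=p-\dist(a,c)\le p$, and Corollary~\ref{cor:TGromov} reduces the desired bound $(x\vert b)_c\le 2\theta$ to proving $p\le\theta$.

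To establish $p\le\theta$, I bring in the projection hypothesis. Let $m\in[a,x]$ and $\tilde q\in[a,b]$ be the points at distance $p$ from $a$; both lie in the domain $X$ of $\Phi$, and both map to $q$ in the tree, since the two geodesic segments $[\Phi(a),\Phi(x)]$ and $[\Phi(a),\Phi(b)]$ share the initial portion $[\Phi(a),q]$. Part~(2) of the approximation lemma therefore gives $\dist(m,\tilde q)\le 2k\delta\le\theta$. Because $\tilde q\in[a,b]\subset\Gamma$, the projection property gives $\dist(x,a)\le\dist(x,\tilde q)$, and using $\dist(x,m)=\dist(x,a)-p$ together with the triangle inequality,
\[
\dist(x,a)\le\dist(x,\tilde q)\le\dist(x,m)+\dist(m,\tilde q)\le\dist(x,a)-p+\theta,
\]
whence $p\le\theta$, which closes the argument.

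The only delicate choice is the base point of the approximation tree: taking $a$ as the base forces $\Phi(c)$ to lie exactly on $[\Phi(a),\Phi(b)]$ and collapses the possible tree topologies to a one-parameter family labelled by $p$. This lets the projection hypothesis be converted into a direct inequality on $p$ by comparing the two natural lifts $m$ and $\tilde q$ of the attachment point $q$, with no separate estimate on the Gromov product $(x\vert b)_a$ needed.
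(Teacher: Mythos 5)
Your proof is correct, and the core mechanism is the same as the paper's: pass to an approximation tree, locate the branch point where the geodesic toward $x$ leaves $\Gamma$, and use the projection hypothesis to show this branch point is $\theta$-close to $\Phi(a)$, which bounds the tree Gromov product and hence $(x\vert b)_c$ via Corollary~\ref{cor:TGromov}. The only real difference is the choice of approximation tree: the paper introduces auxiliary points $y,z\in\Gamma$ with $a$ as midpoint of $[y,z]$ and approximates the triple $(y,z,x)$ with $y$ as base, which makes $\Phi(a)$ an interior point of the tripod's long side and the branch point $\Phi(p)$ can fall on either side of it; you instead take $a$ itself as base for $(a,c,x,b)$, so the attachment point $q$ is forced onto $[\Phi(a),\Phi(b)]$ at distance $p\ge 0$ from $\Phi(a)$, and you then compare the two lifts $m$ and $\tilde q$ of $q$ directly. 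Both versions extract the key inequality from the identity $\dist(x,a)\le\dist(x,\cdot)$ for a point on $\Gamma$ near the branch point, and your version saves the auxiliary points $y,z$ at the cost of splitting into the two subcases $p\le\dist(a,c)$ and $p>\dist(a,c)$; the paper's choice avoids the case split because the estimate on $\dist(\Phi(a),\Phi(p))$ applies regardless of where $\Phi(c)$ sits relative to $\Phi(p)$.
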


\begin{proof}
Let $y$ and $z$ be two points of $\Gamma$ such that $a$ is the middle of $[y,z]\subset \Gamma$,  $b$ is contained in $[a,z]$, and $\dist(y,z) \ge 10 \theta$.
Let $\Phi\colon X\mapsto T$ be an approximation tree of $(y,z,x)$, where $X$ is the union of the segment $[y,z]\subset \Gamma$ and a segment $[y,x]$.
Let $p$ be the point of $\Gamma$ which is mapped onto the branch point of the tripode $T$ by $\Phi.$
$$\mygraph{
!{<0cm,0cm>;<\sizex,0cm>:<0cm,\sizey>::}
!{(0,0)}*{\circ}="y" !{(4,0)}*{\bullet}="a"
!{(5,0)}*{\bullet}="p" !{(5,1.5)}*{\bullet}="x"
!{(6,0)}*{\bullet}="c" !{(7,0)}*{\bullet}="b"
!{(8,0)}*{\bullet}="z"
"x"-^<{\Phi(x)}"p"
"y"-_<{\Phi(y)}"a" "a"-_<{\Phi(a)}"p" "p"-_<{\Phi(p)}"c"
"c"-_<{\Phi(c)}"b" "b"-_<{\Phi(b)}_>{\Phi(z)}"z"
}$$
We have
\begin{eqnarray*}
\dist(\Phi(x),\Phi (p)) + \theta & \geq & \dist(x,p) \\
&\geq & \dist(x,a) \\
& \ge & \dist(\Phi(x),\Phi(a)) \\
& = & \dist(\Phi(x),\Phi(p)) + \dist(\Phi(p),\Phi(a)),
\end{eqnarray*}
because $\Phi(a)$ is contained in the segment $\Phi([y,z]).$ In particular
$ \dist(\Phi(p),\Phi(a))$ is bounded by $\theta$;
from this and Corollary \ref{cor:TGromov} we get
\begin{eqnarray*}
(x\vert b)_{c} & \leq & (\Phi(x),\Phi(b))_{\Phi(c)} +\theta \\
& \leq & \dist(\Phi(a),\Phi(p)) + \theta,
\end{eqnarray*}
so that $(x\vert b)_{c}$ is bounded by $2\theta.$
\end{proof}

%%%
\subsubsection{Shortening and weak convexity}
%%%

The following lemma says that two geodesic segments are close as soon as there
extremities are not too far. Lemma \ref{lem:serge} makes this statement much more precise
in the context of {\sc{cat}}(-1) spaces.

\begin{lem}[A little shorter, much closer. See 1.3.3 in \cite{Delzant:1996}]
\label{lem:closer}
Let $[x,y]$ and $[x',y']$ be two geodesic segments of $\GH$ such that
\begin{enumerate}[(i)]
\item $\dist(x,x') \leq \beta,$ and $ \dist(y,y') \leq \beta$
\item  $\dist(x,y) \ge 2\beta + 4 \theta$.
\end{enumerate}
Then, the geodesic segment $[x'+\beta +\theta,y'-\beta -\theta]$ is in the $2\theta$-neighborhood of~$[x,y]$.
\end{lem}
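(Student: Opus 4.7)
The strategy will be to apply the tree-approximation lemma to a well-chosen $5$-tuple of points and reduce the statement to a routine computation in a tree. Fix a point $w$ on the segment $[x'+\beta+\theta,\, y'-\beta-\theta]$, so that $\dist(x',w)\ge\beta+\theta$ and $\dist(w,y')\ge\beta+\theta$; the goal is to produce $m\in[x,y]$ with $\dist(w,m)\le 2\theta$. I would apply the approximation lemma to the five points $(x,y,x',y',w)$ with base point $x$, and denote by $\Phi\colon X\to T$ the resulting approximation tree. The reason for this choice of base point is that $[x,y]$ then sits inside $X$ and $\Phi$ is isometric on $[x,y]$.

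The first step would be to draw the four-point tree picture of $\Phi(x),\Phi(y),\Phi(x'),\Phi(y')$. Since $\dist(\Phi(x),\Phi(x'))\le\beta$, $\dist(\Phi(y),\Phi(y'))\le\beta$ and $\dist(\Phi(x),\Phi(y))=\dist(x,y)\ge 2\beta+4\theta$, a standard analysis in the tree yields projections $p,q\in[\Phi(x),\Phi(y)]$ of $\Phi(x')$ and $\Phi(y')$ onto $[\Phi(x),\Phi(y)]$, satisfying $\dist(\Phi(x),p),\, \dist(p,\Phi(x'))\le\beta$ and $\dist(\Phi(y),q),\, \dist(q,\Phi(y'))\le\beta$, appearing in the order $\Phi(x),\, p,\, q,\, \Phi(y)$ along $[\Phi(x),\Phi(y)]$, and such that the tree segment $[\Phi(x'),\Phi(y')]$ is the concatenation $[\Phi(x'),p]\cup[p,q]\cup[q,\Phi(y')]$.

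The second step would be to locate $\Phi(w)$ on this concatenation. From $\dist(\Phi(w),\Phi(x'))\le\dist(w,x')$, $\dist(\Phi(w),\Phi(y'))\le\dist(w,y')$ and $\dist(\Phi(x'),\Phi(y'))\ge \dist(x',y')-\theta=\dist(w,x')+\dist(w,y')-\theta$ (the equality using $w\in[x',y']$), one deduces $(\Phi(x')\mid\Phi(y'))_{\Phi(w)}\le\theta/2$. Hence the projection $\tilde m$ of $\Phi(w)$ onto $[\Phi(x'),\Phi(y')]$ in the tree satisfies $\dist(\Phi(w),\tilde m)\le\theta/2$, and moreover $\dist(\tilde m,\Phi(x'))\ge(\beta+\theta)-\theta/2>\beta\ge\dist(p,\Phi(x'))$, with the symmetric inequality on the $y'$-side; so $\tilde m$ lies in the central sub-segment $[p,q]\subset[\Phi(x),\Phi(y)]$.

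Finally, since $\Phi$ is an isometry from $[x,y]$ onto $[\Phi(x),\Phi(y)]$, there is a unique $m\in[x,y]$ with $\Phi(m)=\tilde m$, and the approximation bound would give
\[
\dist(w,m)\le \dist(\Phi(w),\Phi(m))+\theta=\dist(\Phi(w),\tilde m)+\theta\le \tfrac{3}{2}\theta<2\theta,
\]
which finishes the proof. I expect the delicate point to be the second step: one has to make sure that $\tilde m$ lands in the core segment $[p,q]$ rather than on one of the whiskers $[\Phi(x'),p]$ or $[q,\Phi(y')]$. This is precisely the role of the extra $+\theta$ in the shortening $[x'+\beta+\theta,\, y'-\beta-\theta]$, which exactly absorbs the $\theta$-error introduced by the tree approximation.
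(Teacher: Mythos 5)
Your approach is essentially the one in the paper: both proofs take the approximation tree of the five points $(x,y,x',w,y')$ with base point $x$, both reduce the claim to showing that the branch point of $(\Phi(x'),\Phi(w),\Phi(y'))$ (your $\tilde m$, the paper's $\Phi(p_3)$) lies on the core segment $[p,q]$ between the two whisker attachment points, and both finish with the estimate $\dist(\Phi(w),\tilde m)=(\Phi(x')\vert\Phi(y'))_{\Phi(w)}\leq\theta/2$. So the architecture matches.

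There is, however, a slip in the key inequality $\dist(\tilde m,\Phi(x'))\ge(\beta+\theta)-\theta/2$. If you derive it from $\dist(\tilde m,\Phi(x'))=\dist(\Phi(w),\Phi(x'))-\dist(\Phi(w),\tilde m)$, you must account for the possibility that $\dist(\Phi(w),\Phi(x'))$ is up to $\theta$ smaller than $\dist(w,x')$ (the pair $(w,x')$ does not lie on a single segment from the base point $x$, so $\Phi$ is not isometric there). Honest bookkeeping gives only $\dist(\tilde m,\Phi(x'))\ge(\beta+\theta)-\theta-\theta/2=\beta-\theta/2$, which does not beat $\dist(p,\Phi(x'))\le\beta$. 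The fix is to use the Gromov product directly: in the tree, $\dist(\tilde m,\Phi(x'))=(\Phi(w)\vert\Phi(y'))_{\Phi(x')}$, and because $w\in[x',y']$ one has $\dist(x',y')=\dist(x',w)+\dist(w,y')$, so the $-\theta$ losses on the two distances from $\Phi(x')$ partly cancel against the exact term $\dist(\Phi(w),\Phi(y'))\le\dist(w,y')$, giving
\[
\dist(\tilde m,\Phi(x'))\;\ge\;\tfrac12\bigl[(\dist(x',w)-\theta)+(\dist(x',y')-\theta)-\dist(w,y')\bigr]\;=\;\dist(x',w)-\theta\;\ge\;\beta,
\]
which is exactly the threshold needed to place $\tilde m$ in $[p,q]$. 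With this replacement the argument closes, and the rest of your write-up (the first step's picture, the symmetric bound on the $y'$-side, and the pull-back via the isometry $\Phi\vert_{[x,y]}$) is correct.
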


\begin{proof}
Let $z'$ be a point of $[x',y'].$ Consider the approximation
tree $\Phi \colon X \mapsto T$ of $(x,y,x',z',y').$ Note $p_1$ (resp. $p_2$, resp. $p_3$) the points in $[x,y]$ such that $\Phi(p_1)$ (resp. $\Phi(p_2)$, resp. $\Phi(p_3)$) is the branch point of the tripode $(\Phi(x),\Phi(x'),\Phi(y))\subset T$
(resp.  $(\Phi(x),\Phi(y'),\Phi(y))$, resp. $(\Phi(x),\Phi(z'),\Phi(y))).$ One easily checks that $p_3$ is
contained in $[p_1,p_2]$  as soon as
\[
\min \left( \dist(z',x'), \dist(z',y') \right) \geq \frac{\theta}{2} +\beta.
\]
When $z'$ is in $[x'+\beta +\theta,y'-\beta -\theta]$ this inequality is satisfied. In particular,
the tree $T$ looks as follows:

$$\mygraph{
!{<0cm,0cm>;<\sizex,0cm>:<0cm,\sizey>::}
!{(7,1)}*{\bullet}="y'" !{(0,0)}*{\circ}="x"
!{(1,1)}*{\bullet}="x'" !{(4,1)}*{\bullet}="z'"
!{(4,0)}*{\bullet}="fork" !{(8,0)}*{\bullet}="y"
!{(2,0)}*{\bullet}="fork-1" !{(6,0)}*{\bullet}="fork+1"
"z'"-^<{\Phi(z')}"fork"
"x'"-^<{\Phi(x')}"fork-1"
"y'"-_<{\Phi(y')}"fork+1"
"fork-1"-_>{\Phi(p_3)}"fork" "fork"-"fork+1"
"x"-_<{\Phi(x)}_>{\Phi(p_1)}"fork-1"
"fork+1"-_<{\Phi(p_2)}_>{\Phi(y)}"y"
}$$

Therefore we have $\dist(\Phi(z'), \Phi(p_3)) = (\Phi(x') \vert \Phi(y'))_{\Phi(z')}$. By Corollary \ref{cor:TGromov} we have
$(\Phi(x') \vert \Phi(y'))_{\Phi(z')} \le \frac{\theta}{2}$. We obtain
\[
\dist(z',[x,y]) \le \dist(z',p_3) \le  d(\Phi(z'), \Phi(p_3)) + \theta \leq 2\theta,
\]
and $[x'+\beta +\theta,y'-\beta -\theta]$ is in the $2\theta$-neighborhood of~$[x,y]$.
\end{proof}

The following lemma is obvious in a {\sc{cat}}(0) space (by convexity of the distance, see \cite[p. 176]{Bridson-Haefliger:Book}).

\begin{lem}[Weak convexity]
\label{lem:weak}
Let $[x,y]$ and $[x',y']$ be two geodesic segments of $\GH$ such that $\dist(x,x')$ and $\dist(y,y')$ are bounded from above by $\beta$.
Then $[x',y']$ is in the $(\beta+2\theta)$-neighborhood of $[x,y]$.
\end{lem}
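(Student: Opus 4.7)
My plan is to reduce to the tree case via the approximation lemma and then exploit convexity of distance functions in trees. For any $z' \in [x',y']$, I would apply the approximation lemma to the five points $(x,y,x',y',z')$ with base point $x$, producing a map $\Phi\colon X \to T$ into a metric tree $T$ with distortion at most $\theta$ (covered by the paper's convention for up to five points). Because $x$ is the base point, the segment $[x,y]$ lies in $X$ and $\Phi$ is an isometry from $[x,y]$ onto the tree-geodesic $[\Phi(x),\Phi(y)]_T$.

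The first step is to place $\Phi(z')$ close to $[\Phi(x'),\Phi(y')]_T$. Since $z' \in [x',y']$, the Gromov product $(x'|y')_{z'}$ vanishes, and Corollary \ref{cor:TGromov} gives $(\Phi(x')|\Phi(y'))_{\Phi(z')}\leq \theta/2$. In a tree the Gromov product $(a|b)_c$ equals the distance from $c$ to the tree-geodesic $[a,b]_T$, so
\[
\dist\!\bigl(\Phi(z'),\,[\Phi(x'),\Phi(y')]_T\bigr)\leq \theta/2.
\]

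The second step is to show that $[\Phi(x'),\Phi(y')]_T$ itself lies in the $\beta$-neighborhood of $[\Phi(x),\Phi(y)]_T$. The distortion bound gives $\dist(\Phi(x),\Phi(x'))\leq \beta$ and $\dist(\Phi(y),\Phi(y'))\leq \beta$, so both endpoints $\Phi(x')$, $\Phi(y')$ are within $\beta$ of $[\Phi(x),\Phi(y)]_T$. Trees are {\sc{cat}}$(0)$, hence the distance function to the convex subset $[\Phi(x),\Phi(y)]_T$ is convex along the tree-geodesic $[\Phi(x'),\Phi(y')]_T$, which forces the whole of this geodesic to stay within $\beta$ of $[\Phi(x),\Phi(y)]_T$.

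Combining the two steps, $\Phi(z')$ is within $\beta + \theta/2$ of $[\Phi(x),\Phi(y)]_T$. I would then choose $q \in [x,y]$ with $\Phi(q)$ realising the infimum, which is possible because $\Phi$ isometrically parametrises $[\Phi(x),\Phi(y)]_T$. Lifting back via the distortion bound yields
\[
\dist(z',q) \leq \dist(\Phi(z'),\Phi(q)) + \theta \leq \beta + \tfrac{\theta}{2} + \theta \leq \beta + 2\theta,
\]
as required. The only subtle point, more conceptual than computational, is to work with the tree-geodesic $[\Phi(x'),\Phi(y')]_T$ rather than with $\Phi([x',y'])$, since the segment $[x',y']$ need not lie in $X$ and its image is not in general a geodesic of $T$; once this is recognised, the proof reduces to standard tree convexity.
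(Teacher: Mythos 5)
Your proof is correct and follows essentially the same route as the paper's: both build the approximation tree on the five points $(x,y,x',y',z')$, use Corollary~\ref{cor:TGromov} to bound the tree Gromov product $(\Phi(x')\vert\Phi(y'))_{\Phi(z')}$ by $\theta/2$, and then lift the resulting estimate back to $\GH$ via the distortion bound, landing at $\beta+3\theta/2\le\beta+2\theta$. The only (minor) difference is that the paper reads off the intermediate bound $\dist(\Phi(z'),[\Phi(x),\Phi(y)]_T)\le(\Phi(x')\vert\Phi(y'))_{\Phi(z')}+\beta$ from the explicit branch points $\Phi(p_1),\Phi(p_2)$ in the tripod picture, whereas you obtain it by observing that the tree-geodesic $[\Phi(x'),\Phi(y')]_T$ lies in $\Tub_\beta([\Phi(x),\Phi(y)]_T)$ via CAT$(0)$ convexity of the distance function --- a slightly cleaner way of expressing the same tree combinatorics, and your remark that $\Phi([x',y'])$ need not be a tree geodesic (unlike $\Phi([x,y])$) correctly pins down the one point where care is needed.
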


\begin{proof}
Pick $z' \in [x',y']$, and consider an approximation tree of $(x,y, x',y',z')$. Note $p_1 \in [x,y]$ (resp. $p_2$) the point such that $\Phi(p_1)$ (resp. $\Phi(p_2)$) is the branch point of the tripod $(\Phi(x),\Phi(y),\Phi(x'))$ (resp. $(\Phi(x),\Phi(y),\Phi(y'))$).
$$\mygraph{
!{<0cm,0cm>;<\sizex,0cm>:<0cm,\sizey>::}
!{(6,0)}*{\bullet}="y" !{(0,0)}*{\circ}="x"
!{(2,1)}*{\bullet}="x'" !{(4,1)}*{\bullet}="y'"
!{(4,0)}*{\bullet}="p2" !{(2.5,0.5)}*{\bullet}="z'"
!{(2,0)}*{\bullet}="p1" !{(2,0.5)}="fork"
"x"-_<{\Phi(x)}"p1"
"p1"-_<{\Phi(p_1)}"p2"
"p2"-_<{\Phi(p_2)}_>{\Phi(y)}"y"
"x'"-_<{\Phi(x')}"p1"
"y'"-^<{\Phi(y')}"p2"
"z'"-_<(-0.6){\Phi(z')}"fork"
}$$
We have
\[
\dist(z', [x,y]) \leq \dist(\Phi(z'), \Phi(p_1)) + \theta \leq
( \Phi(x') \vert \Phi(y') )_{\Phi(z')} + \beta +\theta \leq \beta + 2\theta
\]
and the lemma is proved.
\end{proof}

%%%
\subsubsection{Canoeing to infinity}
%%%

The following lemma comes from \cite{Delzant:1996} (see \S 1.3.4), and says that  hyperbolic canoes never make loops  (see \S 2.3.13 in \cite{Hubbard:Book}).
This lemma is at the heart of our proof of Theorem~C. 

\begin{lem}[Canoeing]
\label{lem:canoeing}

Let $y_0, \cdots, y_n$ be a finite sequence of points in $\GH,$ such that
\begin{enumerate}[(i)]
\item $\dist(y_i, y_{i-1}) \ge 10 \theta$ for all $1 \leq i \leq n$;
\item $( y_{i+1} \vert y_{i-1} )_{y_i} \leq 3\theta$ for all $1 \leq i \leq n-1$.
\end{enumerate}
Then, for all $1 \leq j \leq n,$
\begin{enumerate}
\item $\dist(y_0,y_j) \ge \dist(y_0, y_{j-1}) + 2\theta$ if $j\ge 1$;
\item $\dist(y_0,y_j) \ge \sum_{i=1}^j ( \dist(y_i,y_{i-1}) - 7\theta )$;
\item $y_j$ is $5\theta$-close to any geodesic segment $[y_0,y_n]$.
\end{enumerate}
\end{lem}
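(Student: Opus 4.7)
The natural approach is induction on $j$, simultaneously establishing the three conclusions. The key observation is that (1) and (2) both follow from the sharper inequality
\[
(\star)\quad \dist(y_0, y_j) \geq \dist(y_0, y_{j-1}) + \dist(y_{j-1}, y_j) - 7\theta,
\]
since $(\star)$ together with the hypothesis $\dist(y_{j-1},y_j) \geq 10\theta$ gives an increment of at least $3\theta$, and telescoping $(\star)$ over $i=1,\ldots,j$ yields (2).

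To prove $(\star)$ by induction I would apply the approximation lemma of \S \ref{par:approxbytrees} to the four points $y_0, y_{j-2}, y_{j-1}, y_j$, letting $\Phi$ denote the approximation and $T$ the target tree. By Corollary \ref{cor:TGromov}, the hypothesis $(y_{j-2}\vert y_j)_{y_{j-1}} \leq 3\theta$ translates to the tree bound $(\Phi(y_{j-2})\vert\Phi(y_j))_{\Phi(y_{j-1})} \leq 7\theta/2$, so $\Phi(y_{j-1})$ lies within $7\theta/2$ of some point $p$ on the segment $[\Phi(y_{j-2}), \Phi(y_j)]$. The inductive hypothesis (1) at step $j-1$ gives $\dist(\Phi(y_0), \Phi(y_{j-1})) > \dist(\Phi(y_0), \Phi(y_{j-2}))$ with a positive gap of order $\theta$. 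Letting $q$ be the projection of $\Phi(y_0)$ onto $[\Phi(y_{j-2}), \Phi(y_j)]$, one splits into two cases according as $q \in [\Phi(y_{j-2}), p]$ or $q \in [p, \Phi(y_j)]$. In the first case a direct tree computation gives $(\Phi(y_0)\vert\Phi(y_j))_{\Phi(y_{j-1})} = \dist(\Phi(y_{j-1}), p) \leq 7\theta/2$, which pulls back to $(\star)$. In the second case, the positive gap forces $\dist(p, \Phi(y_{j-2})) < \dist(p, \Phi(y_{j-1})) \leq 7\theta/2$, so $\dist(\Phi(y_{j-2}), \Phi(y_{j-1})) \leq 7\theta$; but the hypothesis $\dist(y_{j-2}, y_{j-1}) \geq 10\theta$ gives a tree distance of at least $9\theta$, a contradiction. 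Hence only the first case occurs and $(\star)$ is established.

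For (3), the same strategy applied to the reversed sequence $y_n, y_{n-1}, \ldots, y_0$ (which satisfies the same hypotheses) yields an analogous $(\star)$ starting from $y_n$. I would then pass to an approximation tree of the five points $(y_0, y_{j-1}, y_j, y_{j+1}, y_n)$; the hypothesis $(y_{j-1}\vert y_{j+1})_{y_j} \leq 3\theta$ combined with the two one-sided linear arrangements (of $\Phi(y_0), \Phi(y_{j-1}), \Phi(y_j)$ on one side and $\Phi(y_j), \Phi(y_{j+1}), \Phi(y_n)$ on the other, established as in the previous step) forces the tree Gromov product $(\Phi(y_0)\vert\Phi(y_n))_{\Phi(y_j)}$ to be $O(\theta)$, with a bound independent of $n$. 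Pulling back via Corollary \ref{cor:TGromov} gives the $5\theta$-closeness of $y_j$ to any geodesic $[y_0, y_n]$ asserted in (3).

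The main obstacle is the combinatorial case analysis in the four-point tree: ruling out the "doubling back" configuration quantitatively, which is the content of the picturesque statement that hyperbolic canoes never make loops. The uniform lower bound $\dist(y_i, y_{i-1}) \geq 10\theta$ in hypothesis (i) provides exactly the slack needed to exclude this bad topology. A secondary difficulty arises for (3), because naively iterating $(\star)$ only produces a bound on $(y_0\vert y_n)_{y_j}$ that grows linearly in $n$; the 5-point tree analysis combining both directions at $y_j$ is essential to extract an $n$-independent estimate, and careful bookkeeping of the $\theta$-errors from each invocation of the approximation lemma and of Corollary \ref{cor:TGromov} is needed to match the constants $7\theta$ and $5\theta$ in the statement.
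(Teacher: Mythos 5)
Your approach to (3) is essentially the same as the paper's (a five-point approximation tree for $(y_0,y_{j-1},y_j,y_{j+1},y_n)$, with the hypothesis at $y_j$ bounding the distance to the branch point), so I will focus on (1) and (2), where your route genuinely differs from the paper's.

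For (1) and (2), the paper does \emph{not} pass to an approximation tree. It first rewrites the Gromov-product hypothesis directly as $\dist(y_{i+1},y_{i-1}) \ge \dist(y_{i+1},y_i)+\dist(y_{i-1},y_i)-6\theta$, which combined with $\dist(y_i,y_{i-1})\ge 10\theta$ gives $\dist(y_{i+1},y_{i-1})\ge \max\{\dist(y_i,y_{i-1}),\dist(y_{i+1},y_i)\}+4\theta$. The induction step is then a single application of the four-point hyperbolicity inequality (\ref{eq:defequiv}) to the quadruple $(y_0,y_j,y_{j-1},y_{j+1})$: once one knows $\dist(y_0,y_j)+\dist(y_{j-1},y_{j+1})$ exceeds $\dist(y_0,y_{j-1})+\dist(y_j,y_{j+1})+2\delta$, the inequality forces $\dist(y_0,y_j)+\dist(y_{j-1},y_{j+1})\le \dist(y_0,y_{j+1})+\dist(y_j,y_{j-1})+2\delta$, from which both (1) and (2) drop out. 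The crucial efficiency is that this loses only $6\theta$ from the Gromov product plus $2\delta=\theta/4$ from the four-point inequality, comfortably within the stated $7\theta$.

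Your route instead uses a four-point approximation tree for $(y_0,y_{j-2},y_{j-1},y_j)$ with a case split on whether the branch point of $\Phi(y_0)$ lies on the $\Phi(y_{j-2})$-side or the $\Phi(y_j)$-side of the branch point of $\Phi(y_{j-1})$. The idea is sound, and your Case 2 exclusion argument (using $\dist(y_{j-2},y_{j-1})\ge 10\theta$ and the inductive gap to force a contradiction) is correct. However, with $y_0$ as the base point as you have written it, the pullback through Corollary \ref{cor:TGromov} costs an additional $\theta$: the hypothesis gives a tree Gromov product bound $\le 3\theta+\theta/2$, and pulling back $(\Phi(y_0)|\Phi(y_j))_{\Phi(y_{j-1})}$ to $(y_0|y_j)_{y_{j-1}}$ costs another $\theta/2$ (after exploiting that the base-point distances are exact), so you land at $(\star)$ with $8\theta$ rather than $7\theta$. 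That is strictly weaker than the lemma you are trying to prove, so as written your proof does not establish the stated conclusion. This can be repaired by basing the approximation tree at $y_{j-1}$ instead of $y_0$ (so that both distances entering the Gromov product $(y_0|y_j)_{y_{j-1}}$ on the favorable side are exactly preserved, and the only $\theta$-loss is in the hypothesis side); a careful redo then gives exactly $7\theta$. You acknowledge that the bookkeeping is delicate, but as stated the constants do not come out, and the paper's tree-free argument for (1) and (2) is precisely designed to avoid paying this extra $\theta$.
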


\begin{proof}
The second assumption implies
\begin{equation}\label{eq:star1}
 \dist(y_{i+1},y_{i-1}) \ge  \dist(y_{i+1},y_{i}) + \dist(y_{i-1},y_{i}) - 6\theta 
\end{equation}
for all $1\leq i\leq n-1$. Together with the first assumption, we get
\begin{equation}\label{eq:star2}
 \dist(y_{i+1},y_{i-1})
 \geq \max \left\lbrace \dist(y_i,y_{i-1}),\, \dist(y_{i+1},y_i) \right\rbrace +4\theta .
 \end{equation}

When applied to $i=1,$ equation (\ref{eq:star1}) implies that
\[
\dist(y_2,y_0)\geq \dist(y_2,y_1) -7\theta + \dist(y_1,y_0) -7\theta + 8\theta.
\]
In particular, properties (1) and (2) are proved for $j=2.$ Now we prove (1) and (2) by
induction, assuming that these inequalities have been proved for all indices up to $j$
(included).
By property (1) and equation (\ref{eq:star2}) (for $i=j$) we have
\begin{eqnarray*}
\dist(y_0,y_j)+\dist(y_{j-1},y_{j+1}) & > & \dist(y_0,y_{j-1}) +2\theta +\dist(y_j,y_{j+1})+4\theta \\
& \geq & \dist(y_0,y_{j-1})   +\dist(y_j,y_{j+1})+6\theta.
\end{eqnarray*}
This, and the hyperbolic inequality (\ref{eq:defequiv}), imply
\[
\dist(y_0,y_j)+\dist(y_{j-1},y_{j+1}) \leq \dist(y_0,y_{j+1})+\dist(y_j,y_{j-1}) + \delta.
\]
Hence
\[
\dist(y_0,y_{j+1})  \geq \dist(y_0,y_j) +\dist(y_{j-1}, y_{j+1})-\dist(y_j,y_{j-1})-\delta.
\]
From equation (\ref{eq:star2}) with $i=j$ and the inequality $4\theta-\delta \ge 2\theta$ we get property (1);  then
equation (\ref{eq:star1}) with $i=j$ implies
\[
\dist(y_0,y_{j+1})\geq \dist(y_0,y_j) + \dist(y_j,y_{j+1}) -(6\theta + \delta).
\]
The induction hypothesis concludes the proof of (1) and (2).\\

We  now  prove assertion (3). First note that, reversing the labeling of the points, we get 
\begin{enumerate}
\item[(2')] $  \dist(y_n,y_j) \ge \sum_{i=j+1}^n ( \dist(y_i,y_{i-1}) - 7\theta ).$
\end{enumerate}
Let $[y_0,y_n]$ be a geodesic segment from $y_0$ to $y_n$ and let $\Phi\colon X\mapsto T$ be an approximation tree for 
$(y_0, y_{j-1}, y_j, y_{j+1}, y_n)$, where $X$ is the union of $[y_0,y_n]$ and three segments from $y_0$ to $y_{j-1}$, $y_j$, and $y_{j+1}$.
First, we prove that
\begin{equation}\label{eq:star3}
 (\Phi(y_n)\vert \Phi(y_{j+1}))_{\Phi(y_0)} > (\Phi(y_n)\vert \Phi(y_j))_{\Phi(y_0)}.
\end{equation}
By Corollary \ref{cor:TGromov} it is sufficient to prove
\[
(y_n\vert y_{j+1})_{y_0} >  (y_n\vert y_j)_{y_0} + \frac{3\theta}{2}.
\]
This inequality is equivalent to
\[
\dist(y_n, y_0)+\dist(y_{j+1},y_0)-\dist(y_n,y_{j+1}) > \dist(y_n,y_0)+\dist(y_j,y_0)-\dist(y_n,y_j)+3\theta,
\]
that is, to
\[
\dist(y_{j+1},y_0) +\dist(y_n,y_j) > \dist(y_j,y_0) + \dist(y_n,y_{j+1}) + 3 \theta,
\]
and therefore follows from property (1) applied to $\dist(y_0,y_{j+1})$ and to
$\dist(y_n, y_j)$ by reversing the labeling.

The inequality (\ref{eq:star3}) shows that the approximation tree $T$ has the following form:
$$\mygraph{
!{<0cm,0cm>;<\sizex,0cm>:<0cm,\sizey>::}
!{(6,1)}*{\bullet}="xj+1" !{(0,0)}*{\circ}="x0"
!{(2,1)}*{\bullet}="xj-1" !{(4,1)}*{\bullet}="xj"
!{(4,0)}*{\bullet}="fork" !{(8,0)}*{\bullet}="xn"
!{(2,0)}*{\bullet}="fork-1" !{(6,0)}*{\bullet}="fork+1"
"xj"-^<{\Phi(y_{j})}"fork"
"xj-1"-^<{\Phi(y_{j-1})}"fork-1"
"xj+1"-^<{\Phi(y_{j+1})}"fork+1"
"x0"-_<{\Phi(y_{0})}"fork"
"fork"-_<{\Phi(p)}_>{\Phi(y_{n})}"xn"
}$$
Choose $p \in [y_0,y_n]$ such that $\Phi(p)$ is the branch point of the tripod generated by $\Phi(y_0)$, $\Phi(y_n)$, and $\Phi(y_j)$ in~$T$. We have
\begin{align*}
\dist(y_j - p)
&\leq \dist(\Phi(y_j),\Phi(p)) + \theta \\
&=  (\Phi(y_{j+1})\vert \Phi(y_{j-1}) )_{\Phi(y_j)} + \theta \\
&\leq ( y_{j+1}\vert y_{j-1} )_{y_j} + \tfrac{\theta}{2} + \theta \\
&\leq  3\theta + \tfrac{\theta}{2} + \theta.
\end{align*}
Thus the point $y_j$ is $5\theta$-close to the segment $[y_0,y_n]$.
\end{proof}

%%%
\subsection{Rigidity of axis and non simplicity criterion}

%%%

Let $G$ be a group of isometries of a $\delta$-hyperbolic space $(\GH,\dist)$, and $g$ be an element of $G$.
Our goal is to provide a criterion which implies that the normal subgroup $\lld g \rrd$ generated by $g$ 
in $G$ does not coincide with $G$.

%%%
\subsubsection{Isometries}\label{par:isometriesGromov}
%%%

If $f \in \Isom(\GH)$ is an isometry of a $\delta$-hyperbolic space $(\GH,\dist)$, we define the {\bf{translation length}} $\lgt(f)$ as the 
limit $\lim_{n \to \infty} \frac1n\dist(x, f^n(x))$ where $x$ is an arbitrary  point of $\GH$ (see \cite[p. 117]{CDP:Book}).
We denote by $\Min(f)$ the set of points $y \in \GH$ such that $\dist(y,f(y)) = \lgt(f)$.

% infimum of  $\dist(x, f(x))$ where $x$ runs over all points of $\GH$,
% and 

There are three types of isometries  $f \in \Isom(\GH)$, respectively termed \textbf{elliptic}, \textbf{parabolic}, \textbf{hyperbolic}. 
An isometry $f$ is elliptic if it admits a bounded orbit (hence all orbits are bounded).
If the orbits of $f$ are unbounded, we say that $f$ is hyperbolic if $\lgt(f) > 0$, and otherwise that $f$ is parabolic. 
 
% In the parabolic and hyperbolic cases, $f$ has unbounded orbits ; hence, the limit set of the cyclic group generated by $f$ is a non-empty 
% subset of the boundary at infinity $\partial \GH$ of $\GH$ (see \cite{Ghys-delaHarpe:Book}). This limit set has exactly one element if $f$ is parabolic, 
% and two if $f$ is hyperbolic. 

 Say that $f$ has an {\textbf{invariant axis}} if there is a geodesic line $\Gamma$ such that $f(\Gamma)=\Gamma$.
If $f$ has an invariant axis, then $f$ is either elliptic or hyperbolic,
since the restriction $f|_\Gamma$ is either the identity, a symmetry, or a translation. 
%for $f$ fixes the two boundary points of the axis at infinity. 

Suppose that $f$ is hyperbolic. If it has an invariant axis $\Gamma$, this axis is contained in $\Min(f)$ and $f$ acts as a translation of length $\lgt(f)$ along $\Gamma$. If $\Gamma$ and $\Gamma'$ are two invariant axis, each of them is in the $2\theta$-tubular neighborhood of the other (apply Lemma \ref{lem:closer}). 
When $f$ is hyperbolic and has an invariant axis, we denote by $\axe(f)$ any invariant geodesic line, even if such a line is a priori not unique. 

\begin{rem}
\label{rem:axe}
If $(\GH,\dist)$ is complete, {\sc{cat}}(0) and $\delta$-hyperbolic, every hyperbolic isometry has an invariant axis 
(see \cite[chap. II, theorem 6.8]{Bridson-Haefliger:Book}).
When $\GH$ is a tree or a hyperbolic space $\Hyp^n$, the set $\Min(g)$ coincides with the unique geodesic line it contains.
If $G$ is a hyperbolic group acting on its Cayley graph, any hyperbolic $g \in G$ admits a periodic geodesic line\footnote{ Sketch of proof: Consider $x, y \in \Min(g)$. If $\Gamma$ is an infinite  geodesic in $\Min(g)$, we have $x,y \in \Tub_{2\theta}(\Gamma)$. By local compacity, there exists $N >0$ such that all balls of radius $2\theta$ in the Cayley graph $\GH$ contains at most $N$ vertices. Thus there is at most $N^2$ segments from the ball $B(x,2\theta)$ to the ball $B(y,2\theta)$. Take $n = (2N)!$, we obtain that $g^n$ fixes all these segments, and therefore also any geodesic $\Gamma \in \Min(g)$.};
thus, there is a non trivial power
$g^k$ of $g$ which has an invariant axis. 
\end{rem}

%%%
\subsubsection{Rigidity of geodesic lines}\label{par:rigidityoflines}
%%%
If $A$ and $A'$ are two subsets of $\GH$, the {\textbf{intersection with precision $\alpha$}} of $A$ and $A'$  is
the intersection of the tubular neighborhoods $\Tub_\alpha(A)$ and $\Tub_\alpha(A')$:
\[
A \cap_\alpha A' = \left\lbrace x \in \GH; \dist(x,A) \leq \alpha \mbox{ and } \dist(x,A') \leq \alpha \right\rbrace.
\]
Let $\eps$ and $B$ be positive real numbers. A subset $A$ of $\GH$ is \textbf{$(\eps,B)$-rigid} if  
$f(A)=A$ as soon as $f\in G$ satisfies
\[
\diam(A \cap_{\eps} f(A)) \ge B.
\]
This rigidity property involves both the set $A$ and the group $G$.
The set $A$ is {\textbf{$\eps$-rigid}} is there exists a positive number $B>0$ such that $A$ is $(\eps, B)$-rigid.
If $\eps' < \eps$ and $A \subset \GH$ is $\eps$-rigid, then $A$ is also $\eps'$-rigid (for the same constant $B$). The converse holds for geodesic lines (or convex sets) 
when $\eps'$ is not too small:

\begin{lem} \label{lem:rigidGH}
Let $\eps > 2\theta$. If a geodesic line $\Gamma \subset \GH$ is $(2\theta,B)$-rigid, then $\Gamma$ is also $(\eps, B + 6\eps + 4\theta)$-rigid.
\end{lem}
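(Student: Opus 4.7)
The plan is to reduce the $(\eps, B + 6\eps + 4\theta)$-rigidity question to the hypothesized $(2\theta, B)$-rigidity by applying Lemma \ref{lem:closer} to ``shorten'' a large overlap at precision $\eps$ into a slightly shorter overlap at precision $2\theta$. Concretely, suppose $f \in G$ satisfies $\diam(\Gamma \cap_{\eps} f(\Gamma)) \geq B + 6\eps + 4\theta$. For arbitrary $\eta > 0$, pick two points $x, y \in \Gamma \cap_{\eps} f(\Gamma)$ with $\dist(x, y) \geq B + 6\eps + 4\theta - \eta$. Use that projections exist for geodesic lines to select $a, b \in \Gamma$ realizing the distances from $x, y$ to $\Gamma$, and $a', b' \in f(\Gamma)$ realizing the distances from $x, y$ to $f(\Gamma)$; by the choice of $x, y$, each of the four projection distances is at most $\eps$.

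Next, the triangle inequality yields $\dist(a, a') \leq 2\eps$, $\dist(b, b') \leq 2\eps$, and $\dist(a, b), \dist(a', b') \geq \dist(x, y) - 2\eps \geq B + 4\eps + 4\theta - \eta$. For $\eta$ small, the hypothesis $\dist(a, b) \geq 2\beta + 4\theta$ of Lemma \ref{lem:closer} is satisfied with $\beta := 2\eps$, so the lemma applied to the segments $[a, b] \subset \Gamma$ and $[a', b'] \subset f(\Gamma)$ gives that the sub-segment $[a' + 2\eps + \theta,\, b' - 2\eps - \theta] \subset f(\Gamma)$ lies in the $2\theta$-neighborhood of $[a, b] \subset \Gamma$.

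Every point of this sub-segment is in $f(\Gamma)$ and at distance at most $2\theta$ from $\Gamma$, hence belongs to $\Gamma \cap_{2\theta} f(\Gamma)$. The sub-segment has length
\[
\dist(a', b') - 2(2\eps + \theta) \;\geq\; \dist(x, y) - 6\eps - 2\theta \;\geq\; B + 2\theta - \eta.
\]
Letting $\eta \to 0$ gives $\diam(\Gamma \cap_{2\theta} f(\Gamma)) \geq B + 2\theta \geq B$, and the $(2\theta, B)$-rigidity of $\Gamma$ forces $f(\Gamma) = \Gamma$, completing the proof.

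There is no genuine obstacle: the argument is a bookkeeping exercise in which the constant $B + 6\eps + 4\theta$ is the sum of the $2 \times 2\eps$ loss at the two endpoints when passing from $(x, y)$ to the projections, plus the $2 \times (2\eps + \theta)$ loss from the shortening step in Lemma \ref{lem:closer}, with a $2\theta$ of extra slack (which in fact lets us conclude $\diam(\Gamma \cap_{2\theta} f(\Gamma)) \geq B + 2\theta$ rather than just $\geq B$). The only point requiring mild care is the ``supremum vs.\ maximum'' issue for the diameter, handled by letting the auxiliary parameter $\eta$ tend to $0$; the assumption $\eps > 2\theta$ itself is not used in the estimates and only ensures that the statement is nontrivial (as for $\eps \leq 2\theta$ the inclusion $\Gamma \cap_\eps f(\Gamma) \subset \Gamma \cap_{2\theta} f(\Gamma)$ makes rigidity immediate).
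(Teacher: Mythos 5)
Your proof is correct and follows essentially the same route as the paper: take two far-apart points in $\Gamma \cap_\eps f(\Gamma)$, project them onto $\Gamma$ and onto $f(\Gamma)$ to obtain endpoints of nearby geodesic segments, then apply Lemma \ref{lem:closer} with $\beta = 2\eps$ to shrink the overlap to a slightly shorter one at precision $2\theta$, landing it in the scope of the $(2\theta,B)$-rigidity hypothesis. The only additions over the paper's writeup are the explicit $\eta$-argument for the supremum-versus-maximum issue in the diameter (which the paper glosses over) and the observation that the hypothesis $\eps > 2\theta$ is never actually used; both are accurate.
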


\begin{proof} Let $B'= B + 6 \eps + 4 \theta$.
Suppose that $\diam(\Gamma \cap_{\eps} f(\Gamma)) \ge B'$. We want to show that $f(\Gamma) = \Gamma$. There exist $x,y \in \Gamma$, $x',y'\in f(\Gamma)$ such that $\dist(x,x') \le 2\eps$, $\dist(y,y') \le 2\eps$, $\dist(x,y) \ge B'-2\eps$ and $\dist(x',y') \ge B'-2\eps$. By Lemma \ref{lem:closer}, the segment 
\[
[u,v] := [x' + 2\eps + \theta, y' - 2\eps-\theta]\subset \Gamma
\]
is $2\theta$-close to $[x,y]$, and we have $\dist(u,v) \ge B$. Thus $\Gamma \cap_{2\theta} f(\Gamma)$ contains the two points $u$ and $v$ and  has diameter greater than $B$. Since $\Gamma$ is $(2\theta, B)$-rigid, we conclude that $\Gamma = f(\Gamma)$.
\end{proof}

%%%
\subsubsection{\Good\ elements, small cancellation, and the normal subgroup theorem}
%%%

We shall say that an element $g \in G$ is \textbf{\good} if
\begin{itemize}
\item $g$ is hyperbolic and admits an invariant axis $\axe(g) \subset \Min(g)$;
\item the geodesic line $\axe(g)$ is $2\theta$-rigid;
\item for all $f \in G$, if $f(\axe(g)) = \axe(g)$ then $fgf^{-1} = g$ or $g^{-1}$.
\end{itemize}
Note that if $g$ is \good, then any iterate $g^n$, $n > 0$, is also \good.

We shall say that $g \in G$ satisfies the \textbf{small cancellation property} if 
$g \in G$ is \good, with a $(14\theta, B)$-rigid axis $\axe(g)$
and 
\[
\frac{\lgt(g)}{20} \geq 60\,  \theta + 2B.
\]
Thus, the small cancellation property requires both tightness and a large enough translation length.
We shall comment on this definition in Section \ref{par:small-cancellation} and explain how it is related to
classical small cancellation properties.

\begin{rem}
If $g$ is \good, then $g^n$ is \good~for all $n\geq 1$, and $g^n$ satisfies the small cancellation property as soon as
$n\lgt(g)\geq 1200\, \theta + 40 B$. 
\end{rem}

We now state the main theorem of Part \ref{part:hyperbolic}; this result implies
Theorem~C from the introduction.

\begin{thm}[Normal subgroup theorem]
\label{thm:criterion}
Let $G$ be a group acting by isometries on a hyperbolic space $\GH$.
Suppose that $g \in G$ satisfies the small cancellation property.
Then any element $h \neq {\rm Id}$ in the normal subgroup $\lld g \rrd \subset G$ satisfies the following
alternative: Either $h$ is a conjugate of $g$, or $h$ is a hyperbolic  isometry with translation length $\lgt(h) > \lgt(g)$.
In particular, if $n\geq 2$,  the normal subgroup $\lld g^n \rrd$ does not contain $g$.
\end{thm}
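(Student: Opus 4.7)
The plan is to adapt Delzant's small-cancellation strategy from \cite{Delzant:1996}, using the canoeing lemma as the key quantitative tool. Let $h \in \lld g \rrd \setminus \{\mathrm{Id}\}$ and write $h = h_1 h_2 \cdots h_N$ with $h_i = f_i g^{\epsilon_i} f_i^{-1}$ and $\epsilon_i \in \{+1,-1\}$, choosing $N$ as small as possible. The case $N = 1$ produces $h$ as a conjugate of $g^{\pm 1}$ and falls in the first alternative, so from now on I assume $N \geq 2$ and aim to show $\lgt(h) > \lgt(g)$. Each $h_i$ is \good{} with axis $\Gamma_i = f_i(\axe(g))$ on which it translates by $\lgt(g)$, and the $(14\theta, B)$-rigidity of $\axe(g)$ passes to each $\Gamma_i$ by $G$-equivariance.

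The first geometric step is a \emph{no-cancellation} lemma: minimality of $N$ forces consecutive axes to share almost nothing, namely
\[
\diam\bigl(\Gamma_i \cap_{14\theta} \Gamma_{i+1}\bigr) < B \qquad (1 \le i < N).
\]
Indeed, a larger overlap would by rigidity force $\Gamma_i = \Gamma_{i+1}$, so $f_{i+1}^{-1} f_i$ would preserve $\axe(g)$; the third clause of \good{}-ness would then give $f_{i+1}^{-1}f_i\, g\, (f_{i+1}^{-1}f_i)^{-1} = g^{\pm 1}$, allowing the product $h_i h_{i+1}$ to be rewritten as a single conjugate $f_i g^m f_i^{-1}$ (possibly the identity), contradicting the minimality of $N$.

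Next I build a canoeing sequence $y_0, y_1, \ldots, y_N$ with $y_N = h(y_0)$ by picking, for each $i$, a point on $\Gamma_i$ that lies past both overlap regions (with $\Gamma_{i-1}$ and $\Gamma_{i+1}$) at distance roughly $\lgt(g)$ from the overlaps, and setting $y_i$ to be its image under the partial product $h_1 \cdots h_{i-1}$; $y_0$ is chosen as the corresponding companion base point. The spacing hypothesis $\dist(y_{i-1}, y_i) \geq 10\theta$ of Lemma~\ref{lem:canoeing} follows from $\lgt(g)/20 \geq 60\theta + 2B$, which forces $\lgt(g) \gg \theta$. The crucial obtuse-angle hypothesis $(y_{i-1} \vert y_{i+1})_{y_i} \leq 3\theta$ is the technical heart of the argument: at the corner $y_i$, the incoming direction exits the translated axis near the overlap with $\Gamma_{i-1}$, while the outgoing direction exits near the overlap with $\Gamma_{i+1}$; Lemma~\ref{lem:obtuse}, applied to the projections onto the shifted axis through $y_i$, together with the no-cancellation bound $B$ on overlap diameters, yields the required $3\theta$-bound, and it is precisely here that the numerical hypothesis $\lgt(g) \geq 1200\theta + 40B$ is pinned down. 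This verification is the main obstacle of the proof.

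Once Lemma~\ref{lem:canoeing} applies, its second conclusion yields
\[
\dist(y_0, h(y_0)) \geq \sum_{i=1}^{N} \bigl(\dist(y_{i-1}, y_i) - 7\theta\bigr) \geq N\bigl(\lgt(g) - 8\theta - B\bigr).
\]
Running the same construction for $h^k$ and dividing by $k$ as $k \to \infty$ yields
\[
\lgt(h) \geq N\bigl(\lgt(g) - 8\theta - B\bigr) \geq 2\bigl(\lgt(g) - 8\theta - B\bigr) > \lgt(g),
\]
the last inequality using the small-cancellation numerical hypothesis. This proves the alternative. For the final corollary, $g^n$ is \good{} with the same axis and satisfies the small-cancellation property for every $n \geq 1$; if $g$ belonged to $\lld g^n \rrd$ for some $n \geq 2$, the alternative applied to $g^n$ would force $g$ to be either a conjugate of $g^n$, impossible since conjugate isometries have equal translation lengths and $\lgt(g) \neq n\lgt(g)$, or a hyperbolic element with $\lgt(g) > \lgt(g^n) = n\lgt(g)$, also impossible. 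Hence $g \notin \lld g^n \rrd$.
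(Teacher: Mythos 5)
Your overall strategy (minimal decomposition into conjugates, canoeing via Lemma~\ref{lem:canoeing}) is the right one and is the same Delzant-inspired approach the paper takes, but there is a genuine gap in the \emph{no-cancellation lemma}, and it is load-bearing.

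You claim that minimality of $N$ forces $\diam(\Gamma_i \cap_{14\theta} \Gamma_{i+1}) < B$ for consecutive axes, because a large overlap would by rigidity force $\Gamma_i = \Gamma_{i+1}$, whence $f_{i+1}^{-1}f_i$ preserves $\axe(g)$, tightness gives $f_{i+1}^{-1}f_i\, g\, (f_{i+1}^{-1}f_i)^{-1} = g^{\pm 1}$, and then $h_i h_{i+1}$ rewrites as a single conjugate $f_i g^m f_i^{-1}$. The issue is the exponent $m$. Writing $h_i = f_i g^{\epsilon_i} f_i^{-1}$ and using $f_i g f_i^{-1} = f_{i+1} g^{\eta} f_{i+1}^{-1}$ with $\eta=\pm 1$, one gets $h_i h_{i+1} = f_{i+1} g^{\eta\epsilon_i + \epsilon_{i+1}} f_{i+1}^{-1}$ with $\eta\epsilon_i + \epsilon_{i+1} \in \{-2,0,2\}$. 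When the exponent is $0$, the two factors cancel and $N$ drops — this is exactly the $h_j = h_i^{-1}$ case the paper handles in Lemma~\ref{lem:mini}. But when the exponent is $\pm 2$ (e.g.\ $h_i = h_{i+1}$, as already happens for $h=g^2 \cdot sgs^{-1}$), the product is a conjugate of $g^{\pm 2}$, which is \emph{not} expressible as a single conjugate of $g^{\pm 1}$ (their translation lengths differ), so no reduction of $N$ is possible; yet $\Gamma_i = \Gamma_{i+1}$ and the overlap is the entire axis. Thus the no-cancellation lemma is simply false, and your canoeing construction — which requires picking $y_i$ ``past both overlap regions'' — is undefined in exactly this situation. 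This is not a cosmetic problem: handling repeated (or same-axis, same-orientation) consecutive factors is precisely what forces the paper to abandon direct control of axis overlaps and instead introduce the machinery of \emph{admissible presentations} and \emph{neutral segments} in \S\ref{par:presentation}. There, one does not try to bound $\diam(\Gamma_i \cap_\alpha \Gamma_{i+1})$; rather, one perturbs the presentation (Lemma~\ref{lem:presentation}) and the base point so that the connecting segments $[x_{i-1},a_i]$, $[b_i,x_i]$ are \emph{neutral}, i.e.\ do not contain long subsegments fellow-travelling \emph{any} translate of $\axe(g)$, and the resulting bookkeeping (Lemma~\ref{lem:infernal} with its ``pieces'' and ``configurations of order $k$'') is what makes the Greendlinger-type estimate go through.

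A second, subsidiary point: you flag the verification of the obtuse-angle hypothesis $(y_{i-1}\vert y_{i+1})_{y_i}\le 3\theta$ as ``the technical heart'' and leave it to the reader, but this is where essentially all the numerics of the paper live (Lemma~\ref{lem:infernal} distinguishes several subcases and keeps track of pieces of sizes $4/20$, $5/20$, $17/20$, $18/20$). Relatedly, be cautious of your final quantitative claim $\lgt(h) \ge N(\lgt(g)-8\theta-B)$: the paper only extracts a bound on the order of $(21/20)\lgt(g)$ (see the end of \S\ref{par:proof}), because the canoe may traverse a $\Gamma_i$ for only a short ``piece'' near the turn. Obtaining the much stronger linear-in-$N$ bound would require justification well beyond what a naive per-factor accounting supplies.

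Your treatment of the final corollary ($g \notin \lld g^n \rrd$ for $n\ge 2$, by comparing translation lengths) is correct and matches the paper's.
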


\subsubsection{Small cancellation properties and complements}\label{par:small-cancellation}
Assume that $g$ is a hyperbolic element of $G$ and $\axe(g) \subset \Min(g)$ is an axis of $g$. 
Let $\lambda \in\R_+^*$ be a positive real number. 
One says that $g$ satisfies the small cancellation property with parameter $\lambda$, or the $\lambda$-small
cancellation property, if the following is satisfied: {\sl{For all elements $f\in G$ with $f(\axe(g))\neq \axe(g)$, the set of points of 
$f(\axe(g))$ at distance $\leq 4 \theta=32 \delta$ from $\axe(g)$
has diameter at most $\lambda \lgt(g)$}}. 

When $g$ is tight, with a $(14\theta, B)$-rigid axis, then $g$ satisfies
this small cancellation property as soon as $\lambda \lgt(g)\geq B$. In particular, $g^n$ satisfies the $\lambda$-small
cancellation property when $n\geq B/(\lambda \lgt(g))$. Assume now that $g$ satisfies the small cancellation 
property with parameter $\lambda$; then $g$ is tight with a $(2\theta, \lambda \lgt(g))$-rigid axis. By Lemma \ref{lem:rigidGH},
the axis of $g$ is $(14\theta, B)$-rigid with $B=\lambda \lgt(g) + 88\theta$. 
This shows that the less precise cancellation property used in Theorem \ref{thm:criterion} corresponds to the following, 
more classical, pair of assumptions
\begin{itemize}
\item $g$ satisfies the $\lambda$-small cancellation property for some $0< \lambda < 1/40$;
\item $\lgt(g)\geq (3640\, \theta)/(1-40\lambda)$.
\end{itemize}
Such hypotheses are well known in geometric group theory (see \cite{Ol}, \cite{Delzant:1996} or \cite{Osin:2010} for example).

\begin{rem}{\bf{a.--}} \label{rem:criterion}
Theorem \ref{thm:criterion} is similar to the main result of \cite{Delzant:1996}, which concerns the case of a hyperbolic group $G$ acting on its Cayley graph; our strategy of proof, presented in paragraph \ref{par:proof}, follows the same lines (see \cite{Delzant:1996}, and also the recent 
complementary article \cite{Chaynikov}). 
There are several results of this type in the literature but none of them seems to contain Theorem \ref{thm:criterion} as a 
corollary; in our setting, the space $\GH$ is not assumed to be locally compact, and the action of $G$ is not assumed to 
be proper. 
This is crucial for  application to  the Cremona group: In this case, the space $\GH$ is locally
homeomorphic to a non-separable Hilbert space, and the stabilizers of points in $G$ are algebraic groups, like $\PGL_3({\mathbf{k}})=\Aut({\mathbb{P}}^2_{\mathbf{k}})$. 

\noindent {\bf{b.--}}  Theorem \ref{thm:criterion} applies in particular to groups acting by isometries on trees: This includes the situations of a free group acting on its Cayley graph, or of an amalgamated product over two factors acting on its Bass-Serre tree. It is a useful exercise, which is done in \cite{Lamy:HDR}, to write down the proof in this particular case ; the proofs of the technical lemmas \ref{lem:presentation} and \ref{lem:infernal} become much more transparent in this setting because one does not need approximations by trees. 

\noindent {\bf{c.--}}  In the definition of a \good\ element, we could impose a weaker list of hypothesis. The main point would be to replace $\axe(g)$ by a long quasi-geodesic segment (obtained, for example, by taking an orbit of a point $x$ in $\Min(g)$); this would give a similar statement, without assuming that $g$ has an invariant axis. Here, we take 
this slightly more restrictive definition because it turns out to be sufficient for our purpose, and it makes the proof less technical; 
we refer to  \cite{Delzant:1996}  or to the recent work \cite{DahGui} for more general viewpoints.  We refer to Section \ref{par:SQ}
for extensions and improvements of Theorem \ref{thm:criterion}.
\end{rem}

We now prove Theorem~\ref{thm:criterion}. First we need to introduce the notion of an admissible presentation.

%%%
\subsection{Pieces, neutral segments and admissible presentations}
\label{par:presentation}
%%%

Let $g\in G$ be a hyperbolic isometry with an invariant axis; let $\axe(g)$ be such an axis (see Remark \ref{rem:axe}), and  $\lgt = \lgt(g)$
be the translation length of $g$.
All isometries that are conjugate to $g$ have an invariant axis: If $f=sgs^{-1}$, then $s(\axe(g))$ 
is an invariant axis for $f$.

%%%
\subsubsection{Axis, pieces, and neutral segments}
%%%
Let $[x,y]\subset \GH$ be an oriented geodesic segment, the length of which is at least $20\theta$. The segment $[x,y]$ is \textbf{a piece}
if there exists an element $s$ in $G$ such that $[x,y]$ is contained in the tubular neighborhood $\Tub_{7\theta}(s(\axe(g))$. If $[x,y]$ 
is a piece, the conjugates $f = s g s ^{-1}$ of $g$ and  $f^{-1}=s g^{-1} s^{-1}$ of $g^{-1}$ have the same invariant axis $\Gamma=s(\axe(g))$, and this axis almost contains $[x,y]$.
Changing $f$ to $f^{-1}$ if necessary, we can assume that $\pi_\Gamma(y)$ and $f(\pi_\Gamma(x))$ lie on the same side\footnote{
Since $\dist(x,y)\geq 20\theta$, this assumption is meaningful: It does not depend on the choices of the projections of $x$ and $y$ on $\axe(f)$.} of $\pi_\Gamma(x)$ in $\Gamma$.
This assumption made, the isometry $f$ is called a \textbf{support} of $[x,y]$ (and so $f^{-1}$ is a support of the piece $[y,x]$).
The segment $[x,y]$ is \textbf{a piece of size $p/q$} if furthermore $\dist(x,y)\geq \frac{p}{q} \lgt$.
We say that $[x,y]$ {\textbf{contains a piece of size $p/q$}} if there is a segment $[x',y']\subset [x,y]$ which is such a piece.

A pair of points $(x,y)$ is \textbf{neutral} if none of the geodesic segments $[x,y]$ between $x$ and $y$ contains a piece of size $11/20$; by a slight abuse of notation, we also say that the segment $[x,y]$ is neutral if this property holds 
(even if there are a priori several segments from $x$ to $y$). If $[x,y]$ is neutral and $f$ is an element of the group
$G$, then $f([x,y])$ is also neutral; in other words, being neutral is invariant by translation under isometries $f\in G$.

Our choice of working with ${L}/{20}$ as a unit has no particular significance; we use it for convenience.

%%%
\subsubsection{Admissible presentations}
%%%

Let $h$ be an element of the normal subgroup $\lld g \rrd \subset G$. We can write $h$ as a product
$h=h_kh_{k-1} \cdots h_1$ where each $h_j$ is a conjugate of $g$ or its inverse:
\[
\forall\,  1\leq i\leq k, \quad \exists\,  s_i\in G, \quad h_i=s_i g s_i^{-1} \text{ or } s_i g^{-1} s_i^{-1}.
\]
By convention, each of the $h_i$ comes with $s_i\in G$ and thus with an invariant axis $\axe(h_i)=s_i(\axe(g))$;
thus, the choice of $s_i$ is part of the data ($s_i$ could be changed into $s_i t$ with $t$ in the centralizer
of $g$).

We fix a base point $x_0\in \GH$. Let us associate three sequences of points $(a_i),$  $(b_i),$ and $(x_i),$
$1\leq i\leq k,$ to the given base point $x_0$ and the factorization of $h$ into the product of the $h_i$. 
Namely, for all $1\leq i \leq k,$ we first set: 
\begin{itemize}
\item $x_i$ is equal to $h_i(x_{i-1})$; in particular $x_{k}=h(x_0)$;
\item $a_i$ is the projection of $x_{i-1}$ on the geodesic line $\axe(h_i)$;
\item $b_i$ is equal to $h_i (a_i)$; in particular, both $a_i$ and $b_i$ are on $\axe(h_i)$.
\end{itemize}
However we do not want the distances $\dist(x_{i-1},a_i) = \dist(b_i, x_i)$ to be too small. 
Thus, if $\dist(x_{i-1},a_i) \le 10\theta$, we translate $a_i$ towards $b_i$ on a distance of $12\theta$, and similarly we translate $b_i$ towards $a_i$. This defines the three sequences $(x_i)$, $(a_i)$ and $(b_i)$.

\begin{lem}
\label{lem:properties}
If the translation length $\lgt$ of $g$ is larger than $480 \theta$ then the following properties hold:
\begin{enumerate}
\item \label{point:1} Each $[a_i,b_i]$ is a subsegment of $\axe(h_i)$ of length at least $\frac{19}{20} \lgt$;
\item \label{point:2} $(x_{i-1}\vert b_i )_{a_i} \leq 2\theta$ and $(a_{i}\vert x_i )_{b_i} \leq 2\theta$;
\item \label{point:3} all segments $[x_{i-1},a_i]$ and $[b_i,x_i]$ have length at least $10 \theta$ (for $1\leq i \leq k$).
\end{enumerate}
\end{lem}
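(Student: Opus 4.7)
My approach would be to verify the three properties in order, exploiting the fact that $a_i^{\rm old}$ is defined as a nearest-point projection of $x_{i-1}$ onto the invariant axis $\axe(h_i) \subset \Min(h_i)$, and using $h_i$-equivariance to transport the analysis at the $x_{i-1}$ side over to the $x_i$ side. Because $h_i$ acts on its axis as a pure translation of length $L$, we have $\dist(a_i^{\rm old}, b_i^{\rm old}) = L$ from the outset.

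For (1), the nudging step moves each endpoint along $\axe(h_i)$ by at most $12\theta$, so $\dist(a_i, b_i) \geq L - 24\theta$, and the hypothesis $L > 480\theta$ is precisely what is needed to conclude $\dist(a_i, b_i) \geq \frac{19}{20}L$. For (2), observe that in either case (nudged or not) the point $a_i$ lies on $[a_i^{\rm old}, b_i]$ inside $\axe(h_i)$. Applying Lemma \ref{lem:obtuse} with $\Gamma = \axe(h_i)$, $x = x_{i-1}$, $a = a_i^{\rm old}$, $b = b_i$ and $c = a_i$ immediately yields $(x_{i-1} \vert b_i)_{a_i} \leq 2\theta$. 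The symmetric inequality $(a_i \vert x_i)_{b_i} \leq 2\theta$ follows from the same lemma applied to $x_i = h_i(x_{i-1})$: since $h_i$ preserves $\axe(h_i)$, the projection of $x_i$ onto $\axe(h_i)$ is $h_i(a_i^{\rm old}) = b_i^{\rm old}$, and $b_i$ lies on $[b_i^{\rm old}, a_i]$.

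For (3), when no nudge is performed the conclusion is tautological. When a nudge occurs, Lemma \ref{lem:obtuse} applied with $a = a_i^{\rm old}$, $c = a_i^{\rm old}$ and $b = a_i$ gives
\[
\dist(x_{i-1}, a_i) \;\geq\; \dist(x_{i-1}, a_i^{\rm old}) + \dist(a_i^{\rm old}, a_i) - 4\theta,
\]
and the $12\theta$ nudge is calibrated so that the right-hand side comfortably clears the $10\theta$ threshold. The estimate for $[b_i, x_i]$ is then obtained by the same $h_i$-equivariance trick used in (2). The main obstacle, and what pins down all the numerical constants, is the $O(\theta)$ slack inherent to tree-approximation in the $\delta$-hyperbolic setting (rather than CAT$(0)$): the nudge $12\theta$ must dominate the $4\theta$ error coming from Lemma \ref{lem:obtuse}, and the bound $L > 480\theta$ must absorb the total $24\theta$ loss from nudging at both endpoints in (1). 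Everything else is routine bookkeeping once this calibration is in place.
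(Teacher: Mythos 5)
Your treatment of parts (1) and (2) is correct and follows the same route as the paper (nudge bookkeeping for (1), Lemma~\ref{lem:obtuse} applied with $a=a_i^{\rm old}$, $c=a_i$ for (2), transported to the $x_i$ side by $h_i$-equivariance). There is, however, a genuine gap in part (3). From Lemma~\ref{lem:obtuse} you deduce
\[
\dist(x_{i-1}, a_i) \;\geq\; \dist(x_{i-1}, a_i^{\rm old}) + 12\theta - 4\theta \;=\; \dist(x_{i-1}, a_i^{\rm old}) + 8\theta,
\]
and assert that the $12\theta$ nudge is ``calibrated so that the right-hand side comfortably clears the $10\theta$ threshold.'' That is not true: when the nudge is triggered you only know $\dist(x_{i-1}, a_i^{\rm old}) \in [0,10\theta]$, and the right-hand side can be as small as $8\theta$ (for instance if $x_{i-1}$ already lies on $\axe(h_i)$, so that $a_i^{\rm old}=x_{i-1}$). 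To recover the claimed $10\theta$ you must additionally invoke the plain triangle inequality
\[
\dist(x_{i-1}, a_i) \;\geq\; \dist(a_i^{\rm old}, a_i) - \dist(x_{i-1}, a_i^{\rm old}) \;=\; 12\theta - \dist(x_{i-1}, a_i^{\rm old}).
\]
Writing $s = \dist(x_{i-1}, a_i^{\rm old}) \in [0,10\theta]$, the maximum of the two lower bounds, $\max(s+8\theta,\,12\theta-s)$, is minimized at $s=2\theta$, where it equals exactly $10\theta$; the Gromov-product bound alone fails for $s<2\theta$ and the triangle bound alone fails for $s>2\theta$, so both are needed. The paper's own proof is silent here (it merely asserts (3) from the nudge), so this verification was genuinely yours to supply, and as written it does not close.
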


\begin{proof}
Since we performed some translations when $[x_{i-1},a_i]$ was too small, we have 
Property (\ref{point:3}). 
The segment $[a_i,b_i]$ was initially of length $\lgt$, so the first property remains true as long as $\frac{\lgt}{20} \ge 24\theta$, which is our assumption. 
Property (\ref{point:2}) follows from Lemma~\ref{lem:obtuse}.
% In a second step, we use this remark to move the points $a_i$ and $b_i$ along the geodesic line $\axe(h_i)$, and to define 
% new sequences $(a_i),$ $(b_i),$ $(x_i)$ that satisfy the three properties (1), (2), and (3). The definition of $(x_i)$
% has not been changed during this second step. The point $a_i$ is no more equal to a projection of $x_i$ on $\axe(h_i)$,
% but is at distance at most $12\theta$ from such a projection; the point $b_i$ is not equal to $h_i(a_i)$, but the distance
% between these two points is less than $24\theta$.
\end{proof}

We say that $h_k \cdots h_1$ is an \textbf{admissible presentation} of $h$ (with respect to the base point $x_0$) if the three sequences $(x_i)$, $(a_i)$, and $(b_i)$ have the additional property
 that all pairs $(x_{i-1},a_i)$, $1\leq i \leq k$, are neutral. It follows that all pairs $(b_j, x_j)$ are also neutral, because the property of being neutral is stable under translation by isometries.

\begin{lem}
\label{lem:presentation}
%NB : inutile de supposer g tight ici :
Let $x_0$ be a base point in $\GH$.
Let $g$ be an element of $G,$ and $h$ be an element of the normal subgroup generated by $g$. 
Assume that the translation length of $g$ satisfies $28 \theta< \frac{\lgt(g)}{20}$. Then $h$ admits at least one admissible presentation.
\end{lem}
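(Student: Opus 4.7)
The plan is to produce an admissible presentation by minimizing a well-chosen complexity over all presentations of $h$. Since $h\in\lld g\rrd$ by assumption, at least one presentation $h = h_k h_{k-1}\cdots h_1$ as a product of conjugates of $g^{\pm 1}$ exists, so the task is to single out one whose pairs $(x_{i-1},a_i)$ are all neutral. To each such presentation I attach the pair $(N,k)\in\Z_{\ge 0}^2$, where $N$ counts the indices $i\in\{1,\ldots,k\}$ for which $(x_{i-1},a_i)$ is \emph{not} neutral, and $k$ is the number of factors; I order these pairs lexicographically. Both coordinates being non-negative integers, a lex-minimum is attained, and I claim it satisfies $N=0$, which is exactly the conclusion.

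Suppose for contradiction that $N\ge 1$ at some minimizer. Pick an index $i$ at which $(x_{i-1},a_i)$ is non-neutral: there is a subsegment $[u,v]\subset [x_{i-1},a_i]$ of length at least $\tfrac{11}{20}\lgt$ contained in $\Tub_{7\theta}(\axe(f))$ for some support $f$ (a conjugate of $g$ or $g^{-1}$), whose translation direction along $\axe(f)$ is fixed by Lemma \ref{lem:obtuse} so as to match the orientation of $[u,v]$ as a subsegment of $[x_{i-1},a_i]$. The strategy is to insert the trivial pair $f\cdot f^{-1}$ adjacent to $h_i$ in the product and then regroup the resulting word so that one of the new conjugate factors has axis $\axe(f)$ (replacing the fellow-travelling portion of $[x_{i-1},a_i]$ by an actual travel along $\axe(f)$) while the rest of the word is left untouched. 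This yields a new presentation of $h$ with $k'\le k+2$ factors whose broken path shortcuts through $\axe(f)$ at index $i$ rather than fellow-travelling it.

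The explicit construction of the regrouped presentation, together with the proof that it has strictly fewer non-neutral pairs than the original, is the main technical obstacle. It rests on an approximation-by-trees argument applied to the five points $(x_{i-1},a_i,b_i,u,v)$, combined with Lemma \ref{lem:closer} (to confirm that $[x_{i-1},a_i]$ fellow-travels $\axe(f)$ up to an error of $2\theta$ over a length at least $\tfrac{11}{20}\lgt - O(\theta)$) and Lemma \ref{lem:obtuse} (to control the new projections $a'_j$ onto the axes of the modified factors). The key numerical point is that the surplus $\tfrac{11}{20}-\tfrac{1}{2}=\tfrac{1}{20}$ in the size of the piece, applied on both sides of the shortcut through $\axe(f)$, exceeds the single translation length $\lgt$ introduced by the rewrite by a margin of $\tfrac{1}{10}\lgt - O(\theta)$, which is strictly positive by the standing hypothesis $28\theta<\tfrac{\lgt}{20}$. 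This slack is exactly what is needed to ensure that the original non-neutrality at index $i$ is eliminated while no new non-neutral pair is created at the modification site (the newly created segments being either shorter than $\tfrac{11}{20}\lgt$ or no longer contained in a $7\theta$-tube of any $s'(\axe(g))$). Consequently $N$ strictly decreases in the new presentation, contradicting the minimality of $(N,k)$, and hence $N=0$.
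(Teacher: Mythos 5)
Your framework (choose a lex-minimal presentation by a well-founded complexity, show a modification strictly decreases the complexity if $N\geq 1$) is a reasonable way to organize a termination argument, and the rewrite step you sketch — replacing $h_i$ by $(h_if^{-1}h_i^{-1})\,h_i\,f$ where $f$ is a support of the offending piece inside $[x_{i-1},a_i]$ — is exactly the one the paper uses. The gap is in the termination claim. You assert that, after the rewrite, ``no new non-neutral pair is created at the modification site (the newly created segments being either shorter than $\tfrac{11}{20}\lgt$ or no longer contained in a $7\theta$-tube of any $s'(\axe(g))$).'' This dichotomy is not justified and is false in general: the new segments $[x_{i-1},a'_i]$, $[x'_i,a'_{i+1}]$, $[x'_{i+1},a'_{i+2}]$ can still be long (well beyond $\tfrac{11}{20}\lgt$) and can still contain pieces, so the count $N$ need not drop — it can stay constant or even increase. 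If $N$ stays constant, $k$ grows from $k$ to $k+2$, so your lexicographic complexity $(N,k)$ actually goes \emph{up}, and the contradiction evaporates.

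What the paper actually proves is the weaker (and correct) pair of estimates: (i) the two outer lengths $\dist(x_{i-1},a'_i)$ and $\dist(x'_{i+1},a'_{i+2})$ drop by at least $\tfrac{10}{20}\lgt$; (ii) the middle length $\dist(x'_{i},a'_{i+1})$ drops by at least $\tfrac{1}{20}\lgt$. The termination quantity is therefore not the number of non-neutral pairs but the \emph{maximum} of $\dist(x_{j-1},a_j)$ over non-neutral indices $j$; this decreases by at least $\tfrac{1}{20}\lgt$ at each round, and once it falls below $\tfrac{11}{20}\lgt$ every remaining pair is automatically neutral. If you want to keep your minimization framing, the correct complexity to minimize is this maximum (or, say, the multiset of non-neutral lengths under a suitable ordering), not $(N,k)$ lexicographically. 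Finally, your numerical remark that the surplus ``exceeds the single translation length $\lgt$ introduced by the rewrite by a margin of $\tfrac{1}{10}\lgt - O(\theta)$'' does not parse: $2\cdot\tfrac{1}{20}\lgt = \tfrac{1}{10}\lgt < \lgt$, so that margin would be negative. The correct bookkeeping appears in the paper's proof of the second estimate, where the net gain after subtracting error terms is $\tfrac{2}{20}\lgt - 28\theta > \tfrac{1}{20}\lgt$ using the standing hypothesis $28\theta < \tfrac{\lgt}{20}$.
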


\begin{rem} \label{rem:property4}
Thanks to this lemma, the following property can be added to the first three properties listed
in Lemma \ref{lem:properties}:
\begin{enumerate}
\setcounter{enumi}{3}
\item \label{point:4} \textit{All pairs $(x_{i-1},a_i)$ and $(b_i,x_i)$ are neutral}.
\end{enumerate}
\end{rem}

Lemma \ref{lem:presentation} corresponds to Lemma 2.2 in \cite{Delzant:1996}. We give a complete
proof because it is more involved in our case. 

\begin{proof}[Proof of Lemma \ref{lem:presentation}] As before, denote by $\lgt$ the translation length $\lgt(g)$. 
Start with a decomposition $h= h_k \cdots h_1$ with $h_i=s_i g^{\pm 1} s_i^{-1}$
and construct the sequences of points $(a_i)$, $(b_i)$, and $(x_i)$ as above.
Let $\mathcal{I}$ be the set of indices $1 \le i \le k$ such that $(x_{i-1},a_i)$ is not neutral. 
Suppose $\mathcal{I}$ is not empty (otherwise we are done). Our goal is to modify the
construction in order to get a new decomposition of $h$ which is admissible. \\

{\sl{Changing the decomposition.--}} 
Pick $i \in \mathcal{I}$. Then there are two geodesic  segments $[y,z] \subset [x_{i-1},a_i]$ (with $y \in [x_{i-1},z]$) and a conjugate $f=s g s^{-1}$  of $g$ such that $[y,z]$ is $7\theta$-close to $\axe(f)=s(\axe(g))$ and $\dist(y,z) \ge \frac{11}{20} \lgt$; we fix such a segment $[y,z]$ with a maximal length. 
%In particular $[x_{i-1}, a_i]$ is a long segment and so $a_i$ is exactly the projection of $x_{i-1}$ on $\axe(h_i)$. 
Let $y',z'$ be projections of $y$ and $z$ on  $\axe(f)$; we have $\dist(y,y') \le 7\theta$ and $\dist(z,z') \le 7\theta$. If $y'$ is contained in the segment $[f(y'),z']\subset \axe(f)$, we change $f$ into $f^{-1}$. 
Replace $h_i$ by the product of three conjugates of $g$ or $g^{-1}$
\[
(h_i f^{-1}h_i^{-1}) h_i f.
\]
This gives rise to a new decomposition of $h$ as a product of conjugates of $g^{\pm 1}$, hence to new sequences of points. 
Let $a'_i$ be a projection of $x_{i-1}$ on $\axe(f)$.
Concerning the sequence $(x_i),$ we have two new points $x'_i = f(x_{i-1})$ and $x'_{i+1} = h_i(x'_i)$; the point $x'_{i+2} = h_i f^{-1} h_i^{-1}(x'_{i+1})$ is equal to the old point $x_i = h_i(x_{i-1})$.
Thus, the neutral pair $(x_{i-1},a_i)$ disappears and is replaced by three new pairs $(x_{i-1},a'_i)$, $(x'_i,a'_{i+1})$, and $(x'_{i+1}, a'_{i+2})$ (see Figure \ref{fig:admissible}). Note that if any of these segments is small then it is automatically neutral; so without loss of generality we can assume that there was no need to move  $a'_i$, $a'_{i+1}$ or $a'_{i+2}$ as in Lemma \ref{lem:properties}.\\

{\sl{Two estimates.--}} Our first  claim is 
\begin{enumerate}
\item $\dist(x_{i-1},a'_i) = \dist(x'_{i+1},a'_{i+2}) \le \dist(x_{i-1},a_i) - \frac{10}{20}\lgt$.
\end{enumerate}

\begin{figure}[ht]
\begin{center}
\setlength{\unitlength}{0.00083333in}
\begingroup\makeatletter\ifx\SetFigFont\undefined%
\gdef\SetFigFont#1#2#3#4#5{%
  \reset@font\fontsize{#1}{#2pt}%
  \fontfamily{#3}\fontseries{#4}\fontshape{#5}%
  \selectfont}%
\fi\endgroup%
{\renewcommand{\dashlinestretch}{30}
\begin{picture}(4270,3339)(0,-10)
\put(1761.760,4820.541){\arc{4823.613}{0.7814}{2.3178}}
\whiten\thicklines
\path(3443.277,3071.847)(3474.000,3122.000)(3423.295,3092.197)(3445.500,3094.016)(3443.277,3071.847)
\thinlines
\put(3402,269){\blacken\ellipse{72}{72}}
\put(3402,269){\ellipse{72}{72}}
\put(2724,2623){\blacken\ellipse{72}{72}}
\put(2724,2623){\ellipse{72}{72}}
\put(1227,1376){\blacken\ellipse{72}{72}}
\put(1227,1376){\ellipse{72}{72}}
\put(3438,2623){\blacken\ellipse{72}{72}}
\put(3438,2623){\ellipse{72}{72}}
\put(3652,1875){\blacken\ellipse{72}{72}}
\put(3652,1875){\ellipse{72}{72}}
\put(123,269){\blacken\ellipse{72}{72}}
\put(123,269){\ellipse{72}{72}}
\put(835,2088){\blacken\ellipse{72}{72}}
\put(835,2088){\ellipse{72}{72}}
\put(728,1981){\blacken\ellipse{72}{72}}
\put(728,1981){\ellipse{72}{72}}
\put(658,2088){\blacken\ellipse{72}{72}}
\put(658,2088){\ellipse{72}{72}}
\put(1156,2231){\blacken\ellipse{72}{72}}
\put(1156,2231){\ellipse{72}{72}}
\put(336,983){\blacken\ellipse{72}{72}}
\put(336,983){\ellipse{72}{72}}
\put(453,1089){\blacken\ellipse{72}{72}}
\put(453,1089){\ellipse{72}{72}}
\put(464,841){\blacken\ellipse{72}{72}}
\put(464,841){\ellipse{72}{72}}
\put(799,2616){\blacken\ellipse{72}{72}}
\put(799,2616){\ellipse{72}{72}}
\put(988,2544){\blacken\ellipse{72}{72}}
\put(988,2544){\ellipse{72}{72}}
\path(2724,2587)(3402,269)
\path(799,2587)(123,269)
\path(1156,2231)(1227,1410)
\path(3438,2623)(3652,1875)
\path(3794,2730)(3793,2730)(3790,2729)
	(3785,2728)(3777,2727)(3766,2725)
	(3752,2722)(3734,2718)(3713,2713)
	(3689,2707)(3661,2700)(3632,2692)
	(3600,2683)(3566,2673)(3532,2661)
	(3497,2649)(3461,2635)(3426,2619)
	(3390,2602)(3356,2583)(3321,2562)
	(3288,2540)(3256,2514)(3225,2486)
	(3195,2454)(3167,2420)(3141,2381)
	(3116,2338)(3094,2291)(3074,2239)
	(3058,2184)(3046,2124)(3039,2069)
	(3035,2012)(3034,1957)(3035,1902)
	(3037,1850)(3042,1800)(3047,1753)
	(3053,1709)(3060,1668)(3067,1630)
	(3075,1595)(3082,1563)(3090,1533)
	(3098,1505)(3107,1478)(3115,1453)
	(3124,1429)(3132,1405)(3141,1381)
	(3151,1357)(3161,1332)(3172,1307)
	(3184,1280)(3197,1251)(3210,1221)
	(3225,1189)(3242,1154)(3260,1117)
	(3280,1077)(3302,1036)(3326,992)
	(3352,947)(3380,902)(3409,856)
	(3441,811)(3474,769)(3512,726)
	(3551,688)(3590,655)(3628,628)
	(3666,605)(3703,587)(3738,573)
	(3773,563)(3807,556)(3841,552)
	(3874,550)(3906,551)(3937,554)
	(3969,559)(3999,565)(4029,572)
	(4057,580)(4085,589)(4111,598)
	(4136,607)(4159,616)(4180,625)
	(4198,633)(4214,640)(4227,647)
	(4238,652)(4246,656)(4258,662)
\whiten\thicklines
\path(4213.341,623.727)(4258.000,662.000)(4200.587,649.237)(4222.275,644.137)(4213.341,623.727)
\thinlines
\path(1156,55)(1155,55)(1153,56)
	(1148,58)(1142,60)(1132,63)
	(1120,68)(1104,74)(1086,81)
	(1064,89)(1041,99)(1015,110)
	(987,123)(958,136)(928,151)
	(897,168)(865,185)(834,205)
	(803,225)(772,248)(741,272)
	(712,298)(683,327)(654,358)
	(627,392)(601,429)(576,470)
	(552,515)(530,564)(510,616)
	(493,673)(479,733)(470,788)
	(463,844)(458,899)(455,952)
	(454,1003)(454,1052)(455,1097)
	(457,1139)(460,1178)(462,1213)
	(466,1246)(469,1277)(473,1305)
	(476,1331)(480,1355)(484,1378)
	(489,1401)(493,1423)(498,1444)
	(503,1466)(509,1489)(516,1512)
	(523,1537)(532,1563)(541,1591)
	(552,1621)(564,1654)(578,1688)
	(594,1725)(611,1764)(631,1805)
	(653,1848)(677,1891)(704,1934)
	(732,1977)(763,2017)(802,2061)
	(843,2099)(884,2131)(925,2158)
	(966,2179)(1007,2196)(1047,2209)
	(1087,2218)(1126,2224)(1165,2226)
	(1203,2226)(1241,2224)(1279,2220)
	(1316,2215)(1352,2208)(1387,2200)
	(1421,2191)(1454,2182)(1484,2173)
	(1512,2164)(1537,2155)(1558,2148)
	(1577,2141)(1591,2135)(1603,2131)(1620,2124)
\whiten\thicklines
\path(1561.808,2132.540)(1620.000,2124.000)(1572.667,2158.911)(1583.067,2139.208)(1561.808,2132.540)
\put(3196,3201){\makebox(0,0)[lb]{\smash{{\SetFigFont{9}{10.8}{\rmdefault}{\mddefault}{\updefault}${\rm Ax}(h_i)$}}}}
\put(1227,1197){\makebox(0,0)[lb]{\smash{{\SetFigFont{9}{10.8}{\rmdefault}{\mddefault}{\updefault}$x'_i = f(x_{i-1})$}}}}
\put(442,2053){\makebox(0,0)[lb]{\smash{{\SetFigFont{9}{10.8}{\rmdefault}{\mddefault}{\updefault}$z$}}}}
\put(728,1803){\makebox(0,0)[lb]{\smash{{\SetFigFont{9}{10.8}{\rmdefault}{\mddefault}{\updefault}$z'$}}}}
\put(763,2196){\makebox(0,0)[lb]{\smash{{\SetFigFont{9}{10.8}{\rmdefault}{\mddefault}{\updefault}$b$}}}}
\put(15,55){\makebox(0,0)[lb]{\smash{{\SetFigFont{9}{10.8}{\rmdefault}{\mddefault}{\updefault}$x_{i-1}$}}}}
\put(514,1054){\makebox(0,0)[lb]{\smash{{\SetFigFont{9}{10.8}{\rmdefault}{\mddefault}{\updefault}$y'$}}}}
\put(550,769){\makebox(0,0)[lb]{\smash{{\SetFigFont{9}{10.8}{\rmdefault}{\mddefault}{\updefault}$a'_i$}}}}
\put(123,983){\makebox(0,0)[lb]{\smash{{\SetFigFont{9}{10.8}{\rmdefault}{\mddefault}{\updefault}$y$}}}}
\put(3844,799){\makebox(0,0)[lb]{\smash{{\SetFigFont{9}{10.8}{\rmdefault}{\mddefault}{\updefault}${\rm Ax}(h_if^{-1}h_i^{-1})$}}}}
\put(1443,1902){\makebox(0,0)[lb]{\smash{{\SetFigFont{9}{10.8}{\rmdefault}{\mddefault}{\updefault}${\rm Ax}(f)$}}}}
\put(3196,2746){\makebox(0,0)[lb]{\smash{{\SetFigFont{9}{10.8}{\rmdefault}{\mddefault}{\updefault}$a'_{i+2}$}}}}
\put(2547,2695){\makebox(0,0)[lb]{\smash{{\SetFigFont{9}{10.8}{\rmdefault}{\mddefault}{\updefault}$b_i$}}}}
\put(1223,2267){\makebox(0,0)[lb]{\smash{{\SetFigFont{9}{10.8}{\rmdefault}{\mddefault}{\updefault}$f(a'_{i})$}}}}
\put(3092,55){\makebox(0,0)[lb]{\smash{{\SetFigFont{9}{10.8}{\rmdefault}{\mddefault}{\updefault}$x_i = x'_{i+2} = h_if^{-1}h_i^{-1}(x'_{i+1})$}}}}
\put(3510,1630){\makebox(0,0)[lb]{\smash{{\SetFigFont{9}{10.8}{\rmdefault}{\mddefault}{\updefault}$x'_{i+1} = h_i(x'_i)$}}}}
\put(728,2695){\makebox(0,0)[lb]{\smash{{\SetFigFont{9}{10.8}{\rmdefault}{\mddefault}{\updefault}$a_i$}}}}
\put(1054,2551){\makebox(0,0)[lb]{\smash{{\SetFigFont{9}{10.8}{\rmdefault}{\mddefault}{\updefault}$a'_{i+1}$}}}}
\end{picture}
}
\end{center}
\caption{}
\label{fig:admissible}
\end{figure}

To prove it, we write
\[
\dist(x_{i-1},a_i) \ge \dist(x_{i-1},y) + \dist(y, z) \ge \dist(x_{i-1},y)+ \tfrac{11}{20} \lgt,
\]
and
\begin{equation}\label{eq:s1}
\dist(x_{i-1},a'_i) \le \dist(x_{i-1},y') \le \dist(x_{i-1},y) + \dist(y,y') \le \dist(x_{i-1},y) + 7\theta;
\end{equation}
claim (1) follows because $7\theta< \frac{\lgt}{20}$.\\

Our second claim is 
\begin{enumerate}
\item[(2)] $\dist(x'_{i},a'_{i+1}) \le \dist(x_{i-1},a_i) - \frac{1}{20}\lgt$.
\end{enumerate}

Let us postpone the proof of this second estimate, and deduce the lemma from these two inequalities. \\

{\sl{Conclusion.--}} 
Since the modification is local, and does not change the points $a_j$ 
for $j\neq i$, we can perform a similar replacement for all indices $i$ in $\mathcal{I}$. We obtain a new presentation for $h$ and a new list of bad indices $\mathcal{I}'$. Either this new list is empty, or by the two estimates (claims (1) and (2)) the maximum of the lengths $\dist(x_{j-1},a_j)$ over all the non neutral pairs $(x_{j-1},a_j)$, $j \in \mathcal{I}',$ drops at least by $\frac{1}{20}\lgt$. By induction, after a finite number of such replacements, we obtain an admissible presentation. \\

{\sl{Proof of the second estimate.--}} To prove estimate (2), 
denote by $b$ the projection of $a_i$ on $\axe(f)$. Similarly as (\ref{eq:s1})we have
\begin{equation}\label{eq:s2}
\dist(b,a_i) \le \dist(z,a_i) + 7\theta .
\end{equation}
There are two cases, according to the position of $f(a_i')$ with respect to the segment
$[a_i',b]\subset \axe(f)$. If $f(a_i')$ is in this segment, then 
\begin{eqnarray*}
d(x'_i, a_i)  & \leq & d(f(x_{i-1}),f(a'_i)) + d(f(a'_i),b) + d(b,a_i)\\
& \leq & d(x_{i-1},a'_i) + d(a'_i,b) - L + d(b, a_i).
\end{eqnarray*}
Applying Lemma \ref{lem:obtuse} for the triangles $x_{i-1}$, $y'$, $a_i'$ 
and  $a_i$, $z'$, $b$, and the inequalities $\dist(y,y')$, $\dist(z,z')\leq 7\theta$,
we get $\dist(x_{i-1},y)=\dist(x_{i-1},a_i') + \dist(y, a_i')$ up to an error of $11 \theta$,
and similarly $\dist(a_{i},z)=\dist(a_i,b) + \dist(z, b)$ up to $11 \theta$.
Hence, we get
\begin{eqnarray*}
d(x'_i, a_i) & \leq & d(x_{i-1},y) + d(y,z)  + d(z, a_i) - L + 22 \theta\\
& \leq & d(x_{i-1}, a_i) - L + 22 \theta.
\end{eqnarray*}
This concludes the proof of (2) in the first case, because $- L + 22 \theta < -L/20$.

The second case occurs when $b$ is in the segment $[a_i',f(a_i')]\subset \axe(f)$
(see Figure~\ref{fig:admissible}). 
In this case we have
\begin{eqnarray*}
\dist(x'_{i},a_{i})
&\le& \dist(f(x_{i-1}),f(a'_i)) + \dist(f(a'_i),b) + \dist(b,a_i)\\
&\le& \dist(x_{i-1},z ) - \tfrac{11}{20}\lgt + 7\theta
+ \lgt - \dist(a'_i,b) + d(z,a_i) + 7\theta,  
\end{eqnarray*}
and thus

\begin{eqnarray}\label{eq:s3}
\dist(x'_{i},a_{i}) &\le& \dist(x_{i-1},a_i ) - \dist(a'_i,b)  + \tfrac{9}{20}\lgt + 14 \theta .
\end{eqnarray}

On the other hand, the triangular inequality implies
\begin{eqnarray*}
\dist(x_{i-1},a'_i) + \dist(a'_i, b) + \dist(b,a_i)
& \ge & \dist(x_{i-1},a_i) \\
&=& \dist(x_{i-1},y)+\dist(y,z)+\dist(z,a_i).
\end{eqnarray*}
and so
\[
\dist(a'_i, b) \ge \tfrac{11}{20}\lgt
+ \dist(x_{i-1},y) -\dist(x_{i-1},a'_i)
+  \dist(z,a_i) -  \dist(b,a_i).
\]
Using the inequalities (\ref{eq:s1}) and  (\ref{eq:s2}) we obtain
\[
\dist(a'_i, b) \ge \tfrac{11}{20}\lgt  - 14\theta.
\]
Finally inequality (\ref{eq:s3}) gives
\[
\dist(x'_{i},a'_{i+1}) \le \dist(x'_{i},a_{i})\le \dist(x_{i-1},a_{i}) - \tfrac{2}{20}\lgt + 28\theta
\]
hence claim (2) because $28\theta< \frac{\lgt}{20}$.
\end{proof}

The following lemma provides a useful property of admissible presentations
with a minimum number of factors $h_i$.

\begin{lem}
\label{lem:mini}
Let $h = h_k \cdots h_1$ be an admissible presentation with base point $x_0$. If there exist two indices $j > i$ such that $h_j = h_i^{-1}$, then $h$ admits an admissible presentation with base point $x_0$ and only $k-2$ factors.
\end{lem}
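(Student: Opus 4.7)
The plan is to cancel the pair $h_i, h_j$ by conjugation. Since $h_j = h_i^{-1}$, I rewrite the middle slice as
\[
h_j h_{j-1}\cdots h_{i+1} h_i \;=\; h_i^{-1}(h_{j-1}\cdots h_{i+1})h_i \;=\; (h_i^{-1}h_{j-1}h_i)(h_i^{-1}h_{j-2}h_i)\cdots(h_i^{-1}h_{i+1}h_i),
\]
which gives a new decomposition $h = h'_{k-2}\cdots h'_1$ with $h'_l = h_l$ for $l\le i-1$, $h'_l = h_i^{-1}h_{l+1}h_i$ for $i\le l\le j-2$, and $h'_l = h_{l+2}$ for $l\ge j-1$. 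Each new factor is a conjugate of $g^{\pm 1}$, with invariant axis equal to $\axe(h_l)$, $h_i^{-1}(\axe(h_{l+1}))$, or $\axe(h_{l+2})$ respectively.

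Next, I would compute the associated sequences of points. A direct induction shows $x'_l = x_l$ for $l\le i-1$, $x'_l = h_i^{-1}(x_{l+1})$ for $i\le l\le j-2$, and $x'_l = x_{l+2}$ for $l\ge j-1$; the two formulas agree at the boundary $l=j-1$ via the identity $h_i^{-1}(x_{j-1}) = h_j(x_{j-1}) = x_j$. Since $h_i$ is an isometry, projecting $x'_{l-1}$ on $\axe(h'_l)$ and then (if necessary) performing the $12\theta$-shift towards $b'_l$ yields $a'_l = a_l$, $a'_l = h_i^{-1}(a_{l+1})$, or $a'_l = a_{l+2}$ in the three ranges, where each adjustment is applied precisely when the corresponding original index required one. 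In particular, properties (1)--(3) of Lemma \ref{lem:properties} are inherited from the original presentation.

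Finally, I verify admissibility. Each new pair $(x'_{l-1},a'_l)$ is either literally an old pair $(x_{m-1},a_m)$ of the original admissible presentation (outside the cancelled range, including the boundary case $l=j-1$ where the pair equals $(x_j,a_{j+1})$), or the image $h_i^{-1}(x_m,a_{m+1})$ of an old pair under the isometry $h_i^{-1}\in G$ (in the middle range). Neutrality is invariant under the action of $G$, since the support of a piece inside a translated segment is just the conjugate support; because every old pair was neutral, so is every new pair. Hence $h'_{k-2}\cdots h'_1$ is an admissible presentation of $h$ with base point $x_0$ and only $k-2$ factors.

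The argument is essentially bookkeeping, and the only point requiring care is the matching of indices at the boundary $l=j-1$ and the observation that the $12\theta$-adjustment commutes with translation by $h_i$; neither step poses a real obstacle.
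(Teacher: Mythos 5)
Your proof is correct and follows essentially the same approach as the paper: conjugate the middle slice $h_{j-1}\cdots h_{i+1}$ by $h_i$ (using $h_j = h_i^{-1}$) to produce a decomposition with $k-2$ factors, then observe that every new pair $(x'_{l-1},a'_l)$ is either an old pair verbatim (outside $[i,j-2]$, including the boundary pair $(x_j,a_{j+1})$ at $l=j-1$) or the isometric image $h_i^{-1}(x_m,a_{m+1})$ of an old pair, so neutrality carries over. You supply slightly more bookkeeping than the paper — in particular you spell out why the $12\theta$-adjustment commutes with the isometric translation, a point the paper leaves implicit — but the core argument and the resulting sequence of triplets are the same.
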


\begin{proof}
We may assume that $j\geq i+2$, otherwise the simplification is obvious.  The decomposition for $h$
is then
\[
h = h_k \cdots h_{j+1} h_i^{-1} h_{j-1} \cdots h_{i+1} h_i h_{i-1} \cdots h_1.
\]
Note that $i$ can be equal to $1$, and $j$ can be equal to $k$.
We have a sequence of triplets $(a_i,b_i,x_i)$, $i = 1, \cdots, k$, associated with this presentation and with the base point $x_0$
Then we claim that
\[
h =  h_k \cdots  h_{j+1} (h_i^{-1} h_{j-1} h_i)(h_i^{-1} h_{j-2} h_i) \cdots (h_i^{-1} h_{i+1} h_i)  h_{i-1} \cdots h_1
\]
is another admissible presentation with base point $x_0$ and with $k-2$ factors. 
Indeed the sequence of $k-2$ triplets associated with this new presentation is
\begin{multline*}
(a_1,b_1,x_1), \cdots, (a_{i-1},b_{i-1},x_{i-1}),(h_i^{-1}(a_{i+1}),h_i^{-1}(b_{i+1}),h_i^{-1}(x_{i+1})), \cdots,\\
(h_i^{-1}(a_{j-1}),h_i^{-1}(b_{j-1}),h_i^{-1}(x_{j-1})),(a_{j+1}, b_{j+1}, x_{j+1}), \cdots, (a_k,b_k,x_k)
\end{multline*}
and one checks that all relevant segments are neutral because they are obtained from neutral segments of the previous presentation
by isometric translations (either by $\rm Id$ or by $h_i$). 
\end{proof}

%%%
\subsection{Proof of the normal subgroup theorem}
\label{par:proof}

%%%
Let $g$ be an element of $G$ which satisfies the small cancellation property. By definition, $g$ is a \good\ element of~$G$, its 
axis is $(14\theta, B)$-rigidity for some $B>0$, and 
\[
\frac{1}{20}\lgt(g) \ge 60 \theta + 2B.
\]
Denote by $\lgt$ the translation length $\lgt(g)$.
Let $h$ be a non trivial element of the normal subgroup $\lld g \rrd$. Our goal is to prove $\lgt(h) \ge \lgt$, with equality if and only if $h$ is conjugate to $g$.  \\

Pick a base point $x_0$ such that $\dist(x_0,h(x_0)) \le \lgt(h) + \theta$. Lemma \ref{lem:presentation} applied to $g$
implies the existence of an admissible presentation with respect to the base point $x_0$:
\[
h=h_m \circ \cdots \circ h_1, \quad h_i=s_i g^{\pm 1} s_i^{-1}.
\]
We assume that $m$ is minimal among all such choices of base points and admissible
presentations. Lemma \ref{lem:mini} implies that $h_j$ is different from $h_i^{-1}$ for all $1\leq i<j\leq m$.

Let $(a_i),$ $(b_i),$ $(x_i)$ be the three sequences of points defined in  \S \ref{par:presentation}; they  satisfy the 
properties (1) to (4)  listed in Lemma \ref{lem:properties} and Remark \ref{rem:property4}. Since the constructions below are more natural
with segments than pairs of points, and $\GH$ is not assumed to be uniquely geodesic, we choose geodesic segments between
the points $x_i$, as well as geodesic segments $[a_i,b_i]\subset \axe(h_i)$. 

We now introduce the following definition in order to state the key Lemma~\ref{lem:infernal}.  
A sequence of points $(c_{-1}$, $c_0$, $\cdots$, $c_k, c_{k+1})$ in $ \GH$, with some choice of segments $[c_i,c_{i+1}]$, $-1\leq i\leq k$, is a \textbf{configuration of order $k \ge 1$} for the segment $[x_0,x_j]$ if
\begin{enumerate}[(i)]
\item \label{point:i} $x_0 = c_{-1}$ and $x_j = c_{k+1}$;
\item \label{point:ii} For all $0 \leq i \leq k$, and all $a \in [c_i,c_{i+1}]$, we have $( c_{i-1}\vert a )_{c_i} \leq 3\theta$.  In particular $( c_{i-1}\vert  c_{i+1} )_{c_i} \leq 3\theta$;
\item \label{point:iii} For all $0 \leq i \leq k+1$ we have $\dist(c_{i-1}, c_{i}) \ge 10 \theta$;
\item \label{point:iv} For all $0 \leq i \leq k$ the segment $[c_i,c_{i+1}]$ is either neutral or a piece, with the following rules:
       \begin{enumerate}[({iv}-a)]
        \item There are never two consecutive neutral segments;
        \item The last segment $[c_k,c_{k+1}]= [c_k, x_j]$ is neutral;
        \item The second segment $[c_0,c_1]$ is a piece of size $18/20$ if  $[c_1, c_2]$ is neutral (this is always the case when $k=1$), and of size $17/20$ otherwise;
        \item For the other pieces $[c_{i-1}, c_i],$ with $i>1,$ the size is $5/20$ when $[c_{i}, c_{i+1}]$ is neutral and  $4/20$ otherwise;
       \end{enumerate}
\item  \label{point:v} For all $0 \leq i \leq k,$ if $[c_i,c_{i+1}]$ is a piece, then there is an index $l$ with $1\leq l\leq j,$ such that $h_l$ is the support of the piece $[c_i,c_{i+1}]$.
\end{enumerate}
Note that properties (\ref{point:iv}) and (\ref{point:v}) do not concern the initial segment $[x_0,c_0],$ and that the size
$p/q$ of a piece $[c_i, c_{i+1}],$ $0\leq i\leq k-1,$ is equal to  $18/20,$ $17/20,$ $5/20,$ or  $4/20$; moreover, this 
size is computed with respect to $\lgt=\lgt(g)$, and thus the minimum length of a piece $[c_i,c_{i+1}]$ is
bounded from below by $4(60 \theta + 2B)$.

The following lemma is inspired by Lemma 2.4 in \cite{Delzant:1996}.
As mentionned there, this can be seen as a version of the famous Greendlinger Lemma in classical small cancellation theory. 
Recall that this lemma claims the existence of a region with a large external segment in a van Kampen diagram (see \cite{Lyndon-Schupp:Book}, Chapter V).  The segment $[c_0,c_1]$ in the previous definition plays a similar role. 
One can consult \cite{Lamy:HDR} for a simpler proof of Lemma \ref{lem:infernal} and Theorem \ref{thm:criterion} in the case of a group acting on a tree, and for small cancellation theory in the context of $\Aut[{\mathbf{k}}^2]$ and its  amalgamated
product structure.

\begin{lem}
\label{lem:infernal}
For each $j = 1,\cdots,m$, there exists $k \ge 1$ such that the segment $[x_0,x_j]$ admits a configuration of order $k$.
Moreover if $j \ge 2$ and $k = 1$ then the first segment $[x_0,c_0]$ of the configuration has length at least $\frac{3}{20}\lgt$.
\end{lem}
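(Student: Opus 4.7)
I would prove Lemma \ref{lem:infernal} by induction on $j \in \{1, \dots, m\}$.

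The base case $j = 1$ is direct: set $(c_{-1}, c_0, c_1, c_2) = (x_0, a_1, b_1, x_1)$, giving a configuration of order $k = 1$. Indeed, $[x_0, a_1]$ and $[b_1, x_1]$ are neutral by admissibility of the presentation, $[a_1, b_1] \subset \axe(h_1)$ is a piece with support $h_1$ of size $\ge 19/20 > 18/20$ by Lemma \ref{lem:properties}(\ref{point:1}), and properties (ii)--(iii) are respectively Lemma \ref{lem:properties}(\ref{point:2}) and (\ref{point:3}).

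For the inductive step, suppose $[x_0, x_{j-1}]$ admits a configuration $(c_{-1}, \dots, c_{k+1})$ with $c_{k+1} = x_{j-1}$ and last segment $[c_k, x_{j-1}]$ neutral (rule (iv-b)). The naive extension would insert $a_j, b_j, x_j$, but since $[x_{j-1}, a_j]$ is also neutral this violates rule (iv-a), so these two consecutive neutral segments must be fused into the single segment $[c_k, a_j]$. In \emph{Case 1}, where the pair $(c_k, a_j)$ is itself neutral, the result $(c_{-1}, \dots, c_k, a_j, b_j, x_j)$ is a configuration of order $k+1$: the new piece $[a_j, b_j]$ has size $\ge 19/20$, well above the $5/20$ threshold at a position with index $>1$; properties (ii)--(iii) at the inserted vertices follow from Lemma \ref{lem:obtuse} applied to the projection of $c_k$ onto $\axe(h_j)$ (which coincides with $a_j$ up to $\theta/2$) combined with Lemma \ref{lem:properties}(\ref{point:3}). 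In \emph{Case 2}, where $(c_k, a_j)$ is \emph{not} neutral, some geodesic $[c_k, a_j]$ contains a piece $[y, z]$ of size $\ge 11/20$ with axis $\Gamma = s\, \axe(g)$. Since $[c_k, x_{j-1}]$ and $[x_{j-1}, a_j]$ are each neutral, $[y, z]$ must straddle $x_{j-1}$: approximating the five points $(c_k, x_{j-1}, a_j, y, z)$ by a tree and combining Lemma \ref{lem:obtuse} with the lower bound $\tfrac{11}{20}\lgt \ge 660\theta + 22B$ provided by the small cancellation hypothesis, I would show that $\Gamma$ sits $14\theta$-close to either $\axe(h_j)$ or to one of the previously-used axes $\axe(h_l)$ over a diameter $\ge B$. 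The $(14\theta, B)$-rigidity of $\axe(g)$ then forces $\Gamma = \axe(h_l)$, and the \good\ hypothesis (which identifies translation lengths and normalizes the stabilizer) identifies $f = s g^{\pm 1} s^{-1}$ with $h_l^{\pm 1}$; this lets me absorb $[y, z]$ into the existing piece with support $h_l$ (or into $[a_j, b_j]$ when $l = j$), yielding a modified configuration with new breakpoints.

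The main obstacle lies in the Case 2 absorption: one must verify that after re-indexing, the piece-size thresholds $18/20, 17/20, 5/20, 4/20$ prescribed by (iv-c) and (iv-d) are all preserved — the slack between $11/20$ and $19/20$ leaves room, but one must examine separately whether the recovered axis is $\axe(h_j)$ or one of the earlier $\axe(h_l)$, and whether the absorbed region's neighbors are neutral or pieces — and that rule (v) (support among $\{h_1, \dots, h_j\}$) continues to hold. For the supplementary statement, suppose $j \ge 2$ and $k = 1$, so the final configuration has only the single piece $[c_0, c_1]$ although the presentation involves $j - 1 \ge 1$ additional conjugates that must have been absorbed in Case 2 steps. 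Tracking the locations along $\axe(h_l)$ where these axes were absorbed, and applying the canoeing lemma (Lemma \ref{lem:canoeing}) to the sequence $x_0, a_1, b_1, \dots, a_j, b_j, x_j$ whose consecutive Gromov products are controlled by Lemma \ref{lem:properties}(\ref{point:2}), one sees that the displacement $\dist(x_0, c_0)$ must absorb the residual translation produced by these earlier absorptions, forcing $\dist(x_0, c_0) \ge \tfrac{3}{20}\lgt$.
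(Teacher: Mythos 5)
Your base case is fine and matches the paper. The inductive step, however, has a genuine gap that makes the whole scheme fail.

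Your Case~1 asserts that when $(c_k,a_j)$ is neutral, prepending $(a_j,b_j,x_j)$ after shrinking $[c_k,x_{j-1}]\cup[x_{j-1},a_j]$ into the single neutral segment $[c_k,a_j]$ produces a configuration of order $k+1$. This is false in general, because property~(ii) of a configuration---$(c_{k-1}\vert a)_{c_k}\le 3\theta$ for all $a\in[c_k,a_j]$---can fail: the axis $\axe(h_j)$ and hence the projection $a_j$ of $x_{j-1}$ onto it may lie ``behind'' $x_{j-1}$ with respect to $c_k$, so that the geodesic $[c_k,a_j]$ backtracks over $[c_k,x_{j-1}]$ and the sequence no longer canoes forward through $c_k$. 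The paper's proof is structured precisely around this possibility: after building the tripod on $(x_j,x_0,x_{j+1})$ with branch point $p$, it distinguishes whether $\Phi(a)$ is close to or past $\Phi(p)$ (``First case''), in which case the order is \emph{reset to $1$} with a single piece of size $18/20$, from the transverse situation (``Second case''), in which the old configuration is cut at some index $i$ and extended by two or three new breakpoints. Your scheme only ever increases the order, so it misses the reset entirely.

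This omission is not cosmetic: the supplementary statement ($j\ge 2$ and $k=1 \Rightarrow \dist(x_0,c_0)\ge\tfrac{3}{20}\lgt$) is exactly the book-keeping attached to the reset case, where $\dist(x_0,q)\ge \dist(\Phi(x_0),\Phi(p))-12\theta \ge \tfrac{4}{20}\lgt-12\theta$ is read off Fact~\ref{fact:6}. If the order only ever increased, the hypothesis ``$j\ge 2$ and $k=1$'' would be vacuous, and the argument in the proof of Theorem~\ref{thm:criterion} distinguishing $k\ge 2$ from $k=1$ would collapse. Your final paragraph (``tracking the locations \ldots one sees that the displacement must absorb the residual translation'') is not a proof of the $\tfrac{3}{20}\lgt$ bound; it gestures at an accounting mechanism that your construction does not provide. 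Finally, in your Case~2 the claim that a piece $[y,z]\subset[c_k,a_j]$ must ``straddle $x_{j-1}$'' implicitly treats $[c_k,a_j]$ as the concatenation $[c_k,x_{j-1}]\cup[x_{j-1},a_j]$, which need not hold in a $\delta$-hyperbolic space where $x_{j-1}$ may be a long way from $[c_k,a_j]$; the paper avoids this by never joining $c_k$ directly to $a_{j+1}$, working instead with projections $c_i'$ onto $[x_0,x_j]$ and the fixed tripod $T$.
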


\begin{proof}
The proof is by induction on $j$, and uses  the four properties that are listed in Lemma \ref{lem:properties} and Remark \ref{rem:property4}; we refer to them as properties (\ref{point:1}), (\ref{point:2}), (\ref{point:3}), and (\ref{point:4}). Note that
(\ref{point:2}) and (\ref{point:3}) enable us to apply Lemma \ref{lem:canoeing}; similarly, properties
(\ref{point:ii}) and (\ref{point:iii}) for a configuration of order $k$ show that Lemma \ref{lem:canoeing} can be applied
to the sequence of points in such a configuration.\\

{\sl{Initialization.-- }}  When $j = 1,$ we take $c_0 = a_1, c_1 = b_1$ and get a configuration of order 1.
Indeed, by property (2), we have
\[(x_0\vert b_1 )_{a_1} \le 2 \theta \mbox{ and }
( a_1 \vert x_1 )_{b_1} \le 2 \theta,
\]
and, by property (\ref{point:1}), $[c_0,c_1]$ is a piece of size 19/20 (it is a subsegment of $\axe(h_1)$). The segments $[x_0,c_0]$ and $[x_1,c_1]$ are neutral (property (\ref{point:4})) and of length at least $10 \theta$ (property (\ref{point:3})).
\\

Suppose now that $[x_0,x_{j}]$ admits a configuration of order $k$. We want to find a configuration of order $k'$ for $[x_0,x_{j+1}]$. As we shall see, the proof provides
a configuration of order $k'=1$ in one case, and of order $k'\leq k+ 2$ in the other case. \\

{\sl{Six preliminary facts.-- }} Consider the approximation tree $(\Phi, T)$ of  $(x_j$, $x_0$, $x_{j+1})$. We choose $p \in [x_j,x_0]$ such that $\Phi(p) \in T$ is the branch point of the tripod $T$ (with $p = x_j$ if $T$ is degenerate). 
Let $a$ (resp. $b$) be a projection of $a_{j+1}$ (resp. $b_{j+1}$) on the segment $[x_j,x_{j+1}]$. 
By assertion (3) in Lemma \ref{lem:canoeing} we have $\dist(a,a_{j+1}) \le 5\theta$ and $\dist(b,b_{j+1}) \le 5\theta$. %Thus, the subsegment $[a,b]\subset [x_j,x_{j+1}]$ is contained in $[x_j+5\theta, x_{j+1}-5\theta]$;
Thus $\dist (a,b) \ge \frac{19L}{20} -10 \theta$, and by Lemma \ref{lem:weak}, $[a,b]$ is a piece with support $h_{j+1}$.

For all $i\leq k+1,$ note $c'_i$ a projection of  $c_i$ on the segment $[x_0,x_j]$. Note
that, by Lemma \ref{lem:canoeing}, we have $\dist(c_i,c'_i) \le 5\theta$, and the points $c'_i,$ $-1\leq i\leq k+1,$ form a monotonic sequence of points from $x_0$ to $x_j$  along the geodesic segment $[x_0,x_j]$.

Let $S_i$ be the interval of $T$ defined by $S_i= [\Phi(a),\Phi(b)] \cap [\Phi(c'_i),\Phi(c'_{i+1})]$. The preimages of $S_i$ by $\Phi$ are two intervals
$I\subset [a,b]$ and $I' \subset [c'_i,c'_{i+1}]$ such that
\[
\Phi(I) = \Phi(I')= S_i.
\]

\begin{fact}
\label{fact:1}
If $[c_i, c_{i+1}]$ is neutral then
$\diam(S_i) \le \frac{12}{20}\lgt$.
\end{fact}
Since $\Phi$ is an isometry along the geodesic segments $[x_j,x_0]$ and $[x_j,x_{j+1}],$
we only have to prove $\diam(I') \le \frac{12}{20}\lgt$. By Lemma \ref{lem:weak} we know that $[c_i,c_{i+1}] \subset \Tub_{7\theta}([c'_i, c'_{i+1}])$. 
Thanks to the triangular inequality, we can choose $J \subset [c_i,c_{i+1}]$ such that $J \subset \Tub_{7\theta}(I')$ and $\diam(J) \ge \diam(I') - 14 \theta$. 
The properties of the approximation tree imply  $I' \subset \Tub_{\theta}(I)$, and $I \subset \Tub_{7\theta}(\axe(h_{j+1}))$. Thus $J \subset \Tub_{15\theta}(\axe(h_{j+1}))$. Applying Lemma \ref{lem:closer} (with $\beta=16\theta$) and shortening $J$ by $32\theta$ ($16\theta$ at each end), we obtain  $\diam(J) \ge \diam(I') - 46 \theta$ and $J \subset \Tub_{2\theta}(\axe(h_{j+1}))$. Thus $J$ is a piece contained in the neutral segment $[c_i, c_{i+1}]$, hence
\[
\tfrac{11}{20}\lgt \ge \diam(J) \ge \diam(I') - 46 \theta, \quad {\text{and therefore}} \quad \tfrac{12}{20}\lgt \ge \diam(I').
\]

\begin{fact}
\label{fact:2}
If $[c_i, c_{i+1}]$ is a piece then
$\diam(S_i) \le B$.
\end{fact}

By property (\ref{point:v}), there exists an index $l,$ with $1\leq l \leq j,$ such that $[c_i, c_{i+1}]  \subset \Tub_{7\theta}(\axe(h_l))$. Since  $I' \subset \Tub_{7\theta}([c_i, c_{i+1}]),$
we get 
\[
I' \subset \Tub_{14\theta}(\axe(h_l)).
\]
On the other hand $I' \subset \Tub_{\theta}(I)$ and $I  \subset \Tub_{7\theta}(\axe(h_{j+1}))$, thus
\[
I' \subset \Tub_{8\theta}(\axe(h_{j+1})).
\]
If $\diam(I')> B,$ the rigidity assumption shows that  $h_{j+1}$ and $h_l$ share the same axis, with opposite orientations; since $g$ is \good, we obtain $h_{j+1} = h_l^{-1}$. By 
Lemma \ref{lem:mini} this contradicts the minimality of the presentation of $h$. This proves that $\diam(I') \le B$.

\begin{fact}
\label{fact:3}
Suppose $[c_i, c_{i+1}]$ is a piece. Then
\begin{eqnarray*}
\diam([\Phi(x_j),\Phi(a)] \cap [\Phi(c'_i),\Phi(c'_{i+1})]) & \leq & \tfrac{12}{20}\lgt.
\end{eqnarray*}
\end{fact}

By property (\ref{point:v}),
there exists an index $l,$ with $1\leq l\leq j,$ such that $h_l$ is the support
of $[c_i,c_{i+1}]$.
Let $K \subset [a,x_j]$ and $K' \subset [c'_i,c'_{i+1}]$ be two intervals such that
\[
\Phi(K) = \Phi(K') = [\Phi(x_j),\Phi(a)] \cap [\Phi(c'_i),\Phi(c'_{i+1})].
\]
We want to prove $\diam(K) \le \frac{12}{20}\lgt$.
Applying the triangular inequality, we can choose $J \subset [a_{j+1},x_j]$ such that $J \subset \Tub_{7\theta}(K)$ and $\diam(J) \ge \diam(K) - 14 \theta$. Now we have $K \subset \Tub_{\theta}(K')$, $K' \subset \Tub_{7\theta}([c_i, c_{i+1}])$ and $[c_i, c_{i+1}] \subset \Tub_{7\theta}(\axe(h_l))$. Thus
$J \subset \Tub_{22\theta}(\axe(h_l))$. Applying Lemma \ref{lem:closer} with $\beta=23\theta,$ we shorten $J$ by $46\theta$ ($23\theta$ on each end) and obtain
\[
J \subset \Tub_{2\theta}(\axe(h_l)) \quad {\text{and }} \quad \diam(J) \ge \diam(K) - 60 \theta.
\]
The admissibility condition implies that  $J \subset [a_{j+1},x_j]$ is neutral, and therefore
$$
\tfrac{11}{20}\lgt \ge \diam(J) \ge \diam(K) - 60 \theta,
\quad {\text{so that}} \quad \tfrac{12}{20}\lgt \ge \diam(K).
$$

\begin{fact}
\label{fact:4}
The segment $[\Phi(b),\Phi(a)]$ is not contained in the segment $[\Phi(c'_0)$, $\Phi(x_j)]$.
\end{fact}

Let us prove this by contradiction, assuming $[\Phi(b),\Phi(a)] \subset [\Phi(c'_0),\Phi(x_j)].$
By Property (iv-a) and Fact \ref{fact:1}, $[\Phi(b),\Phi(a)]$ intersects at least one
segment $[\Phi(c'_i),\Phi(c'_{i+1})]$ for which $[c_i,c_{i+1}]$ is a piece; Fact \ref{fact:2} implies
that  $[\Phi(c'_i),\Phi(c'_{i+1})]$ is not contained in $[\Phi(a),\Phi(b)]$ (it must intersect the
boundary points of $[\Phi(a),\Phi(b)]$). From this follows that  $[\Phi(a),\Phi(b)]$ intersects at most two pieces and one neutral segment. Facts \ref{fact:1} and \ref{fact:2} now give the contradictory inequality $2B + \frac{12}{20}\lgt  \geq \frac{19}{20}\lgt -10 \theta$.\\

\begin{fact}
\label{fact:5}
The segment $[\Phi(c'_{0}), \Phi(c'_1)]$ is not contained in the segment $[\Phi(x_{j+1})$, $\Phi(x_j)]$.
\end{fact}

Since $[a_{j+1},x_j]$ is neutral and $[c_0,c_1]$ is a piece of size $\geq 17/20$, the segment $[\Phi(c'_0),\Phi(c'_1)]$ is not contained
in $[\Phi(a_{j+1}),\Phi(x_j)]$. Assume that $[\Phi(c'_{0}), \Phi(c'_1)] \subset [\Phi(x_{j+1}), \Phi(x_j)]$, apply Fact \ref{fact:4} and then 
Fact \ref{fact:2} and Fact \ref{fact:3}; this gives the contradictory inequality $B + \frac{12}{20}\lgt  \geq \frac{17}{20}\lgt -10\theta$.\\

These last two facts imply $\Phi(b) \in [\Phi(x_{j+1}), \Phi(p)]$ and $\Phi(c'_0) \in [\Phi(x_0), \Phi(p)]$.  
Moreover, with this new property in mind, the proofs of Facts \ref{fact:4} and \ref{fact:5} give:

\begin{fact}
\label{fact:6}
The segment $[\Phi(b), \Phi(p)]$ has length at least $\tfrac{6}{20}\lgt$, and the segment $[\Phi(c'_0), \Phi(p)]$ has length at least $\tfrac{4}{20}\lgt$.
\end{fact}

In particular the tripod $T$ is not degenerate at $\Phi(x_0)$ nor at $\Phi(x_{j+1})$.\\

{\sl{The induction.-- }} We now come back to the proof by induction.
We distinguish two cases, with respect to the position of $\Phi(a)$ relatively to the branch point $\Phi(p)$.\\

\noindent$\bullet\, $\textbf{First case.} $(\Phi(b) \vert \Phi(x_{0}))_{\Phi(a)} \le \frac{1}{20} \lgt - 12\theta$.\\

In other words we assume that either $\Phi(a) \in [\Phi(b),\Phi(p)]$, or $\Phi(a)$ is close to $\Phi(p)$.
Note that this includes the situation where  $\Phi(x_j)$ is a degenerate vertex of $T$. By
Fact \ref{fact:6}, the situation is similar to  Figure \ref{fig:firstcase}.
Since the distance between $a_{j+1}$ (resp. $b_{j+1}$) and $a$ (resp. $b$) is at most $5\theta$, 
the triangular inequality and Corollary \ref{cor:TGromov} imply
\begin{align*}
(b_{j+1} \vert x_0)_{a_{j+1}}
&\le (b \vert x_0)_a + (10\theta + 5 \theta + 5 \theta)/2 \\
&\le (\Phi(b) \vert \Phi(x_0))_{\Phi(a)} + \theta + 10\theta,
\end{align*}
and the assumption made in this first case gives
\begin{align}\label{ineq:1}
(b_{j+1} \vert x_0)_{a_{j+1}}&\le \frac{1}{20} L - \theta.
\end{align}

\begin{figure}[ht]
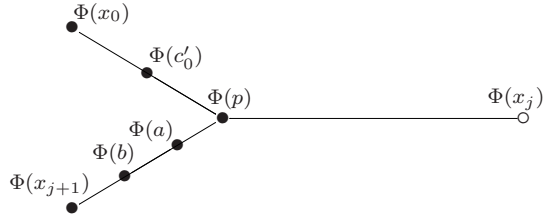

$$\mygraph{
!{<0cm,0cm>;<\sizex,0cm>:<0cm,\sizey>::}
!{(0,-2)}*{\bullet}="xj+1" !{(0,2)}*{\bullet}="x0" !{(1,1)}*{\bullet}="c0"
!{(.7,-1.3)}*{\bullet}="b" !{(1.4,-.6)}*{\bullet}="a"
!{(6,0)}*{\circ}="xj"
!{(2,0)}*{\bullet}="fork"
"xj+1"-^<{\Phi(x_{j+1})}"fork" "x0"-^<{\Phi(x_{0})}"fork"
"b"-^<{\Phi(b)}^>{\Phi(a)}"a" "c0"-^<{\Phi(c'_{0})}"fork"
"fork"-^<{\Phi(p)}^>{\Phi(x_{j})}"xj"
}$$
\caption{The point $\Phi(a)$ could also be on $[\Phi(p),\Phi(x_j)]$, but not far from $\Phi(p)$.}
\label{fig:firstcase}
\end{figure}

At this point we would like to define a configuration of order 1 for $[x_0,x_{j+1}]$ by using $a$ and $b$. The problem is that since the property to be a neutral segment is not stable under perturbation, there is no guarantee that $[b, x_{j+1}]$ is neutral even if $[b_{j+1}, x_{j+1}]$ is.
Another candidates would be $a_{j+1}$ and $b_{j+1}$, but then we would not have the estimate (\ref{point:ii}) in the definition of a configuration of order $k$.

To solve this dilemma we consider another approximation tree  $\Psi \colon X\to T'$, for the list $(a_{j+1}$, $b_{j+1}$, $x_0)$, and
choose a point $q\in [a_{j+1}, b_{j+1}]$ such that $\Psi(q)$ is the branch point of $T'$ (see Figure \ref{fig:firstcasebis}).

\begin{figure}[ht]
$$\mygraph{
!{<0cm,0cm>;<\sizex,0cm>:<0cm,\sizey>::}
!{(-2,0)}*{\bullet}="b" !{(0,1)}*{\circ}="a"
!{(2,0)}*{\bullet}="x0" !{(0,0)}*{\bullet}="fork"
"a"-^<{\Psi(a_{j+1})}"fork" "x0"-^<{\Psi(x_{0})}"fork"
"b"-_<{\Psi(b_{j+1})}_>{\Psi(q)}"fork"
}$$
\caption{}
\label{fig:firstcasebis}
\end{figure}

We then define a new configuration of points ${\hat{c}}_{-1}=x_0$, ${\hat{c}}_0=q$, ${\hat{c}}_1 = b_{j+1}$, ${\hat{c}}_2=x_{j+1}$, and we show that this defines
a configuration of order 1 for $[x_0,x_{j+1}]$.
Using inequality (\ref{ineq:1}), the fact that $\Psi$ is an isometry along $[a_{j+1},b_{j+1}]$, and Corollary \ref{cor:TGromov}, we obtain
\[
\dist(q, b_{j+1}) \ge \tfrac{19}{20}\lgt - (b_{j+1} \vert x_0)_{a_{j+1}} - \theta \ge  \tfrac{18}{20}\lgt.
\]
Thus $[q,b_{j+1}]$ is a piece of size $18/20$ with support $h_{j+1}$:
This gives properties (\ref{point:iv}-c) and (\ref{point:v}). 

By Lemma \ref{lem:obtuse} we have $(q \vert x_{j+1} )_{b_{j+1}} \le 2 \theta$, and by Corollary \ref{cor:TGromov} we have  $(x_0 \vert b_{j+1} )_{p_1} \le \theta$, so we obtain property (\ref{point:ii}); property (\ref{point:iii}) follows from the definition of $b_{j+1}$ (property (\ref{point:3}) in Lemma \ref{lem:properties}).
Thus, $({\hat{c}}_j)_{-1\leq j\leq 2}$ is a configuration of order $1$ for the segment $[x_0,x_{j+1}]$.

Moreover, we have
\begin{align*}
d(x_0,q)
&= (\Psi(a_{j+1}) \vert \Psi(b_{j+1}) )_{\Psi(x_0)} \\
&\ge (a_{j+1} \vert b_{j+1} )_{x_0} -\theta \\
&\ge (a \vert b)_{x_0} - 11\theta \\
&\ge (\Phi(a) \vert \Phi(b))_{\Phi(x_0)} - 12\theta \\
&\ge \dist(\Phi(x_0), \Phi(p))  - 12\theta.
\end{align*}
Fact \ref{fact:6} then gives
$$\dist(x_0,q) \ge \tfrac{4}{20} \lgt - 12 \theta \ge \tfrac{3}{20} \lgt$$
so we obtain the second assertion in Lemma \ref{lem:infernal}.\\

\noindent$\bullet\,$\textbf{Second case.} $(\Phi(b) \vert \Phi(x_{0}))_{\Phi(a)} > \frac{1}{20} \lgt - 12\theta$.\\

Let $i$ be the index  such that $\Phi(p) \in [\Phi(c'_i),\Phi(c'_{i+1})]$. This index is uniquely defined if we impose $\Phi(c'_i) \neq \Phi(p)$. 
The assumption implies   $\Phi(a)\in [\Phi(p),\Phi(x_j)]$ and  $\dist(\Phi(p),\Phi(a)) > \frac{1}{20}\lgt -12\theta$. 
By Fact \ref{fact:6} again,  the situation is similar to Figure \ref{fig:secondcase}. 
We distinguish two subcases according to the nature of $[c_i,c_{i+1}]$.\\

\begin{figure}[ht]
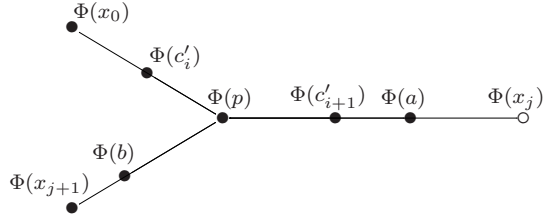

$$\mygraph{
!{<0cm,0cm>;<\sizex,0cm>:<0cm,\sizey>::}
!{(0,-2)}*{\bullet}="xj+1" !{(0,2)}*{\bullet}="x0"
!{(.7,-1.3)}*{\bullet}="b" !{(4.5,0)}*{\bullet}="a"
!{(6,0)}*{\circ}="xj" !{(1,1)}*{\bullet}="ci"
!{(2,0)}*{\bullet}="fork" !{(3.5,0)}*{\bullet}="ci+1"
"xj+1"-^<{\Phi(x_{j+1})}"fork" "x0"-^<{\Phi(x_{0})}"fork" "fork"-^>{\Phi(x_{j})}"xj"
"fork"-^>{\Phi(a)}"a" "ci"-^<{\Phi(c'_i)}"fork"
"b"-^<{\Phi(b)}"fork" "fork"-^<{\Phi(p)}^>{\Phi(c'_{i+1})}"ci+1"
}$$
\caption{On this picture, $\Phi(c'_{i+1})$ is in between $\Phi(p)$ and $\Phi(a)$; another
possible case would be $\Phi(a)\in [\Phi(p),\Phi(c'_{i+1})]$.}
\label{fig:secondcase}
\end{figure}

\noindent\textbf{Second case - first subcase.} Assume $[c_i,c_{i+1}]$ is neutral. Then  if $i < k $ the segment $[c_{i+1},c_{i+2}]$ is a piece (if $i = k$ the following discussion is even easier).
If  $\dist(\Phi(p),\Phi(c'_{i+1})) \le 34\theta$ then Fact \ref{fact:2} implies $\dist(\Phi(p),\Phi(a)) \le B + 34\theta \le \frac{1}{20}\lgt -12\theta$.
This contradicts the assumption of the second case; as a consequence,
\begin{equation} \label{eq:34theta}
\min\{ \dist(\Phi(p), \Phi(c'_{i+1})), 
\dist(\Phi(p), \Phi(a) )\}  > 34\theta.
\end{equation}
Consider now the approximation tree $\Psi \colon X\to T'$
of $(c_{i+1}$, $c_{i}$, $a_{j+1}$, $b_{j+1})$. We have
\begin{align*}
(\Psi(c_{i+1}) \vert \Psi(a_{j+1}))_{\Psi(b_{j+1})}
&\ge (c'_{i+1} \vert a)_{b} - 15 \theta - \theta \\
&\ge (\Phi(c'_{i+1}) \vert \Phi(a) )_{\Phi(b)} -17 \theta \\
&\ge  \dist(\Phi(b), \Phi(p))- 17\theta \\
&\text{ } + \min\{ \dist(\Phi(p), \Phi(c'_{i+1})), \dist(\Phi(p), \Phi(a) )\} 
\end{align*}
and
\begin{align*}
(\Psi(c_{i}) \vert \Psi(a_{j+1}))_{\Psi(b_{j+1})}
&\le (c'_{i} \vert a)_{b} + 15 \theta +\theta \\
&\le (\Phi(c'_{i}) \vert \Phi(a) )_{\Phi(b)} +17 \theta \\
&\le \dist(\Phi(b), \Phi(p)) +17\theta.
\end{align*}
By (\ref{eq:34theta}) we get
$$(\Psi(c_{i+1}) \vert \Psi(a_{j+1}))_{\Psi(b_{j+1})} >  (\Psi(c_{i}) \vert \Psi(a_{j+1}))_{\Psi(b_{j+1})}.$$
Thus we obtain the  pattern depicted on Figure \ref{fig:neutralcase}, where $q$ is a point of $[c_i,c_{i+1}]$
which is mapped to the branch point of $T'$.
Note that $[c_i, q] \subset [c_i,c_{i+1}]$ is again a neutral segment. \\

\begin{figure}[ht]
$$\mygraph{
!{<0cm,0cm>;<\sizex,0cm>:<0cm,\sizey>::}
!{(0,0)}*{\bullet}="bj+1" !{(1,2)}*{\bullet}="ci" !{(4,2)}*{\circ}="ci+1" !{(3,0)}="fork"
!{(6,0)}*{\bullet}="aj+1"
!{(2,0)}*{\bullet}="p1"
"bj+1"-^<{\Psi(b_{j+1})}"p1" "ci"-^<{\Psi(c_i)}"p1" "p1"-_<{\Psi(q)}^>{\Psi(a_{j+1})}"aj+1"
"ci+1"-^<{\Psi(c_{i+1})}"fork"
}$$
\caption{}
\label{fig:neutralcase}
\end{figure}

Since the point $q$ is $4\theta$-close to $\axe(h_{j+1})$, 
the segment $[q,b_{j+1}]$ is a piece with support $h_{j+1}$.
We have
\begin{align*}
\dist(b_{j+1},q)
&\ge \dist(\Psi(b_{j+1}),\Psi(q))   \\
&\ge (c_i \vert a_{j+1})_{b_{j+1}} - \theta \\
&\ge (c'_i\vert a)_{b} - 15 \theta - \theta \\
&\ge \dist(\Phi(b), \Phi(p)) -17 \theta.
\end{align*}
Thus by Fact \ref{fact:6} we see that $[q,b_{j+1}]$ is a piece of size $5/20$, and the same conclusion holds even if one moves $q$ along the segment $[q,b_{j+1}]$ 
by at most $12\theta$. The inequalities $(x_{j+1} \vert a_{j+1})_{b_{j+1}} \le 2\theta$ and $(a_{j+1} \vert b_{j+1})_{q} \le \theta$   imply that $(x_{j+1}\vert q)_{b_{j+1}} \le 3\theta$. 

In this first subcase, we define the new configuration $(\hat{c}_{l})$ by  $\hat{c_l}=c_l$  for $l$ between $-1$ and $i$, and by 
$\hat{c}_{i+1} = q$, $\hat{c}_{i+2} = b_{j+1}$ and $\hat{c}_{i+3}=x_{j+1}$; this defines a configuration of order $i+2$ for the segment $[x_0,x_{j+1}]$.

By construction, properties (\ref{point:i}), (\ref{point:ii}), and (\ref{point:v}) are satisfied, and property (\ref{point:iii})  is obtained after translating $q$ along $[q,b_{j+1}]$  
on a distance less than $12\theta$. This does not change the fact that $[q,b_{j+1}]$ is a piece of size $5/20$. Since the new configuration is obtained from the previous one by cutting it after $c_i$ and then adding a piece $[q,b_{j+1}]$ and a neutral segment $[b_{j+1},x_{j+1}]$, property (\ref{point:iv}) is also satisfied.  \\

\noindent\textbf{Second case - second subcase.} 
Assume $[c_i,c_{i+1}]$ is a piece. By Fact \ref{fact:2} the segment $[\Phi(p),\Phi(c_{i+1})]$ has length at most $B$, and $[c_{i+1},c_{i+2}]$ must be neutral: otherwise  $d(\Phi(p),\Phi(a)) \le 2B$, in contradiction with the case assumption $\dist(\Phi(p),\Phi(a)) > \frac{1}{20}\lgt -12\theta$. Thus the piece $[c_i,c_{i+1}]$ has size $5/20$ (or $18/20$ if $i = 0$). We consider the approximation tree $\Psi \colon X\to T$
of $(c_{i+1}$, $c_{i}$, $c_{i+2}$, $a_{j+1}$, $b_{j+1})$. We obtain one of the situations depicted on Fig. \ref{fig:piececase}, where now $q \in [c_{i+1},c_i]$ or $[c_{i+1},c_{i+2}]$. \\

In case (a), $[c_{i+1},q]$ is small, and therefore neutral. We are in a case similar to the first subcase and to Figure \ref{fig:neutralcase}. We still have 
\begin{align*}
\dist(b_{j+1},q)&\geq \dist(\Phi(b), \Phi(p)) - 17 \theta,
\end{align*}
and one can define a new sequence $(\hat{c}_{l})$ by  $\hat{c_l}=c_l$  for $l$ between $-1$ and $i+1$, and by 
$\hat{c}_{i+2} = q$, $\hat{c}_{i+3} = b_{j+1}$ and $\hat{c}_{i+4}=x_{j+1}$.  This new sequence cuts $(c_l)$ after $l=i+1$,
adds a neutral segment after the last piece $[c_i,c_{i+1}]$, and then a piece $[q,b_{j+1}]$ and a final neutral segment 
$[b_{j+1}, x_{j+1}]$. Hence, $(\hat{c}_{l})$ is a configuration of order $i+3$ for the segment $[x_0,x_{j+1}]$.

\begin{figure}[ht]
$$
\begin{array}{c}
\mygraph{
!{<0cm,0cm>;<\sizex,0cm>:<0cm,\sizey>::}
!{(0,0)}*{\bullet}="bj+1" !{(1,2)}*{\bullet}="ci" !{(4,1)}*{\bullet}="ci+2"
!{(3.5,0)}="fork"
!{(4,0)}*{\bullet}="aj+1"
!{(2,0)}*{\bullet}="p1"
!{(1.75,0.5)}="fork'" !{(2.5,1)}*{\circ}="ci+1"
"bj+1"-^<{\Psi(b_{j+1})}"p1" "ci"-^<{\Psi(c_i)}"p1" "p1"-_<{\Psi(q)}_>{\Psi(a_{j+1})}"aj+1"
"ci+2"-^<{\Psi(c_{i+2})}"fork"
"ci+1"-_<{\Psi(c_{i+1})}"fork'"
}\\
\\ (a) \end{array}
\quad
\begin{array}{c}
\mygraph{
!{<0cm,0cm>;<\sizex,0cm>:<0cm,\sizey>::}
!{(0,0)}*{\bullet}="bj+1" !{(1,2)}*{\bullet}="ci" !{(4,1)}*{\bullet}="ci+2" !{(3.5,0)}="fork"
!{(4,0)}*{\bullet}="aj+1"
!{(2,0)}*{\bullet}="p1"
!{(2.5,0)}="fork'" !{(2.5,1)}*{\circ}="ci+1"
"bj+1"-^<{\Psi(b_{j+1})}"p1" "ci"-^<{\Psi(c_i)}"p1" "p1"-_<{\Psi(q)}_>{\Psi(a_{j+1})}"aj+1"
"ci+2"-^<{\Psi(c_{i+2})}"fork"
"ci+1"-^<{\Psi(c_{i+1})}"fork'"
}\\
\\ (b) \end{array}
$$
\caption{}
\label{fig:piececase}
\end{figure}

In case (b), let us check that $\dist(c_{i+1},q) \le B + 15\theta$, so that $[c_i,q]$ is a piece of size $4/20$ (or $17/20$ if $i = 0$). We have
\begin{align*}
\dist(c_{i+1},q)
&\le (c_i \vert b_{j+1})_{c_{i+1}} +2\theta\\
&\le (\Phi(c'_i) \vert \Phi(b))_{\Phi(c'_{i+1})} + 15\theta\\
&\le B + 15\theta.
\end{align*}

As in the first subcase, the estimate for $\dist(b_{j+1},q)$ gives
\begin{align*}
\dist(b_{j+1},q)&\geq \dist(\Phi(b), \Phi(p)) - 17 \theta,
\end{align*}
and Fact \ref{fact:6} implies that $[q,b_{j+1}]$ is a piece of size $5/20$; moreover $(x_{j+1}\vert p_1)_{b_{j+1}}$ $\le 3\theta$. 

We define the new configuration $(\hat{c}_{l})$ by taking $\hat{c_l}=c_l$  for $l$ between $-1$ and $i$, and 
$\hat{c}_{i+1} = q$, $\hat{c}_{i+2} = b_{j+1}$ and $\hat{c}_{i+3}=x_{j+1}$; this defines a configuration of order $i+2$ for the segment $[x_0,x_{j+1}]$.

By construction, properties (\ref{point:i}), (\ref{point:ii}), (\ref{point:iii}), (\ref{point:iv}) and (\ref{point:v}) are satisfied. 
\end{proof}

\begin{proof}[Proof of Theorem~\ref{thm:criterion}]
By Lemma \ref{lem:infernal} there exists $(c_i)$ a configuration of order $k$ for $[x_0,x_{m}]$, where $x_m = h(x_0)$. Recall that by our choice of $x_0$ we have $\lgt(h) \ge \dist(x_0,x_m) - \theta$.

If $k \ge 2$, then we have at least two distinct pieces: The first one, $[c_0,c_1]$, has size at least 17/20, and the last one, $[c_{k-1},c_k]$, has size 5/20. By Lemma \ref{lem:canoeing} we obtain $\lgt(h) \ge \frac{22}{20} \lgt - 11\theta > \lgt$.

If $k = 1$, either $m = 1$ and $h$ is conjugate to $g$, or we have $\dist(x_0,c_0) \ge \frac{3}{20}\lgt$. On the other hand $\dist(c_0,c_1)\ge \frac{18}{20}\lgt$, so we obtain $\lgt(h) \ge \frac{21}{20}\lgt - 11\theta > \lgt$.
\end{proof}

%%%%%%%%%%%%%
%%%%%%%%%%%%%
\section{Hyperbolic spaces with constant negative curvature}
\label{sec:NegCurvature}
%%%%%%%%%%%%%
%%%%%%%%%%%%%

This section is devoted to the classical hyperbolic space $\Hyp^n$, where the  dimension $n$ is allowed to be infinite. 
As we shall see, constant negative curvature, which is stronger than $\delta$-hyperbolicity, is a useful  property to decide whether the axis $\axe(g)$ of a hyperbolic isometry is rigid. 

%%%
\subsection{Hyperbolic spaces}\label{par:hypspaces}

%%%
%%
\subsubsection{Definition}
Let $\Hilb$ be a real Hilbert space, with scalar product $(x \cdot y)_\Hilb$, and let  $\lVert x \rVert_{\Hilb}$ denotes  the norm of any element $x\in \Hilb$. 
Let $u$ be a unit vector in $\Hilb$, and $u^\perp$ be its orthogonal complement; each element $x\in \Hilb$ decomposes uniquely into $x=\alpha(x) u + v_x$ with
$\alpha(x)$ in $\R$ and $v_x$ in $u^\perp$.
Let $\langle \, . \, \vert \, .\, \rangle \colon \Hilb\times \Hilb\to \R$ be the symmetric bilinear mapping  defined by
\[
\langle x \vert y \rangle = \alpha(x)\alpha(y) - (v_x \cdot v_y)_\Hilb.
\]
This bilinear mapping is continuous and has signature equal to $(1,\dim(\Hilb)-1)$. 
The set of points $x$ with $\langle x \vert x \rangle=0$ is the light cone of $\langle \, . \, \vert \, .\, \rangle$.
Let $\Hyp$ be the subset of $\Hilb$ defined by
\[
\Hyp = \{ x \in \Hilb \,; \, \langle x \vert x \rangle = 1 \, {\text{ and }} \, \langle u \vert x \rangle >0\}.
\]
The space $\Hyp$ is the sheet of the hyperboloid $\langle x \vert x\rangle =1$ which contains $u$.

The function $\hd \colon \Hyp \times \Hyp \to \R_+$ defined by
\[
\cosh(\hd(x,y))=\langle x \vert y\rangle
\]
gives a distance on $\Hyp,$ and $(\Hyp,\hd)$ is a complete and simply connected riemannian manifold of dimension $\dim(H)-1$
with constant scalar curvature $-1$ (this characterizes $\Hyp$ if the dimension is finite). 
As such, $\Hyp$ is  a {\sc{cat}}(-1) space, and therefore a 
$\delta$-hyperbolic space. 
More precisely, $\Hyp$ is $\delta$-hyperbolic, in any dimension, even infinite, with
$\delta = \log(3)$ (see \cite{CDP:Book}, \S~I.4 page 11).  In particular, all properties
listed in \S \ref{par:basicdef} are satisfied in $\Hyp$.

\subsubsection{Geodesics and boundary}
The hyperbolic space $\Hyp$ is in one to one correspondence with its projection into the projective space $\P(\Hilb)$.
The boundary of this subset of   $\P(\Hilb)$ is the projection of the light cone of $\langle \, . \, \vert \, .\, \rangle$; we shall
denote it by $\partial \Hyp$ and call it the {\bf{boundary}} of $\Hyp$ (or boundary at infinity).

Let $\Gamma$ be a geodesic line in $\Hyp$. Then there is a unique plane $V_\Gamma\subset\Hilb$ such that $\Gamma= V_\Gamma\cap \Hyp$. 
The plane $V_\Gamma$ intersects the light cone on two lines, and these lines determine two points of $\partial \Hyp,$
called the endpoints of $\Gamma$. If $x$ and $y$ are two distinct points of $\Hyp,$ there is a unique geodesic segment $[x,y]$ from $x$ to $y$; this segment
is contained in a unique geodesic line, namely $\Vect(x,y)\cap \Hyp$.

Let $x$ be a point of $\Hyp$ and $\Gamma$ be a geodesic. The projection $\pi_\Gamma(x)\in \Gamma$, i.e. the point $y\in \Gamma$ which minimizes $\hd(x,y)$, is unique. 

%%%
\subsubsection{Distances between geodesics}
%%%
The following lemma shows that close geodesic segments are indeed exponentially close in a neighborhood of
their middle points; this property is not satisfied in general $\delta$-hyperbolic spaces because $\delta$-hyperbolicity
does not tell anything at scale $\leq \delta$. 
\begin{lem}
\label{lem:serge}
Let $x,$ $x',$ $y,$ and $y'$ be four points of the hyperbolic space $\Hyp$.
Assume that 
\begin{enumerate}
\item[(i)] $\hd(x,x')\leq \eps,$ and $\hd(y,y')\leq \eps$,
\item[(ii)] $\hd(x,y)\geq B$, 
\item[(iii)] $B\geq 10 \eps$,
\end{enumerate}
where $\eps$ and $B$ are positive real numbers. Let $[u,v]\subset [x,y]$ 
be the geodesic segment of length $\frac{3}{4} \hd(x,y)$ centered at the middle 
of $[x,y]$. Then $[u,v]$ is contained in the tubular neighborhood $\Tub_\nu([x',y'])$ with 
\[
\cosh(\nu)-1\leq  5 \frac{\cosh(2\eps)-1}{\exp(B/2-4\eps)}.
\]
\end{lem}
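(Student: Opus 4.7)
The approach is to work directly in the hyperboloid model, using the identity $\cosh\hd(p,q)=\langle p\vert q\rangle$ coming from the bilinear form of signature $(1,\infty)$ on $\Hilb$. Setting $L=\hd(x,y)$ and $L'=\hd(x',y')$, I would parametrize the two geodesic segments by arclength in the standard way,
\[
\gamma(t)=\frac{\sinh(L-t)\,x+\sinh t\,y}{\sinh L},\qquad \gamma'(s)=\frac{\sinh(L'-s)\,x'+\sinh s\,y'}{\sinh L'},
\]
and expand $\langle\gamma(t)\vert\gamma'(s)\rangle$ by bilinearity to obtain an explicit formula in terms of $\cosh a,\cosh b,\cosh c,\cosh d$, where $a=\hd(x,x')$, $b=\hd(y,y')$, $c=\hd(x,y')$, $d=\hd(y,x')$, together with $\sinh L\sinh L'$ in the denominator. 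The hypotheses give $a,b\le\eps$, while the triangle inequality yields $|c-L|,\,|d-L|,\,|L'-L|\le 2\eps$.

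For each $t\in[L/8,7L/8]$ I would choose a matching $s=s(t)\in[0,L']$ (say $s=tL'/L$, or the foot of perpendicular from $\gamma(t)$ onto the line extending $[x',y']$) and compare the resulting expression to the ``unperturbed'' value $\cosh\hd=1$ that arises when $x'=x$ and $y'=y$. The unperturbed case returns $1$ thanks to the addition identity $\sinh^2(L-t)+2\sinh(L-t)\sinh(t)\cosh L+\sinh^2 t=\sinh^2 L$. The residual splits naturally into four error terms: two ``endpoint'' terms controlled by $\cosh a-1$ and $\cosh b-1$ (each $\le\cosh\eps-1$), and two ``cross'' terms coming from $\cosh c-\cosh L$ and $\cosh d-\cosh L$, which, after writing $\cosh c=\cosh L\cosh(c-L)+\sinh L\sinh(c-L)$ and Taylor-expanding in $|c-L|\le 2\eps$, also reduce to quantities of order $\cosh(2\eps)-1$. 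The factor $\cosh(2\eps)$ rather than $\cosh\eps$ emerges when combining the two endpoint perturbations via the elementary inequality $2(\cosh\eps-1)\le\cosh(2\eps)-1$.

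The decisive quantitative ingredient is the exponential decay of the coefficients: for $t\in[L/8,7L/8]$, the ratio $\sinh(L-t)\sinh t/(\sinh L\sinh L')$ is bounded by $\exp(-(B/2-4\eps))$, using $\sinh u\le\tfrac12 e^{u}$, the lower bound $\sinh L\ge\tfrac14 e^{L}$ valid under $L\ge B\ge 10\eps$, and $|L'-L|\le 2\eps$. Collecting all contributions and absorbing absolute constants yields the announced bound with constant $5$. The main technical obstacle is the trigonometric bookkeeping: since $\cosh c$ and $\cosh d$ themselves grow like $\tfrac12 e^{L}$, one cannot bound them crudely as $\cosh(L+2\eps)$ without losing the exponential decay; instead, one must exploit the cancellation with $\sinh L\sinh L'$ via the addition identity above and control only the first-order perturbations coming from $c-L$ and $d-L$. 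Once the algebra is correctly arranged, the exponential decay of $\sinh(L-t)\sinh t/(\sinh L\sinh L')$ delivers the stated estimate.
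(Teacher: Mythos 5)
Your setup — working in the hyperboloid model, expanding the Minkowski bilinear form $\langle\gamma(t)\vert\gamma'(s)\rangle$ for parametrized geodesics, and extracting exponential decay from the size of $\sinh L$ in the denominator — is the same strategy as the paper's, and your ``addition identity'' correctly reproduces the unperturbed value $1$. The choice of arclength $\sinh$-weights rather than the paper's projective parametrization $m_t=u_t/\vert u_t\vert$ with $u_t=tx+(1-t)y$ is a harmless stylistic variant.

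However, there is a genuine gap in the treatment of the cross terms, and you have stated something false. You claim that $\cosh c-\cosh L$, after the expansion $\cosh c=\cosh L\cosh(c-L)+\sinh L\sinh(c-L)$, ``reduces to a quantity of order $\cosh(2\eps)-1$.'' It does not: the second summand $\sinh L\sinh(c-L)$ is of order $\eps\,e^{L}$, not $\eps^2$, and is the dominant part. After multiplying by the coefficient $\frac{\sinh(L-t)\sinh s}{\sinh L\sinh L'}$, which is of order $e^{-L}$ on the middle range of $t$, the contribution is of order $\eps$ — a constant, with \emph{no} exponential decay in $B$. Your last paragraph acknowledges the obstacle (``one cannot bound them crudely as $\cosh(L+2\eps)$\ldots one must exploit the cancellation''), but you never say where the cancellation comes from, and a direct expansion does not produce one: the quantities $\langle x\vert(y'-y)\rangle=\cosh c-\cosh L$ and $\langle y\vert(x'-x)\rangle=\cosh d-\cosh L$ are not small in general, and the spacelike condition $\langle r\vert r\rangle=-2(\cosh a-1)$ does not control the Euclidean norm of $r=x'-x$ either.

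The missing idea is a \emph{normalization step}. The paper first replaces $x'$ by the point where the geodesic through $x',y'$ meets the sphere of radius $B$ centered at $y$ (and symmetrically for $y'$), at the cost of changing $\eps$ to $2\eps$. This forces $\hd(x,y')=\hd(y,x')=B$, so the cross terms $\cosh c-\cosh L$ and $\cosh d-\cosh L$ vanish \emph{identically}, and it simultaneously forces $\langle y\vert r\rangle=\langle x\vert s\rangle=0$, which combined with $\langle r\vert r\rangle=-2(\cosh a -1)$ yields $\Vert r\Vert_{euc}^2\le 2\cosh(2\eps)(\cosh(2\eps)-1)$ and similarly for $s$. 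Only then is the remaining perturbation of order $\cosh(2\eps)-1$ and the exponential decay of the coefficient wins. Without this normalization the computation you describe stalls at an $O(\eps)$ error term.
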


Thus, when $B$ is large, $\nu$ becomes much smaller than $\eps$.
The constants in this inequality are not optimal and depend on the choice
of the ratio 
$$\hd(u,v)/\hd(x,y)=3/4.$$ 

\begin{proof}
We use coordinates $(x_0, x_1, ..., x_n ,...)$ in $H$ with 
\[
\langle x \vert x\rangle = x_0^2-\sum_{i\geq 1} x_i^2.
\]
We denote by $\vert x\vert $ the square root of the absolute 
value of $\langle x \vert x\rangle$, and denote by 
$\Vert x \Vert_{euc}$ the square root of $\sum_{i\geq 0} x_i^2$.
We have $\vert \langle u \vert v \rangle \vert \leq \Vert u \Vert_{euc} \Vert v \Vert_{euc}$.

If $x$ and $y$ are two elements of $\Hyp$ the segment $[x,y]$
is parametrized by $t\mapsto u_t/\vert u_t\vert$ where $u_t=tx + (1-t)y$.

Let $x$, $x'$, $y$, $y'$ satisfy $\hd(x,x')\leq \eps$, $\hd(y,y')\leq \eps$, and $\hd(x,y)=B$.
We first apply an isometry of $\Hyp$ to assume that $x=(1,0, ...0, ..)$ and 
$y=(\cosh(B), \sinh(B), 0, ...)$. 

The sphere of $\Hyp$ centered at $x$ with radius $B$ is the set 
$\{ y+z \in \Hyp \, \vert \, \langle x \vert z\rangle =0 \}$.

\vspace{0.1cm}

{\sl{First Step}.---} Let us first assume that $\hd(x,y')=B=\hd(y,x')$, and write $x'=x+r$ and $y'=y+s$.
We have 
\begin{itemize}
\item $\langle x'\vert x'\rangle = 1$, i.e. $2\langle r \vert x \rangle + \langle r \vert r\rangle =0$;
\item $\langle x \vert x+r\rangle \leq \cosh(\eps)$;
\item $\langle x+r\vert y\rangle = \cosh(B)$.
\end{itemize}
Thus, writing $r=(r_0, r_1, ...)$ we get 
\[
0\leq r_0\leq \cosh(\eps)-1, \text{ and } \, 2r_0+r_0^2= \sum_{i\geq 1} r_1^2.
\]
In particular, 
\[
\Vert r \Vert^2_{euc}= 2r_0+2r_0^2 \leq 2 \cosh(\eps)(\cosh(\eps)-1)
\]
A similar analysis for $s$ leads to the following estimates
\begin{itemize}
\item $s_0=0$;
\item $-2\sinh(B)s_1=\sum_{i\geq 1} s_i^2$;
\item $\Vert s\Vert^2_{euc}=\sum_{i\geq 1} s_i^2\leq 2(\cosh(\eps)-1)$.
\end{itemize}

Let us now parametrize the geodesic segments between $x$ and $y$ and
between $x'$ and $y'$. The first one is parametrized by 
\[
m_t = u_t/\vert u_t\vert, {\text{ where }} u_t = tx + (1-t) y.
\]
The second one by $m'_t=v_t/\vert v_t\vert $ where 
\[
v_t = tx'+(1-t)y'= u_t + tr+(1-t)s = u_t + w_t
\]
where $w_t=tr+(1-t)s$.
Thus, 
\[
\langle m_t\vert m'_t\rangle = \frac{\vert u_t\vert}{ \vert v_t\vert } + \frac{1}{ \vert v_t\vert} \langle m_t\vert t r + (1-t)s\rangle.
\]
We now restrict this computation to  $t\in [t_0, 1-t_0]$ where $t_0$ is a fixed positive number
(we shall need $t_0= \exp(-B/2)$). We have 
\begin{eqnarray*}
\vert u_t\vert^2 = t^2  + (1-t)^2 + 2 t (1-t)  \langle x \vert y\rangle  & \geq & 1/2 + 2 t_0 ( 1-t_0) \cosh(B), \\
\vert v_t\vert^2 & \geq & 1/2 + 2 t_0(1-t_0) \cosh(B-2\eps) \\
\langle u_t \vert w_t\rangle \leq \frac{1}{2} (\cosh(\eps)-1).
\end{eqnarray*}
Expanding $\vert v_t\vert^2$ and using that $2 \langle u_t \vert w_t\rangle + \langle w_t\vert w_t\rangle$ is equal 
to $2 t(1-t) \langle r\vert s\rangle$
\begin{eqnarray*}
\left\vert  \frac{\vert u_t\vert}{\vert v_t\vert}-1\right\vert & \leq &  2 t_0 (1-t_0) \frac{\langle r \vert s\rangle}{\vert v_t\vert^2}
\end{eqnarray*}
All together, one easily gets the estimate
\begin{eqnarray*}
\left\vert\langle m_t \vert m'_t\rangle -1\right\vert& \leq & \left( 2 \sqrt{2} t_0 (1-t_0) \sqrt{\cosh(\eps)+1} + \frac{1}{2} \right)\frac{\cosh(\eps)-1}{\frac{1}{2} + 2 t_0(1-t_0) \cosh(B-2\eps)}.
\end{eqnarray*}
Let us now choose $t_0=\exp(-B/2)$ ; with such a choice, the geodesic segments described by $m_t$ (resp. $m'_t$) 
contains a segment of length $3B/4$ centered at the middle of $[x,y]$ (resp. $[x',y']$). Since $B\geq 10 \eps$
we have 
\begin{eqnarray*}
\left\vert\langle m_t \vert m'_t\rangle -1\right\vert& \leq &  \left( 4 \exp(-B/2+\eps/2) + 1/2) \right)\frac{\cosh(\eps)-1}{   \cosh(B/2-2\eps)}\\
& \leq & 5 \frac{\cosh(\eps)-1}{   \cosh(B/2-2\eps)}
\end{eqnarray*}
This upper bound show that the point $m'_t$ is at distance at most $\nu$ from $[x,y]$ with 
\[
\cosh(\nu)-1\leq 5 \frac{\cosh(\eps)-1}{   \cosh(B/2-2\eps)}
\]

\vspace{0.1cm}

{\sl{Second Step}.---} When $x'$ is not on the sphere of radius $B$ centered at $y$, we replace
$x'$ by the intersection of the geodesic containing $[x',y']$ with the sphere; since $B\geq 10\eps$,
the distance between $x$ and the new point $x'$ is at most $2\eps$. We do the same for $y'$
and the sphere of radius $B$ centered at $x$, and then apply the first step to conclude. 
\end{proof}

\subsubsection{Isometries}\label{par:isohypspace}
Let $f$ be an isometry of $\Hyp$; then $f$ is the restriction of a unique continuous linear transformation of the Hilbert space $\Hilb$. 
In particular, $f$ extends to a homeomorphism of the boundary $\partial \Hyp$. 
The three types of isometries (see \S~\ref{par:isometriesGromov}) have the following properties.
\begin{enumerate}
\item If $f$ is elliptic there is a point $x$ in $\Hyp$ with $f(x)=x$, and $f$ acts as a rotation
centered at $x$. Fixed points of $f$ are eigenvectors of the linear extension $f\colon H\to H$ corresponding to the eigenvalue $1$.
\item If $f$ is parabolic there is a unique fixed point of $f$ on the boundary $\partial \Hyp$; this point corresponds to a line of eigenvectors 
with eigenvalue $1$ for the linear extension $f\colon H\to H$. 
\item If $f$ is hyperbolic, then $f$ has exactly two fixed points $\alpha(f)$ and $\omega(f)$ on the boundary  $\partial \Hyp$ and the orbit $f^k(x)$
of every point $x\in \Hyp$ goes to $\alpha(f)$ as $k$ goes to $-\infty$ and $\omega(f)$ as $k$ goes to $+\infty$. The set $\Min(f)$
coincides with the geodesic line from $\alpha(f)$ to $\omega(f)$. In particular, $\Min(f)$ coincides with the unique $f$-invariant
axis $\axe(f)=\Vect(\alpha(f),\omega(f))\cap \Hyp$.
The points $\alpha(f)$ and $\omega(f)$ correspond to eigenlines of the linear extension $f\colon H\to H$ with eigenvalues $\lambda^{-1}$
and $\lambda$, where $\lambda >1$; the translation length $\lgt(f)$ is equal to the logarithm of $\lambda$ (see Remark \ref{rem:trans_length}
below).
\end{enumerate}

%%%
\subsection{Rigidity of axis for hyperbolic spaces}

%%%

Let $G$ be a group of isometries of the hyperbolic space $\Hyp,$
and $g$ be a hyperbolic element of $G$.
The following strong version of Lemma \ref{lem:rigidGH}  
is a direct consequence of Lemma \ref{lem:serge}.

\begin{lem} \label{lem:rigidPM}
Let $\eps' < \eps$ be positive real numbers. If the segment $A \subset \Hyp$ is $(\eps', B')$-rigid, then $A$ is also $(\eps, B)$-rigid
with 
\[
B= \max \left\lbrace 
22\eps, 18\eps+2\log  \left(5 \tfrac{\cosh(4\eps)-1}{\cosh(\eps')-1} \right) , 
 \tfrac{4}{3}B'+2\eps \right\rbrace.
\]
\end{lem}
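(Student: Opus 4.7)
The plan is to reduce the $(\eps,B)$-rigidity statement to the hypothesis by producing, from the data of an almost-intersection at precision $\eps$ and diameter $B$, a long almost-intersection at the finer precision $\eps'$, and then apply the assumption.

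First I would unpack the definition. If $\diam(A\cap_\eps f(A))\ge B$, there exist $u,v\in A\cap_\eps f(A)$ with $\hd(u,v)\ge B$, hence points $x,y\in A$ and $x',y'\in f(A)$ satisfying
\[
\hd(x,x')\le 2\eps,\quad \hd(y,y')\le 2\eps,\quad \hd(x,y)\ge B-2\eps,\quad \hd(x',y')\ge B-2\eps.
\]
The first term in the max, $B\ge 22\eps$, is chosen precisely so that the hypothesis $B-2\eps\ge 10\cdot(2\eps)$ of Lemma~\ref{lem:serge} is met with parameters $(2\eps,\,B-2\eps)$ in place of $(\eps,B)$.

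Next I would apply Lemma~\ref{lem:serge} to the configuration $(x,x',y,y')$. It yields the middle segment $[u_0,v_0]\subset[x,y]$ of length $\tfrac{3}{4}(B-2\eps)$, contained in the tubular neighborhood $\Tub_\nu([x',y'])\subset\Tub_\nu(f(A))$, with
\[
\cosh(\nu)-1\le 5\,\frac{\cosh(4\eps)-1}{\exp\bigl((B-2\eps)/2-8\eps\bigr)}.
\]
The second term in the max is exactly the condition ensuring the right-hand side is at most $\cosh(\eps')-1$; indeed, it is equivalent to
\[
\tfrac{1}{2}(B-2\eps)-8\eps\ \ge\ \log\!\left(5\,\tfrac{\cosh(4\eps)-1}{\cosh(\eps')-1}\right).
\]
Hence $\nu\le\eps'$, so every point of $[u_0,v_0]$ lies in $A$ and within $\eps'$ of $f(A)$; that is, $[u_0,v_0]\subset A\cap_{\eps'}f(A)$.

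Finally, the third term in the max, $B\ge\tfrac{4}{3}B'+2\eps$, is equivalent to $\tfrac{3}{4}(B-2\eps)\ge B'$, so $\diam(A\cap_{\eps'}f(A))\ge B'$. Applying the $(\eps',B')$-rigidity of $A$ then forces $f(A)=A$, which finishes the proof. The only real substance is bookkeeping: matching the loss factors $2\eps$ (from the two triangle inequalities) and $8\eps$ and $4\eps$ (coming from the $2\eps_0=4\eps$ and $4\eps_0=8\eps$ appearing when Lemma~\ref{lem:serge} is invoked with $\eps_0=2\eps$) against the three terms in the max; I do not anticipate any conceptual difficulty beyond this.
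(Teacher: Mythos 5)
Your argument is correct and follows essentially the same route as the paper: extract nearby endpoints in $A$ and $f(A)$ from the $\eps$-almost-intersection, apply Lemma~\ref{lem:serge} with the substitution $\eps\mapsto 2\eps$, $B\mapsto B-2\eps$, and check that the three terms of the max guarantee respectively the hypothesis $B-2\eps\ge 10\cdot(2\eps)$, the bound $\nu\le\eps'$, and a middle subsegment of length at least $B'$. You even track the factor $5$ correctly; the paper's proof sketch drops it in its second displayed inequality, a minor typo that your version avoids.
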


In the context of  hyperbolic spaces $\Hyp$, Lemma \ref{lem:rigidPM} enables us to drop the $\eps$ in the notation 
for $\eps$-rigidity; we simply say that $A \subset \Hyp$ is rigid.

\begin{proof}
By assumption, $B$ satisfies the following three properties
\begin{itemize}
\item $B-2\eps \geq 10 (2\eps)= 20 \eps$;
\item $\cosh(\eps')-1\geq  {(\cosh(4\eps)-1)}/{\exp(B/2-9\eps)}$;
\item ${(3/4)(B-2\eps)}\geq B'$.
\end{itemize}
Let $f$ be an element of the group $G$ such that
$A\cap_\eps f(A)$ contains  two points at distance $B$. Then $A$ and $f(A)$ contain two
segments $I$ and $J$ of length $B-2\eps$ which are $2\eps$-close. Lemma \ref{lem:serge} and the
inequalities satisfied by $(B-2\eps)$
show that $I$ and $J$ contain subsegments of length $3(B-2\eps)/4$ which are $\eps '$-close. Since $A$ is
$(\eps',B')$-rigid, $f(A)$ coincides with $A$, and therefore $A$ is $(\eps, B)$-rigid.
\end{proof}

\begin{pro} \label{pro:almostfixed}
Let $G$ be a group of isometries of $\Hyp$, and $g\in G$ be a hyperbolic isometry.
Let $n$ be a positive integer, $p \in \axe(g)$, and $\eta >0$. 
If $\axe(g)$ is not rigid, there exists
an element $f$ of $G$  such that $f(\axe(g)) \neq \axe(g)$ and $\hd(f(x),x) \le \eta$ for all $x \in [g^{-n}(p),g^n(p)]$.
\end{pro}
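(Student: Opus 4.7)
The plan is to combine the non-rigidity assumption with two consequences of constant negative curvature in $\Hyp$: the CAT$(0)$ convexity of the distance function, and the exponential contraction of Lemma~\ref{lem:serge}.

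First, I would reduce the statement on the whole segment $I:=[g^{-n}(p),g^n(p)]$ to an estimate at its two endpoints. Since $\Hyp$ is CAT$(-1)$, hence CAT$(0)$, Reshetnyak's theorem asserts that for any pair of geodesic segments $\sigma_1,\sigma_2\colon[0,L]\to \Hyp$ parametrized proportionally, $t\mapsto \hd(\sigma_1(t),\sigma_2(t))$ is convex. Taking $\sigma_1$ the arc-length parametrization of $I$ from $g^{-n}(p)$ to $g^n(p)$, and noting that $f$ is an isometry so $\sigma_2(t):=f(\sigma_1(t))$ is the arc-length parametrization of $f(I)$ from $f(g^{-n}(p))$ to $f(g^n(p))$, this yields
\[
\sup_{x\in I}\hd(f(x),x)\;\leq\;\max\bigl\{\hd(f(g^{-n}(p)),g^{-n}(p)),\ \hd(f(g^n(p)),g^n(p))\bigr\}.
\]
Hence it suffices to build $f\in G$ with $f(\axe(g))\neq \axe(g)$ and $\hd(f(g^{\pm n}(p)),g^{\pm n}(p))\leq \eta$.

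Next, I would exploit non-rigidity quantitatively. Setting $L:=2n\lgt(g)$, I pick $\eps\ll \eta$ and $B\gg L$, and let $f_0\in G$ be a witness of non-rigidity: $f_0(\axe(g))\neq \axe(g)$ and $\diam(\axe(g)\cap_\eps f_0(\axe(g)))\geq B$. Since $g$ preserves $\axe(g)$ setwise, conjugation by an appropriate power of $g$ translates the overlap along $\axe(g)$ while preserving both the length $B$ and the condition $f_0(\axe(g))\neq \axe(g)$; I use this freedom to center the overlap at $p$, so that $I$ lies deep inside the middle three-quarters of the overlap. Lemma~\ref{lem:serge} then tightens the $\eps$-closeness to a much smaller $\nu$-closeness on this middle region, with $\nu$ exponentially small in $B$, and in particular $\nu<\eta/2$.

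Finally, I would account for the way $f_0$ translates along the axis direction. Composing $f_0|_{\axe(g)}$ with the nearest-point projection $\pi\colon f_0(\axe(g))\to \axe(g)$ yields an approximate isometry of the overlap into $\axe(g)$, i.e. an approximate translation by some $s\in\R$ (the orientation-reversing case is handled analogously). Right-multiplying $f_0$ by a suitable power of $g$ preserves $f_0(\axe(g))$ setwise, hence the non-rigidity condition, while adjusting $s$ by integer multiples of $\lgt(g)$; this makes $|s|+\nu$ the relevant upper bound for $\hd(f(g^{\pm n}(p)),g^{\pm n}(p))$. A single such correction only normalizes $|s|$ into $[-\lgt(g)/2,\lgt(g)/2]$, which settles the proposition when $\eta\geq \lgt(g)/2+\nu$. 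For smaller $\eta$ one has to exploit the abundance of non-rigidity witnesses afforded by the hypothesis: composing two witnesses whose approximate shifts almost cancel (or passing to a commutator of such witnesses, using that shifts nearly add under composition of near-identity elements) manufactures an element whose shift on $\axe(g)$ is arbitrarily small, while the resulting axis image is still distinct from $\axe(g)$. This final shift-reduction step, together with the ``keeping it off the stabilizer of $\axe(g)$'' condition, is the principal technical obstacle; the conclusion then follows from Step~1 once $|s|+\nu\leq\eta$.
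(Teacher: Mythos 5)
Your first two steps are sound and match the paper's strategy: the CAT$(0)$ convexity of the distance function does reduce the estimate on $[g^{-n}(p),g^n(p)]$ to the two endpoints, and conjugating a non-rigidity witness by powers of $g$ (on the left and on the right) does let you center a long $\eps$-overlap at $p$. Your appeal to Lemma~\ref{lem:serge}, however, is an unnecessary detour: ``not rigid'' means not $(\eps,B)$-rigid for \emph{any} $\eps>0$ and $B>0$, so you can take $\eps$ as small as $\eta/2$ and $B$ of order $n\lgt(g)$ directly at the outset; there is nothing to tighten by exponential contraction.

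The genuine gap is precisely the step you flag as ``the principal technical obstacle,'' and you do not resolve it. Your suggestions---composing two witnesses with nearly cancelling shifts, or taking a commutator of two witnesses---are left vague and would require establishing both that such a pair exists (there is no reason the shifts of two independent witnesses should be related) and that the resulting element still fails to preserve $\axe(g)$. The paper resolves the translation part in a single clean move: replace the normalized witness $h_1$ by the commutator $h_2=h_1^{-1}g^{-1}h_1g$. The conjugate $h_1^{-1}g^{-1}h_1$ is hyperbolic with axis $h_1^{-1}(\axe(g))$, which is $\eps$-close to $\axe(g)$ on the long segment around $p$; hence on that segment $h_1^{-1}g^{-1}h_1$ approximately undoes the translation $g$ (up to orientation, corrected by a further composition with $g^{-2}$), and $h_2$ moves the endpoints $x,y$ by at most about $2\eps<\eta$, \emph{independently} of whatever translation $h_1$ induced along the axis. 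Moreover $h_2(\axe(g))\neq\axe(g)$: otherwise $h_1^{-1}g^{-1}h_1$ would preserve $\axe(g)$, and by uniqueness of the invariant axis of a hyperbolic isometry of $\Hyp$ this would force $h_1^{-1}(\axe(g))=\axe(g)$, a contradiction. This commutator with $g$ itself---not a commutator of two witnesses---is the key idea your proposal is missing.
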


\begin{proof}
Since $\axe(g)$ is not rigid, in particular $\axe(g)$ is not $(\eps,B)$-rigid for $B = (3n + 4)\lgt(g)$ and $\eps = \eta/2$.
Then there exists an isometry $h\in G$ such that 
$h(\axe(g))$ is different from $\axe(g),$ but $\axe(g)$ contains a segment $J$ of length
$B$ which is mapped by $h$ into the tubular neighborhood $\Tub_\eps(\axe(g))$.

Changing $h$ into $g^j\circ h$, we can assume that the point $p$ is near the middle of
the segment $h(J)$; and changing $h$ into $h\circ g^k$ moves $J$ to $g^{-k}(J)$. 
We can thus change $h$ into $h_1= h^j \circ h\circ g^k$ for some $j,k\in \Z$, $B$ into $B_1\geq B- 4 \lgt(g) \ge 3n \lgt(g)$, and find two points $x$ and $y$ 
on $\axe(g)$ that satisfy 
\begin{enumerate}[(1)]
\item $\hd(x,y)\geq B_1$, $p\in [x,y]$, and $\hd(p,x)\geq B_1/3$, $\hd(p,y)\geq B_1/3$;
\item either (a) $h_1(x)\in \Tub_\eps([g^{-1}(x),x])$ and $h_1(y)\in \Tub_\eps([g^{-1}(y),y])$, 

 \hspace{0.09cm} or (b) $h_1(y)\in \Tub_\eps([g^{-1}(x),x])$ and $h_1(x)\in \Tub_\eps([g^{-1}(y),y])$;
\item $g^i(x)$ and  $g^i(y)$, with $-2\leq i\leq 2$,  are at distance at most $\eps$ from $h(\axe(g))$.
\end{enumerate}
This does not change the axis $h(\axe(g))$ and the value of $\eps$. 

We now change $h_1$ into the commutator $h_2=h_1^{-1}g^{-1}h_1 g$. We still 
have $h_2(\axe(g))\neq \axe(g)$, because otherwise $h_1^{-1}g^{-1}h_1$ fixes
$\axe(g)$ and by  uniqueness of the axis of a hyperbolic
isometry of $\Hyp$ we would have $h_1^{-1}(\axe(g))= \axe(g)$. Moreover, property (2) above is replaced by $\hd(x,h_2(x))\leq 2 \eps < \eta$ and $\hd(y,h_2(y))\leq \eta$
in case (a), and by 
$\hd(g^2(x), h_2(x))\leq \eta$  and $\hd(g^2(y),h_2(y))\leq \eta$ in case (b); similar
properties are then satisfied by the points $g^i(x)$ and $g^i(y)$, $-2\leq i\leq 2$, in place  of $x$ and $y$.

Changing, once again, $h_2$ into $h_3=h_2\circ g^{-2}$ if necessary, we can assume that 
$\hd(x,h_3(x))\leq \eta$ and $\hd(y,h_3(y))\leq \eta$.

Consider the arc length  parametrization  
\[
m \colon t\in [-\infty,+\infty] \mapsto \axe(g)
\]
such that $m(0) = p$ and $g(m(t)) = m(t+\lgt(g))$. 
Since $p$ is in the interval $[x+B_1/3,y-B_1/3],$ we obtain 
$
\hd(m(t), h_3(m(t)))\leq \eta
$ 
for all $t$ in $[-B_1/3,B_1/3]$. Defining $f=h_3$ we get
\[
\hd(z,f(z))\leq \eta, \quad \forall z \in [g^{-n}(p), g^n(p)]
\]
because $n\lgt(g)\leq B_1/3$. 
\end{proof}

In particular, the proof for $n = 1$ gives the following corollary.

\begin{cor}
\label{cor:constants}
Let $G$ be a group of isometries of $\Hyp$. Let $g$ be a hyperbolic element 
of $G$ and $p$ be a point of $\axe(g)$. Let  $\eta$ be a positive real number.
If there is no $f$ in $G\setminus \{ {\rm Id}\}$ such that $\dist(f(x),x) \le \eta$ for all $x \in [g^{-1}(p),g(p)]$, then $\axe(g)$ is $(\eta/2, 7\lgt(g))$-rigid. 
\end{cor}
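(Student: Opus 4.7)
My plan is to obtain this corollary as a direct specialization of Proposition \ref{pro:almostfixed} with $n=1$, read in contrapositive form. First I would check that the constants line up: in the proof of Proposition \ref{pro:almostfixed}, the proof opens by invoking non-rigidity with parameters $\eps=\eta/2$ and $B=(3n+4)\lgt(g)$, which for $n=1$ is exactly $B=7\lgt(g)$. So the proposition really proves the more precise statement: \emph{if $\axe(g)$ is not $(\eta/2,7\lgt(g))$-rigid, then there exists $f\in G$ with $f(\axe(g))\neq \axe(g)$ and $\hd(f(x),x)\leq \eta$ on $[g^{-1}(p),g(p)]$}.

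The second thing to verify is that the $f$ produced is nontrivial. The proof of Proposition \ref{pro:almostfixed} constructs $f$ as either the commutator $h_2=h_1^{-1}g^{-1}h_1g$ or its translate $h_3=h_2\circ g^{-2}$. In both cases $f(\axe(g))=h_2(\axe(g))$ since $g^{\pm k}$ preserves $\axe(g)$, and one shows $h_2(\axe(g))\neq \axe(g)$: otherwise $h_1^{-1}g^{-1}h_1$ would stabilize $\axe(g)$, and by the uniqueness of the axis of a hyperbolic isometry of $\Hyp$ recalled in \S\ref{par:isohypspace}, this forces $h_1(\axe(g))=\axe(g)$, contradicting the initial choice of $h_1$. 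Hence $f\neq \mathrm{Id}$, which is exactly what the corollary's conclusion requires.

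Putting these two observations together, the contrapositive of (the $n=1$ case of) Proposition \ref{pro:almostfixed} reads: \emph{if no $f\in G\setminus\{\mathrm{Id}\}$ satisfies $\hd(f(x),x)\leq \eta$ on $[g^{-1}(p),g(p)]$, then $\axe(g)$ is $(\eta/2,7\lgt(g))$-rigid}. This is precisely the statement of Corollary \ref{cor:constants}. The only potential subtlety, which is the main thing one needs to double-check in the proof of Proposition \ref{pro:almostfixed}, is the quantitative inequality $B_1/3\geq n\lgt(g)$ with $B_1\geq B-4\lgt(g)$; for $n=1$ and $B=7\lgt(g)$ this becomes $\lgt(g)\geq \lgt(g)$, which holds with equality, so the constant $7\lgt(g)$ in the corollary is exactly the threshold produced by the argument.
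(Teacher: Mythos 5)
Your proof is correct and follows exactly the route the paper intends: the paper's ``proof'' of the corollary is the single sentence ``In particular, the proof for $n=1$ gives the following corollary,'' and you have carefully unpacked what that means — reading the proof of Proposition \ref{pro:almostfixed} as actually establishing the sharper implication ``not $(\eta/2,(3n+4)\lgt(g))$-rigid $\Rightarrow$ there exists $f$ with $f(\axe(g))\neq\axe(g)$ (hence $f\neq\mathrm{Id}$) and $\hd(f(x),x)\leq\eta$ on $[g^{-n}(p),g^n(p)]$,'' specializing to $n=1$ where $(3n+4)\lgt(g)=7\lgt(g)$, and taking the contrapositive. The observations that $f(\axe(g))\neq\axe(g)$ forces $f\neq\mathrm{Id}$, and that the inequality $B_1/3\geq n\lgt(g)$ is tight at $n=1$, are exactly the right checks.
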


%
%%%%%%%%%%%%%%%%%%%%%%%%%%%%%%%%%%%%%%%%%%%%%%%%%%%%%%%%%%%%%%%%%%
%

%\vfill
%\pagebreak

\part{Algebraic Geometry and the Cremona Group}

%
%%%%%%%%%%%%%%%%%%%%%%%%%%%%%%%%%%%%%%%%%%%%%%%%%%%%%%%%%%%%%%%%%%
%

%\section{The Cremona group and the Picard-Manin space}
\section{The Picard-Manin space}
\label{sec:cremona-PM}

%
%%%%%%%%%%%%%%%%%%%%%%%%%%%%%%%%%%%%%%%%%%%%%%%%%%%%%%%%%%%%%%%%%%
%

%%
\subsection{N\'eron-Severi groups and rational morphisms}

Let $X$ be a smooth 
projective surface defined over an algebraically
closed field ${\mathbf{k}}$.
The N\'eron-Severi group 
$\NS(X)$ is the group 
of Cartier divisors modulo numerical equivalence.
When the field  
of definition is the field of complex numbers, $\NS(X)$ coincides
with the space of Chern classes of holomorphic line bundles of $X$
(see \cite{Lazarsfeld:Book}), and thus
\[
\NS(X)=H^2(X(\C),\Z)_{t.f.}\cap H^{1,1}(X,\R)
\]
where $H^2(X(\C),\Z)_{t.f.}$ is the torsion free part of $H^2(X(\C),\Z)$
(the torsion part being killed when one takes the image of $H^2(X(\C),\Z)$
into $H^2(X(\C),\R)$).
The rank $\rho(X)$ of this abelian group is called the Picard number of $X$.
If $D$ is a divisor, we denote by $[D]$ its class in $\NS(X)$. 
The intersection form defines an integral quadratic form 
\[
([D_1], [D_2])\mapsto [D_1] \cdot [D_2]
\]
on $\NS(X),$ the
signature of which is equal to $(1,\rho(X)-1)$ by Hodge index theorem
(see \cite{Hartshorne:book}, \S V.1).
We also denote   $\NS(X) \otimes \R$ by $\NS(X)\ten$.

If $\pi\colon X\to Y$ is a birational morphism, then the pullback morphism 
\[
\pi^*\colon \NS(Y)\to\NS(X)
\]
is injective and preserves the intersection form. For example, 
if $\pi$ is just the blow-up of a point with exceptional divisor $E\subset X$,
then $\NS(X)$ is isomorphic to $\pi^*(\NS(Y))\oplus \Z [E],$ this sum is orthogonal
with respect to the intersection form, and $[E]\cdot [E]=-1$.

\begin{eg}
The N\'eron-Severi group of the plane is isomorphic to $\Z[H]$ where $[H]$ 
is the class of a line and $[H]\cdot [H]=1$. After $n$ blow-ups of points,
the N\' eron-Severi group is isomorphic to $\Z^{n+1}$ with a basis of orthogonal 
vectors $[H]$, $[E_1], \cdots$, $[E_n]$ such that $[H]^2=1$ and $[E_i]^2=-1$
for all $1\leq i\leq n$.\end{eg}

\subsection{Indeterminacies}

Let $f\colon X\dashrightarrow Y$ be a rational map between smooth projective surfaces.
The indeterminacy set $\Ind(f)$ is  finite, and the curves which are blown down by $f$ form a codimension 
$1$, analytic subset of $X,$ called the exceptional set $\Exc(f)$. 

Let $H$ be an ample line bundle on $Y$. Consider the pullback by $f$ of the linear system of divisors which are linearly equivalent to $H$; 
when $X=Y=\P^2$ and $H$ is a line, this linear system is called the homaloidal net of $f$. The base locus of this linear system 
is supported on $\Ind(f)$, but it can of course include infinitely near points. We shall call it the {\textbf{base locus}} of $f$. 
To resolve the indeterminacies of $f$,  one blows up the base locus (see \cite{Lazarsfeld:Book} for base loci, base ideal, and their blow ups);
in other words, one blows up $\Ind(f)$, obtaining $\pi \colon X'\to X$, then one blows up $\Ind(f\circ \pi)$, and so on; the process stops
in a finite number of steps (see \cite{Hartshorne:book}, \S V.5).

\begin{rem} \label{rem:ind_in_exc}
If $f$ is a birational transformation of a projective surface with Picard number
one, then $\Ind(f)$ is contained in $\Exc(f)$. 
One can prove this by considering the factorization of $f$ as a sequence of blow-ups followed by a sequence of blow-downs. Any curve in $\Exc(f)$ corresponds to a $(-1)$-curve at some point in the sequence of blow-downs, but is also the strict transform of a curve of positive self-intersection from the source (this is where we use $\rho(X) = 1$).
\end{rem}

\subsection{Dynamical degrees}

The rational map $f\colon X\dashrightarrow Y$  determines a linear map $f^*\colon \NS(Y)\to \NS(X)$. For complex surfaces,
$f$  determines a linear map
$f^*\colon H^2(Y,\Z)\to H^2(X,\Z)$ which preserves the Hodge decomposition: The action of $f^*$ on $\NS(X)$
coincides with the action by pull-back on  $H^2(X(\C),\Z)_{t.f.}\cap H^{1,1}(X,\R)$ (see \cite{Diller-Favre:2001} for example). 

Assume now that $f$ is a birational  selfmap of $X$. The {\textbf{dynamical degree}} $\dd(f)$ of $f$ is 
the spectral radius of the sequence of linear maps $((f^n)^*)_{n\geq 0}$:
\[
\dd(f)=\lim_{n\to + \infty} \left( \lVert  (f^n)^* \rVert^{1/n} \right)
\]
where $\lVert \, .\, \rVert$ is an operator norm on $\End(\NS(X)_\R)$; the limit exists because the sequence
$\lVert (f^n)^* \rVert$ is submultiplicative (see \cite{Diller-Favre:2001}).
The number $\dd(f)$ is invariant under conjugacy: $\dd(f)=\dd(gfg^{-1})$ if
$g\colon X \dashrightarrow Y$ is a birational map. 

\begin{eg}
Let $[x:y:z]$ be homogeneous coordinates for the projective plane $\P^2_{\mathbf{k}}$. Let
$f$ be an element of $\Bir(\P^2_{\mathbf{k}})$; there are three homogeneous polynomials
$P,$ $Q,$ and $R\in {\mathbf{k}}[x,y,z]$ of the same degree $d,$ and without 
common factor of degree $\geq 1,$ such that 
\[
f[x:y:z]=[P:Q:R].
\] 
The degree $d$ is called the
degree of $f,$ and is denoted by $\deg(f).$ The action $f^*$ of $f$ on $\NS(\P^2_{\mathbf{k}})$ is the
multiplication by $\deg(f)$. 
The dynamical degree of $f$ is thus equal to the limit of $\deg(f^n)^{1/n}$.
\end{eg}

%%%
\subsection{Picard-Manin classes}\label{par:picman}

%%%

We follow the presentation given in \cite{Boucksom-Favre-Jonsson:2008,Cantat:Preprint, Favre:Bourbaki} which,  in turn, is inspired by the fifth chapter of Manin's book~\cite{Manin:Book}. 

\subsubsection{Models and Picard-Manin space}

A model of $X$ is a smooth projective surface $X'$ with a birational morphism $X' \to X$. If $\pi_1\colon X_1\to X$, and $\pi_2\colon X_2\to X$ are  two models, we say that $X_2$ dominates $X_1$ if the induced birational map  $\pi_1^{-1}\circ \pi_2 \colon X_2 \dashrightarrow X_1$ is a morphism. In this case, $\pi_1^{-1}\circ \pi_2$ contracts
a finite number of exceptional divisors and induces an injective map $(\pi_1^{-1}\circ \pi_2)^*\colon \NS(X_1)\ten \hookrightarrow \NS(X_2)\ten$. 

Let $\mod_X$ be the set of models that dominate $X$. 
If $X_1, X_2 \in \mod_X$ then,by resolving the indeterminacies of the induced birational map $X_1 \dashrightarrow X_2$ we obtain $X_3 \in \mod_X$ which dominates both $X_1$ and $X_2$.  
The space $\PM(X)$ of \textbf{(finite) Picard-Manin classes}
%\footnote{Picard-Manin classes can be thought of as classes of divisors on the so-called Riemann-Zariski space; one can also define Weil classes by mean of a projective limit. These spaces are not useful for our purposes.} 
is the direct limit
\[
\PM(X) = \lim_{\rightarrow \mod_X}  \NS(X')\ten.
\]
The N\'eron-Severi group  $\NS(X')\ten$ of any model $X'\to X$ embeds in $\PM(X)$ and can be identified to its image into $\PM(X)$.
Thus, a Picard-Manin class is just a (real) N\'eron-Severi class of some model dominating $X$.
The Picard-Manin class of a divisor $D$ is still denoted by $[D],$ as for N\'eron-Severi classes.
Note that $\PM(X)$ contains the direct limit of the lattices $\NS(X')$ (with integer coefficients).
This provides an integral structure for $\PM(X)$. 
In the following paragraph we construct a basis of $\PM(X)$ made of integral points.

For all birational morphisms $\pi$, the pull-back operator $\pi^*$ preserves the intersection form and maps nef classes to nef classes; 
as a consequence, the limit space $\PM(X)$ is endowed with an intersection form (of signature $(1,\infty)$) and a nef cone. 

\subsubsection{Basis of $\PM(X)$}\label{par:basisofpm}

A basis of the real vector space $\PM(X)$ is constructed as follows. 
On the set of models $\pi\colon Y\to X$ together with marked points $p\in Y,$ 
we introduce the equivalence relation: $(p,Y) \sim (p',Y')$ if the induced birational map $\pi'^{-1}\circ \pi \colon Y \dashrightarrow Y'$ is an isomorphism in a neighborhood of $p$ that maps $p$ onto $p'$. Let $\pts_X$ be the quotient space; to denote points of $\pts_X$ we just write $p\in \pts_X,$ without any further reference to a model $\pi:Y\to X$ with $p\in Y$.

Let $(p,Y)$ be an element of $\pts_X$. Consider $\bar{Y} \to Y$ the blow-up of $p$ and  $E_p \subset \bar{Y}$ the exceptional divisor; the N\'eron-Severi class $[E_p]$ determines a Picard-Manin class and one easily verifies that this class depends only on the class $p\in \pts_X$ (not on the model $(Y,\pi)$, see \cite{Manin:Book}). 
The classes $[E_p],$ $p\in \pts_X$, have self-intersection $-1,$  are mutually orthogonal,
and are orthogonal to $\NS(X)\ten$. Moreover, 
\[
\PM(X)=\NS(X)\ten\oplus \Vect([E_p]; \, p\in \pts_X)
\]
and this sum is orthogonal with respect to the intersection form on $\PM(X)$. To get a basis of $\PM(X),$
we then fix a basis $([H_i])_{1\leq i\leq \rho(X)}$ of $\NS(X)\ten$, where the $H_i$ are
Cartier divisors, and complete it with the family $([E_p])_{p\in \pts_X}$.

\subsubsection{Completion}
The \textbf{(completed) Picard-Manin space} $\man(X)$ of $X$ is the $L^2$-completion of $\PM(X)$ (see \cite{Boucksom-Favre-Jonsson:2008, Cantat:Preprint} for details); in other words 
$$
\man(X) = \left\lbrace  [D] + \sum_{p \in \pts_X} a_p [E_p]; \, [D] \in \NS(X)\ten,\, a_p \in \R \mbox{ and } \sum a_p^2 < +\infty
\right\rbrace 
$$ 
whereas $\PM(X)$ corresponds to the case where the $a_p$ vanish for all but a finite
number of $p\in \pts_X$.
For the projective plane $\P^2_{\mathbf{k}}$, the N\'eron-Severi group $\NS(\P^2_{\mathbf{k}})$ is isomorphic to $\Z[H],$
where $H$ is a line; elements of $\man(X)$ are then given by sums 
\[
a_0 [H] + \sum_{p \in \pts_\cpd} a_p [E_p]
\]
with $\sum a_p^2 < +\infty$. 
We shall call this space the Picard-Manin space without further reference to $\P^2_{\mathbf{k}}$ or to the completion.

%%%
\subsection{Action of $\Bir(X)$ on $\man(X)$}\label{par:actionofbir}

%%%

If $\pi\colon X' \to X$ is a morphism, then $\pi$ induces an isomorphism $\pi^*\colon \PM(X) \to \PM(X')$. Let us describe this fact when $\pi$ is the (inverse of the) blow-up of a point 
$q \in X$. In this case we have 
\[
\NS(X') = \pi^*(\NS(X)) \oplus \Z [E_q] \quad \mbox{ and } \quad \pts_X = \pts_{X'} \cup \left\lbrace (q,X) \right\rbrace
\]
where $(q,X)\in \pts_X$ denotes the point of $\pts_X$ given by $q\in X$.
Thus the bases of $\PM(X)$ and $\PM(X')$ are in bijection, the only difference being that $[E_q]$ is first viewed 
as the class of an exceptional divisor in $\PM(X)$, and  then as an element of $\NS(X')\subset \PM(X')$; 
the isomorphism $\pi^*$ corresponds to this bijection of basis.
Note that  $\pi^*$ extends uniquely as a continuous  isomorphism
$\pi^*\colon \man(X') \to \man(X)$ that preserves the intersection form. 
Since any birational morphism $\pi$ is a composition of blow-downs of exceptional curves of the first kind, this
proves that $\pi^*$ is an isometry from $\PM(X)$ (resp. $\man(X)$) to $\PM(X')$ (resp. $\man(X')$).

Now consider $f\colon X \dashrightarrow X $ a birational map. There exist a surface $Y$ and two morphisms $\pi\colon Y \to X$, $\sigma\colon Y \to X$, such that $f = \sigma \circ \pi^{-1}$. Defining $f^*$ by $f^*=(\pi^*)^{-1}\circ\sigma^*$, and $f_*$ by $f_*=(f^{-1})^*$, we get a representation 
\[
f\mapsto f_*
\]
of $\Bir(X)$ into the orthogonal group of $\PM(X)$ (resp. $\man(X)$) with respect to the intersection form. This representation is faithful, because $f_*[E_p]=[E_{f(p)}]$ for all points $p$ in the domain of definition of $f$; it preserves the integral structure of $\PM(X)$ and the nef cone.

In what follows, we restrict the study to the projective plane $\P^2_{\mathbf{k}}$ and the Cremona group  $\Bir(\P^2_{\mathbf{k}})$.

%%%
\subsection{Action on an infinite dimensional hyperbolic space}
%%%
%%
\subsubsection{The hyperbolic space $\hypman$}
We define
\[
\hypman = \left\lbrace [D] \in \man(\P^2_{\mathbf{k}}) ; \, \, [D]^2 = 1, [H] \cdot [D] > 0 \right\rbrace
\] 
and a distance $\hd$ on $\hypman$ by the following formula
\[
\cosh (\hd([D_1],[D_2])) = [D_1] \cdot [D_2].
\]
Since the intersection form is of Minkowski type, this endows  $\hypman$ with the structure of an infinite dimensional hyperbolic space, as in \S
\ref{par:hypspaces}. 

\subsubsection{Cremona isometries}\label{par:cremonaisometries}
By paragraph \ref{par:actionofbir} the action of the Cremona group on $\man(\P^2_{\mathbf{k}})$ preserves the two-sheeted hyperboloid 
$\left\lbrace [D] \in \man(\P^2_{\mathbf{k}}); [D]^2 = 1 \right\rbrace$ and since the action also preserves the nef cone, we obtain a faithful representation 
of the Cremona group into the group of isometries of $\hypman$:
\[
\Bir(\P^2_{\mathbf{k}}) \hookrightarrow \Isom(\hypman).
\]
In the context of the Cremona group, the classification of isometries into three types (see \S~\ref{par:isohypspace}) has an algebraic-geometric meaning. 

\begin{thm}[\cite{Cantat:Preprint}]
Let $f$ be an element of $\Bir(\P^2_{\mathbf{k}})$. 
The isometry $f_*$ of $\hypman$ is  hyperbolic if and only if the dynamical degree $\dd(f)$ is strictly larger than $1$. 
\end{thm}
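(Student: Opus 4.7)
The goal is to identify the translation length of the isometry $f_*$ on $\hypman$ with $\log\dd(f)$; since by \S\ref{par:isohypspace} an isometry of $\hypman$ is hyperbolic precisely when its translation length is positive, this identification yields both directions of the equivalence. The computation is carried out along the orbit of the natural basepoint $[H]\in\hypman$, the class of a line.

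First, $[H]\in\hypman$ because $[H]^2=1$ and $[H]$ is nef, and the orbit stays in $\hypman$ because $f_*$ preserves the intersection form and the nef cone (see \S\ref{par:actionofbir}). The key identity is
\[
[H]\cdot (f^n)^*[H] = \deg(f^n).
\]
This is the classical fact that if the homaloidal class of $f^n$ in $\PM(\cpd)$ is $d[H]-\sum m_i[E_{p_i}]$ with $d=\deg(f^n)$, then pairing with $[H]$ collects only the $[H]$-component. Equivalently, resolve $f^n$ through $\pi,\sigma\colon Y\to\cpd$ with $f^n=\sigma\circ\pi^{-1}$; by \S\ref{par:actionofbir}, $(f^n)^*[H]=(\pi^*)^{-1}\sigma^*[H]$, and pairing on $\PM(\cpd)$ reduces via the isometry $\pi^*$ to $\pi^*[H]\cdot\sigma^*[H]$ on $Y$, which counts the intersection of a generic line with the image by $f^n$ of another generic line, namely $\deg(f^n)$. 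Since the group homomorphism property $(fg)_*=f_*g_*$ gives $(f_*)^{-1}=f^*$ and $(f^n)^*=(f^*)^n$, the map $(f^n)^*$ is an isometry of $\hypman$, hence
\[
\hd([H],f_*^n[H]) = \hd((f^n)^*[H],[H]) = \cosh^{-1}\!\bigl(\deg(f^n)\bigr).
\]

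Finally, dividing by $n$ and letting $n\to\infty$, I apply the asymptotic $\cosh^{-1}(x)=\log(2x)+o(1)$ as $x\to\infty$, together with the formula $\dd(f)=\lim_n\deg(f^n)^{1/n}$ recorded in the example of \S\ref{par:picman}, to get
\[
\lgt(f_*) = \lim_{n\to\infty}\tfrac{1}{n}\hd([H],f_*^n[H]) = \log\dd(f).
\]
Existence of the limit defining $\lgt(f_*)$ follows from Fekete's lemma applied to the subadditive sequence $n\mapsto\hd([H],f_*^n[H])$. Both directions now follow: if $\dd(f)>1$ then $\lgt(f_*)>0$, so $f_*$ has unbounded orbits and positive translation length, hence is hyperbolic; conversely, hyperbolicity of $f_*$ forces $\lgt(f_*)>0$, hence $\dd(f)>1$. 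The only step requiring attention is the intersection formula $[H]\cdot(f^n)^*[H]=\deg(f^n)$; it is purely a matter of unpacking the Picard--Manin definition of $f^*$ from \S\ref{par:actionofbir}, and no refined estimate is needed beyond that and the elementary behaviour of $\cosh^{-1}$ at infinity.
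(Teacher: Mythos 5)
Your proof is correct, and it is a clean way to establish the equivalence. The paper itself cites the statement to \cite{Cantat:Preprint} without reproducing a proof, so there is no internal argument to compare line by line; but the paper's own Remark \ref{rem:trans_length} records the identity $\lgt(f_*)=\log\dd(f)$ by exhibiting the isotropic eigendirections $R_f^{\pm}$ and evaluating the displacement of a point on the axis, which presupposes that $f_*$ is already known to be hyperbolic (i.e.\ it is a consequence of the theorem, not a proof of it). Your route is genuinely different and in a sense more economical: you read off $\lgt(f_*)$ directly from the orbit of the single basepoint $[H]$, using only the identity $[H]\cdot f_*^n[H]=\deg(f^n)$ (which is just the statement that the $[H]$-coefficient of the homaloidal class of $f^n$ is $\deg(f^n)$, the exceptional components being orthogonal to $[H]$), Fekete's lemma for the existence of $\lgt$, and the elementary two-sided bound $\log x\le \cosh^{-1}(x)\le \log(2x)$, which handles all three regimes (bounded degrees, polynomial growth, exponential growth) uniformly — avoiding the spectral/eigenvector analysis entirely. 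What the eigenvector approach buys, and your proof does not, is the explicit description of the invariant axis $\axe(f_*)=V_f\cap\hypman$ and of the boundary fixed points $R_f^{\pm}$; these are used later in the paper (e.g.\ in \S\ref{sec:rigidity_generic}) and your argument would have to be supplemented by the structural facts from \S\ref{par:isohypspace} to recover them. Two small presentational points that do not affect correctness: the intersection $\pi^*[H]\cdot\sigma^*[H]$ is most naturally read as the degree of $f^{-n}$ (push down by $\pi$ rather than $\sigma$), which equals $\deg(f^n)$ anyway; and the formula $\dd(f)=\lim_n\deg(f^n)^{1/n}$ is recorded in the example of the subsection on dynamical degrees, not in \S\ref{par:picman}.
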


As a consequence, when $\dd(f) > 1$ then $f_*$ preserves a unique 
geodesic line $\axe(f)\subset \hypman$; this line is the intersection of $\hypman$ with a 
plane $V_f\subset \man(\P^2_{\mathbf{k}})$ which intersects the isotropic cone of $\man(\P^2_{\mathbf{k}})$
on two lines $R^+_f$ and $R^-_f$ such that 
\[
f_* (a) =\dd(f)^{\pm 1} a
\] 
for all $a\in R^\pm_f$ (the lines $R^+_f$ and $R^-_f$ correspond to $\omega(f)$ and $\alpha(f)$ 
in the notation of \S~\ref{par:isohypspace}).

\begin{rem} \label{rem:trans_length}
The translation length $\lgt(f_*)$  is therefore equal to $ \log(\dd(f))$. Indeed, take $[\alpha] \in R^-_f$, $[\omega] \in R^+_f$ normalized such that $[\alpha]\cdot[\omega] = 1$. The point  $[P] = \frac{1}{\sqrt{2}}(  [\alpha] + [\omega])$ is on the axis $\axe(f_*) $. Since $f_*[P]= \frac{1}{\sqrt{2}}(\dd(f)^{-1}[\alpha] + \dd(f) [\omega])$, we get
\[
e^{\lgt(f_*)} + \frac{1}{e^{\lgt(f_*)}} = 2 \cosh\bigl(\hd([P], f_*[P])\bigr) = 2([P]\cdot f_*[P]) = \dd(f) + \frac{1}{\dd(f)}.
\]
\end{rem}

\begin{rem} Over the field of complex numbers $\C$, \cite{Cantat:Preprint} proves that:
$f_*$ is elliptic if, and only if, there exists a positive iterate $f^k$ of $f$ and a birational 
map $\eps\colon \cpd \dashrightarrow X$ such that $\eps \circ f^k \circ \eps^{-1}$ is 
an element of $\Aut(X)^0$ (the connected component of the identity in $\Aut(X)$); 
$f_*$ is parabolic if, and only if, $f$ preserves a pencil of elliptic curves and $\deg(f^n)$ grows quadratically with $n,$ or 
$f$ preserves a pencil of rational curves and $\deg(f^n)$ grows linearly with $n$.
\end{rem}

\subsubsection{Automorphisms} \label{par:autom-action}

Assume that $f\in \Bir(\P^2_{\mathbf{k}})$ is conjugate, via a birational transformation $\varphi$, to an automorphism $g$ of a smooth rational surface $X$:
$$\xymatrix{
X \ar[r]_g \ar@{-->}[d]_\varphi & X \ar@{-->}[d]^\varphi \\
\P^2_{\mathbf{k}} \ar@{-->}[r]_f & \P^2_{\mathbf{k}}
}$$
Then we have an isomorphism $\varphi_*\colon \man(X) \to \man(\P^2_{\mathbf{k}})$ and an orthogonal 
decomposition
\[
\man(X) = \NS(X)\ten \oplus {\NS(X)\ten} ^\perp
\]
where ${\NS(X)\ten}^\perp$ is spanned by the classes $[E_p],$ $p\in \pts_X$. 
This orthogonal decomposition is $g_*$-invariant. In particular, $f_*$ preserves
the finite dimensional subspace $\varphi_*\NS(X)\ten\subset \man(\P^2_{\mathbf{k}})$. 

By Hodge index theorem, the intersection form has signature $(1,\rho(X)-1)$ on 
$\NS(X),$ so that $\varphi_*\NS(X)\ten$ intersects $\hypman$ on an $f_*$-invariant hyperbolic
subspace of dimension $\rho(X)-1$. This proves the following lemma.

\begin{lem} \label{lem:f_*}
If $f$ is conjugate to an automorphism $g\in \Aut(X)$ by a birational map $\varphi\colon X\dashrightarrow \P^2_{\mathbf{k}},$
then:
\begin{enumerate}
\item The isometry $f_*\colon \hypman \to \hypman$ is hyperbolic (resp. parabolic, resp. elliptic) if and only if the isometry $g_*\colon \NS(X)\ten \to \NS(X)\ten$ is hyperbolic (resp. parabolic, resp. elliptic) for the intersection form on $\NS(X)\ten$; 
\item the translation length of $f_*$ is equal to the translation length of $g_*$; 
\item if $f_*$ is hyperbolic then, modulo $\varphi_*$-conjugacy,  the plane $V_f$ corresponds to $V_g$, which is contained
in $\NS(X)\ten$, and $ \axe(f_*) $ corresponds to $\axe(g_*)$. 
\end{enumerate}
\end{lem}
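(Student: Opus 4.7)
The plan is to reduce the statement for $f_*$ on $\man(\P^2_{\mathbf{k}})$ to an assertion about $g_*$ acting on $\man(X)$, and then exploit the $g_*$-invariant orthogonal splitting of $\man(X)$ already described in the paragraph preceding the lemma. First, I would observe that the birational map $\varphi$ induces, by the discussion in \S \ref{par:actionofbir}, an isometry $\varphi_*\colon \man(X)\to \man(\P^2_{\mathbf{k}})$ which preserves the intersection form, sends the hyperbolic space of $X$ onto $\hypman$, and conjugates $g_*$ into $f_*$. Since the type of an isometry, its translation length, and its invariant geodesic line (when hyperbolic) are all preserved by conjugation by an isometry of the ambient hyperbolic space, it suffices to establish each of the three assertions with $f_*$ replaced by $g_*$ and $V_f$, $\axe(f_*)$ replaced by the corresponding objects inside $\man(X)$.

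Next I would use that $g$ is a biregular automorphism of $X$: it lifts uniquely to an isomorphism between any two models $Y_1, Y_2\in\mod_X$ (after blowing up the $g$-images of the centers), and therefore induces a bijection on the set $\pts_X$. Consequently $g_*$ permutes the basis $\{[E_p]\}_{p\in\pts_X}$ of ${\NS(X)\ten}^{\perp}$, preserves the orthogonal splitting $\man(X)=\NS(X)\ten\oplus {\NS(X)\ten}^{\perp}$, and restricts to a permutation-type orthogonal transformation of the negative definite summand. Hodge index theorem gives $\NS(X)\ten$ signature $(1,\rho(X)-1)$, so its intersection with the positive sheet is a finite dimensional hyperbolic subspace $\Hyp_X\subset \hypman(X)$ of dimension $\rho(X)-1$ which is $g_*$-invariant.

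To prove (1) and (2), pick an ample class $[D]\in\NS(X)\cap \Hyp_X$. The whole orbit $\{g_*^n[D]\}_{n\in\Z}$ stays inside $\Hyp_X$, and the ambient hyperbolic distance restricted to $\Hyp_X$ coincides with the intrinsic hyperbolic distance of $\Hyp_X$. Therefore $\{g_*^n[D]\}$ is bounded in $\hypman(X)$ if and only if it is bounded in $\Hyp_X$, which gives the equivalence elliptic $\Leftrightarrow$ elliptic. The translation length is read off from the same orbit via
\[
\lgt(g_*)=\lim_{n\to+\infty}\tfrac{1}{n}\hd([D],g_*^n[D]),
\]
and both limits (in $\hypman(X)$ and in $\Hyp_X$) are equal. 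This yields assertion (2) and, combined with the fact that hyperbolic $\Leftrightarrow$ $\lgt>0$ (while parabolic/elliptic both have zero translation length), the remaining part of (1).

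For (3), assume $g_*$ is hyperbolic. Its dynamical plane $V_g\subset\NS(X)\ten$ is $g_*$-invariant inside $\NS(X)\ten$, hence also inside $\man(X)$; the intersection $V_g\cap \Hyp_X$ is a geodesic line on which $g_*$ acts as a translation of length $\log\dd(g)$. By uniqueness of the invariant axis of a hyperbolic isometry of a hyperbolic space (\S \ref{par:isohypspace}), this geodesic is $\axe(g_*)$ both in $\Hyp_X$ and in $\hypman(X)$, so $V_f=\varphi_*V_g$ and $\axe(f_*)=\varphi_*\axe(g_*)$. The only point requiring care is the verification that an automorphism of $X$ really does induce a bijection of $\pts_X$ (which includes classes of infinitely near points over every dominating model); this is the main, but rather mild, obstacle, and it follows from the functoriality of blow-ups under isomorphisms of the base.
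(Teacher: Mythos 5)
Your proof is correct and follows the same approach the paper sketches in \S\,3.5.3: one uses the $g_*$-invariant orthogonal decomposition $\man(X)=\NS(X)\ten\oplus\NS(X)\ten^{\perp}$ to produce an $f_*$-invariant, totally geodesic, finite-dimensional hyperbolic subspace of $\hypman$, and then reads off type, translation length, and axis from that subspace, transporting via the isometry $\varphi_*$. The details you supply (orbit of an ample class for (1)--(2), uniqueness of the invariant axis of a hyperbolic isometry of $\Hyp$ for (3), and the fact that an automorphism permutes $\pts_X$) are precisely what the paper leaves implicit.
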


\subsubsection{Example: quadratic mappings (see \cite{Cerveau-Deserti:2009})}
The set of birational quadratic maps $\Bir_2(\P^2_\C)$ is an irreducible algebraic variety of
dimension $14$. Let  $f\colon \cpd\dashrightarrow \cpd$ be a quadratic birational map. 
The base locus of $f$ (resp.  $f^{-1}$) is made of 
three points $p_1, p_2, p_3$ (resp. $q_1, q_2,q_3$), where infinitely near points
are allowed.  
We have
\[ f_*([H]) = 2[H] - [E_{q_1}]  - [E_{q_2}]  - [E_{q_3}]
\]
with $[H]$ the class of a line in $\cpd$.
If $f$ is an isomorphism on a neighborhood of $p$, and $f(p) = q$, then
$
f_*([E_p]) = [E_q].
$
These formulas are correct even when there are infinitely near base points. For instance if $f$ is the H\'enon map
$$ [x:y:z] \dashrightarrow [yz:y^2-xz:z^2]$$
then $q_2$ is infinitely near $q_1$, and $q_3$ is infinitely near  $q_2$, but  the formula for the image of $[H]$ is still the same. 
%In such a situation it is crucial to work with total transforms (as opposed to strict transforms).
%, and the brackets in our notation should serve as a constant reminder of this fact.

 A Zariski open subset of $\Bir_2(\P^2_\C)$ is made of birational transformations
$f=h_1\circ \sigma\circ h_2$, with $h_i\in \Aut(\cpd)$, $i=1,2$, and $\sigma$
the standard quadratic involution 
\[
\sigma[x:y:z]=[yz:zx:xy].
\] 
For such maps, 
$\{p_1,p_2,p_3\}$ is the image of $\Ind(\sigma)=\{[1:0:0]; [0:1:0];[0:0:1]\}$
by $h_2^{-1}$, and $\{q_1,q_2,q_3\}=h_1(\Ind(\sigma))$. Moreover,
 the exceptional set $\Exc(f^{-1})$ is the union of the three lines through the pairs of points $(q_i,q_j)$, $i\neq j$. 
Assume, for example, that the line through $q_1$ and $q_2$ is contracted on $p_1$ by $f^{-1}$, then we have
\[
f_*([E_{p_1}]) = [H] - [E_{q_2}]- [E_{q_3}].
\]
If $f$ is a general quadratic map, then $\dd(f)$ is equal to $2$ and $f_*$ induces a hyperbolic isometry on $\hypman$; we shall see that it is possible to compute explicitly the points on the axis of $f$ (see \S \ref{sec:rigidity_generic} for precise statements) and that $[H]$ is \textit{not} on the axis. In fact, in this situation the axis of $f$ does not contain any finite class $[D]\in \PM(\P^2_\C)$.

%
%%%%%%%%%%%%%%%%%%%%%%%%%%%%%%%%%%%%%%%%%%%%%%%%%%%%%%%%%%%%%%%%%%
%

\section{\Good\ birational maps}

%
%%%%%%%%%%%%%%%%%%%%%%%%%%%%%%%%%%%%%%%%%%%%%%%%%%%%%%%%%%%%%%%%%%
%

\subsection{General Cremona transformations}
\label{par:generic}

%
%%%%%%%%%%%%%%%%%%%%%%%%%%%%%%%%%%%%%%%%%%%%%%%%%%%%%%%%%%%%%%%%%%
%

In this section we prove Theorem~A concerning normal subgroups generated by iterates of general Cremona transformations.

\subsubsection{Jonqui\`eres transformations}

Let $d$ be a positive integer. 
As mentioned in the introduction, the set $\Bir_d(\cpd)$ 
of plane birational transformations  of degree $d$ is quasi-projective: It is a 
Zariski open subset in a subvariety of the projective space made of triples of homogeneous polynomials of degree $d$ modulo scalar multiplication.

Recall that $\J_d$ denotes the set of Jonqui\`eres transformations of degree $d$, defined as the birational transformations of degree $d$ of $\cpd$ that preserve the pencil of lines through $q_0=[1:0:0]$. Then we define $\V_d$ as the image of the composition map
\[
(h_1, f, h_2) \mapsto h_1 \circ f \circ h_2
\]
where $(h_1, f, h_2)$ describes $\PGL_3(\C)\times \J_d\times \PGL_3(\C)$.
As the image of an irreducible algebraic set by a regular map, $\V_d$ is an irreducible subvariety of $\Bir_d(\cpd)$. 
The dimension of $\Bir_d(\cpd)$ is equal to $4d+6$ and $\V_d$ is the unique
irreducible component of $\Bir_d(\cpd)$ of maximal dimension (in that sense, generic
elements of $\Bir_d(\cpd)$ are contained in $\V_d$). In degree $2,$ i.e. for quadratic
Cremona transformations, $\V_2$ coincides with a Zariski open subset of $\Bir_2(\cpd)$. 

Let $f$ be an element of  $\J_d$. In affine coordinates, 
\[
f(x,y)=(B_y(x),A(y))
\]
where $A$ is in $\PGL_2(\C)$ and $B_y$ in $\PGL_2(\C(y))$. 
Clearing denominators we can assume that
$B_y$ is given by a function $B\colon y\mapsto B(y)$ with 
\[
B(y)= \left(
\begin{array}{cc} a(y) & b(y) \\ c(y) & d(y) 
\end{array}
\right) \in \GL_2(\C(y)),
\] 
where the coefficients $a, b, c$ and $d$ are polynomials of respective degrees $d-1, d, d-2$ and $d-1$.
The degree of  the function $\det(B(y))$ is equal to $2d -2$; if $B$ is generic, $\det(B(y))$ has 
$2d-2$ roots $y_i,$ $1\leq i\leq 2d-2,$ and $B({y_i})$ is a rank $1$ complex matrix
for each of these roots. The image of $B({y_i})$ is a line, and this line corresponds
to a point $x_i$ in  $\P(\C^2)$. The birational transformation $f$ contracts each horizontal 
line corresponding to a root $y_i$ onto a point $q_i=(x_i,A(y_i))$. 
This provides $2d-2$ points of indeterminacy for $f^{-1}$; again, if $B$ is generic,
the $2d-2$ points $q_i$ are distinct, generic points of the plane. The same conclusion
holds if we change $f$ into its inverse, and gives rise to $2d-2$ indeterminacy points
$p_1, \cdots, p_{2d-2}$ for $f$. One more indeterminacy point (for $f$ and $f^{-1}$) 
coincides with $p_0 =q_0 =[1:0:0]$.

An easy computation shows that the base locus of $f$  is made of 
\begin{enumerate}
\item the point $p_0$ itself, with multiplicity $d-1$;
\item the $2d-2$ single points $p_1, \cdots, p_{2d-2}$.
\end{enumerate}
Any set of three distinct points $\{p_0,p_1, p_{2}\}$ such that  $p_0=[1:0:0]$ 
and $p_0$, $p_1$, and $p_2$ are not collinear is the indeterminacy set of a Jonqui\`eres 
transformation of degree $2$. All sets of distinct points $\{p_0, p_1, \cdots, p_{4}\}$ such that
$p_0=[1:0:0]$, no three of them are on a line through $p_0$, and there is no line containing $p_1$, $p_2$, 
$p_3$, and $p_4$  can be obtained as the indeterminacy
set of a Jonqui\`eres transformation of degree $3$. More generally, the indeterminacy sets of 
Jonqui\`eres transformations of degree $d$ form a non empty, Zariski open subset in the product
$\{p_0\}\times S^{2d-2}(\cpd)$, where $S^{2d-2}(\cpd)$ is the symmetric product of $2d-2$ copies
of the projective plane. 

In particular, on the complement of a strict Zariski closed subset of $\J_d,$ 
the points $p_i$ form a set of $2d-1$ distinct points in the plane: There are no infinitely
near points in the list.  Thus, for a generic element of $\V_d,$ we have 
\[
f_*[H]= d[H]-(d-1)[E_{p_0}]-\sum_{i=1}^{2d-2}[E_{p_i}]
\]
where the $p_i$ are generic distinct points of the plane.

\begin{rem} \label{rem:fourpoints}
If $\Sigma\subset \cpd$ is a generic set of $k$ points, and $h$ is an automorphism
of $\cpd,$ then $h$ is the identity map as soon as $h(\Sigma)\cap \Sigma$ contains five points. Applied
to $\Ind(f),$ we obtain the following:
Let $g$ be a generic element of $\V_d,$ and $h$ be an automorphism of $\cpd$;
if $h$ is not the identity map, then $h(\Ind(g))\cap \Ind(g)$ contains at most four points.
\end{rem}

\subsubsection{\Good ness is a general property in $\Bir_d(\cpd)$}

Our goal is to prove the following statement. 

\begin{thm}\label{thm:goodinvd}
There exists a positive integer $k$  such that for all integers $d\geq 2$ 
the following properties are satisfied by a general element $g\in \V_d$: 
\begin{enumerate}
\item $g$ is a \good\ Cremona transformation;
\item If $n\geq k,$ then $g^{n}$ generates a normal subgroup of 
$\Bir(\cpd)$ whose non trivial elements $f$  satisfy $\dd(f)\geq d^n$.
\end{enumerate}
\end{thm}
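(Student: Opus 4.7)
The plan is to verify the hypotheses of Theorem~\ref{thm:criterion} for $g^n$ (with $n \ge k$) and then read off both conclusions of Theorem~\ref{thm:goodinvd} from its conclusion. Concretely, the strategy is: (i) show $g_*$ is hyperbolic with $\lgt(g_*)=\log d$ and exhibit $\axe(g_*)$; (ii) establish the rigidity of $\axe(g_*)$ with a bound $B$ that is uniform in $d$; (iii) check the normalizer condition $f_*(\axe(g_*))=\axe(g_*)\Rightarrow fgf^{-1}=g^{\pm1}$; (iv) observe that \good ness passes to iterates, and for $n\geq k$ large enough (independent of $d$, since $\lgt(g^n_*)=n\log d\geq n\log 2$) the small cancellation inequality $\tfrac{1}{20}\lgt(g^n_*)\ge 60\theta+2B$ of Section~\ref{par:small-cancellation} is satisfied.

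First I would verify hyperbolicity and identify the axis. Using the formula $g_*[H]=d[H]-(d-1)[E_{p_0}]-\sum_{i=1}^{2d-2}[E_{p_i}]$ together with the general position of the base points of $g^{\pm n}$ (no infinitely near points, no coincidences between forward and backward orbits), one checks inductively that $(g^n)_*[H]$ has $\cpd$-degree exactly $d^n$. Thus $\dd(g)=d$, the isometry $g_*$ is hyperbolic, and by Remark~\ref{rem:trans_length} the translation length equals $\log d$. The axis is the intersection with $\hypman$ of the $g_*$-invariant plane $V_g$ spanned by the isotropic classes $[\omega_g]=\lim d^{-n}g_*^n[H]$ and $[\alpha_g]=\lim d^{-n}g_*^{-n}[H]$, both of which lie in $\man(\cpd)$ and can be expanded explicitly as sums of the $[E_p]$ over the grand orbit of base points of $g$.

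The main obstacle is the rigidity of $\axe(g_*)$, and I would spend most of the argument there. By Lemma~\ref{lem:rigidPM}, it suffices to prove $(\eps',B')$-rigidity for some small $\eps'>0$ and \emph{uniform} $B'$. I would argue by contradiction: if $f_*(\axe(g_*))$ and $\axe(g_*)$ stay $\eps'$-close on a segment of length $B'\gg 0$, then Lemma~\ref{lem:serge} implies that the corresponding isotropic endpoints are arbitrarily close in a neighborhood of these segments, forcing $f$ to nearly preserve the finite configurations of base points of $g^n$ for increasingly large $n$. Because these configurations are in very general position in $\cpd$ (and by Remark~\ref{rem:fourpoints} no nontrivial projective transformation can fix five such points), the only way this can happen for arbitrarily large $n$ is that $f$ genuinely sends the base locus of $g^n$ to itself, whence $f_*[\omega_g]$ is proportional to $[\omega_g]$ and $f_*(\axe(g_*))=\axe(g_*)$, a contradiction. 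The uniformity of $B$ in $d$ reduces to the fact that the "combinatorial" obstruction (five points in general position) does not depend on $d$.

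Next I would treat the normalizer condition. If $f_*(\axe(g_*))=\axe(g_*)$ then $fgf^{-1}$ is a hyperbolic isometry with the same axis and same translation length as $g$, so it acts on $V_g$ as a translation by $\pm\log d$ and permutes $\{[\omega_g],[\alpha_g]\}$. Applying the same rigidity analysis to the base points (using Remark~\ref{rem:fourpoints} again) shows that $fgf^{-1}$ and $g^{\pm 1}$ have identical actions on $\man(\cpd)$; by faithfulness of the Picard--Manin representation (\S\ref{par:actionofbir}) we conclude $fgf^{-1}=g^{\pm 1}$. This finishes the proof that $g$ is \good. Any iterate $g^n$ is then also \good, and once $n\log d\geq 1200\theta+40B$ (which holds for all $n\geq k$ with $k$ independent of $d\geq 2$), $g^n$ satisfies the small cancellation property of Theorem~\ref{thm:criterion}.

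Applying Theorem~\ref{thm:criterion} to $g^n$ yields a proper normal subgroup $\lld g^n\rrd$, and any nontrivial $h\in\lld g^n\rrd$ is either conjugate to $g^n$ (whence $\dd(h)=\dd(g^n)=d^n$) or satisfies $\lgt(h_*)>\lgt(g^n_*)=n\log d$, i.e.\ $\dd(h)>d^n$. In both cases $\dd(h)\geq d^n$, giving assertion (2). The hardest step will be the uniform rigidity bound; once that is in place, both assertions of the theorem and the uniformity of $k$ follow formally.
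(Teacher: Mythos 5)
Your high-level plan -- identify the axis, prove rigidity with a $d$-uniform bound, check the normalizer condition, then iterate and feed $g^n$ into Theorem~\ref{thm:criterion} -- matches the paper's strategy, and the last step (reading off assertion (2) from the conclusion of Theorem~\ref{thm:criterion}) is handled correctly. The genuine gap is in step (ii), the rigidity of $\axe(g_*)$, which is where almost all of the work in the paper lies.

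Your argument for rigidity is: approximate coincidence of the axis with $f_*(\axe(g_*))$ over a long segment forces $f$ to ``nearly preserve'' the base configurations of $g^n$ for large $n$, and since no nontrivial projective transformation can fix five generic points (Remark~\ref{rem:fourpoints}), $f$ must preserve the axis exactly. This skips the hardest step: Remark~\ref{rem:fourpoints} applies only to $f\in\Aut(\P^2)$, not to arbitrary birational $f$, so one first has to \emph{bound the degree of $f$}. In the paper this is the content of Proposition~\ref{prop:f=id}: one fixes the specific point $[P]=\tfrac1{\sqrt2}([\alpha]+[\omega])\in\axe(g_*)$, shows via $\deg(f)=f_*[H]\cdot[H]\le\cosh(2\hd([P],[H])+\eps_0)<4$ that $f$ has degree at most $3$, and then rules out degrees $2$ and $3$ by explicit intersection computations with the class $[R]=\sqrt2\,[H]-\sqrt2\,[P]$, crucially using Fact~\ref{fact:pointsdisjoints} (distinctness of the points blown up in $[R]$, a consequence of strong algebraic stability, Lemma~\ref{lem:pointsdisjoints}). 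Only after $f$ is known to be linear can Remark~\ref{rem:fourpoints} be invoked, and then it is applied to the \emph{fixed} configuration $\Ind(g)\cup\Ind(g^{-1})$, not to growing configurations for $g^n$. Your sketch contains neither the degree bound, nor the use of strong algebraic stability that underlies the intersection estimates, nor the reduction to a finite, $n$-independent configuration; as written, the passage from ``$f$ nearly preserves data attached to $g^n$'' to ``$f$ preserves the axis'' is an unproved leap. One should also note that your normalizer step (iii) leans on ``the same rigidity analysis,'' whereas the paper derives it cleanly from Proposition~\ref{prop:f=id} applied to $(fgf^{-1}g^{\mp1})_*$, so even there the missing degree bound is being silently used.
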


Since $\V_d$ is irreducible, \good\ Cremona transformations 
are dense in $\V_d$; since $\V_d$ is the unique component of $\Bir_d(\cpd)$ of
maximal dimension, properties (1) and (2) are also generally satisfied  in $\Bir_d(\cpd)$.
Thus Theorem~\ref{thm:goodinvd} implies Theorem~A.

\subsubsection{Strong algebraic stability is a general property}

Given a surface $X$, a birational transformation $f \in \Bir(X)$ is said to be \textbf{algebraically stable} if 
one of the following equivalent properties hold: 
\begin{enumerate}
\item if $x$ is a point of $\Ind(f),$ then $f^k(x)\notin \Ind(f^{-1})$ for all $k\leq 0$;
\item if $y$ is a point of $\Ind(f^{-1})$ then $f^k(y)\notin \Ind(f)$ for all $k\geq 0$;
\item the action $f^*$  of $f$ on the N\'eron-Severi group $\NS(X)$ satisfies $(f^k)^*=(f^*)^k$
for all $k\in \Z$.
\end{enumerate}
If $f$ is an element of $\Bir_d(\cpd),$ algebraic stability is equivalent to $\dd(f)=\deg(f)$
(if $f$ is not algebraically stable, property (3) implies $\dd(f)<\deg(f)$).
Condition (2)  can be rephrased by saying that $f$ is algebraically stable if
\[
f^k(\Ind(f^{-1}))\cap \Ind(f)=\emptyset
\]
for all $k\geq 0$. We now prove that general elements of $V_d$ satisfy a property which is stronger
than algebraic stability (recall from Remark \ref{rem:ind_in_exc} that $\Ind(f) \subset \Exc(f)$). 

\begin{lem}
\label{lem:generic-exc}
If $g$ is a general element of $\V_d$ then 
\begin{equation}\label{eq:sas}
g^k(\Ind(g^{-1}))\cap \Exc(g)= \emptyset
\end{equation}
for all $k\geq 0$. In particular $g$ is algebraically stable.
\end{lem}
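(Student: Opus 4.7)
The plan is to argue, for each integer $k \geq 0$, that the subset
\[
U_k := \{g \in \V_d : g^k \text{ is regular on } \Ind(g^{-1}) \text{ and } g^k(\Ind(g^{-1})) \cap \Exc(g) = \emptyset\}
\]
is a nonempty Zariski open subset of $\V_d$. Since $\V_d$ is irreducible, each $U_k$ is then Zariski dense, so a general element $g$ of $\V_d$ (in the sense of \S\ref{intro:generic}) belongs to the countable intersection $\bigcap_{k \geq 0} U_k$ and satisfies (\ref{eq:sas}); algebraic stability then follows from the inclusion $\Ind(g) \subset \Exc(g)$ (Remark \ref{rem:ind_in_exc}).

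Openness of $U_k$ is straightforward: by the description of general Jonqui\`eres transformations recalled above, for $g$ in a Zariski open subset of $\V_d$ both $\Ind(g^{\pm 1})$ and $\Exc(g)$ vary algebraically with $g$, as does the rational map $g^k$ on a neighborhood of $\Ind(g^{-1})$; hence the failure of the disjointness condition is Zariski closed.

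The core step is non-emptiness. Via $\mu \colon \PGL_3 \times \J_d \times \PGL_3 \to \V_d$, $(h_1, f, h_2) \mapsto h_1 \circ f \circ h_2$, I fix a generic $f \in \J_d$ and set $h := h_2 \circ h_1$. A direct computation using $g^k = h_1 \circ (f \circ h)^{k-1} \circ f \circ h_2$, $\Ind(g^{-1}) = h_1(\Ind(f^{-1}))$ and $\Exc(g) = h_2^{-1}(\Exc(f))$, together with the identity $h \circ (f \circ h)^k = (h \circ f)^k \circ h$, reduces the condition $g \in U_k$ to
\[
(h \circ f)^k(h(p)) \notin \Exc(f) \quad \text{for every } p \in \Ind(f^{-1}),
\]
a condition on $h \in \PGL_3$ alone. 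For each such $p$, the rational map $\Phi_p \colon \PGL_3 \dashrightarrow \cpd$, $h \mapsto (h \circ f)^k(h(p))$, factors as $\Phi_p(h) = h(F_p(h))$, where $F_p(h) := f((h \circ f)^{k-1}h(p))$. At a generic $h_0 \in \PGL_3$ where the composition is regular, the differential is
\[
d\Phi_p|_{h_0}(v) = dh_0|_{F_p(h_0)}\bigl(v(F_p(h_0)) + dF_p|_{h_0}(v)\bigr),
\]
and the linear map $v \mapsto v(F_p(h_0))$ from $\mathfrak{pgl}_3$ already surjects onto $T_{F_p(h_0)}\cpd$ by transitivity of the $\PGL_3$-action on $\cpd$; hence for generic $h_0$ the differential is surjective and $\Phi_p$ is dominant. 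Consequently $\Phi_p^{-1}(\Exc(f))$ is a proper closed subset of $\PGL_3$; intersecting the complements over the finite set $\Ind(f^{-1})$ with the open locus of definition produces a nonempty open subset of $\PGL_3$ whose image under $\mu$ lies in $U_k$.

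The main obstacle is to justify that the two contributions to the differential do not systematically cancel: the identity $v(F_p(h_0)) + dF_p|_{h_0}(v) \equiv 0$ for all $v \in \mathfrak{pgl}_3$ is a rigid algebraic constraint on $h_0$ that one breaks by an explicit perturbation at a carefully chosen $h_0$ for which the forward orbit $h_0(p), f(h_0(p)), \ldots, (h_0 f)^{k-1}h_0(p)$ lies in general position. One thus concludes that $U_k$ is nonempty, completing the plan.
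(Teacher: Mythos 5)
Your overall plan — showing each $I_k := \V_d \setminus U_k$ is a proper Zariski closed subset of the irreducible variety $\V_d$, so that a general element avoids them all — is the same as the paper's. You also correctly reduce, via the factorisation $g = h_1\circ f\circ h_2$ and $h=h_2h_1$, to the condition $(hf)^k(h(p))\notin\Exc(f)$ for $p\in\Ind(f^{-1})$. The two arguments diverge at the crucial non-emptiness step, and this is where your proposal has a genuine gap.

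You try to show that $\Phi_p\colon h\mapsto (hf)^k(h(p))$ is dominant by computing $d\Phi_p|_{h_0}(v)=dh_0\bigl(v(F_p(h_0))+dF_p|_{h_0}(v)\bigr)$ and observing that the first summand $v\mapsto v(F_p(h_0))$ is onto. As you yourself note, this does not prove surjectivity of the sum: the term $dF_p|_{h_0}(v)$, obtained by differentiating a $k$-fold composition that also depends on $h$, is a priori comparable in size, and there is no reason the sum cannot degenerate for every $h_0$. Your remedy — ``an explicit perturbation at a carefully chosen $h_0$ for which the forward orbit lies in general position'' — is not carried out, and it is precisely the content of the lemma. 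Moreover, for large $k$ and a single $h_0\in\PGL_3$ one cannot force a length-$k$ forward orbit into ``general position'': $\mathfrak{pgl}_3$ has dimension $8$, so vanishing at even five points in the orbit already kills every $v$, and any genuine dynamics (which is what $f$ introduces) tends to produce highly non-generic orbits. So the sketch ends exactly where the real difficulty begins.

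The paper avoids this entirely. Rather than a transversality argument, it exhibits a single explicit $g\in\V_d$ lying outside every $I_k$ simultaneously: $g=h\circ f$ where $f$ is a H\'enon-type Jonqui\`eres map whose unique point $q=\Ind(f^{-1})$ is an attracting fixed point, and $h$ is a small translation chosen so that the neighbourhood $U$ of $q$ satisfies $h(f(U))\subset U\setminus\Exc(g)$. Then the whole forward orbit $g^k(\Ind(g^{-1}))$ stays trapped in $U$ away from the exceptional line, for all $k\geq 0$ at once. Notice that this example is the \emph{opposite} of general position (the orbit accumulates at a fixed point), which is a hint that the ``general position'' heuristic in your last paragraph is probably not the mechanism by which the result is true. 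If you want to salvage your approach, you would need to actually prove that for generic $f$ and generic $h$ the map $\Phi_p$ has image not contained in $\Exc(f)$; absent that, the argument is a plan, not a proof.
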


\begin{proof}
For a fixed  integer $k\geq 0,$
condition (\ref{eq:sas}) is equivalent to the existence of a point $m\in \Ind(g^{-1})$ such that
$g^k(m)\in \Exc(g),$ and hence to ${\rm Jac}_g(g^k(m))=0,$ where ${\rm Jac}_g$
is the equation of $\Exc(g)$. This is an algebraic
condition, which defines an algebraic subvariety $I_k$ in $\Bir_d(\P^2_\C)$.

\begin{fact}
\label{fact:generic}
There exists $g \in \V_d$ such that $g \not\in I_k$ for any $k\geq 0$.
\end{fact}

Since $\V_d$ is irreducible,  this fact implies that the codimension of $I_k$ is positive and shows that equation (\ref{eq:sas})  is satisfied on the intersection of countably many
Zariski dense open subsets of $V_d$. 
We are therefore reduced to prove Fact \ref{fact:generic}. For this, let $f$ be the H\'enon mapping 
defined by
\[
f  \left( [x:y:z] \right)= \left[  \frac{yz^{d-1}}{10}: y^d + xz^{d-1}: z^d\right].
\]
The affine plane $\{z\neq 0\}$ is $f$-invariant, and $f$ restricts to a polynomial automorphism
of this plane.
The exceptional set $\Exc(f)$ is the line at infinity $\{ z=0\},$ and its image $\Ind(f^{-1})$ 
is the point $q = [0:1:0]$. This point is fixed by $f$:  In the affine chart $\{y\neq 0\},$ with
affine coordinates $(x,z)=[x:1:z]$ around $q,$ the map $f$  is given by
\[
(x,z) \mapsto \left( \frac{z^{d-1}}{10(1 + xz^{d-1})} , \frac{z^d}{1 + xz^{d-1}} \right).
\]
In particular, $q$ is an attracting fixed point.
Let $U$ be the neighborhood of $q$ defined by
\[
U = \left\lbrace [x:1:z] \,; \,  |x| < 1/10, |z| < 1/10 \right\rbrace.
\]
Then, $f(U)$ is contained in $\left\lbrace [x:1:z] \,; \,  |x| < 1/90, |z| < 1/90 \right\rbrace$.
Let $h$ be the linear transformation of the plane which, in the affine coordinates
$(x,z),$ is the translation $(x,z) \mapsto (x + 1/20, z+ 1/20)$.
We have $h(f(U)) \subset U \setminus \{z=0\}$, and $f(U) \cap h(f(U)) = \emptyset$. 
We now take $g=h\circ f$. Its exceptional set is the line at infinity $\{z=0\}$; the unique indeterminacy point of $g^{-1}$ is $h(q)=[1/20:1:1/20]$. By construction, for all $k\geq 0,$ 
$g^k(\Ind(g^{-1}))$ is a point of $U\setminus \Exc(g)$.  This proves that $g$ is not in $I_k,$ 
for any $k\geq 0$.
\end{proof}

We say that a Cremona transformation $g$ is \textbf{strongly algebraically stable} if $g$ and $g^{-1}$ satisfy  property (\ref{eq:sas}) stated in Lemma \ref{lem:generic-exc}. 
This lemma shows that general elements of $\V_d$ are strongly algebraically stable.

Recall that algebraic stability implies that $g$ is well defined along the forward orbit $g^k(\Ind(g^{-1})),$
$k\geq 0$; similarly, $g^{-1}$ is well defined along the backward orbit of $\Ind(g)$. 

\begin{lem}
\label{lem:pointsdisjoints}
Let $g$ be strongly algebraically stable. Then we have:
\begin{enumerate}
\item For all $k > j \ge 0$, $g^k(\Ind(g^{-1})) \cap g^j(\Ind(g^{-1})) = \emptyset$;
\item For all $k > j \ge 0$, $g^{-k}(\Ind(g)) \cap g^{-j}(\Ind(g)) = \emptyset$;
\item For all $k \ge 0$ and $j \ge 0$, $g^k(\Ind(g^{-1})) \cap g^{-j}(\Ind(g)) = \emptyset$.
\end{enumerate}
\end{lem}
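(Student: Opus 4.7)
The plan is to derive all three assertions from strong algebraic stability combined with Remark \ref{rem:ind_in_exc} (which gives $\Ind(h) \subset \Exc(h)$ for any $h \in \Bir(\cpd)$). The key preliminary observation, used repeatedly, is that $g$ restricts to an isomorphism between $\cpd \setminus (\Ind(g) \cup \Exc(g))$ and $\cpd \setminus (\Ind(g^{-1}) \cup \Exc(g^{-1}))$; equivalently, whenever $g$ is defined at a point $y$ and $g(y) \in \Ind(g^{-1})$, the point $y$ must lie on one of the curves contracted by $g$, so $y \in \Exc(g)$.

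I would first prove (3). Assume for contradiction that $g^k(p) = g^{-j}(q) =: x$ with $p \in \Ind(g^{-1})$, $q \in \Ind(g)$. Strong algebraic stability for $g^{-1}$ ensures the backward orbit $q, g^{-1}(q), \ldots, g^{-j}(q)$ avoids $\Exc(g^{-1}) \supset \Ind(g^{-1})$ at every step, so $x$ is a well-defined point; strong algebraic stability for $g$ ensures $x = g^k(p) \notin \Exc(g) \supset \Ind(g)$, so $g$ is defined at $x$. Applying $g$ repeatedly (at each step the next point equals some $g^{k+i}(p)$, hence avoids $\Exc(g)$), I would reach $g^{k+j}(p) = q$. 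But then $q \in \Ind(g) \subset \Exc(g)$ meets $g^{k+j}(\Ind(g^{-1}))$, contradicting strong algebraic stability of $g$.

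For (1), suppose $g^k(p) = g^j(q)$ with $p, q \in \Ind(g^{-1})$ and $k > j \geq 0$. I would argue by downward induction on $j$. In the base case $j = 0$, set $y := g^{k-1}(p)$: this is well-defined and lies outside $\Exc(g)$ by strong stability, and $g(y) = q \in \Ind(g^{-1})$, so the key observation forces $y \in \Exc(g)$, contradicting $y \in g^{k-1}(\Ind(g^{-1}))$. For the inductive step $j \geq 1$, set $y := g^{k-1}(p)$ and $z := g^{j-1}(q)$; both avoid $\Exc(g)$, and $g(y) = g(z)$. Either $g(y) \in \Ind(g^{-1})$, in which case the key observation again gives the immediate contradiction $y \in \Exc(g)$, or $g^{-1}$ is defined at $g(y)$, forcing $y = z$, which is an instance of (1) for the pair $(k-1, j-1)$ and closes the induction. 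Finally, (2) follows immediately by applying (1) to $g^{-1}$, whose strong algebraic stability is built into the definition.

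The heart of the argument, and the only nontrivial step, is the key observation that any point where $g$ is regular and whose image lies in $\Ind(g^{-1})$ must itself lie on $\Exc(g)$; this follows from the standard factorization of $g$ as a composition of blow-ups and blow-downs resolving its indeterminacy, and explains why strong algebraic stability --- a condition about $\Exc(g)$ rather than $\Ind(g^{-1})$ --- suffices to control all the orbit collisions in (1), (2), and (3).
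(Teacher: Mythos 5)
Your proof is correct and takes essentially the same approach as the paper's: the central point in both arguments is the observation that if $g$ is regular at $y$ and $g(y) \in \Ind(g^{-1})$ then $y \in \Exc(g)$, which the paper invokes implicitly in the sentence ``But then $g^{k-1}(p)\in \Exc(g)$.'' Your reorganization --- proving (3) directly and independently rather than via (2), making the key observation explicit, and running the induction for (1) one step $(k,j)\to(k-1,j-1)$ at a time instead of applying $g^{-j}$ in one go --- is a cosmetic rearrangement of the same argument.
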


\begin{proof}
Suppose that there is a point $q \in g^k(\Ind(g^{-1})) \cap g^j(\Ind(g^{-1}))$. 
Suppose first that $j = 0$. 
This means that there exist $p,q \in \Ind(g^{-1})$ such that $g^k(p) = q$. 
But then $g^{k-1}(p)\in \Exc(g),$  contradicting the assumption.
Now assume $j > 0$.
By the first step, we know that $g^{-1}$ is well defined along the positive orbit of $\Ind(g^{-1})$.
Thus we can apply $g^{-j}$, which brings us back to the case $j = 0$. 

Property (2) is equivalent to Property (1), replacing $g$ by $g^{-1}$.

Suppose (3) is false. Then there exists $p \in \Ind(g^{-1})$, $q \in \Ind(g)$, $k,j \ge 0$ such that $g^k(p) = g^{-j}(q)$. 
By (2) we can apply $g^j$ to the right hand side of this equality, thus $q \in g^{k+j}( \Ind(g^{-1})) \cap \Ind(g)$.
This contradicts the algebraic stability of $g$.
\end{proof}

\subsubsection{Rigidity is a general property}
\label{sec:rigidity_generic}

Let  $g$ be an element of $\V_d$. Consider the isometry $g_*$ of $\man(\P^2_{\mathbf{k}})$. 
If $g$ is algebraically stable, its dynamical degree is equal to $d$ and the translation 
length of $g_*$ is equal to $\log(d)$.
The Picard-Manin 
classes $[\alpha]$ and $[\omega]$ corresponding to the end points of the axis of $g_*$ 
satisfy $g_*[\alpha] =[\alpha]/d$ and $g_*[\omega]= d[\omega]$. To compute explicitly 
such classes, we start with the class $[H]$ of a line $H\subset \P^2_\C,$ and use the fact
that 
\[
\frac{1}{d^n}g_*^n [H]\to c^{ste}[\omega]
\]
when $n$ goes to $+\infty$ (see \S~\ref{par:cremonaisometries}). 

Assuming that $g$ is a general element of $\V_d$, its base locus is made of one point $p_0$ of multiplicity $d-1$ and $2d-2$ 
points $p_i$, $1\leq i\leq 2d-2$, of multiplicity $1$, and similarly for the base locus of $g^{-1}$.
Note $[E^+]$ (resp. $[E^-]$) the sum of the classes of the exceptional divisors, with multiplicity $d-1$ for the first one, 
obtained by blowing-up the $2d-1$ distinct points in $\Ind(g)$ (resp. $\Ind(g^{-1})$).
We have
\[
g_*[H] = d[H] - [E^{-}], \quad g^2_*[H] = d^2[H] -d[E^{-}] - g_*[E^{-}], \quad \mbox{etc.}
\]
Thus, the lines $R^-_g$ and $R^+_g$ of the Picard-Manin space generated by
\[
[\alpha] = [H] - \sum_{i = 1}^\infty \frac{g^{-i+1}_*[E^+]}{d^i} \quad {\text{and}}\quad 
[\omega] = [H] - \sum_{i = 1}^\infty \frac{g^{i-1}_*[E^{-}]}{d^i}
\]
correspond to the end points of the axis of $g_*$. 
By Lemma \ref{lem:pointsdisjoints}, both infinite sums appearing in these formulas are
sums of classes of the exceptional divisors obtained by blowing up the backward (resp. forward) orbit of $\Ind(g)$ (resp. $\Ind(g^{-1})$). 
By construction, $[\alpha]$ and $[\omega]$ satisfy $[\alpha] \cdot [\omega] = 1$ and $[\alpha]^2 = [\omega]^2 = 0$, because both of them 
are on the boundary of $\hypman$.
All points on $\axe(g_*)$ are linear combinations $u[\alpha] + v[\omega]$ with the condition
\[
1 = (u[\alpha] + v[\omega])^2 = 2uv.
\]
The intersection of $[H]$ with a point on $\axe(g_*)$ is minimal for $u = v = \frac{1}{\sqrt{2}}$ and is then 
equal to $\sqrt{2}$ (independently on $d$);
denote by $[P] = \frac{1}{\sqrt{2}}(\alpha + \omega)$  the point which realizes the minimum. We have 
\[
[P] = \sqrt{2}[H] - \frac{1}{\sqrt{2}} [R] \quad {\text{with}} \quad [R]=  \frac{[E^+] + [E^-]}{d} + \frac{g_*^{-1}[E^+] + g_*[E^-]}{d^2} +\cdots 
\]
Once again, Lemma \ref{lem:pointsdisjoints} implies the following fact.

\begin{fact}
\label{fact:pointsdisjoints}
The class $[R]$ is a sum of classes of exceptional divisors obtained by blowing up
distinct points of $\P^2_\C$ (there is no blow-up of infinitely near points).
\end{fact}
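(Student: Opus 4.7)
The plan is to unfold the series definition of $[R]$ into a weighted sum of classes of the form $[E_{g^i(q)}]$ and $[E_{g^{-j}(p)}]$, and then to verify (using the two lemmas already at our disposal) that each point appearing in this expansion is a genuine (i.e. not infinitely near) point of $\P^2_\C$, and that all such points are pairwise distinct.

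First I would write
\[
[R] = \sum_{j \geq 0} \frac{g_*^{-j}[E^+]}{d^{j+1}} + \sum_{i \geq 0} \frac{g_*^{i}[E^-]}{d^{i+1}},
\]
and decompose $[E^+] = (d-1)[E_{p_0}] + \sum_{l=1}^{2d-2}[E_{p_l}]$ over the $2d-1$ distinct points of $\Ind(g)$ (distinct because $g$ is general in $\V_d$), and symmetrically for $[E^-]$. Genericity also ensures $\Ind(g)\cap\Ind(g^{-1})=\emptyset$.

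The main technical step is to show that for $q\in\Ind(g^{-1})$ and $i\geq 0$ one has $g_*^i[E_q]=[E_{g^i(q)}]$ with $g^i(q)$ a genuine point of $\P^2_\C$, and symmetrically $g_*^{-j}[E_p]=[E_{g^{-j}(p)}]$ for $p\in\Ind(g)$. This is where strong algebraic stability (Lemma \ref{lem:generic-exc}) is used: the orbit point $g^i(q)$ avoids $\Exc(g)$ at every step, so in the sequence of blow-ups resolving the $i$-th iterate of $g$ the point $g^i(q)$ never has to be created infinitely near a previously contracted curve; consequently the Picard-Manin pushforward is a single exceptional class $[E_{g^i(q)}]$ over an honest point. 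I expect this bookkeeping to be the only delicate part, though it is essentially immediate from the definitions once strong algebraic stability is invoked.

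Finally I would invoke Lemma \ref{lem:pointsdisjoints} to conclude that the two families
\[
\{g^i(q) : q \in \Ind(g^{-1}),\ i \geq 0\}\quad \text{and}\quad \{g^{-j}(p) : p \in \Ind(g),\ j \geq 0\}
\]
have no collisions across different iterates, and that the two families do not meet each other. The case $i=j=0$ reduces to the disjointness $\Ind(g)\cap\Ind(g^{-1})=\emptyset$ noted above. Distinctness within a single iterate $g^i(\Ind(g^{-1}))$ or $g^{-j}(\Ind(g))$ follows from the injectivity of $g^{\pm i}$ on its domain of definition together with the distinctness of the $2d-1$ points of $\Ind(g)$ and $\Ind(g^{-1})$. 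Putting everything together, $[R]$ is an $\ell^2$-sum of classes $[E_{p}]$ over mutually distinct ordinary points $p\in\P^2_\C$, which is exactly the content of the fact.
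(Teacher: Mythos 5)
Your proposal is correct and takes essentially the same route as the paper, which simply states that Lemma~\ref{lem:pointsdisjoints} implies the fact; your expanded version supplies the bookkeeping (decomposing $[R]$ into the orbit sums of $[E^+]$ and $[E^-]$, using strong algebraic stability to propagate $g_*^i[E_q]=[E_{g^i(q)}]$ along orbits avoiding $\Exc(g^{\pm 1})$, and invoking Lemma~\ref{lem:pointsdisjoints} for pairwise disjointness of the orbit points) that the paper leaves implicit. The one point worth highlighting is that your use of strong algebraic stability, rather than mere algebraic stability, is genuinely necessary: if an orbit point landed on $\Exc(g)\setminus\Ind(g)$, the identity $g_*[E_p]=[E_{g(p)}]$ could fail, so the stronger property from Lemma~\ref{lem:generic-exc} is exactly what makes the computation go through.
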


\begin{pro}
\label{prop:f=id}
Let $\eps_0 =0.289$. Let $d \ge 2$ be an integer and $g$ be a general element of $V_d$. 
Let $[P]$ be the Picard-Manin class defined above. If $f$ is a birational transformation of the plane such that 
$\hd(f_*[Q],[Q]) \le \eps_0$ for all $[Q]$ in $\{ g^{-1}_*[P], [P], g_*[P] \}$, 
then $f$ is the identity map.
\end{pro}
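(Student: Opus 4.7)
My strategy is to first use the hypothesis $\hd(f_*[P], [P]) \le \eps_0$ to force $\deg(f) = 1$, and then to combine all three hypotheses with the genericity of $g$ in order to conclude $f = \mathrm{Id}$.

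For the degree bound, I write $d_f := \deg(f)$ and $f^*[H] = d_f[H] - \sum_i m_i [E_{p_i^f}]$, with $m_i \ge 1$ the multiplicities at the base points of $f$. Using the explicit form $[P] = \sqrt{2}\,[H] - \tfrac{1}{\sqrt{2}}[R]$, where $[R] = \sum_q c_q[E_q]$ runs over distinct points (Fact~\ref{fact:pointsdisjoints}) with $\sum_q c_q^2 = 2$, I compute
\[
f_*[P] \cdot [H] \;=\; [P] \cdot f^*[H] \;=\; \sqrt{2}\, d_f \;-\; \tfrac{1}{\sqrt{2}}\, S, \qquad S \;:=\; \sum_i m_i\, c_{p_i^f},
\]
with the convention $c_q = 0$ whenever $q \notin \mathrm{supp}([R])$. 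Since $\hd([P],[H]) = \mathrm{arccosh}(\sqrt{2})$, the triangle inequality applied to $[H], [P], f_*[P]$ yields $f_*[P] \cdot [H] \le \sqrt{2}\cosh\eps_0 + \sinh\eps_0$, i.e.
\[
S \;\ge\; 2\, d_f \,-\, A, \qquad A \,:=\, 2\cosh\eps_0 + \sqrt{2}\sinh\eps_0.
\]
The value $\eps_0 = 0.289$ is chosen precisely so that $A < 5/2$ (numerically $A \approx 2.499$).

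To rule out $d_f \ge 2$, I bound $S$ from above using the Noether identities $\sum_i m_i = 3(d_f - 1)$, $\sum_i m_i^2 = d_f^2 - 1$, the coefficient bound $c_q \le (d-1)/d$, and the combinatorial information on how many high-coefficient points the class $[R]$ possesses. For $d = 2$, the only points of $\mathrm{supp}([R])$ carrying the maximal coefficient $1/2$ are the six points of $\Ind(g)\cup\Ind(g^{-1})$, which gives $S \le 3(d_f - 1)/2$; since $2d_f - A > 3(d_f-1)/2$ precisely when $d_f > 2A - 3 \approx 1.998$, every $d_f \ge 2$ is ruled out, the borderline case $d_f = 2$ amounting to $3/2 < 4 - A$. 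For $d \ge 3$, I apply the same triangle-inequality estimate to $g_*^{\pm 1}[P]$ as well, using $\hd(g_*^{\pm 1}[P], [H]) = \mathrm{arccosh}\bigl((d + d^{-1})/\sqrt{2}\bigr)$ (a direct computation from $[P] = ([\alpha]+[\omega])/\sqrt{2}$), producing two further lower bounds on sums $S^{\pm}$ analogous to $S$. The essentially disjoint character of the supports of $[R], g_*[R], g_*^{-1}[R]$ provided by Lemma~\ref{lem:pointsdisjoints} then makes the three simultaneous lower bounds incompatible with $d_f \ge 2$.

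Once $d_f = 1$, we have $f \in \PGL_3(\C)$ and $f_*[E_q] = [E_{f(q)}]$, so
\[
f_*[P] \cdot [P] \;=\; 2 \,-\, \tfrac{1}{2}\sum_q c_q\, c_{f(q)}\,;
\]
the hypothesis $\hd(f_*[P], [P]) \le \eps_0$ rewrites as $\sum_q c_q c_{f(q)} \ge 2(2 - \cosh\eps_0) \approx 1.915$, nearly saturating the Cauchy--Schwarz upper bound $\sum_q c_q c_{f(q)} \le \sum_q c_q^2 = 2$. The resulting defect $\approx 0.085$ is small enough that $f$ cannot afford to mismatch a high-coefficient point of $[R]$ (each such mismatch would cost at least $c_q(c_q - c_{f(q)})$, which typically exceeds the budget). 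Performing the same analysis with $g_*^{\pm 1}[P]$ in place of $[P]$, the map $f$ is forced to preserve large subsets of the supports of $g_*^{\pm 1}[R]$ as well. Altogether $f$ fixes far more than four points of $\P^2_\C$ drawn from $\Ind(g)\cup\Ind(g^{-1})$ and their orbits; for $g$ general these lie in general position, and since a projective automorphism of $\P^2_\C$ is determined by its action on four points in general position, we conclude $f = \mathrm{Id}$.

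The main obstacle is the combinatorial bookkeeping of the second paragraph when $d \ge 3$: the numerical miracle $A < 5/2$ handles $d = 2$ essentially for free with only the first hypothesis, but the general case truly requires the coordinated use of all three hypotheses together with the disjointness statement of Lemma~\ref{lem:pointsdisjoints}.
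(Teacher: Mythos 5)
The degree bound for $d=2$ is fine, but the proof diverges from the paper in both steps and both places where it diverges have real problems.

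\textbf{Step 1, $d\ge 3$.} Your claim about the ``essentially disjoint character of the supports of $[R]$, $g_*[R]$, $g_*^{-1}[R]$'' is false, and Lemma~\ref{lem:pointsdisjoints} does not say that. The support of $[R]$ is the union of the backward orbit $\bigcup_{k\ge 0} g^{-k}(\Ind(g))$ and the forward orbit $\bigcup_{k\ge 0} g^{k}(\Ind(g^{-1}))$. Applying $g_*$ merely shifts these orbits by one step: for every $q$ in the support of $[R]$ except for the finitely many extreme ones, $q$ also lies in the supports of $g_*[R]$ and $g_*^{-1}[R]$. Lemma~\ref{lem:pointsdisjoints} only guarantees that within $[R]$ itself the blown-up points are pairwise distinct (so that $[R]$ is a bona fide $\ell^2$-sum of orthogonal exceptional classes); it gives no disjointness between $[R]$ and its $g_*$-translates. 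Your announced plan of extracting three ``incompatible'' lower bounds therefore has no foothold, and you give no calculation to support it. This is a genuine gap: the whole case $d\ge 3$ is unhandled. (For what it is worth, the paper's own proof claims to settle all $d\ge 2$ in Step 1 using only the single hypothesis $\hd(f_*[P],[P])\le\eps_0$, via the auxiliary class $[D]=2\sqrt{2}[P]-[H]$ for $\deg f=3$ and a coefficient bound for $\deg f=2$; if your instinct is that the stated coefficient estimate is tight only at $d=2$, that is worth flagging, but your proposed repair does not work.)

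\textbf{Step 2.} Here you take a genuinely different route from the paper, and it is also incomplete. The paper's argument is a clean conjugation trick: once $f$ is linear, the hypothesis with $[Q]=g_*^{-1}[P]$ rewrites, after applying the isometry $g_*$, as $\hd\bigl((gfg^{-1})_*[P],[P]\bigr)\le\eps_0$; Step 1 then applies \emph{to $gfg^{-1}$} and forces $gfg^{-1}\in\PGL_3$, hence $f(\Ind(g))=\Ind(g)$; symmetrically $f(\Ind(g^{-1}))=\Ind(g^{-1})$; Remark~\ref{rem:fourpoints} and genericity finish. Your Cauchy--Schwarz bookkeeping on $\sum_q c_q c_{f(q)}$ is a sensible direct attack, and the formula $f_*[P]\cdot[P]=2-\tfrac12\sum_q c_q c_{f(q)}$ is correct, but the concluding step is too optimistic: for $d=2$, all six points of $\Ind(g)\cup\Ind(g^{-1})$ carry the same coefficient $1/2$, so any $f\in\PGL_3$ permuting four of them loses nothing in $\sum_q c_q c_{f(q)}$ on those terms --- and four points in general position \emph{are} permuted by a nontrivial subgroup of $\PGL_3$ isomorphic to $S_4$. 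So your defect budget does not by itself force $f$ to \emph{fix} enough points, only to approximately preserve the level sets of $q\mapsto c_q$, and you would still have to rule out these symmetric configurations by a separate genericity argument coordinated across the three hypotheses. That bookkeeping is precisely what the conjugation trick sidesteps. I would strongly recommend adopting the paper's Step 2: once Step 1 is in hand it gives Step 2 essentially for free.
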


The proof uses explicit values for distances and hyperbolic cosines that
are recorded on Table \ref{table}.
Using this table, we see that
\[
\cosh(a_1+\eps)< 4 \, {\text{ and }}\, a_2+\eps <  a_4 < a_3
\]
as soon as  $\eps \leq \eps_0 = 0.289$.

\begin{table}[t]
\begin{tabular}{|c|c|}
\hline 
$\cosh(a_i)$ & $a_i$ \\
\hline
$3$ & $a_1 \simeq 1.76274 $ \\
$\sqrt{2}$&  $a_2 \simeq 0.88137 = a_1/2$\\
$ 3/\sqrt{2}$ &$a_3\simeq 1.38432 $\\
$ 5/(2\sqrt{2})$&$a_4 \simeq 1.17108$\\  
$4$ &$a_5 \simeq 2.06343$\\
\hline
\end{tabular}
\vspace{0.2cm}
\caption{Distances and hyperbolic cosines}
\label{table}
\end{table}

\begin{proof} We proceed in two steps.\\

\noindent{\textbf{First step.--}} We show that if $\hd(f_*[P],[P])\leq \eps_0$, then $f$ is linear.\\

By the triangular inequality
\begin{eqnarray*}
\hd(f_*[H],[H]) & \le & \hd(f_*[H],f_*[P]) + \hd(f_*[P],[P]) + \hd([P],[H]) \\
                        & \le & 2\hd([P],[H]) + \eps_0.
\end{eqnarray*}
Recall that $\cosh(\hd([D_1],[D_2])) = [D_1]\cdot [D_2]$ for all pairs of points $[D_1],$ $[D_2]$ in $\hypman$ and that the degree of $f$ is given by  
$\deg(f)= f_*[H] \cdot [H]$. Using Table \ref{table} we see that $ [P]\cdot [H] = \sqrt{2}$ implies $2\hd([P],[H]) = 2 a_2 = a_1$. Taking hyperbolic cosines we get
\[
\deg(f) \le \cosh(a_1+\eps_0) < 4.
\]
We conclude that $\deg(f) \le 3$. \\

Now we want to exclude the cases $\deg(f) = 3$ or $2$. The following remark will be used twice: Since $[H]\cdot [P]=\sqrt{2}$, we have $\hd([H],[P])=a_2$; thus, if $f_*$ is an isometry and $\hd(f_*[P],[P])\leq \eps_0$, applying hyperbolic cosines to the triangular inequality $\hd(f_*[H],[P]) \le \hd(f_*[H],f_*[P]) + \hd(f_*[P],[P])$, we get $f_*[H]\cdot [P]\leq \cosh(a_2+\eps_0)$.\\

Suppose   that $\deg(f) = 3$.  Then 
\[
f_*[H] = 3[H] - 2[E_1] - [E_2] -[E_3]-[E_4]-[E_5]
\]
for some exceptional divisors $E_i$ above $\P^2_\C$ (they may come from blow-ups of infinitely near points).
By Fact \ref{fact:pointsdisjoints}, all exceptional classes in the infinite sum defining $[R]$
come from blow-ups of distinct points; hence
\[
[2E_1+E_2+E_3+E_4+E_5]\cdot [R] \ge \frac{1}{d} (-2(d-1)-1-1-1-1) \ge -3.
\]
Consider  the point $[D]$ of the Picard-Manin space such that $[P]$ is the middle point of the geodesic segment from $[H]$ to $[D]$; explicitly 
\[
[D] =  2\sqrt{2}[P] - [H] = 3[H] - 2[R].\\
\]
We obtain 
\begin{eqnarray*}
f_*[H]\cdot [D] &=& f_*[H]\cdot (3[H] - 2[R]) \\
 &=& 9 + 2 [2E_1+E_2+E_3+E_4+E_5]\cdot [R]  \\
&\ge& 3.
\end{eqnarray*}
On the other hand, $f_*[H]\cdot [H]=3$ because $f$ has degree $3$.
Since $2\sqrt{2}[P]=[H]+[D]$ we obtain
\[
\cosh(a_2+\eps_0)\geq f_*[H]\cdot [P] \geq \frac{3+3}{2\sqrt{2}}=\frac{3}{\sqrt{2}}=\cosh(a_3).
\]
This contradicts the choice of $\eps_0$. \\

Suppose that $\deg(f) = 2$. We have $ f_*[H] = 2[H] - [E_1] - [E_2] -[E_3]$
where the $[E_i]$  are classes of exceptional divisors above $\P^2_\C.$ The product $f_*[H]\cdot [P]$  is given by
\begin{eqnarray*}
f_*[H]\cdot [P] & = & [2H - E_1 - E_2 -E_3]\cdot \left( \sqrt{2}[H] - \frac{1}{\sqrt{2}} [R]\right) \\
& = & 2\sqrt{2} + \frac{1}{\sqrt{2}} [E_1+E_2+E_3] \cdot [R] \\
& \geq & 2\sqrt{2} + \frac{1}{\sqrt{2}} (-1-\frac{1}{d}) \\
& \geq  & 2\sqrt{2} - \frac{3}{2\sqrt{2}} = \frac{5}{2\sqrt{2}}=\cosh(a_4),
\end{eqnarray*}
where the inequality follows from Fact \ref{fact:pointsdisjoints}: The curves $E_1,$ $E_2,$ and $E_3$ appear at most once in $[R]$, so that  $ [E_1+E_2+E_3]\cdot [R] \ge -1 -\frac{1}{d} \ge - \frac{3}{2}$. As a consequence, 
\[
\cosh(a_2+\eps_0)\geq \cosh(a_4),
\]
in contradiction with the choice of $\eps_0$. \\

Thus, we are reduced to the case where $f$ is an element of $\Aut(\P^2_\C)$. 
\\

\noindent{\textbf{Second step.--}} Now we show that if $f$ is in $\Aut(\P^2_\C)$ and  $\hd(f_*[Q],[Q]) \le \eps_0$ for $[Q]$ in $\{ g^{-1}_*[P], g_*[P] \}$, then $f$ is the identity map.\\

Applying the assumption to $[Q] = g^{-1}_*[P]$, we get
\[
\hd((gfg^{-1})_*[P],[P]) \le \eps_0
\] 
and the first step shows that $gfg^{-1}$ must be linear. This implies that $f(\Ind(g))$ coincides with $\Ind(g)$. The same argument
with $[Q]=g_*[P]$ gives  $f(\Ind(g^{-1})) = \Ind(g^{-1})$. If $\deg(g) \ge 3$, the set $\Ind(g)$ is a generic set of points in $\cpd$ with cardinal at least 5, so $f(\Ind(g)) = \Ind(g)$ implies that $f$ is the identity map (see remark  \ref{rem:fourpoints}). Finally, if $\deg(g) = 2$, the set $\Ind(g) \cup \Ind(g^{-1})$ is a generic set of 6 points in the plane, so if   $f(\Ind(g)) = \Ind(g)$ and $f(\Ind(g^{-1})) = \Ind(g^{-1})$ then again $f$ is the identity map.
\end{proof}

\begin{cor}
\label{cor:rigidity}
If  $g$ is a general element of $V_d$, then $\axe(g_*)$ is rigid.
\end{cor}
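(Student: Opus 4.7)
The plan is to deduce this corollary as a direct consequence of Proposition \ref{prop:f=id} combined with Corollary \ref{cor:constants} from the abstract hyperbolic framework. Since $g$ is a general element of $V_d$, Lemma \ref{lem:generic-exc} ensures that $g$ is algebraically stable, so $g_*$ is hyperbolic on $\hypman$ with a well-defined axis $\axe(g_*)$ on which the class $[P]$ from Proposition \ref{prop:f=id} lies, and $g_*$ acts as a translation of length $\log d$ along this axis. In particular, $g_*^{-1}[P]$ and $g_*[P]$ are the endpoints of a geodesic segment of length $2\log d$ contained in $\axe(g_*)$ and centered at $[P]$.

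Next I would verify the hypothesis of Corollary \ref{cor:constants}, applied to the action of $G = \Bir(\cpd)$ on $\hypman$ with base point $p = [P]$ and $\eta = \eps_0$. The key observation is: if some $f \in \Bir(\cpd) \setminus \{\mathrm{Id}\}$ satisfies $\hd(f_*(x),x) \le \eps_0$ for every $x$ in the segment $[g_*^{-1}[P], g_*[P]]$, then a fortiori this inequality holds at the three points $g_*^{-1}[P]$, $[P]$, and $g_*[P]$, and Proposition \ref{prop:f=id} forces $f = \mathrm{Id}$, a contradiction. Corollary \ref{cor:constants} then yields that $\axe(g_*)$ is $(\eps_0/2, 7\log d)$-rigid.

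It remains to upgrade $(\eps_0/2, 7\log d)$-rigidity to rigidity in the sense used throughout the paper, that is, $(\eps,B)$-rigidity for any $\eps > \eps_0/2$, and in particular for $\eps = 2\theta$. Since $\hypman$ is a hyperbolic space of the type studied in \S\ref{sec:NegCurvature}, this is exactly the content of Lemma \ref{lem:rigidPM}: one pays by enlarging the constant $B$ (depending on $\eps$ and on $d$), but the conclusion of rigidity survives unchanged.

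The genuinely delicate step — ruling out nontrivial $f \in \Bir(\cpd)$, in particular those of low degree, that could produce displacements smaller than $\eps_0$ at the three test points on $\axe(g_*)$ — has already been overcome in Proposition \ref{prop:f=id} by means of the explicit hyperbolic estimates recorded in Table \ref{table} together with the genericity input of Fact \ref{fact:pointsdisjoints} that the exceptional classes appearing in $[R]$ come from blow-ups of distinct points. Once that numerical input is available, the present corollary is essentially a formal packaging of Proposition \ref{prop:f=id} into the rigidity vocabulary of Part \ref{part:hyperbolic}.
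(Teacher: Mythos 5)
Your proof is correct and takes essentially the same approach as the paper: the paper's own proof of Corollary \ref{cor:rigidity} argues by contradiction directly from Proposition \ref{pro:almostfixed} (with $n=1$), while you invoke its quantitative contrapositive, Corollary \ref{cor:constants}, and verify its hypothesis via Proposition \ref{prop:f=id}. This is exactly the rephrasing the paper itself adopts later in the proof of Theorem \ref{thm:goodinvd} when it revisits Corollary \ref{cor:rigidity} to extract the explicit constant $(\eps_0/2, 7\lgt(g_*))$, so the two routes coincide.
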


\begin{proof}
Suppose $\axe(g_*)$ is not rigid. Choose $\eta > 0$. By Proposition \ref{pro:almostfixed} there exists $f$ which does not preserve $\axe(g_*)$ such that $d(f_*[Q],[Q]) < \eta$ for all $[Q]$ in the segment $\bigl[ g^{-1}_*[P],g_*[P] \bigr]$. For $\eta < \eps_0$, this contradicts Proposition~\ref{prop:f=id}.
\end{proof}

\begin{proof}[Proof of Theorem \ref{thm:goodinvd}]
Let $g$ be a general element of $\V_d,$ with $d\geq 2$.
Suppose that $f_*(\axe(g_*)) = \axe(g_*)$. If $f_*$ preserves the orientation on $\axe(g_*)$, then $(fgf^{-1}g^{-1})_*$ fixes each point in $\axe(g_*)$, and Proposition \ref{prop:f=id} gives $fgf^{-1} = g$. Similarly, if $f_*$ reverses the orientation, considering $(fgf^{-1}g)_*$ we obtain $fgf^{-1} = g^{-1}$.

Since we know by Corollary \ref{cor:rigidity} that $\axe(g_*)$ is rigid, we obtain that $g_*$ is \good, hence by Theorem~\ref{thm:criterion} $g_*^k$ generates a proper subgroup of $\Bir(\cpd)$ for large enough $k$. 

We can be more precise on $k$ by reconsidering the proof of Corollary \ref{cor:rigidity}. The rigidity of the axis of $g_*$ follows from Proposition \ref{prop:f=id}, where the
condition on $\eps_0$ is independent of $d$, so by Corollary \ref{cor:constants} we obtain that $\axe(g_*)$ is $(\eps_0/2, 7\lgt(g_*))$-rigid. 
Recall that $\theta$ is defined in section \ref{par:approxbytrees} by $\theta= 8\delta$ and that $\delta=\log(3)$ works for the hyperbolic space $\hypman$; thus 
we can choose $\theta=8\log(3)$, so that $14\theta=112\log(3)=123.4...\leq 124$.
By Lemma \ref{lem:rigidPM} we obtain that $\axe(g_*)$ is also $(14\theta, B)$-rigid, for 
\begin{eqnarray*}
B & \geq & \max\left\{2728, 2232 + 2\log\left(5\frac{\cosh(56\theta)-1}{\cosh(\eps_0/2)-1}\right), (28/3)\lgt(g_*)+ 248\right\}\\
& = & \max\left\{ 3220, (28/3)\lgt(g_*)+ 248\right\}.
\end{eqnarray*}
Then, Theorem~\ref{thm:criterion} requires $k \lgt(g_*)\geq 40 B + 1200 \theta$. 
Thus, we see that 
\[
k=\max\left( \frac{139347}{\lgt(g_*)}, 374+\frac{10795}{\lgt(g_*)}\right)\leq 201021
\]
is sufficient to conclude that $g^k$
generates a proper normal subgroup of $\Bir(\P^2_\C)$. Asymptotically, for degrees
$d$ with $d\geq \exp(10796)$, we can take $k=375$. 
\end{proof}

%%%
\subsection{Automorphisms of rational surfaces}
\label{par:goodauto}
%%%

%%%
\subsubsection{Preliminaries} 
%%%
Let ${\mathbf{k}}$ be an algebraically closed field, and $X$ 
be a rational surface defined over $\mathbf{k}$. 
Let $g$ be an automorphism of  $X$. 
If $\varphi \colon X\da \P^2_{\mathbf{k}}$ is a birational map, conjugation by $\varphi$
provides an isomorphism between $\Bir(X)$ 
and $\Bir(\P^2_{\mathbf{k}})$ and $\varphi_*$ conjugates the actions of $\Bir(X)$
on $\man(X)$ and $\Bir(\P^2_{\mathbf{k}})$ on $\man(\P^2_{\mathbf{k}})$. We thus identify
$\Bir(X)$ to $\Bir(\P^2_{\mathbf{k}})$ and $\man(X)$ to $\man(\P^2_{\mathbf{k}})$ without 
further reference to the choice of $\varphi$.
This paragraph provides a simple criterion 
to check whether  $g$  is a 
\good\ element of the Cremona group $\Bir(X)$.

As explained in \S \ref{par:autom-action}, the N\'eron-Severi group
$\NS(X)$ embeds into the Picard-Manin space $\man(\P^2_{\mathbf{k}})$ and  $g_*$ preserves
the orthogonal decomposition
\[
\man(\P^2_{\mathbf{k}})=\NS(X)\ten \oplus {\NS(X)\ten}^\perp,
\]
where ${\NS(X)\ten}^\perp$ is the orthogonal complement with respect to the intersection 
form. 
Assume that $g_*$ is hyperbolic, with axis $\axe(g_*)$ and translation length $\log(\dd(g))$. 
By Lemma \ref{lem:f_*}, the plane
$V_g$ such that $\axe(g_*)=\hypman \cap V_g$ is contained in $\NS(X)\ten$.
\begin{rem}
\label{rem:Vgh}
Let $h$ be a birational transformation of $X$ such that $h_*$ preserves $V_g$. The restriction 
of $h_*$ to $V_g$ satisfies  one  of the following two properties.
\begin{enumerate}
\item $h_*$ and $g_*$ commute on $V_g$;
\item $h_*$ is an involution of $V_g$ and $h_*$ conjugates $g_*$ to its inverse on $V_g$. 
\end{enumerate}
Indeed, the group of isometries of a hyperbolic quadratic form in two variables
is isomorphic to the semi-direct product $\R \rtimes \Z/2\Z$. 
\end{rem}

\begin{lem} 
\label{lem:auto-hyp-faithful}
Let $g$ be a hyperbolic automorphism of a rational surface $X$. 
Assume that 
\begin{enumerate}[(i)]
\item $g_*$ is the identity on the orthogonal complement of $V_g$ in $\NS(X)\ten$;
\item the action of $\Aut(X)$ on $\NS(X)$ is faithful. 
\end{enumerate}
Let $h$ be an automorphism of $X$ such that $h_*$ preserves $V_g$.
Then $h g h^{-1} $ is equal to $g$ or~$g^{-1}$. 
\end{lem}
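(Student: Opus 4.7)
The plan is to reduce the equality $hgh^{-1} = g^{\pm 1}$ to an equality of isometries of $\NS(X)\ten$, and then invoke the faithfulness hypothesis (ii).

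First I would observe that since $h$ is an automorphism of $X$ (not merely birational), $h_*$ preserves $\NS(X)\ten$ and acts on it as an isometry of the intersection form. Because $h_*$ is assumed to preserve the plane $V_g$, it therefore also preserves the orthogonal complement $W := V_g^\perp \cap \NS(X)\ten$, giving an $h_*$-invariant orthogonal splitting
\[
\NS(X)\ten \;=\; V_g \,\oplus\, W.
\]
This splitting is also $g_*$-invariant, by the very definition of $V_g$.

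Next, I would analyze the conjugate $h_* g_* h_*^{-1}$ on each factor separately. On $V_g$, Remark~\ref{rem:Vgh} applies and yields that $h_* g_* h_*^{-1}$ coincides with $g_*$ or with $g_*^{-1}$. On $W$, hypothesis (i) tells us that $g_*$ acts as the identity, so trivially $h_* g_* h_*^{-1} = \mathrm{Id} = g_* = g_*^{-1}$ on $W$. Since the two possible conclusions on $V_g$ are compatible with \emph{both} possibilities on $W$ (because $g_*|_W = g_*^{-1}|_W$), we conclude that on the whole of $\NS(X)\ten$,
\[
h_* \, g_* \, h_*^{-1} \;=\; g_*^{\varepsilon} \quad \text{for some } \varepsilon \in \{+1,-1\}.
\]

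Finally, rewriting this as $(hgh^{-1})_* = (g^{\varepsilon})_*$ on $\NS(X)$ and applying hypothesis (ii) that $\Aut(X) \to \mathrm{GL}(\NS(X))$ is injective, we obtain $hgh^{-1} = g^{\varepsilon}$, as required. There is no real obstacle here: the argument is a direct linear-algebraic consequence of the two hypotheses combined with Remark~\ref{rem:Vgh}; the only point worth checking is the compatibility of the sign choice between $V_g$ and $W$, which is automatic because $g_*$ is trivial on $W$.
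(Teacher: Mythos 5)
Your argument is correct and is essentially the paper's own proof: the same orthogonal decomposition $\NS(X)\ten = V_g \oplus V_g^\perp$, the same invocation of Remark~\ref{rem:Vgh} on $V_g$, the same observation that hypothesis~(i) makes $g_*$ (and hence the conjugation) trivial on $V_g^\perp$, and the same appeal to faithfulness~(ii) to pass from $(hgh^{-1})_* = (g^{\pm 1})_*$ to $hgh^{-1}=g^{\pm 1}$. The only cosmetic difference is that the paper phrases the two cases of Remark~\ref{rem:Vgh} separately rather than merging them into a single $\varepsilon\in\{\pm1\}$, which is immaterial.
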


\begin{proof}
Let us study the action of $h_*$ and $g_*$ on $\NS(X)$.
Since $h_*$ preserves $V_g,$ it preserves
its orthogonal complement $V_g ^\perp$ and, by assumption $(i),$ commutes to 
$g_*$ on $V_g^\perp$.
If $h_*$ commutes to $g_*$ on $V_g,$ then $h_*$ and $g_*$ commute on $\NS(X)$,
and the conclusion follows from the second assumption.
If $h_*$ does not commute to $g_*,$ Remark \ref{rem:Vgh} implies that $h_*$ 
is an involution on $V_g,$ and that $h_*  g_*  (h_*) ^{-1} = (g_*)^{-1}$
on $V_g$ and therefore also on $\NS(X)$. Once again, assumption 
$(ii)$ implies that $h$ conjugates $g$ to its inverse.
\end{proof}

\begin{rem}
\label{rem:qi}
The plane $V_g$ is a subspace of $\NS(X)\ten$ and it may very well 
happen that this plane does not intersect the lattice $\NS(X)$ (except
at the origin). But, if $g_*$ is the identity on $V_g^\perp,$ then 
$V_g^\perp$ and $V_g$ are defined over $\Z$. In that case, 
$V_g\cap \NS(X) $ is a rank 2 lattice in the plane $V_g$. This lattice 
is $g_*$-invariant, and the spectral values of the linear transformation 
$g_*\in \GL(V_g)$ are quadratic integers. From this follows that $\dd(g)$
is a quadratic integer.
On the other hand, if $\dd(g)$ is a quadratic integer, then $V_g$ 
is defined over the integers, and so is $V_g^\perp$. The restriction of 
$g_*$ to $V_g^\perp$ preserves the lattice $V_g^\perp \cap \NS(X)$ 
and a negative definite quadratic form. This implies that a positive 
iterate of $g_* $ is the identity on $V_g^\perp$. 
\end{rem}

Let us now assume that there exists an ample class $[D']\in \NS(X)\cap V_g$.
In that case, $[D']$ and $g_*[D']$ generate a rank $2$ subgroup of $\NS(X)\cap V_g$ 
and Remark \ref{rem:qi} implies that $\dd(g)$ is a quadratic integer.
In what follows, we shall denote by $[D]$ the class 
\begin{equation}\label{eq:defD'}
[D]=\frac{1}{\sqrt{[D']\cdot [D']}}[D'].
\end{equation}
This is an ample class  with real coefficients that determines a point $[D]$ in~$\hypman$.

\begin{lem}
\label{lem:rig-auto1}
Let $h$ be a birational transformation of a projective surface $X$. 
Let $[D']\in \NS(X)$ be an ample class, and $[D]= [D']/{\sqrt{[D']\cdot [D']}} \in \hypman$. If
\[
\cosh(\hd(h_*[D], [D])) < 1+  ([D']\cdot [D']) ^{-1}
\]
then $h_*$ fixes $[D']$ and is an automorphism of $X$. 
\end{lem}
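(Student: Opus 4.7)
The strategy is to couple integrality in the Néron-Severi lattice with Hodge's index theorem: the hypothesis gives a strict inequality of real numbers, but the relevant quantity is an integer, which forces equality and collapses everything.

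Set $n:=[D']\cdot[D']\in\Z_{>0}$, so $[D]=[D']/\sqrt{n}$ and the hypothesis rewrites as $h_*[D']\cdot[D']<n+1$. Choose a resolution $\pi,\sigma\colon Y\to X$ with $h=\sigma\circ\pi^{-1}$ and $\sigma$ a composition of point blow-ups, whose exceptional components $E_i\subset Y$ form an orthogonal system with $[E_i]^2=-1$ and are orthogonal to $\sigma^*\NS(X)\ten$. Decomposing in $\NS(Y)\ten$,
\[
\pi^*[D']=\sigma^*[D'']+\sum_i a_i\,[E_i],\qquad [D'']\in\NS(X),\ a_i\in\Z,
\]
and applying $h_*=(\sigma^*)^{-1}\pi^*$ (see \S \ref{par:actionofbir}) yields
\[
h_*[D']=[D'']+\sum_i a_i\,[E_{p_i}]\in\man(X),
\]
where $p_i\in\pts_X$ is the point corresponding to $E_i$. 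Since $[D']$ is ample, $\pi^*[D']$ is nef, so $\pi^*[D']\cdot[E_i]\geq 0$; using $\sigma^*[D'']\cdot[E_i]=0$, this gives $-a_i\geq 0$, so every $a_i\leq 0$.

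Because $h_*$ preserves the intersection form,
\[
n=(h_*[D'])^2=[D'']^2-\sum_i a_i^2,\qquad\text{so}\qquad [D'']^2=n+\sum_i a_i^2\geq n.
\]
The class $[D'']$ lies in the positive cone of $\NS(X)\ten$ (it pairs positively with any ample class, because $h_*[D']$ does and the $[E_{p_i}]$ are orthogonal to $\NS(X)$), so Hodge index on $X$ gives
\[
h_*[D']\cdot[D']\;=\;[D'']\cdot[D']\;\geq\;\sqrt{[D']^2\,[D'']^2}\;\geq\; n,
\]
with equality in the first inequality iff $[D'']$ is proportional to $[D']$. But $[D'']\cdot[D']$ is an integer (both classes lie in the lattice $\NS(X)$), and by hypothesis it is $<n+1$; hence $[D'']\cdot[D']=n$ and the full chain collapses: $[D'']^2=n$, every $a_i=0$, and $[D'']=[D']$. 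Consequently $h_*[D']=[D']$ in $\man(X)$, equivalently $\pi^*[D']=\sigma^*[D']$ in $\NS(Y)$.

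To finish, I argue $h\in\Aut(X)$. Any irreducible curve $C\subset Y$ contracted by $\pi$ satisfies $\sigma^*[D']\cdot C=\pi^*[D']\cdot C=0$, i.e.\ $[D']\cdot\sigma_*C=0$; ampleness of $[D']$ forces $\sigma_*C=0$, so $\sigma$ contracts $C$ as well. The symmetric statement holds (applying the same analysis to $h^{-1}$, which also fixes $[D']$ since $h_*$ is invertible). Thus $\pi$ and $\sigma$ contract exactly the same curves, and Zariski's main theorem implies that $h=\sigma\circ\pi^{-1}$ and $h^{-1}$ both extend to morphisms $X\to X$; hence $h\in\Aut(X)$. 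The only delicate step is setting up the decomposition in Step 2 and checking nefness to obtain $a_i\leq 0$; everything else is the integrality-plus-Hodge squeeze, which is standard.
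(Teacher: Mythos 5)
Your proof is correct and shares the key idea with the paper---the integrality of intersection numbers in $\NS(X)$ forces a strict inequality to collapse---but your route is considerably heavier. The paper decomposes $h_*[D]=[D]+[F]+[R]$ along the orthogonal splitting $\man(X)=\NS(X)\ten\oplus{\NS(X)\ten}^\perp$, so that $[D]\cdot h_*[D]=1+[D]\cdot[F]$, and notes that $[D]\cdot[F]$ is a non-negative integer divided by $[D']\cdot[D']$ (non-negativity being just $\cosh(\hd)\geq 1$); the hypothesis then forces $[D]\cdot[F]=0$, hence $h_*[D]=[D]$, and $h$ is an automorphism because it fixes an ample class. You instead choose an explicit resolution $\pi,\sigma\colon Y\to X$, use nefness of $\pi^*[D']$ to control the signs of the exceptional coefficients, and invoke the Hodge index theorem on $\NS(X)$ to derive $[D'']\cdot[D']\geq\sqrt{[D'']^2[D']^2}\geq n$. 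That lower bound and its equality case are precisely the statement that two points of $\hypman$ have Minkowski pairing at least $1$, with equality only when they coincide---the very fact the paper reads off the distance formula---so the resolution, nefness, and Hodge-index apparatus buy nothing extra, and the argument runs several times longer for the same conclusion. One imprecision to flag: the $[E_i]$ that form an orthogonal $(-1)$-system in $\NS(Y)$ orthogonal to $\sigma^*\NS(X)$ are the \emph{total transforms} of the successive exceptional curves of $\sigma$, not its irreducible exceptional components (with infinitely near base points the latter are neither mutually orthogonal nor all of self-intersection $-1$); your nefness step survives since total transforms are still effective.
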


\begin{proof} 
Write $h_*[D]=  [D]+ [F] +[R] $ where $[F]$ is in $\NS(X)\ten$  
and $[R]$ is in the subspace ${\NS(X)\ten}^\perp$ of $\man(X)$. 
More precisely, $F$ is an element of $\NS(X)$ divided by
the square root of  the self intersection $[D']^2$, and $[R]$ is a sum $\sum m_i [E_i]$ 
of exceptional divisors obtained by blowing up points of $X$, coming
from indeterminacy points of $h^{-1}$; the $m_i$ are integers divided by the square root 
of $[D']^2$. We get 
\[
1 \le [D] \cdot h_*[D] = [D]\cdot([D] + [F]+[R]) = 1 + [D]\cdot [F]
\]
because $[D]$ does not intersect the $[E_i]$. The number $[D]\cdot [F] $ is a non negative
integer divided by $[D']^2$. By assumption, this number is less than $([D']\cdot [D']) ^{-1}$ and so it must be zero. In other words, the distance between 
$[D]$ and $h_*[D]$ vanishes, and $[D]$ is fixed.
Since $[D]$ is ample,  $h$ is an automorphism of $X$. 
\end{proof}

\begin{pro}
\label{pro:auto-good}
Let $g$ be a hyperbolic automorphism of a rational surface $X$. 
Assume that 
\begin{enumerate}[(i)]
\item  $V_g$ contains an ample class $[D']$ and 
\item $g_*$ is the identity on $V_g^\perp\cap \NS(X)$.
\end{enumerate}
Then $\axe(g_*)$ is rigid. Assume furthermore that
\begin{enumerate}[(i)]
\setcounter{enumi}{2}
\item if $h\in \Aut(X)$ satifies $h_*(\axe(g_*)) = \axe(g_*)$ then  $hgh^{-1}=g$ or $g^{-1}$. 
\end{enumerate}
Then any $h\in \Bir(X)$ which 
preserves $\axe(g_*)$ is an automorphism of $X$, and $g$ is a \good\ element of $\Bir(X)$.
Thus, for sufficiently large $k$ the iterate $g^k$ generates
a non trivial normal subgroup in the Cremona 
group $\Bir(\cpd) = \Bir(X)$.
\end{pro}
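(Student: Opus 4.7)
The plan is to verify the three conditions in the definition of a \good\ element for $g$ and then apply Theorem \ref{thm:criterion} to a high enough iterate $g^k$. Hyperbolicity of $g_*$ with an invariant axis $\axe(g_*)$ is given. The two remaining tasks are (a) to prove that $\axe(g_*)$ is rigid, and (b) to show that any $h\in \Bir(X)$ preserving $\axe(g_*)$ satisfies $hgh^{-1}=g^{\pm 1}$. On the way to (b) I will prove the sharper statement that such an $h$ is automatically an automorphism of $X$, which is exactly what lets me invoke hypothesis (iii).

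For (a), I plan to argue by contradiction via Proposition \ref{pro:almostfixed}. Fix $\eta>0$ with $\cosh(\eta)<1+([D']\cdot [D'])^{-1}$. If $\axe(g_*)$ were not rigid, the proposition applied with $n=1$ would supply $f\in \Bir(X)$ such that $f_*(\axe(g_*))\neq \axe(g_*)$ and $\hd(f_*(x),x)\leq \eta$ for every $x\in [g_*^{-1}[D],g_*[D]]$. Since $g$ is an automorphism, $g_*[D']$ is also an ample integer class with self-intersection equal to $[D']\cdot [D']$. Applying Lemma \ref{lem:rig-auto1} separately at the midpoint $[D]$ and the endpoint $g_*[D]$ then forces $f_*[D']=[D']$ and $f_*(g_*[D'])=g_*[D']$. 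By hyperbolicity of $g_*$, the classes $[D']$ and $g_*[D']$ are linearly independent in $V_g$, so $f_*$ is the identity on $V_g$ and fixes $\axe(g_*)=V_g\cap \hypman$ pointwise, contradicting $f_*(\axe(g_*))\neq \axe(g_*)$.

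For (b), suppose $h\in \Bir(X)$ satisfies $h_*(\axe(g_*))=\axe(g_*)$. Then $h_*$ preserves the plane $V_g$, so $h_*[D']$ lies in $V_g\subset \NS(X)\ten$; since $h_*$ preserves the integral structure of $\man(X)$, this class is integral, which is to say that its expansion in the Picard--Manin basis has zero coefficient on every exceptional class $[E_p]$, $p\in \pts_X$. For an ample class $[D']$ those exceptional coefficients coincide with the multiplicities of $[D']$ at the base points of $h^{-1}$, which are strictly positive whenever $\Ind(h^{-1})$ is nonempty. Thus $\Ind(h^{-1})=\emptyset$, so $h^{-1}$ extends to a birational morphism $X\to X$; a Picard-number argument then forces it to be an isomorphism, so $h\in \Aut(X)$. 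Hypothesis (iii) yields $hgh^{-1}=g^{\pm 1}$, and $g$ is a \good\ element of $\Bir(X)$.

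To conclude, let $B$ be the rigidity constant obtained from (a) after upgrading via Lemma \ref{lem:rigidPM}. By the remark stated just before Theorem \ref{thm:criterion}, the iterate $g^k$ remains \good\ and satisfies the small cancellation property as soon as $k\lgt(g_*)\geq 1200\theta+40B$. Theorem \ref{thm:criterion} then ensures that the normal subgroup of $\Bir(X)$ generated by such a $g^k$ does not contain $g$, so it is a proper nontrivial normal subgroup. The main obstacle in this plan is the Picard--Manin bookkeeping of paragraph three: I must verify precisely that, for $[D']$ ample, the exceptional contributions in $h_*[D']$ vanish if and only if $h^{-1}$ has no base points, with the positivity of multiplicities providing the crucial ``only if'' direction.
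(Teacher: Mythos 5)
Your proposal reproduces the paper's own argument: Proposition \ref{pro:almostfixed} together with Lemma \ref{lem:rig-auto1} applied at $[D]$ and at $g_*[D]$ gives rigidity, and $h_*[D']\in V_g\subset\NS(X)\ten$ forces $h\in\Aut(X)$, after which hypothesis~$(iii)$ and Theorem \ref{thm:criterion} close the proof. Your unpacking of the implication ``$h_*[D']\in\NS(X)\ten\Rightarrow h\in\Aut(X)$'' via base-point multiplicities is precisely the mechanism already used in the proof of Lemma \ref{lem:rig-auto1}; the paper invokes it here through the shorthand ``$h_*[D']$ is an ample class, hence $h$ is an automorphism.'' One terminological slip to correct in your third paragraph: being \emph{integral} (lying in the direct limit of the lattices $\NS(X')$) does \emph{not} mean having zero coefficients on the exceptional classes $[E_p]$ --- for instance $2[H]-[E_1]-[E_2]-[E_3]$ is integral with nonzero exceptional part. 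The vanishing of the $[E_p]$-coefficients comes instead from $h_*[D']\in V_g\subset\NS(X)\ten$ together with the orthogonality of the splitting $\man(X)=\NS(X)\ten\oplus{\NS(X)\ten}^\perp$, which you in fact state correctly in the preceding clause, so the argument stands. Note also that these coefficients are $-m_p<0$ at the base points of $h^{-1}$, i.e.\ strictly negative rather than positive; the sign is immaterial since the only point needed is that they are nonzero, forcing $\Ind(h^{-1})=\emptyset$.
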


Note that if the action of $\Aut(X)$ on $\NS(X)$ is faithful then by Lemma \ref{lem:auto-hyp-faithful} condition $(iii)$ is automatically satisfied. 

\begin{proof}
Let $[D] \in \axe(g_*)$ be the ample class defined by equation (\ref{eq:defD'}).
If the axis of $g_*$ is not rigid, Proposition \ref{pro:almostfixed} provides a birational transformation 
$f$ of $X$ such that the distances between $f_*[D]$ and $[D]$ and between 
$f_*(g_*[D])$ and $g_*[D]$ are bounded by $([D']^2)^{-1},$ 
and, moreover, $f_*(\axe(g_*))\neq \axe(g_*)$. Lemma \ref{lem:rig-auto1}
implies that $f$ is an automorphism of $X$ fixing both $[D]$ and $g_*[D]$.
This contradicts $f_*(\axe(g_*))\neq \axe(g_*)$ and shows that $\axe(g_*)$
is rigid.

Assume now that $h\in\Bir(X)$ preserves the axis of $g_*$. Then $h_*[D']$ is an ample class, hence $h$ is an automorphism of $X$.
Property $(iii)$ implies that $g$
is a \good\ element of $\Bir(X)$. The conclusion follows from Theorem~\ref{thm:criterion}.
\end{proof}

In the following paragraphs we construct two families of examples which satisfy the assumption of Proposition \ref{pro:auto-good}. Note that the surfaces $X$ that we shall consider 
have quotient singularities; if we blow-up $X$ to get a smooth surface, then the class
$[D']$ is big and nef but is no longer ample. The first example  is defined over the field of
complex numbers $\C$, while the second  works for any algebraically closed
field $\mathbf{k}$.

%%%
\subsubsection{Generalized Kummer surfaces}
%%%

Consider $\Z[i] \subset \C$ the lattice of Gaussian integers, and let $Y$ be the
abelian surface $\C/\Z[i]\times\C/\Z[i]$. The group $\GL_2(\Z[i])$ acts by linear
transformations on $\C^2,$ preserving the lattice $\Z[i] \times \Z[i]$; this provides an
embedding $\GL_2(\Z[i])\to \Aut(Y)$. Let $X$ be the quotient of $Y$ by the action 
of the group of order 4 generated by $\eta(x,y)=(ix,iy)$. This surface is rational, with 
ten singularities, and all of them are quotient singularities that can be resolved
by a single blow-up. Such a surface is a so called (generalized) {\textbf{Kummer surface}};
classical Kummer surfaces are quotient of tori by $(x,y)\mapsto (-x,-y),$ 
and are not rational (these surfaces are examples of K3-surfaces). 

The linear map $\eta$ generates the center of the group $\GL_2(\Z[i])$.
As a consequence, $\GL_2(\Z[i])$, or more precisely $\PGL_2(\Z[i]),$ acts
by automorphisms on $X$. 
Let $M$ be an element of the subgroup $\SL_2(\Z)$ of $\GL_2(\Z[i])$ such that 
\begin{enumerate}
\item[(i)] the trace ${\text{tr}}(M)$ of $M$ is at least $3$;
\item[(ii)]  $M$ is in the level $2$ congruence subgroup of $\SL_2(\Z),$ i.e.
$M$ is equal to the identity modulo $2$.
\end{enumerate}
Let $\hat{g}$ be the automorphism of $Y$ defined by $M$ and $g$ be the automorphism of $X$ induced by $\hat{g}$.

\begin{thm}
\label{thm:kummergood}
The automorphism $g\colon X\to X$ satisfies properties $(i)$, $(ii),$ and $(iii)$ of Proposition
\ref{pro:auto-good}. In particular, $g$ determines a \good\ element of $\Bir(\cpd)$; hence,
if $k$ is large enough, $g^k$  generates a non trivial normal subgroup of $\Bir(\cpd)$.
\end{thm}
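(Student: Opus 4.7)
The plan is to verify the three hypotheses of Proposition~\ref{pro:auto-good} by passing to the abelian cover $\pi\colon Y\to X_{0}:=Y/\eta$ and working on the minimal resolution $\sigma\colon X\to X_{0}$ of the ten quotient singularities.

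Since $E=\C/\Z[i]$ has complex multiplication, $\rho(Y)=h^{1,1}(Y)=4$. The hypothesis $|\tr(M)|\ge 3$ makes $M\in\SL_2(\Z)$ hyperbolic with real eigenvalues $\mu^{\pm 1}$, $\mu>1$; the action of $\hat g$ on $H^{1,1}(Y)\cong\NS(Y)_\R$ factors as $(M^{-1})^T\otimes(\overline{M}^{-1})^T$ on $H^{1,0}\otimes H^{0,1}$, so its eigenvalues are $\mu^{2},1,1,\mu^{-2}$. Hence $\hat g$ is hyperbolic with $\dd(\hat g)=\mu^{2}$; the invariant plane $V_{\hat g}\subset\NS(Y)_\R$ is the sum of the $\mu^{\pm 2}$-eigenspaces, and $V_{\hat g}^{\perp}\cap\NS(Y)_\R$ is the $2$-dimensional $1$-eigenspace. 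The central element $\eta=i\cdot{\rm Id}\in\mathrm{End}(Y)$ acts trivially on $\NS(Y)_\R$, so $\pi^{*}\colon\NS(X_{0})_\R\to\NS(Y)_\R$ is an isomorphism identifying $V_g$ with $V_{\hat g}$. Because $\mu^{\pm 2}$ are conjugate quadratic integers, $V_{\hat g}$ is defined over $\Q$; the lattice $V_{\hat g}\cap\NS(Y)$ has rank $2$, and since $V_{\hat g}$ has signature $(1,1)$ and meets the positive cone of $Y$ it contains ample classes, giving $(i)$.

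For $(ii)$, use the $g_{*}$-invariant orthogonal decomposition
\[
\NS(X)_\R \;=\; \pi^{*}\NS(X_{0})_\R \;\oplus\; \Vect\bigl([E_{1}],\ldots,[E_{10}]\bigr),
\]
where $E_{j}$ are the exceptional curves of $\sigma$. On the first summand $g_{*}$ is the identity on $V_g^{\perp}$ by the eigenvalue computation above. The level-$2$ congruence $M\equiv{\rm Id}\pmod{2}$ forces $\hat g$ to act trivially on the $2$-torsion $Y[2]=\mathrm{Fix}(\eta^{2})$; in particular $\hat g$ fixes pointwise the four points of $\mathrm{Fix}(\eta)$ (which produce the four $\tfrac14(1,1)$-singularities of $X_{0}$) and the twelve remaining $2$-torsion points, which form six $\eta$-orbits (producing the six nodes). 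Thus $g$ fixes each singular point of $X_{0}$ individually, fixes each $E_{j}$ as a set, and acts as the identity on $\Vect([E_{j}])$. Combining both pieces yields $(ii)$.

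For $(iii)$, I check that $\Aut(X)\to O(\NS(X))$ is faithful, so that Lemma~\ref{lem:auto-hyp-faithful} applies. If $h\in\Aut(X)$ acts trivially it fixes each $[E_{j}]$, hence each singular point of $X_{0}$, and descends to $\bar h\in\Aut(X_{0})$, which lifts to $\hat h\in\Aut(Y)$ normalizing $\langle\eta\rangle$. Writing $\hat h=t_{a}\circ N$ with $N\in\GL_2(\Z[i])$, triviality of $\hat h_{*}$ on $\NS(Y)$ forces $N$ to be a scalar unit of $\Z[i]$, i.e.\ $N\in\langle\eta\rangle\cdot\{\pm{\rm Id}\}$; modulo $\langle\eta\rangle$ and the identity $-{\rm Id}=\eta^{2}$, this reduces to $\hat h=t_{a}$ with $a\in\mathrm{Fix}(\eta)$. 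But $t_{a}$ permutes the four $\tfrac14(1,1)$-singularities of $X_{0}$ by translation by $a$ on $\mathrm{Fix}(\eta)$, so fixing each of them individually forces $a=0$. Faithfulness is established, Lemma~\ref{lem:auto-hyp-faithful} combined with $(i)$ and $(ii)$ gives $(iii)$, and Proposition~\ref{pro:auto-good} then exhibits $g$ as a tight element of $\Bir(X)\simeq\Bir(\cpd)$; Theorem~\ref{thm:criterion} produces the desired integer $k$ such that $g^{k}$ generates a proper normal subgroup. The main obstacle is the fine-grained use of the level-$2$ hypothesis in $(ii)$: without it $\hat g$ would permute $Y[2]$ non-trivially, $g$ would permute some exceptional curves, and $(ii)$ would fail on the exceptional piece; the normalizer computation in $(iii)$ relies on the same congruence to collapse the kernel of the $\NS$-action to the identity.
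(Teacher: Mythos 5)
Your proof is essentially correct, but takes a genuinely different route from the paper for condition $(iii)$, and this in turn shifts where the level-$2$ congruence hypothesis does its work.

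For $(i)$ and $(ii)$ you argue via the Hodge decomposition $H^{1,1}(Y)=H^{1,0}\otimes H^{0,1}$, reading off the eigenvalues $\mu^2,1,1,\mu^{-2}$ of $\hat g_*$ directly. This is cleaner than the paper's computation, which exhibits the explicit basis $[A],[B],[\Delta],[\Delta_i]$, observes that $M\in\SL_2(\Z)$ preserves the rank-$3$ sublattice $W=\langle[A],[B],[\Delta]\rangle$, and uses $\det M=1$ to force $\hat g_*=\mathrm{Id}$ on $V_{\hat g}^\perp\cap W$ and on $W^\perp$. Both yield the same conclusion on the rank-$4$ lattice $\NS(Y)\simeq\NS(X_0)$.

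The substantive divergence is in $(iii)$. The paper never verifies faithfulness of $\Aut(X)\to O(\NS(X))$; instead it argues directly: lift an axis-preserving $h$ to $\hat h(x,y)=N(x,y)+(a,b)$, show $N$ conjugates $M$ to $M^{\pm1}$, use that $(a,b)\in(\tfrac12\Z[i])^2$ because the translation part is normalized by $\langle\eta\rangle$, and then use $M\equiv\mathrm{Id}\pmod 2$ to make $(\mathrm{Id}-M^{\pm1})(a,b)$ vanish mod $\Z[i]^2$. You instead prove faithfulness of $\Aut(X)\to O(\NS(X))$ on the \emph{rank-$14$} N\'eron-Severi group of the minimal resolution and invoke Lemma~\ref{lem:auto-hyp-faithful}. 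This is a valid alternative — note, however, that faithfulness genuinely \emph{fails} on the rank-$4$ group $\NS(X_0)$ of the singular quotient (the translations $t_a$, $a\in\mathrm{Fix}(\eta)$, are killed there), which is why the paper cannot use Lemma~\ref{lem:auto-hyp-faithful} in its own setup, and why you are forced to carry the rank-$10$ exceptional piece along. Once you do so, condition $(ii)$ becomes nontrivial on $\Vect([E_1],\dots,[E_{10}])$, and this is precisely where your use of $M\equiv\mathrm{Id}\pmod 2$ (to get $\hat g|_{Y[2]}=\mathrm{Id}$, hence $g$ fixes each singular point, hence $g_*[E_j]=[E_j]$) enters. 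So you relocate the weight of the congruence hypothesis from $(iii)$ to $(ii)$; your closing remark attributing it to "the normalizer computation in $(iii)$" is misleading, since your faithfulness argument itself does not use the congruence at all.

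Two minor imprecisions worth fixing. First, on your chosen surface (the resolution $X$), the pullback of an ample class on $X_0$ is only big and nef, not ample, so $(i)$ of Proposition~\ref{pro:auto-good} is not satisfied verbatim; the paper acknowledges the same abuse in the remark following that proposition, and Lemma~\ref{lem:rig-auto1} does adapt to big nef classes via the ample model $X_0$, but you should say so rather than silently asserting "giving $(i)$." Second, in your displayed orthogonal decomposition of $\NS(X)_\R$ the pullback should be by $\sigma\colon X\to X_0$, not $\pi\colon Y\to X_0$.
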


\begin{proof}
The N\'eron-Severi group of $Y$ has rank $4,$ and is generated by the following classes:
The class $[A]$ of horizontal curves $\C/\Z[i]\times \{*\},$ 
the class $[B]$ of vertical curves $ \{*\}\times\C/\Z[i]$,
the class $[\Delta]$ of the diagonal $\{(z,z)\in Y\}$,
and the class $[\Delta_i]$ of the graph $\{(z,iz)\in Y\, \vert\, \, z \in \C/\Z[i]\}$.

The vector space $H^2(Y,\R)$ is isomorphic to the space of bilinear alternating 
two forms on the $4$-dimensional real vector space $\C^2$. The action of
$g_*$ is given by the action of $M^{-1}$ on this space.
The dynamical degree of $\hat{g}$ is equal to the square of the spectral radius of $M,$ 
i.e. to the quadratic integer
\[
\dd(\hat{g})=\frac{1}{2} \left( a +\sqrt{a^2 - 4}\right)
\]
where $a={\text{tr}}(M)^2 -2 > 2$. Thus, the plane $V_{\hat{g}}$ intersects
$\NS(Y)$ on a lattice. Let $[F]$ be an element of $V_{\hat{g}}\cap \NS(X)$
with $[F]^2>0$ (see Remark \ref{rem:qi}). Since $Y$ is an abelian variety, then $[F]$ (or its opposite) is ample.
Since $M$ has integer coefficients, the linear map $\hat{g}_*$ preserves the three dimensional subspace $W$  of $\NS(Y)$ 
generated by $[A],$ $[B]$ and $[\Delta]$. The orthogonal complement of $V_{\hat{g}}$ intersects $W$ on a line, on which $\hat{g}_*$ must be the identity, because $\det(M)=1$. The orthogonal complement of $W$ is also a line, so that $\hat{g}_*$ is the identity on $V_{\hat{g}}^\perp \subset \NS(Y)$. 

Transporting this picture in $\NS(X),$ we obtain: 
The dynamical degree of $g$ is equal to 
the dynamical degree of $\hat{g}$ (see \cite{Guedj:Panorama}, for more general results), 
the plane $V_{\hat{g}}$ surjects onto $V_g$, the image of $[F]$ is an ample
class $[D']$ contained in $V_g\cap \NS(X),$ and $g_*$ is the identity on $V_g^{\perp}$. 

Automorphisms of $X$ permute the ten singularities of $X$. The fundamental 
group $\Gamma$ of $X\setminus {\text{Sing}}(X)$ is the affine group $\Z/4\Z\ltimes (\Z[i]\times\Z[i])$  
where  $\Z/4\Z$ is generated by $\eta$. The abelian group $\Z[i]\times\Z[i]$
is the unique maximal free abelian subgroup of rank $4$ in $\Gamma$ and, as such, 
is invariant under all automorphisms of $\Gamma$. This implies that all automorphisms
of $X$ lift to (affine) automorphisms of~$Y$. 

Let $h$ be an automorphism of $X$ which preserves the axis $\axe(g_*)$. Then $h_*$ 
conjugates $g_*$ to $g_*$ or $(g_*)^{-1}$ (Remark \ref{rem:Vgh}), and we must show that $h$ conjugates 
$g$ to $g$ or $g^{-1}$. Let $\hat{h}$ be a lift of $h$ to $Y$. There exists a linear 
transformation $N\in \GL_2(\C)$ and a point $(a,b)\in Y$ such that
\[
\hat{h}(x,y)=N(x,y)+(a,b).
\]
The lattice $\Z[i]\times\Z[i]$ is $N$-invariant, and $N$ conjugates $M$ to $M$ or
its inverse $M^{-1},$ because $h_*$ conjugates $g_*$ to $g_*$ or its inverse. 
Then 
\[
\hat{h}\circ\hat{g}\circ\hat{h}^{-1}= M^{\pm 1}(x,y) + (\text{Id} - M^{\pm 1})(a,b) .
\]
On the other hand, since $\hat{h}$ is a lift of an automorphism of $X,$
the translation $t\colon (x,y)\mapsto (x,y)+(a,b)$ is normalized by the cyclic 
group generated by $\eta$. Thus $a$ and $b$ are in $(1/2)\Z[i]$. Since
$M$ is the identity modulo $2,$ we have $M(a,b)=(a,b)$ modulo $\Z[i]\times \Z[i]$.
Hence  $\hat{h}\circ\hat{g}\circ\hat{h}^{-1}=\hat{g}^{\pm 1}$ and, coming back to $X,$  $h$ conjugates
$g$ to $g$ or its inverse.
\end{proof}

\begin{rem}
The lattice of Gaussian integers can be replaced by the lattice of Eisenstein integers $\Z[j]\subset\C$,
with $j^3=1$, $j\neq 1$, and the homothety $\eta$ by $\eta(x,y)=(jx,jy)$. This leads to a second rational
Kummer surface with an action of the group $\PSL_2(\Z)$; a statement similar to Theorem~B can 
be proved for this example. 
\end{rem}

%%%
\subsubsection{Coble surfaces}\label{par:coble}
%%%

Let ${\mathbf{k}}$ be an algebraically closed field.
Let $S\subset \P^2_{\mathbf{k}}$ be a rational sextic curve, with ten double points $m_i,$ $1\leq i \leq 10$ ;
such sextic curves exist and, modulo the action of $\Aut(\P^2_{\mathbf{k}})$, they depend on $9$ parameters
(see \cite{Halphen:1882}, the appendix of \cite{Gizatullin:1980},
or \cite{Cossec-Dolgachev:book}). 
Let $X$ be the surface obtained by blowing up the ten double points of $S$: By definition 
$X$ is the {\textbf{Coble surface}}  defined by $S$.

Let $\pi\colon X\to \P^2_k$ be the natural projection and $E_i,$ $1\leq i\leq 10,$ be the exceptional divisors of $\pi$.  The canonical class of $X$ is 
\[
[K_X]=-3[H]+\sum_{i=1}^{10} [E_i]
\]
where $[H]$ is the pullback by $\pi$ of the class of a line. The strict transform
$S'$ of $S$ is an irreducible divisor of $X,$ and its class $[S']$ coincides with $-2[K_X]$; more precisely, 
there is a rational section $\Omega$ of $2 K_X $ that does not vanish and has simple poles 
along $S'$. 

\begin{rem}
Another definition of Coble surfaces requires $X$ to be a smooth rational surface 
with a non zero regular section of $-2K_X$: Such a definition includes our Coble surfaces (the section vanishes
along $S'$) but it includes also the Kummer surfaces from the previous
paragraph (see \cite{Dolgachev-Zhang:2001}). Our definition is more restrictive.
\end{rem}

The self-intersection of $S'$ is $-4,$ and $S'$ can be blown down: This provides a birational morphism $q\colon X\to X_0$;
the surface $X_0$ has a unique singularity, at $m=q(S')$. The section $\Omega$ defines a holomorphic section of $K_{X_0}^{\otimes 2}$
that trivializes $2 K_{X_0}$ in the complement of $m$; in particular, $H^0(X,-2 K_X)$ has dimension $1,$ and the base locus of $-2K_X$
coincides with $S'$. The automorphism group $\Aut(X)$ acts linearly on the space of sections of $-2K_X,$ and preserves its
base locus $S'$. It follows that $q$ conjugates $\Aut(X)$ and $\Aut(X_0)$.

The rank of the N\'eron-Severi group $\NS(X)$ is equal to $11$. Let $W$ be the orthogonal complement 
of $[K_X]$ with respect to the intersection form. The linear map $q^*\colon \NS(X_0)\to \NS(X)$ provides an isomorphism
between $\NS(X_0)$ and its image $W=[K_X]^\perp\subset\NS(X)$. Let $O'(\NS(X))$ be the group of isometries of the lattice $\NS(X)$ 
which preserve the canonical class $[K_X]$. 

We shall say that $S$ (resp. $X$) is {\bf{special}} when at least one of the following properties occurs 
(see \cite{Dolgachev:1986}, \cite{Cossec-Dolgachev:book} page 147):
\begin{enumerate}
\item three of the points $m_i$ are colinear;
\item six of the points $m_i$ lie on a conic;
\item eight of the $m_i$ lie on a cubic curve with a double point at one of them;
\item the points $m_i$ lie on a quartic curve with a triple point at one of them.
\end{enumerate}

\begin{thm}[Coble's theorem]\label{thm:coble}
Let ${\mathbf{k}}$ be an algebraically closed field.
The set of special sextics is a  proper Zariski closed subset of the space of rational sextic curves $S\subset \P^2_k$.
If $S$ is not special and $X$ is the associated Coble surface then
the morphism 
\begin{align*}
\Aut(X) &\to O'(\NS(X))\\
 f\quad &\mapsto \quad f_*
\end{align*}
is injective and its image is the level $2$ congruence subgroup of $O'(\NS(X))$.
\end{thm}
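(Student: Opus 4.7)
The lattice $\NS(X)$ with basis $[H], [E_1], \dots, [E_{10}]$ is the standard Lorentzian lattice of signature $(1,10)$. Since $[K_X]^2 = -1$, the class $[K_X]$ is primitive and spans an orthogonal direct summand, so $\NS(X) = \Z[K_X] \oplus [K_X]^\perp$; a direct check shows that $[K_X]^\perp$ is even and unimodular of signature $(1,9)$, hence isomorphic to $E_8 \oplus U$ (the $E_{10}$ lattice). Consequently $O'(\NS(X))$ is identified with $O(E_{10})$, and its level $2$ congruence subgroup is by definition the kernel of the reduction map $O'(\NS(X)) \to O(\NS(X)/2\NS(X))$. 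My strategy splits into three parts: properness of the special locus, injectivity, and identification of the image.

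\textbf{Properness of the special locus.} Each of conditions (i)--(iv) is a Zariski-closed incidence condition on the unordered ten-tuple $(m_1, \dots, m_{10})$, so the special locus is Zariski-closed in the parameter space of nodal rational sextics. To see the inclusion is proper I would combine a dimension count (the family of rational sextics with ten nodes is $9$-dimensional modulo $\PGL_3({\mathbf{k}})$, whereas each speciality condition cuts out a positive-codimension subvariety) with an explicit example of a non-special sextic.

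\textbf{Injectivity.} Suppose $f \in \Aut(X)$ acts trivially on $\NS(X)$. Then $f$ fixes each class $[E_i]$, and since $E_i$ is the unique irreducible curve of that class one has $f(E_i) = E_i$. Consequently $f$ descends via the contraction $\pi \colon X \to \P^2_{\mathbf{k}}$ to an automorphism $\bar f \in \Aut(\P^2_{\mathbf{k}})$ fixing each node $m_i$ of $S$. The non-speciality hypothesis ensures that some five of the $m_i$ are in general linear position, which forces $\bar f = {\rm Id}$; hence $f = {\rm Id}$.

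\textbf{Identification of the image.} The natural route is via the double cover $p \colon Y \to X$ branched along $S'$: by adjunction $Y$ is a smooth K3 surface (the section $\Omega$ of $2K_X$, with a simple pole along $S'$, pulls back to a regular nowhere-vanishing $2$-form on $Y$). Every $f \in \Aut(X)$ lifts, uniquely modulo the covering involution $\iota$, to an automorphism of $Y$, and the $\iota$-invariant sublattice of $H^2(Y, \Z)$ is canonically identified with $p^*\NS(X)$. Preservation of the branch divisor $S' = -2K_X$ translates into $f_*$ acting trivially modulo $2$ on $\NS(X)$, so $f_*$ lies in the level $2$ congruence subgroup. For the converse, given $\varphi$ in this subgroup I would lift it to an $\iota$-equivariant Hodge isometry of $H^2(Y, \Z)$, verify it preserves a K\"ahler class --- which is precisely where non-speciality is used, to prevent an effective $(-2)$-root from being sent to a non-effective one --- and apply the Global Torelli theorem for K3 surfaces (first over $\C$) to realize $\varphi$ by an automorphism of $Y$ that descends to $X$. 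Extending from $\C$ to an arbitrary algebraically closed ${\mathbf{k}}$ is then handled by a spreading-out and specialization argument over $\mathrm{Spec}\,\Z$. The hardest step is this surjectivity: it relies essentially on Torelli-type input and a careful analysis of the K\"ahler cone, and transporting the argument to positive characteristic is the most delicate point of the proof.
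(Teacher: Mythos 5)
The paper does not actually prove Coble's theorem. It attributes the result to Coble's book, cites Dolgachev--Ortland and Dolgachev (2008) for sketches and Cantat--Dolgachev for a complete characteristic-free proof, and then gives only a three-line outline of the strategy: identify $\Aut(X)$ with a normal subgroup of $O'(\NS(X))$; observe that the level $2$ congruence subgroup is generated, as a normal subgroup, by a single explicit involution; and realize that involution geometrically by constructing a degree-$2$ morphism from $X$ onto a Del Pezzo surface and taking the covering involution. Your route --- passing up to the K3 double cover $Y\to X$ and invoking Torelli --- goes in the opposite direction and is closer to the Dolgachev--Ortland exposition than to the paper's own outline. Your preliminary observations (the splitting $\NS(X)=\Z[K_X]\oplus [K_X]^\perp$ with $[K_X]^\perp\cong E_8\oplus U$, and the injectivity argument via the contraction to $\P^2_{\mathbf{k}}$ and general position of the nodes) are fine, though four nodes in general position already suffice and the non-speciality condition (i) supplies them.

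The concrete error is in the containment step: you assert that ``preservation of the branch divisor $S'=-2K_X$ translates into $f_*$ acting trivially modulo $2$ on $\NS(X)$.'' This is a non sequitur. Every automorphism of every surface fixes $[K_X]$ and hence $[S']=-2[K_X]$; if this argument were valid it would show that automorphism groups of arbitrary surfaces act trivially mod $2$ on $\NS$, which is false. The inclusion $\Aut(X)_*\subset O'(\NS(X))(2)$ is a real theorem, and the mod $2$ constraint has to be extracted from the lattice-theoretic gluing data of the K3 cover (for a non-special Coble surface the lift $\tilde f$ acts as $\pm\mathrm{id}$ on the transcendental lattice of $Y$, and compatibility across the discriminant forms of the invariant and anti-invariant sublattices forces $f_*\equiv\mathrm{id}\pmod 2$ on $\NS(X)$), not from the tautological preservation of the anticanonical class. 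Likewise, the surjectivity direction --- extending a given element of the congruence subgroup to an $\iota$-equivariant Hodge isometry preserving the ample cone and then realizing it by Torelli --- is exactly where the substance lies, and your proposal does not say how to handle the algebraically closed field of positive characteristic, which is both included in the statement and flagged by the paper (via the Cantat--Dolgachev citation) as the genuinely delicate point.
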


This theorem is due to Coble, and can be found in \cite{Coble:Book}. A proof is sketched in 
\cite{Dolgachev-Ortland:1989, Dolgachev:2008}, and a complete, characteristic free, proof is available in 
\cite{Cantat-Dolgachev};  analogous result and proof  for generic Enriques surfaces can be found in
\cite{Dolgachev:1984}. The main steps are the following. The automorphism group of $X$ 
can be identified with a normal subgroup of $O'(\NS(X))$; this group
is generated, as a normal subgroup, by an explicit involution. To realize this
involution by an automorphism, one constructs a $2$ to $1$ morphism from $X$ 
to a Del Pezzo surface and takes the involution of this cover. 

\begin{rem}
\label{rem:amplecone}
Let ${\text{Amp}}(W)$ be the set of ample classes in $\NS(X_0)\otimes \R\simeq W\otimes \R$. This convex cone is invariant
under the action of $\Aut(X),$ hence under the action of a finite index  subgroup of the orthogonal group $O(W)$.
As such, ${\text{Amp}}(W)$ is equal to $\{ [D]\in W\, \vert \, [D]^2>0, \, [D]  \cdot[H]>0\}$.
\end{rem}

Let $S$ be a generic rational sextic, and $X$ be its associated Coble surface.
Theorem \ref{thm:coble} gives a recipe to construct automorphisms of~$X$: Let $\psi$ be an isometry 
of the lattice $W$; if $\psi$ is equal to the identity modulo $2,$ and $ \psi[H]\cdot [H] >0,$ 
then $\psi=g_*$ for a unique automorphism of $X$. Let us apply this idea to cook up a \good\ automorphism of $X$. 

\begin{lem}[Pell -Fermat equation]
\label{lem:PellFermat}
Let $Q(u,v)=au^2+ buv+cv^2$ be a quadratic binary form with integer coefficients. Assume that $Q$ is non 
degenerate, indefinite, and does not represent $0$. Then there exists an isometry $\phi$ in the orthogonal group ${\rm O}_Q(\Z)$  
with eigenvalues $\lambda(Q)>1>1/\lambda(Q)>0$.
\end{lem}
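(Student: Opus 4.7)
My plan is to reduce the lemma to the existence of a nontrivial solution of a Pell equation attached to the discriminant of $Q$. The argument has three natural stages.

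\textbf{Step 1: Properties of the discriminant.} Let $D = b^2 - 4ac$. Since $Q$ is non-degenerate and indefinite, $D > 0$. I claim that $D$ is not a perfect square. Indeed, since $Q$ does not represent $0$, neither $a$ nor $c$ vanishes (otherwise $Q(1,0)=0$ or $Q(0,1)=0$). If $D = k^2$ with $k \in \Z_{\geq 0}$, then over $\Q$ one has the factorization
\[
4a\, Q(u,v) = (2au + (b+k)v)(2au + (b-k)v),
\]
so the integer pair $(u,v) = (b+k, -2a) \neq (0,0)$ satisfies $Q(u,v)=0$, contradicting the hypothesis. Hence $D$ is a positive non-square integer.

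\textbf{Step 2: Pell's equation and construction of $\phi$.} By the classical theory of Pell's equation, $x^2 - Dy^2 = 1$ has infinitely many integer solutions, including a fundamental solution $(x_0, y_0)$ with $x_0 > 1$ and $y_0 > 0$. Fix such a solution and set $t := 2x_0$, $u := 2y_0$, so that $t^2 - Du^2 = 4$ and $t > 2$, with $t$ and $u$ both even. Define the $2\times 2$ integer matrix
\[
P = \begin{pmatrix} \frac{t-bu}{2} & -cu \\ au & \frac{t+bu}{2} \end{pmatrix}.
\]
The entries are integers because $t$ and $u$ are even. A direct computation shows $\det(P) = \frac{t^2 - b^2 u^2}{4} + acu^2 = \frac{t^2 - Du^2}{4} = 1$, and that $P^{\,T} M_Q P = M_Q$ where $M_Q = \left(\begin{smallmatrix} 2a & b \\ b & 2c \end{smallmatrix}\right)$ is the Gram matrix of $Q$; hence $P \in \mathrm{O}_Q(\Z)$.

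\textbf{Step 3: Spectrum.} The characteristic polynomial of $P$ is $X^2 - tX + 1$, so its eigenvalues are
\[
\lambda_\pm = \frac{t \pm \sqrt{t^2-4}}{2} = x_0 \pm y_0\sqrt{D}.
\]
Since $D$ is not a perfect square these are real and irrational, and since $x_0, y_0 > 0$ one has $\lambda_+ > 1$. Their product equals $x_0^2 - Dy_0^2 = 1$, so $\lambda_- = 1/\lambda_+ \in (0,1)$. Setting $\phi := P$ and $\lambda(Q) := \lambda_+$ completes the proof.

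The only non-elementary input is the existence of a fundamental solution to $x^2-Dy^2=1$ for non-square $D > 0$, which is the standard Pell theorem; I expect no real obstacles in the verifications, provided one is careful in Step 1 to observe that $Q$ representing $0$ is really equivalent to $D$ being a square once $ac \neq 0$ is established.
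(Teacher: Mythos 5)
Your proposal is correct and follows essentially the same route as the paper: the paper's proof is a two-line sketch invoking the classical correspondence between isometries in $\mathrm{O}_Q(\Z)$ and units in the real quadratic field of discriminant $D$, citing Dirichlet's units theorem and Fr\"ohlich--Taylor, and noting this reduces to a Pell--Fermat equation. You have made exactly that correspondence explicit: you verify from the hypothesis that the discriminant $D$ is a positive non-square (a point the paper leaves implicit), then exhibit the classical automorph matrix $P=\bigl(\begin{smallmatrix}(t-bu)/2 & -cu\\ au & (t+bu)/2\end{smallmatrix}\bigr)$ built from a nontrivial solution of $t^2 - Du^2 = 4$, and compute its eigenvalues directly. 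Your parity trick of doubling a fundamental solution of $x^2-Dy^2=1$ to guarantee integrality of the entries is slightly wasteful (one can check that $t\equiv bu \pmod 2$ holds automatically since $D\equiv b \pmod 2$), but it is harmless and keeps the exposition clean. In short: same idea, with the constructive details filled in rather than outsourced to a reference.
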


\begin{proof}[Sketch of the proof]
Isometries  in ${\rm O}_Q(\Z)$ correspond to units in the quadratic field defined by the polynomial $Q(t,1) \in \Z[t]$;
finding units, or isometries, is a special case of Dirichlet's units theorem and amounts to solve a Pell-Fermat equation (see \cite{Frohlich-Taylor:Book},
chapter V.1).
\end{proof}

Let $[D_1]$ and $[D_2]$ be the elements of $\NS(X)$ defined by
\begin{eqnarray*} 
[D_1]  & = & 6 [H] - [E_9] - [E_{10}] - \sum_{i=1}^{8} 2[E_i], \\
\! [D_2] &  =  & 6 [H] -  [E_7] - [E_8] - \sum_{i\neq 7,8}  2[E_i].
\end{eqnarray*}
Both of them have self intersection $2,$ are contained in $W,$ and intersect $[H]$ positively; as explained
in Remark \ref{rem:amplecone}, $[D_1]$ and $[D_2]$ are ample classes of $X_0$. 
Let $V$ be the plane containing $[D_1]$ and $[D_2],$ and $Q$ be the restriction  of the intersection form 
to $V$. If $u$ and $v$ are integers, then 
\[
Q(u[D_1]+v[D_2])= (u[D_1]+v[D_2]) \cdot (u[D_1]+v[D_2])= 2 u^2 + 8 uv + 2 v^2
\]
because $[D_1]\cdot [D_2]  = 4$. This quadratic form does not represent $0,$
because its discriminant is not a perfect square. From Lemma \ref{lem:PellFermat},
there is an isometry $\phi$ of $V$ with an eigenvalue $\lambda(\phi)>1$. In fact, an explicit 
computation shows that the group of isometries of $Q$ is the semi-direct product of
 the group $\Z/2\Z$ generated by the involution which permutes $[D_1]$ and $[D_2]$ and the
  cyclic group $\Z$ generated by the isometry $\phi$ defined by 
\[
\phi([D_1]) = 4 [D_1] + [D_2], \quad \phi([D_2])=-[D_1].
\]
The second iterate of $\phi$ is equal to the identity modulo $2$. Coble's theorem (Theorem~\ref{thm:coble}) now
implies that there exists a unique automorphism $g$ of $X$ such that 
\begin{enumerate}
\item $g_*$ coincides with $\phi^2$ on $V$;
\item $g_*$ is the identity on the orthogonal complement $V^\perp$.
\end{enumerate}
The dynamical degree of $g$ is the square of $\lambda(\phi),$ and is
equal to $7+4\sqrt{3}$. Properties $(i)$, $(ii)$, and $(iii)$ of Proposition \ref{pro:auto-good} are satisfied. Thus, large powers
of $g$ generate non trivial normal subgroups of the Cremona group: 

\begin{thm}
\label{thm:coblegood}
Let ${\mathbf{k}}$ be an algebraically closed field.
Let $X$ be a generic Coble surface defined over ${\mathbf{k}}$. There are hyperbolic automorphisms of $X$ that
generate non trivial normal subgroups of the Cremona group $\Bir(X_{\mathbf{k}})=\Bir(\P^2_{\mathbf{k}})$. 
\end{thm}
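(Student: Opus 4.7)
The plan is to apply Proposition \ref{pro:auto-good} directly to the automorphism $g$ of the generic Coble surface $X$ constructed in the paragraphs immediately preceding the theorem. Recall that this $g$ was defined via Coble's theorem (Theorem \ref{thm:coble}) so that $g_*$ acts on the plane $V \subset \NS(X)\ten$ spanned by $[D_1]$ and $[D_2]$ as the isometry $\phi^2$ (with $\phi$ produced from Lemma \ref{lem:PellFermat}), and as the identity on $V^\perp$. Since $\phi$ has eigenvalues $\lambda(\phi)>1>1/\lambda(\phi)$, the automorphism $g$ is hyperbolic with dynamical degree $\lambda(\phi)^2 = 7+4\sqrt{3}$, and $V_g = V$.

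The three hypotheses of Proposition \ref{pro:auto-good} will then be verified as follows. For (i), the classes $[D_1]$ and $[D_2]$ both have self-intersection $2$, lie in $W = [K_X]^\perp$, and intersect $[H]$ positively; by Remark \ref{rem:amplecone}, any such class is ample on the singular model $X_0$, so $V_g$ contains an ample class $[D']$ as required. Hypothesis (ii), that $g_*$ is the identity on $V_g^\perp \cap \NS(X)$, is immediate from the defining construction of $g$.

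For hypothesis (iii), I will invoke Coble's theorem: for a generic (non-special) Coble surface $X$, the representation $\Aut(X) \to O'(\NS(X))$, $h \mapsto h_*$ is \emph{injective}. Combined with properties (i) and (ii) already checked, Lemma \ref{lem:auto-hyp-faithful} then applies verbatim and yields that any $h \in \Aut(X)$ preserving $V_g$ satisfies $hgh^{-1} \in \{g, g^{-1}\}$. In particular this holds for any $h$ with $h_*(\axe(g_*)) = \axe(g_*)$, since preserving the axis forces $h_*$ to preserve the plane $V_g = V$.

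With the three hypotheses of Proposition \ref{pro:auto-good} verified, that proposition ensures first that every $h \in \Bir(X)$ preserving $\axe(g_*)$ is an automorphism of $X$, so that $g$ is a \good\ element of $\Bir(X)$, and second (via Theorem \ref{thm:criterion}) that some iterate $g^k$ generates a proper non-trivial normal subgroup of $\Bir(X) \simeq \Bir(\P^2_{\mathbf{k}})$. Since the whole construction of $S$, $X$, $[D_1]$, $[D_2]$, $\phi$, and the invocation of Coble's theorem are valid over any algebraically closed field $\mathbf{k}$, the statement holds in arbitrary characteristic. The only mildly delicate point is checking that $\phi^2$ lies in the level-$2$ congruence subgroup (needed to make Coble's theorem produce an actual automorphism): this is the reason we took a square rather than $\phi$ itself, and it is a direct computation from the explicit matrix of $\phi$ in the basis $([D_1],[D_2])$, which is the main (easy) verification required to close the argument.
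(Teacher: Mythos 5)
Your proof is correct and follows essentially the same route as the paper: you take the same automorphism $g$ constructed from $\phi^2$ via Coble's theorem, verify the three hypotheses of Proposition~\ref{pro:auto-good} (in particular obtaining (iii) from the faithfulness part of Coble's theorem via Lemma~\ref{lem:auto-hyp-faithful}, exactly as the remark after Proposition~\ref{pro:auto-good} suggests), and conclude via Theorem~\ref{thm:criterion}. The paper states these verifications more tersely, but the logic is identical, including the observation that working with $\phi^2$ rather than $\phi$ is what places the isometry in the level-$2$ congruence subgroup and the remark that the argument is characteristic-free.
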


As a corollary, the Cremona group $\Bir(\P^2_{\mathbf{k}})$ is not simple 
if ${\mathbf{k}}$ is algebraically closed,
as announced in the Introduction.

%%%%%%%%%%%%%%%%%%%%%%%%%%%%%%%%%%%%%%
%%%%%%%%%%%%%%%%%%%%%%%%%%%%%%
%%%%%%%%%%%%%%%%%%%

%
%%%%%%%%%%%%%%%%%%%%%%%%%%%%%%%%%%%%%%%%%%%%%%%%%%%%%%%%%%%%%%%%%%
%
%\vfill
%\pagebreak
%\part{Complements}
%\def\thesubsection {\arabic{subsection}}

%
%%%%%%%%%%%%%%%%%%%%%%%%%%%%%%%%%%%%%%%%%%%%%%%%%%%%%%%%%%%%%%%%%%
%

\section{Complements}

\subsection{Polynomial automorphisms and monomial transformations}

The group of polynomial automorphisms of the affine plane, and the group of monomial transformations of $\cpd$ were both sources of inspiration for the results in this paper.
We now use these groups to construct hyperbolic elements $g$ of $\Bir(\cpd)$ for which $\lld g\rrd$ coincides with $\Bir(\cpd)$.

\subsubsection{Monomial transformations} 

Consider the group of monomial transformations of $\P^2_{\mathbf{k}}$. By definition, this  group is isomorphic to $\GL_2(\Z)$, acting  by 
\[
\left( \begin{matrix}
 a&b\\ c&d
\end{matrix} \right) \colon (x,y) \mapsto (x^ay^b,x^cy^d)
\]
in affine coordinates $(x,y)$. The matrix $-{\text{Id}}$ corresponds to the standard quadratic involution $\sigma(x,y)=(1/x,1/y)$.

If one considers $\PSL_2(\Z)$ as a subgroup of $\PSL_2(\R)\simeq \Isom(\Hyp^2)$, it is an interesting exercise to check that all hyperbolic matrices of $\PSL_2(\Z)$ are \good\ elements of $\PSL_2(\Z)$ (see \cite{Lamy:HDR}). 
However, when we see $\GL_2(\Z)$ as a subgroup of the Cremona group, we obtain the following striking remark.

\begin{pro} \label{pro:monomial}
Let $g\colon \P^2_{\mathbf{k}} \to \P^2_{\mathbf{k}}$ be a non-trivial monomial transformation.
The normal subgroup of $\Bir(\P^2_{\mathbf{k}})$ generated by $g$ is not proper:  $\lld g \rrd = \Bir(\P^2_{\mathbf{k}})$.
\end{pro}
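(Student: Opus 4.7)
The plan is to produce a non-trivial element of $\PGL_3({\mathbf{k}})$ inside $\lld g \rrd$ and then invoke Gizatullin's theorem cited in Remark a following Theorem A, which states that any non-trivial $h \in \Bir(\cpd)$ preserving a pencil of lines generates $\Bir(\cpd)$ as a normal subgroup (the same proof works over any algebraically closed field).

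The key observation is that monomial transformations normalize the standard torus $T = ({\mathbf{k}}^*)^2 \subset \PGL_3({\mathbf{k}})$ of diagonal linear automorphisms. Concretely, write $g$ as the transformation associated to $M = \begin{pmatrix} a & b \\ c & d \end{pmatrix} \in \GL_2(\Z)$, so that $g(x,y) = (x^a y^b, x^c y^d)$ in affine coordinates. For $(\lambda, \mu) \in ({\mathbf{k}}^*)^2$ and the translation $t_{(\lambda,\mu)}(x,y) = (\lambda x, \mu y)$, a direct calculation yields $g \circ t_{(\lambda,\mu)} = t_{(\lambda^a\mu^b,\, \lambda^c\mu^d)} \circ g$, whence
\[
[g, t_{(\lambda,\mu)}] = g\, t_{(\lambda,\mu)}\, g^{-1}\, t_{(\lambda,\mu)}^{-1} = t_{(\lambda^{a-1}\mu^b,\, \lambda^c\mu^{d-1})}.
\]
This commutator is the product of $g$ and a conjugate of $g^{-1}$, hence lies in $\lld g \rrd$.

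Since $g$ is non-trivial we have $M \neq \mathrm{Id}$, so the matrix $M - \mathrm{Id}$ has a non-zero entry. Because ${\mathbf{k}}$ is algebraically closed and therefore infinite, for any fixed non-zero integer $n$ the polynomial $x^n - 1$ has only finitely many roots in ${\mathbf{k}}^*$; choosing $\lambda,\mu$ appropriately (for instance, set the coordinate corresponding to a vanishing row of $M-\mathrm{Id}$ equal to $1$ and let the other be generic) we obtain $(\lambda^{a-1}\mu^b, \lambda^c\mu^{d-1}) \neq (1,1)$. Thus $h := [g, t_{(\lambda,\mu)}]$ is a non-trivial diagonal linear transformation of $\cpd$, lying in $\lld g \rrd$.

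The element $h$ is an automorphism of $\cpd$ that preserves each of the pencils of lines through $[1:0:0]$, $[0:1:0]$ and $[0:0:1]$, so by Gizatullin's theorem $\lld h \rrd = \Bir(\cpd)$. We conclude
\[
\Bir(\cpd) = \lld h \rrd \subseteq \lld g \rrd \subseteq \Bir(\cpd),
\]
which proves the proposition. The only potentially delicate point is the existence of suitable $\lambda, \mu$, but this is straightforward over an algebraically closed field; the rest of the argument is entirely formal.
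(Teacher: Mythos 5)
Your computation of the commutator is correct and is, in essence, exactly the computation in the paper: both proofs produce a non-trivial diagonal element of $\Aut(\P^2_{\mathbf{k}})$ in $\lld g\rrd$ by commuting $g$ with a diagonal automorphism, using that $g$ normalizes the torus and that ${\mathbf{k}}$ is infinite.

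The only real divergence is how you finish. You invoke Gizatullin's theorem (any non-trivial $h$ preserving a pencil of lines normally generates $\Bir(\cpd)$), adding as an aside that the ``same proof works over any algebraically closed field''; but Gizatullin's result is stated in the paper only over $\C$, and asserting that its proof transfers unchanged is a non-trivial claim you would have to justify or cite. The paper sidesteps this: it instead uses Remark~\ref{rem:GN}, a short self-contained argument --- $\PGL_3({\mathbf{k}})$ is simple, so a normal subgroup containing one non-trivial linear map contains all of $\Aut(\P^2_{\mathbf{k}})$; the standard quadratic involution $\sigma$ is conjugate to the linear map $(x,y)\mapsto(1-x,1-y)$, hence also lies in the subgroup; and the Noether--Castelnuovo theorem (valid over any algebraically closed field) finishes the job. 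If you replace your appeal to Gizatullin by this Noether-style argument, your proof becomes both self-contained and valid for all algebraically closed ${\mathbf{k}}$, as the proposition requires; as written, the reliance on Gizatullin for general ${\mathbf{k}}$ is the one weak link.
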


\begin{rem}[Gizatullin and Noether]\label{rem:GN}
If $N$ is a normal subgroup of $\Bir(\P^2_{\mathbf{k}})$ containing a non trivial automorphism of $\P^2_{\mathbf{k}}$, then $N$
coincides with $\Bir(\P^2_{\mathbf{k}})$. The proof is as follows (see \cite{Gizatullin:1994, Cerveau-Deserti:2009}). Since $\Aut(\P^2_{\mathbf{k}})$ is the simple group $\PGL_3({\mathbf{k}})$
and $N$ is normal, $N$ contains $\Aut(\P^2_{\mathbf{k}})$. In particular, $N$ contains the automorphism $h$ defined
by 
\[
h(x,y)=(1-x,1-y)
\]
in affine coordinates. An easy calculation shows that the standard quadratic involution $\sigma$
satisfies $\sigma = (h\sigma)h(h\sigma)^{-1}$; hence, $\sigma$ is conjugate to $h$, and $\sigma$ 
is contained in $N$. The conclusion follows from Noether's theorem, which states that
$\sigma$ and $\Aut(\P^2_{\mathbf{k}})$ generate $\Bir(\P^2_{\mathbf{k}})$ (see \cite{KSC:book}, \S 2.5).
\end{rem}

\begin{proof}[Proof of Proposition \ref{pro:monomial}]
Let $g = \bigl( \begin{smallmatrix}
 a&b\\ c&d
\end{smallmatrix} \bigr)$ be any non trivial monomial map in $\Bir(\cpd)$. 
The commutator of $g$ with the diagonal map $f(x,y) = (\alpha x,\beta y)$ is the diagonal map 
\begin{equation}\label{eq:com}
g^{-1}f^{-1}gf \colon (x,y)\mapsto (\alpha^{1-d}\beta^b x, \alpha^c\beta^{1-a} y).
\end{equation}
Thus, the normal subgroup $\lld g \rrd$ contains an element of $\Aut(\cpd)\setminus\{\rm Id\}$ and 
Remark \ref{rem:GN} concludes the proof. \end{proof}

%%%
\subsubsection{Polynomial automorphisms}\label{par:Danilovbad}
%%%
As mentioned in the Introduction, Danilov proved that the group $\Aut[{\mathbb{A}}^2_{\mathbf{k}}]_1$ 
of polynomial automorphisms of the affine plane ${\mathbb{A}}^2_{\mathbf{k}}$ 
with Jacobian determinant one is not simple. 
Danilov's proof uses an action on a tree. Since $\Aut[{\mathbb{A}}^2_{\mathbf{k}}]_1$ is a subgroup of $\Bir(\P^2_{\mathbf{k}})$, we also have the action of $\Aut[{\mathbb{A}}^2_{\mathbf{k}}]_1$ on the hyperbolic space $\hypman$. It is a nice observation that $g\in \Aut[{\mathbb{A}}^2_{\mathbf{k}}]_1$ 
determines a hyperbolic isometry of the tree if and only if it determines a hyperbolic isometry of $\hypman$: In both cases, hyperbolicity corresponds to an exponential growth of the sequence of degrees $\deg(g^n)$.

Fix a number $a\in {\mathbf{k}}^*$ and a polynomial   $p\in {\mathbf{k}}[y]$ of degree $d\geq 2$, and consider the  automorphism of ${\mathbb{A}}^2_{\mathbf{k}}$ defined by
$
h(x,y) = (y,p(y) - a x).
$
This automorphism determines an algebraically stable birational transformation of $\P^2_{\mathbf{k}}$, namely
\begin{equation}\label{eq:henon}
h[x:y:z]=[y z^{d-1}: P(y,z)-axz^{d-1}:z^d],
\end{equation}
where $P(y,z)=p(y/z) z^d$. There is a unique indeterminacy point $\Ind(h)=\{ [1:0:0] \}$, and a unique 
indeterminacy point for the inverse, $\Ind(h^{-1})=\{ [0:1:0] \}$. This Cremona transformation is hyperbolic, with translation 
length $\lgt(h_*)= \log(d)$; in particular, the translation length goes to infinity with $d$.

\begin{pro}[See also  \cite{Cerveau-Deserti:2009}]
For all integers $d\geq 2$, equation (\ref{eq:henon}) defines a subset $H_d\subset \V_d$ 
which depends on $d+2$ parameters and satisfies: For all $h$ in $H_d$, $h$ is a hyperbolic,
algebraically stable Cremona transformation, but the normal subgroup generated by $h$
coincides with $\Bir(\P^2_{\mathbf{k}})$.
\end{pro}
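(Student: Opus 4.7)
The plan is to verify the easy properties of $h$ directly from the formula~(\ref{eq:henon}), and then to exhibit a nontrivial element of $\Aut(\P^2_{\mathbf{k}})$ inside $\lld h \rrd$; Remark~\ref{rem:GN} will then finish the proof. First I would check that $H_d \subset \V_d$ by composing with the involution $\tau(x,y) = (y,x)$: the map $\tau \circ h(x,y) = (p(y) - ax,\, y)$ preserves the pencil of horizontal lines and has degree $d$, hence lies in $\J_d$, so $h = \tau^{-1}\circ(\tau\circ h)$ belongs to $\V_d$. The parameter count $d+2$ is immediate ($a\in {\mathbf{k}}^*$ gives one, and $p \in {\mathbf{k}}[y]$ of degree exactly $d$ gives $d+1$). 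From~(\ref{eq:henon}) we read off $\Ind(h) = \{[1{:}0{:}0]\}$ and $\Ind(h^{-1}) = \{[0{:}1{:}0]\}$, and since the top-degree piece of $P(y,z)$ is $c_d y^d$ we get $h([0{:}1{:}0]) = [0{:}c_d{:}0] = [0{:}1{:}0]$. So $[0{:}1{:}0]$ is fixed, $h^n(\Ind(h^{-1}))$ never meets $\Ind(h)$, $h$ is algebraically stable, $\deg(h^n) = d^n$, and $\dd(h) = d > 1$; hence $h_*$ is hyperbolic with translation length $\log d$.

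Second, I would mimic equation~(\ref{eq:com}) from the monomial case and compute the commutator of $h$ with a scalar dilation $f(x,y) = (\alpha x, \alpha y)$. A direct calculation (using $h^{-1}(x,y) = ((p(x)-y)/a,\, x)$) gives
\[
f^{-1} h f h^{-1}(x,y) = \bigl(x,\; y + q(x)\bigr), \qquad q(x) = \frac{p(\alpha x)}{\alpha} - p(x) = \sum_{i=0}^{d} c_i(\alpha^{i-1}-1)\, x^i.
\]
Since ${\mathbf{k}}$ is algebraically closed (hence infinite), we can pick $\alpha \in {\mathbf{k}}^*$ with $\alpha^{d-1} \neq 1$, so that the coefficient $c_d(\alpha^{d-1}-1)$ of $x^d$ is nonzero and $q$ has degree exactly $d$. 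The resulting elementary Jonqui\`eres map $E_q(x,y) := (x, y+q(x))$ lies in $\lld h \rrd$ but is not yet in $\Aut(\P^2_{\mathbf{k}})$.

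Third, I would iteratively reduce the degree of $q$. For any horizontal translation $\tau_\gamma(x,y) = (x+\gamma,y)$ and any $r \in {\mathbf{k}}[x]$, a short computation gives $\tau_\gamma E_r \tau_\gamma^{-1} \cdot E_r^{-1} = E_{\Delta_\gamma r}$, where $\Delta_\gamma r(x) := r(x+\gamma) - r(x)$. Since $\lld h \rrd$ is normal and contains $E_q$, it also contains $E_{\Delta_{\gamma_k}\cdots\Delta_{\gamma_1} q}$ for every choice of $\gamma_1, \dots, \gamma_k \in {\mathbf{k}}^*$. After at most $d-1$ steps the polynomial obtained has degree $\leq 1$, so $E_{\cdot}$ becomes an affine transformation. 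The main obstacle is to ensure that this iterated difference is not identically zero. In characteristic zero this is automatic: each $\Delta_\gamma$ strictly lowers the degree by one, and after $d-1$ steps the leading (linear) coefficient equals $d!\,\gamma_1\cdots\gamma_{d-1}$ times $c_d(\alpha^{d-1}-1)\neq 0$. In positive characteristic $p$ the differences can drop by more than one degree (e.g.\ $\Delta_\gamma x^p = \gamma^p$), but then one lands on a nonzero constant already at some intermediate step, which yields a nontrivial vertical translation $E_c \in \Aut(\P^2_{\mathbf{k}})$. In either case one produces a nontrivial element of $\Aut(\P^2_{\mathbf{k}}) \cap \lld h \rrd$, so Remark~\ref{rem:GN} forces $\lld h \rrd = \Bir(\P^2_{\mathbf{k}})$.
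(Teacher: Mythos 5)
Your proof is correct, and the first paragraph (membership in $\V_d$, parameter count, algebraic stability via the fixed point $[0{:}1{:}0]$, hyperbolicity) matches what the paper would want — indeed the paper merely records $\Ind(h)$, $\Ind(h^{-1})$ and algebraic stability in the surrounding text, so your verification is a welcome expansion. Where you diverge is the second half: the paper observes that for the \emph{vertical translation} $f(x,y)=(x,y+1)$ one has
\[
h^{-1}\circ f\circ h\,(x,y)=(x-a^{-1},y),
\]
so the commutator $f^{-1}h^{-1}fh(x,y)=(x-a^{-1},\,y-1)$ is already a non-trivial element of $\Aut(\P^2_{\mathbf{k}})$, and Remark~\ref{rem:GN} finishes in one stroke. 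You instead use the \emph{scalar dilation} $f(x,y)=(\alpha x,\alpha y)$, whose commutator $f^{-1}hfh^{-1}=E_q$ is an elementary Jonqui\`eres map of degree $d$, and then run a finite-difference reduction (choosing each $\gamma_i$ to avoid killing the polynomial, with the extra case analysis in characteristic $p$) to descend to a linear map. Both arguments are valid, and yours is a genuine alternative, but it does noticeably more work than necessary. The cleaner choice of $f$ works because $h^{-1}(x,y)=\bigl((p(x)-y)/a,\,x\bigr)$ is \emph{linear in $y$}: perturbing the second coordinate by a constant before applying $h^{-1}$ simply shifts the first output coordinate by $-1/a$, so no degree reduction is ever needed. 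It is worth noticing that the paper's commutator is exactly the analogue of equation~(\ref{eq:com}) in the monomial case, which was the model you cited — there too the crucial automorphism is one that conjugates to an automorphism in a single step, not one whose commutator still has high degree.
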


\begin{proof}
The automorphism $h$ is the composition of the Jonqui\`eres transformation 
$
(x,y)\mapsto (P(y)-ax,y)
$
and the linear map $(x,y)\mapsto (y,x)$. As such, $h$ is an element of $\V_d$. 
If $f$ denotes the automorphism $f(x,y)=(x,y+1)$, then 
\[
(h^{-1} \circ f \circ h) (x,y) = (x-a^{-1},y)
\]
is linear (thus, the second step in the proof of proposition \ref{prop:f=id} does not work for $h$).
As a consequence, the commutator $f^{-1}h^{-1}fh$ is linear  and $\lld h \rrd$ 
intersects $\Aut(\P^2_{\mathbf{k}})$ non trivially. The conclusion follows from Remark~\ref{rem:GN}.
\end{proof}

Note that, for $h$ in $H_d$ and large integers $n$,  we expect $\lld h^n \rrd$ to be a proper normal subgroup 
of the Cremona group:

\begin{que}
Let $\mathbf{k}$ be any field. Consider the polynomial automorphism 
\[
g\colon (x,y) \mapsto (y, y^2 + x).
\] 
Does there exist an integer $n>0$ (independent of $\mathbf{k}$) such that 
$\lld g^n \rrd$ is a proper normal subgroup of  $\Bir(\P^2_{\mathbf{k}})$?
\end{que} 

The main point would be to adapt Step 2 in the proof of Proposition \ref{prop:f=id}. 

%%%
\subsection{Projective surfaces}
%%%

The reason why we focused on the group $\Bir(\cpd)$ comes from the fact that 
$\Bir(X)$ is small compared to $\Bir(\cpd)$ when $X$ is an irrational complex 
projective surface. The proof of the following proposition illustrates this property.

\begin{pro}
Let $X$ be a complex projective surface. If the group $\Bir(X)$ is infinite
and simple, then $X$ is birationally equivalent to $C\times \P^1(\C)$ where
$C$ is a curve with trivial automorphism group. 
\end{pro}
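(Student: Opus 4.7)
The plan is to apply the Enriques--Kodaira classification of complex projective surfaces and exclude every case except the claimed one, using the Main Theorem to rule out rationality. First, if $X$ is rational, then conjugation by any birational map $X \dashrightarrow \cpd$ identifies $\Bir(X)$ with $\Bir(\cpd)$, which is not simple by the Main Theorem; hence $X$ is non-rational.

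Next I would suppose $\kappa(X) \geq 0$ and derive a contradiction. In this case $X$ has a unique minimal model $X_{\min}$, and conjugation by the induced birational map identifies $\Bir(X)$ with $\Aut(X_{\min})$; it suffices to show this group is never both infinite and simple. For $\kappa(X) = 2$, $\Aut(X_{\min})$ is finite (a classical result of Matsumura). For $\kappa(X) = 1$, the Iitaka fibration $\pi\colon X_{\min} \to B$ is elliptic and is preserved by $\Aut(X_{\min})$; the induced map to $\Aut(B)$ has finite image (it permutes the singular values of $\pi$), while the kernel embeds into the automorphism group of the generic fiber over $\C(B)$, which is an extension of a finite group by the Mordell--Weil group, hence abelian-by-finite. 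An infinite simple group admits no finite-index abelian-by-finite subgroup. For $\kappa(X) = 0$, I would run through the four subclasses: abelian surfaces contain the infinite normal abelian subgroup of translations; for K3 surfaces the action on $H^{2,0}(X_{\min}) \simeq \C$ defines a character $\chi\colon \Aut(X_{\min}) \to \C^*$, and simplicity forces either an abelian embedding (impossible when infinite) or reduction to symplectic automorphisms, which form a finite group (by the classical averaging/Nikulin--Mukai argument); Enriques and bielliptic cases reduce to the K3 and abelian cases via the canonical covers.

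It follows that $\kappa(X) = -\infty$. Combined with non-rationality, the Enriques classification gives that $X$ is birational to $C \times \P^1_\C$ for some smooth projective curve $C$ with $g(C) \geq 1$. Since $C$ is non-rational, the ruling $X \dashrightarrow C$ is the unique ruling on $X$, hence preserved by every birational self-map. This produces an exact sequence
\[
1 \longrightarrow \PGL_2(\C(C)) \longrightarrow \Bir(X) \longrightarrow \Aut(C) \longrightarrow 1,
\]
whose kernel is the Cremona group of the generic fiber $\P^1_{\C(C)}$. The kernel is infinite and normal, so by simplicity of $\Bir(X)$ it must coincide with $\Bir(X)$, which forces $\Aut(C) = \{1\}$.

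The hard part is the case analysis for $\kappa(X) = 0$, especially K3 and Enriques surfaces, where $\Aut(X_{\min})$ can genuinely be infinite (e.g., for K3 surfaces of large Picard number). Ruling out simplicity there requires the non-elementary classical facts on the character $\chi$ on $H^{2,0}$ and the finiteness of groups of symplectic automorphisms, rather than the soft structural arguments that suffice in the other cases.
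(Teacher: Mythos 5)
Your overall skeleton matches the paper's: rule out rational $X$ via the Main Theorem, show $\kappa(X)\geq 0$ is impossible, then reduce to the irrational ruled case $X\sim C\times\P^1$ and use the surjection $\Bir(X)\twoheadrightarrow\Aut(C)$ to force $\Aut(C)$ trivial. Where you diverge is in how the case $\kappa(X)\geq 0$ is excluded, and there your argument contains a genuine error.

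For $\kappa(X)\geq 0$, the paper avoids any case analysis through the Enriques classification. After passing to the minimal model so that $\Bir(X)=\Aut(X)$, it observes that $\Aut(X)$ acts on the homology of $X$ with kernel commensurable to $\Aut(X)^0$, giving a homomorphism to some $\GL_n(\Z)$, which is residually finite. An infinite simple group cannot embed into a residually finite group, so the homology action is trivial and $\Aut(X)=\Aut(X)^0$; but $\Aut(X)^0$ is abelian for minimal surfaces of nonnegative Kodaira dimension. This single argument simultaneously handles general type, properly elliptic, K3, Enriques, abelian, and bielliptic surfaces, and is the structural reason the paper's sketch is so short.

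The specific gap in your version is the K3 subcase. You assert that the symplectic automorphism group of a K3 surface is finite ``by the classical averaging/Nikulin--Mukai argument.'' This is false: Nikulin and Mukai classify \emph{finite} groups of symplectic automorphisms, but the full symplectic automorphism group of a K3 surface can be infinite. The rational Kummer surface of this very paper gives the prototype: for $M\in\SL_2(\Z)$ with $|\mathrm{tr}\,M|\geq 3$, the induced automorphism of $\mathrm{Kum}(E\times E)$ preserves the holomorphic $2$-form (since $\det M=1$) and has infinite order. Averaging arguments also fail here, since an infinite symplectic group need not fix any K\"ahler class. What is true, and what rescues your case, is that a symplectic automorphism acts trivially on the transcendental lattice and hence faithfully on $\NS(X)\cong\Z^\rho$, so the symplectic group embeds into $\GL_\rho(\Z)$ and is residually finite; an infinite simple group cannot be residually finite. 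But at that point you have recovered exactly the paper's mechanism, so you might as well apply it uniformly from the start rather than running through the classification. Your remaining subcases (general type, elliptic, abelian, and the cover arguments for Enriques and bielliptic) are essentially sound, if somewhat informal about corner cases such as elliptic fibrations with few singular fibers.
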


\begin{proof}[Sketch of the proof]
Assume, first, that the Kodaira dimension of $X$ is non negative. Replace $X$ by its unique 
minimal model, and identify $\Bir(X)$ with $\Aut(X)$. The group $\Aut(X)$
acts on the homology of $X$, and the kernel is equal to its connected component $\Aut(X)^0$ up
to  finite index. The action on the homology group provides
a morphism to $\GL_n(\Z)$ for some $n\geq 1$. Reducing modulo $p$
for large primes $p$, one sees that $\GL_n(\Z)$ is residually finite. Since
$\Aut(X)$ is assumed to be simple, this implies that $\Aut(X)$ coincides
with $\Aut(X)^0$. But $\Aut(X)^0$ is abelian for
surfaces with non negative Kodaira dimension (see \cite{Akhiezer:Book}). Thus $\Bir(X)=\Aut(X)$
is not both infinite and simple when the Kodaira dimension of $X$ is $\geq 0$. 
Assume now that $X$ is ruled and not rational. Up to a birational
change of coordinates, $X$ is a product $\P^1_\C\times C$
where $C$ is a smooth curve of genus $g(C)\geq 1$. The group 
$\Bir(X)$ projects surjectively onto $\Aut(C)$. By simplicity, 
$\Aut(C)$ must be trivial. In that case, $\Bir(X)$ coincides
with the infinite simple group $\PGL_2({\mathcal{M}}(C))$ where
${\mathcal{M}}(C)$ is the field of meromorphic functions of $C$. 
The remaining case is when $X$ is rational, and Theorem~A
concludes the proof. \end{proof}

%%%
\subsection{SQ-universality and the number of quotients}
\label{par:SQ}
%%%
As a direct consequence of Theorem \ref{thm:criterion} and the existence of \good\ elements in $\Bir(\cpd)$, 
{\sl{the Cremona group $\Bir(\cpd)$ has an uncountable number of distinct normal subgroups}}.
Recently, Dahmani, Guirardel, and Osin obtained a better, much more powerful version of Theorem \ref{thm:criterion},
which applies to the Cremona group in the same way as Theorem~C implies Theorem~A, because  the existence of {\good} 
elements in $\Bir(\cpd)$ is sufficient to apply Dahmani, Guirardel, and Osin's theorems. We only describe one consequence
of their results that strengthen the above mentioned fact that $\Bir(\cpd)$ has uncountably many normal subgroups, and refer to \cite{DahGui} for other statements. 

A group is said to be {\textbf{SQ-universal}} (or SubQuotient-universal) if every countable group can be embedded into one of its quotients. For example, the pioneering work \cite{HNN} proves that the free group over two generators is SQ-universal.
If $G$ is a non elementary hyperbolic group, then $G$ is SQ-universal. This result has been obtained by Delzant and Olshanskii in \cite{Delzant:1996} and \cite{Ol}.

\begin{thm}[see \cite{DahGui}] Let ${\mathbf{k}}$ be an algebraically closed field. The Cremona group 
$\Bir(\P^2_{\mathbf{k}})$ is SQ-universal. 
\end{thm}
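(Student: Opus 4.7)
The plan is to reduce the statement to a direct application of the SQ-universality criterion of Dahmani, Guirardel and Osin \cite{DahGui}, which may be viewed as a strengthening of Theorem~\ref{thm:criterion}. Their criterion says, roughly, that if a group $G$ acts by isometries on a Gromov hyperbolic space $\GH$ and contains a loxodromic element whose maximal elementary subgroup is \emph{hyperbolically embedded} in $G$, then $G$ is SQ-universal; in fact one obtains uncountably many quotients such that any prescribed countable group embeds into one of them.

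First I would take as acting space $\GH = \hypman$, the infinite dimensional hyperbolic space of \S\ref{sec:cremona-PM}, with its faithful isometric action of $G = \Bir(\P^2_{\mathbf{k}})$. The required input is the existence of a tight element $g\in G$, that is, a hyperbolic isometry $g_*$ of $\hypman$ whose axis $\axe(g_*)$ is $2\theta$-rigid and whose stabilizer in $G$ coincides with the normalizer of the cyclic group generated by $g$. Such elements are available for every algebraically closed field $\mathbf{k}$ thanks to Theorem~\ref{thm:coblegood}: the automorphism of a generic Coble surface constructed in \S\ref{par:coble} induces a tight isometry of $\hypman$. Over $\C$ one could alternatively use Theorem~A or Theorem~B.

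Next I would translate tightness into the DGO framework. The rigidity of $\axe(g_*)$, combined with the control of its stabilizer, is exactly what is needed to show that the maximal virtually cyclic subgroup containing $g$ is hyperbolically embedded in $G$: the argument is essentially the same geometric input as the one used in the proof of Theorem~\ref{thm:criterion}, but applied to the whole collection of translates $\{f(\axe(g_*)) : f\in G\}$ in order to bound overlaps between any two distinct translates. Crucially, the DGO criterion is formulated without any properness or local compactness assumption on the action, which is what makes it applicable to our situation: $\hypman$ is not locally compact and point stabilizers in $G$ are huge (they contain copies of $\PGL_3({\mathbf{k}})$), so classical small cancellation or relatively hyperbolic setups do not apply directly.

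The main obstacle is precisely this last translation: one has to upgrade the qualitative statements of \S\ref{par:rigidityoflines} (``$f(\axe(g_*))$ and $\axe(g_*)$ share only a bounded overlap unless $f$ preserves $\axe(g_*)$'') into the quantitative displacement estimates used in \cite{DahGui} to verify hyperbolic embeddedness. Once this verification is carried out for a single tight element provided by Theorem~\ref{thm:coblegood}, the DGO theorem delivers SQ-universality of $\Bir(\P^2_{\mathbf{k}})$ at once, and as a corollary refines the qualitative count of normal subgroups obtained from Theorem~\ref{thm:criterion}.
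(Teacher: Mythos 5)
Your proposal follows essentially the same route as the paper: the paper itself deduces SQ-universality by noting that the existence of tight elements in $\Bir(\P^2_{\mathbf{k}})$ (available over any algebraically closed field via the Coble surface automorphism of Theorem~\ref{thm:coblegood}) is exactly what is required to invoke the Dahmani--Guirardel--Osin theorems, which replace Theorem~\ref{thm:criterion} by a stronger small cancellation machinery yielding SQ-universality. Your remark that the ``main obstacle'' is upgrading the rigidity statements to quantitative displacement bounds slightly overstates the issue, since $(\eps,B)$-rigidity as defined in \S\ref{par:rigidityoflines} is already the quantitative overlap control needed, but otherwise this is the same argument the paper gestures at and delegates to \cite{DahGui}.
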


 Note that SQ-universality implies the existence of an uncountable number of non isomorphic quotients.

%
%%%%%%%%%%%%%%%%%%%%%%%%%%%%%%%%%%%%%%%%%%%%%%%%%%%%%%%%%%%%%%%%%%
%

\appendix
\section{The Cremona group is not an amalgam} \label{deCornulier}
\begin{center}{\sc Yves de Cornulier \footnote{Laboratoire de Math\'ematiques,
B\^atiment 425, Universit\'e Paris-Sud 11,
91405 Orsay\\FRANCE; yves.cornulier@math.u-psud.fr}}\end{center}
\medskip

Let $\mathbf{k}$ be a field. The {\em Cremona group} $\mathsf{Bir}(\mathbb{P}^d_\mathbf{k})$ of $\mathbf{k}$ in dimension $d$ is defined as the group of birational transformations of the $d$-dimensional $\mathbf{k}$-affine space. It can also be described as the group of $\mathbf{k}$-automorphisms of the field of rational functions $\mathbf{k}(t_1,\dots,t_d)$. We endow it with the discrete topology.

Let us say that a group has {\em Property} $(\textnormal{F}\mathbf{R})_\infty$ if it satisfies the following
\begin{enumerate}
\item[(A.1)]\label{every1} For every isometric action on a complete real tree, every element has a fixed point.
\end{enumerate}

Here we prove the following result.

\begin{thm}\label{main}
If $\mathbf{k}$ is an algebraically closed field, then $\mathsf{Bir}(\mathbb{P}^2_\mathbf{k})$ has Property
$(\textnormal{F}\mathbf{R})_\infty$.
\end{thm}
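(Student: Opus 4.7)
The goal is to show that every element $g \in G := \mathsf{Bir}(\mathbb{P}^2_\mathbf{k})$ fixes a point in every isometric action of $G$ on a complete real tree $T$. Since an isometry of a complete real tree with translation length zero fixes a point, and since the translation length $\ell_T(g) \ge 0$ is a conjugation-invariant satisfying $\ell_T(g^n) = |n|\,\ell_T(g)$, the problem reduces to showing $\ell_T(g) = 0$ for every $g \in G$. The main technical device is the \emph{self-similarity criterion}: if there exists $\tau \in G$ with $\tau g \tau^{-1} = g^n$ for some integer $|n| \ge 2$, then $\ell_T(g) = 0$ in every isometric action on a real tree.

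The first step of the plan is to verify this criterion for all elements of $G$ with dynamical degree $\lambda(g) = 1$, i.e.\ those that are elliptic or parabolic on the Picard--Manin space $\hypman$. After a birational conjugation, a positive power of such a $g$ is an automorphism of some rational surface $X$ lying in the connected algebraic group $\mathsf{Aut}(X)^0$ over the algebraically closed field $\mathbf{k}$. Via Jordan decomposition, $g = g_s g_u$ splits as a semisimple factor times a commuting unipotent one, each contained in a connected commutative algebraic subgroup of the form $\mathbf{G}_m^r \times \mathbf{G}_a^s$. Divisibility of $\mathbf{k}^\ast$ and $(\mathbf{k},+)$, combined with the abundance in $G$ of Weyl-type conjugations (permutations of coordinates) and of birationally realised field endomorphisms of $\mathbf{k}$, then produces the required conjugating element $\tau$. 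The Jonquières subgroup $\mathsf{J} \cong \mathsf{PGL}_2(\mathbf{k}(s)) \rtimes \mathsf{PGL}_2(\mathbf{k})$, and hence every Cremona element preserving a pencil of rational curves, is handled by the same argument applied fibrewise.

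The main obstacle is the hyperbolic case $\lambda(g) > 1$. Here conjugation in $G$ preserves $\lambda(g)$, while $\lambda(g^n) = \lambda(g)^n$, so $g$ is never conjugate in $G$ to $g^n$ for $|n| \ge 2$; the self-similarity criterion fails outright. For these elements I would proceed indirectly: using Noether's theorem, one presents $g$ as a finite word in elements of $\mathsf{PGL}_3(\mathbf{k})$ and the standard quadratic involution $\sigma$, each of which has already been shown to act elliptically on $T$ by the previous step. Then, exploiting the structural presentation of the Cremona group (Gizatullin--Iskovskikh) together with the abundance of conjugates of $g$ produced by conjugation by arbitrary elements of $\mathsf{PGL}_3(\mathbf{k}) \subset G$, one forces a bounded-orbit condition on $T$ that yields $\ell_T(g) = 0$. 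The hardest step is precisely the handling of this hyperbolic case: the dynamical-degree obstruction defeats the direct self-similarity trick, and one must substitute a substantially more delicate structural argument, which is the true technical core of the appendix.
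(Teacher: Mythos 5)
Your proposal has a genuine gap, which you yourself flag: the self-similarity criterion $\tau g\tau^{-1}=g^n$ (with $|n|\ge 2$) is unavailable for any Cremona transformation with dynamical degree $\lambda(g)>1$, and your fallback for this case --- write $g$ as a Noether--Castelnuovo word in $\mathsf{PGL}_3(\mathbf{k})$ and $\sigma$, then ``exploit the Gizatullin--Iskovskikh presentation to force a bounded-orbit condition'' --- is a placeholder, not an argument. A product of elliptic isometries of a real tree need not be elliptic, and no mechanism is proposed that would make it so; so the hyperbolic case, which you correctly identify as the heart of the matter, is left open. Your treatment of the $\lambda=1$ case is also shakier than presented: the self-similarity trick does not apply to torsion elements (e.g.\ $\sigma$ itself, with $\sigma^2=\mathrm{Id}$), and the appeal to ``abundance of Weyl-type conjugations and birationally realised field endomorphisms'' would need real work to justify.

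The paper's proof never classifies elements by dynamical type at all. It first establishes Property $(\textnormal{F}\mathbf{R})_\infty$ for $H=\mathsf{PGL}_3(\mathbf{k})$ in one stroke via the Shalom--Vaserstein theorem (Property (T) for $\mathsf{EL}_3$ of finitely generated rings, Lemma \ref{hfa}); by the equivalence in Lemma \ref{eqofdef}, $H$ either fixes a point or stably fixes a unique end of $\mathcal{T}$. The latter case is dispatched by Lemma \ref{hpara}, which shows that $G$ then stably fixes that same end. When $H$ fixes a point, the proof becomes a short tree-geometric computation: write $G=\langle H,\sigma H\sigma^{-1}\rangle$ (using that $\mu\sigma$ has order $3$, so $\sigma=(\mu\sigma)\mu(\mu\sigma)^{-1}$), observe that the fixed-point trees $\mathcal{T}_1$ of $H$ and $\mathcal{T}_2=\sigma\mathcal{T}_1$ of $\sigma H\sigma^{-1}$ must be disjoint if $G$ has no fixed point, let $[x_1,x_2]$ be the bridge between them of length $s>0$, note it is pointwise fixed by the $\sigma$-invariant Cartan $C$ and reversed by $\sigma$, and use $\langle C,\mu\rangle = H$ (Lemma \ref{cmaximal}) and the alignment Lemma \ref{trealign} to derive $d(x_1,(\sigma\mu)^k x_1)=ks$ for all $k$, contradicting $(\sigma\mu)^3=\mathrm{Id}$. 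The ``technical core'' is thus a global relation in the Cremona group and a tree alignment lemma, not a case analysis on dynamical degree.
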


\begin{cor}\label{cmain}
The Cremona group does not decompose as a nontrivial amalgam.
\end{cor}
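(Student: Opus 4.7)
The plan is to argue by contradiction using Bass--Serre theory: a nontrivial amalgam decomposition produces an isometric action on a tree in which some element acts as a hyperbolic isometry, contradicting Property $(\textnormal{F}\mathbf{R})_\infty$ from Theorem \ref{main}.

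Concretely, I would suppose that $G = \mathsf{Bir}(\mathbb{P}^2_\mathbf{k}) = A *_C B$ is a nontrivial amalgam, meaning both inclusions $C \hookrightarrow A$ and $C \hookrightarrow B$ are proper. Bass--Serre theory then furnishes the associated Bass--Serre tree $T$, a simplicial tree on which $G$ acts by simplicial automorphisms (without inversions, after barycentric subdivision if needed) with vertex stabilizers the conjugates of $A$ and $B$ and edge stabilizers the conjugates of $C$. Viewed as a metric space with each edge of length $1$, $T$ is a complete $\mathbf{R}$-tree, so the action of $G$ on $T$ falls within the scope of Property $(\textnormal{F}\mathbf{R})_\infty$.

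The key step is to produce a single element of $G$ with no fixed point in $T$. Here I would invoke the standard fact from Bass--Serre theory: for any $a \in A \setminus C$ and any $b \in B \setminus C$, the element $g = ab$ acts on $T$ as a hyperbolic isometry of translation length $2$. This is seen either from the normal form theorem for amalgamated products (the word $(ab)^n$ is reduced of length $2n$ in $A *_C B$, so nontrivial) combined with the observation that any fixed vertex $v$ of $g$ would have to lie on the geodesic segment between the stabilized vertices $v_A$ and $b v_A = g v_A$, forcing $g$ to fix both endpoints of that segment and leading to $a \in C$ or $b \in C$. Applying Theorem \ref{main} to this particular $g$ yields a fixed point, contradicting hyperbolicity.

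The only point requiring some care is the verification that the simplicial Bass--Serre tree is indeed a complete real tree in the sense used in Property $(\textnormal{F}\mathbf{R})_\infty$: this is standard, since any simplicial tree, equipped with the path metric where each edge has length $1$, is $0$-hyperbolic, geodesic, uniquely geodesic, and complete (as a CAT($0$) cube complex of dimension $1$ with discrete vertex set). I do not expect any genuine obstacle in the argument; the substance of the corollary is entirely carried by Theorem \ref{main}, and Bass--Serre theory supplies the reduction essentially for free.
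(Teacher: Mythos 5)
Your argument is correct, and it is essentially the paper's argument made explicit: the paper derives the corollary by noting that Property $(\textnormal{F}\mathbf{R})_\infty$ implies Property $(\textnormal{FA})_\infty$, which by Serre's theorem (cited as \cite{Se}) is equivalent to having no nontrivial amalgam decomposition and no surjection onto $\mathbf{Z}$. What you have done is unwind the relevant direction of Serre's theorem via the Bass--Serre tree and the hyperbolicity of $ab$ for $a \in A \setminus C$, $b \in B \setminus C$; this is exactly the content the paper black-boxes, so the two proofs are the same in substance. (Minor remark: no barycentric subdivision is needed, since for an amalgam the Bass--Serre tree is bipartite and $G$ preserves the bipartition, hence acts without inversions.)
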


Recall that a {\em real tree} can be defined in the following equivalent ways (see \cite{Ch})
\begin{itemize}
\item A geodesic metric space which is 0-hyperbolic in the sense of Gromov;
\item A uniquely geodesic metric space for which $[ac]\subset [ab]\cup [bc]$ for all $a,b,c$;
\item A geodesic metric space with no subspace homeomorphic to the circle.
\end{itemize}
In a real tree, a {\em ray} is a geodesic embedding of the half-line. An {\em end} is an equivalence class of rays modulo being at bounded distance. For a group of isometries of a real tree, to {\em stably fix} an end means to pointwise stabilize a ray modulo eventual coincidence (it means it fixes the end as well as the corresponding Busemann function).

For a group $\Gamma$, Property $(\textnormal{F}\mathbf{R})_\infty$ has the following equivalent characterizations:
\begin{enumerate}%\setcounter{enumi}{\ref{every1}}
\item[(A.2)]\label{every2} For every isometric action of $\Gamma$ on a complete real tree, every finitely generated subgroup has a fixed point.
\item[(A.3)]\label{end} Every isometric action of $\Gamma$ on a complete real tree either has a fixed point, or stably fixes a point at infinity (in the sense above).
\end{enumerate}

The equivalence between these three properties is justified in Lemma \ref{eqofdef}.
Similarly, we can define the weaker {\em Property} $(\textnormal{FA})_\infty$, replacing complete real trees by ordinary trees (and allowing fixed points to be middle of edges), and the three corresponding equivalent properties are equivalent \cite{Se} to the following fourth: the group is not a nontrivial amalgam and has no homomorphism onto the group of integers. In particular, Corollary \ref{cmain} follows from Theorem \ref{main}.

\begin{rem}\label{patapouf}{\bf a.--} Note that the statement for actions on real trees (rather than trees) is strictly stronger. Indeed, unless $\mathbf{k}$ is algebraic over a finite field, the group $\mathsf{PGL}_2(\mathbf{k})=\mathsf{Bir}(\mathbb{P}^1_\mathbf{k})$ does act isometrically on a real tree with a hyperbolic element (this uses the existence of a nontrivial real-valued valuation on $\mathbf{k}$), but does not have such an action on a discrete tree (see Proposition \ref{auxiliary}).

%\footnote{It would be interesting to determine whether $\mathsf{Bir}(\mathbb{P}^2_\mathbf{k})$ is finitely generated, whenever $\mathbf{k}$ is a finitely generated field.}

\noindent{\bf \ref{patapouf}. b.--} Note that $\mathsf{Bir}(\mathbb{P}^2_\mathbf{k})$ always has an action on a discrete tree with no fixed point (i.e.~no fixed point on the 1-skeleton) when $\mathbf{k}$ is algebraically closed, and more generally whenever $\mathbf{k}$ is an infinitely generated field: write, with the help of a transcendence basis, $\mathbf{k}$ as the union of an increasing sequence of proper subfields $\mathbf{k}=\bigcup \mathbf{k}_n$, then $\mathsf{Bir}(\mathbb{P}^2_\mathbf{k})$ is the increasing union of its proper subgroups $\mathsf{Bir}\left(\mathbb{P}^2_{\mathbf{k}_n}\right)$, and thus acts on the disjoint union of the coset spaces $\mathsf{Bir}(\mathbb{P}^2_\mathbf{k})/\mathsf{Bir}\left(\mathbb{P}^2_{\mathbf{k}_n}\right)$, which is in a natural way the vertex set of a tree on which $\mathsf{Bir}(\mathbb{P}^2_\mathbf{k})$ acts with no fixed point (this is a classical construction of Serre \cite[Chap I, \S 6.1]{Se}).

\noindent{\bf \ref{patapouf}. c.--} Theorem \ref{main} could be stated, with a similar proof, for actions on $\Lambda$-trees when $\Lambda$ is an arbitrary ordered abelian group (see \cite{Ch} for an introduction to $\Lambda$-metric spaces and $\Lambda$-trees).
\end{rem}

In the following, $\mathcal{T}$ is a complete real tree; all actions on $\mathcal{T}$ are assumed to be isometric. We begin by a few lemmas.

\begin{lem}\label{trealign}
Let $x_0,\dots,x_k$ be points in a real tree $\mathcal{T}$ and $s\ge 0$. Assume that $d(x_i,x_j)=s|i-j|$ holds for all $i,j$ such that $|i-j|\le 2$. Then it holds for all $i,j$.
\end{lem}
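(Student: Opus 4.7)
The plan is to prove, by induction on $k$, the stronger statement that the points $x_0,x_1,\dots,x_k$ all lie on the geodesic segment $[x_0,x_k]$ in order, with consecutive distances equal to $s$; this immediately gives $d(x_i,x_j)=s|i-j|$ for all $i,j$, since sub-segments of a geodesic are geodesic. The hypothesis for $|i-j|\le 2$ gives two useful facts: the consecutive distances $d(x_i,x_{i+1})=s$, and the three-point equality $d(x_{i-1},x_{i+1})=d(x_{i-1},x_i)+d(x_i,x_{i+1})$, which in a uniquely geodesic space forces $x_i\in[x_{i-1},x_{i+1}]$.

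For the inductive step, assuming $x_0,\dots,x_{k-1}$ lie in order on $[x_0,x_{k-1}]$ with $d(x_0,x_{k-1})=(k-1)s$, I will show that $x_{k-1}\in[x_0,x_k]$. Consider the median $m$ of the triple $\{x_0,x_{k-1},x_k\}$, that is, the unique point belonging to $[x_0,x_{k-1}]\cap[x_0,x_k]\cap[x_{k-1},x_k]$ (which exists because $\mathcal{T}$ is a real tree). Since $m\in[x_{k-1},x_k]$, one has $d(m,x_{k-1})\le s$, so $m$ lies on the terminal subsegment $[x_{k-2},x_{k-1}]\subset[x_0,x_{k-1}]$ (using the induction hypothesis that $x_{k-2}\in[x_0,x_{k-1}]$ at distance $s$ from $x_{k-1}$). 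Thus $m$ belongs to both $[x_{k-2},x_{k-1}]$ and $[x_{k-1},x_k]$. Because the triple $(x_{k-2},x_{k-1},x_k)$ is aligned with $x_{k-1}\in[x_{k-2},x_k]$, these two segments meet only at $x_{k-1}$; hence $m=x_{k-1}$, i.e.\ $x_{k-1}\in[x_0,x_k]$.

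This yields $d(x_0,x_k)=d(x_0,x_{k-1})+d(x_{k-1},x_k)=ks$ and, concatenating the geodesic $[x_0,x_{k-1}]$ with $[x_{k-1},x_k]$, it realizes $[x_0,x_k]$ as a geodesic containing all intermediate $x_i$ in order, which closes the induction. The only nonroutine point is the median argument in the inductive step; everything else is immediate from the axioms of a real tree (unique geodesics, and the tree inclusion $[ac]\subset[ab]\cup[bc]$). The statement holds trivially if $s=0$, since then all points coincide.
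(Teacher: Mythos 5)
Your proof is correct and follows the same inductive strategy as the paper, with a different geometric mechanism in the inductive step. The paper applies the induction hypothesis to both windows $x_0,\dots,x_{k-1}$ and $x_1,\dots,x_k$, obtaining that both concatenations of unit segments are geodesics, and then argues by contradiction: if the first segment $[x_0,x_1]$ and the last segment $[x_{k-1},x_k]$ shared a point, choosing the extremal such point would produce an injective loop in $\mathcal{T}$. You instead apply the induction hypothesis only to $x_0,\dots,x_{k-1}$, bring in the two-step hypothesis $d(x_{k-2},x_k)=2s$ directly to align $(x_{k-2},x_{k-1},x_k)$, and then pin down the median $m$ of $\{x_0,x_{k-1},x_k\}$ to equal $x_{k-1}$. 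Both arguments are valid and use only standard facts about real trees (unique geodesics and the existence and uniqueness of medians, respectively), so this is largely a matter of taste; your formulation arguably makes it a bit more visible where the $|i-j|=2$ hypothesis is actually needed.
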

\begin{proof}
This is an induction; for $k\le 2$ there is nothing to prove. Suppose $k\ge 3$ and the result known up to $k-1$, so that the formula holds except maybe for $\{i,j\}=\{0,k\}$. Join $x_i$ to $x_{i+1}$ by segments. By the induction, the $k-1$ first segments, and the $k-1$ last segments, concatenate to geodesic segments. But the first and the last of these $k$ segments are also disjoint, otherwise picking the ``smallest" point in the last segment that also belongs to the first one, we find an injective loop, contradicting that $\mathcal{T}$ is a real tree. Therefore the $k$ segments concatenate to a geodesic segment and $d(x_0,x_k)=sk$.
\end{proof}

\begin{lem}\label{hfa}
If $\mathbf{k}$ is any field and $d\ge 3$, then $\Gamma=\mathsf{SL}_d(\mathbf{k})$ has
Property $(\textnormal{F}\mathbf{R})_\infty$. In particular, if $\mathbf{k}$ is algebraically closed, then $\mathsf{PGL}_d(\mathbf{k})$ has Property $(\textnormal{F}\mathbf{R})_\infty$.
\end{lem}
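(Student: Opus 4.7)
The plan is to prove Property $(\textnormal{F}\mathbf{R})_\infty$ first for $\Gamma = \mathsf{SL}_d(\mathbf{k})$ using the Steinberg generators and commutation relations, and then deduce it for $\mathsf{PGL}_d(\mathbf{k})$ via surjectivity. By the equivalence of (A.1), (A.2), (A.3) recorded in Lemma \ref{eqofdef}, it suffices, for any isometric action of $\Gamma$ on a complete real tree $\mathcal{T}$, to produce either a global fixed point or a stably fixed end.

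First I would handle the root subgroups. For a pair of indices $i\neq j$, set $U_{ij} = \{E_{ij}(t) : t\in \mathbf{k}\} \cong (\mathbf{k},+)$. The crucial input is that because $d\ge 3$, there is a third index $k$, and one can choose a diagonal matrix $D = \mathrm{diag}(\lambda_1,\dots,\lambda_d)\in \mathsf{SL}_d(\mathbf{k})$ realizing any prescribed value of $\mu := \lambda_i\lambda_j^{-1}\in \mathbf{k}^\times$ (the extra coordinate $\lambda_k$ absorbs the constraint $\prod \lambda_r = 1$). Then $D E_{ij}(t) D^{-1} = E_{ij}(\mu t)$. Combined with $E_{ij}(t)^n = E_{ij}(nt)$, this means $E_{ij}(t)$ is conjugate to a nontrivial power of itself: in characteristic zero, picking $\mu = n$ gives $\ell(E_{ij}(t)) = \ell(E_{ij}(t)^n) = n\ell(E_{ij}(t))$, forcing the translation length to vanish; in characteristic $p$ one uses more directly that $E_{ij}(t)^p = \mathrm{Id}$, so $E_{ij}(t)$ has finite order and is automatically elliptic. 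Either way, each element of $U_{ij}$ is elliptic.

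Next I would promote this to a common fixed behaviour for each $U_{ij}$ and then across all of them. Since $U_{ij}$ is abelian and made of elliptic elements, the closed subtrees $\mathrm{Fix}(E_{ij}(t))$ are pairwise intersecting (by standard real-tree Helly for commuting elliptics); completeness of $\mathcal{T}$ plus a descending-intersection argument then yields either a nonempty $\mathrm{Fix}(U_{ij})$ or a stably fixed end for $U_{ij}$. Using the Steinberg relations $[E_{ij}(s), E_{jk}(t)] = E_{ik}(st)$ for distinct $i,j,k$ (again relying on $d\ge 3$), one checks that the fixed-sets of the various $U_{ij}$ are pairwise compatible, so an iterated application of Helly plus Lemma \ref{trealign} produces a fixed point (or stably fixed end) for the subgroup they generate. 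Since the $U_{ij}$ generate $\mathsf{SL}_d(\mathbf{k})$, this settles Property $(\textnormal{F}\mathbf{R})_\infty$ for $\Gamma$.

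For the $\mathsf{PGL}_d(\mathbf{k})$ assertion, when $\mathbf{k}$ is algebraically closed the natural projection $\mathsf{SL}_d(\mathbf{k})\to \mathsf{PGL}_d(\mathbf{k})$ is surjective (every class admits a representative of determinant $1$ after extraction of a $d$th root), and $(\textnormal{F}\mathbf{R})_\infty$ descends to quotients: any action of $\mathsf{PGL}_d(\mathbf{k})$ on $\mathcal{T}$ pulls back to one of $\mathsf{SL}_d(\mathbf{k})$, on which the previous case applies. The main obstacle I foresee is the passage from vanishing translation lengths to actual fixed points: on a complete real tree an isometry with $\ell(g)=0$ need not fix a point (it may be horocyclic, fixing only an end), so the Helly-type combination of fixed-point subtrees must be executed carefully in the ``fixed end'' regime to ensure the alternative of (A.3) is cleanly preserved under all the amalgamations of fixed sets. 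Managing this dichotomy uniformly across the root subgroups, rather than just for a single elliptic element, is the technical heart of the argument.
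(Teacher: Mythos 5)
The paper's proof is quite different: it reduces, via (A.2), to finite subsets $F\subset\mathsf{SL}_d(\mathbf{k})$, observes that $F$ lies in $\mathsf{EL}_d(A)$ for a finitely generated subring $A\subset\mathbf{k}$, and then invokes the Shalom--Vaserstein theorem to conclude that $\mathsf{EL}_d(A)$ has Kazhdan's Property (T), hence a fixed point in $\mathcal{T}$. You instead try the ``more elementary'' route hinted at in the paper's parenthetical remark, via ellipticity of root subgroups and Helly-type intersection arguments. The two opening steps of your plan are fine: the conjugation-by-diagonal trick (or finite order in characteristic $p$) does show each $E_{ij}(t)$ is elliptic, and for a single abelian $U_{ij}$ the descending-intersection argument on a complete tree gives a common fixed point or a stably fixed end.

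The gap is in the combination step across distinct root subgroups, and you half-acknowledge it. The Steinberg relation $[E_{ij},E_{jk}]=E_{ik}$ and the usual Helly/Serre machinery (two elliptics $g,h$ with elliptic product $gh$ share a fixed point) only handle pairs of root subgroups whose product is again unipotent, e.g.\ $U_{ij}$ and $U_{jk}$ with $i<j<k$. For \emph{opposite} root subgroups the argument breaks: $E_{ij}(s)E_{ji}(t)$ is semisimple with eigenvalues solving $x^2-(2+st)x+1=0$, not unipotent, so neither your divisibility argument nor Serre's lemma gives any information about $\mathrm{Fix}(U_{ij})\cap\mathrm{Fix}(U_{ji})$. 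This is not a cosmetic problem: $\langle U_{ij},U_{ji}\rangle\cong\mathsf{SL}_2(\mathbf{k})$, which by the paper's Proposition~\ref{auxiliary} genuinely \emph{fails} $(\textnormal{F}\mathbf{R})_\infty$ on its own, so the constraint forcing it to fix a point must come from the ambient $\mathsf{SL}_d$ structure and cannot be extracted from Steinberg relations among the $U_{ij}$ alone. Lemma~\ref{trealign} is also not the right tool here --- it is a rigidity statement for metrically aligned points, not a Helly principle. So as written the outline does not close; an honest elementary proof would need, for instance, a bounded-generation argument exploiting the Heisenberg subgroups $\langle E_{ij},E_{jk},E_{ik}\rangle$ and their centers, which is precisely the delicate point the paper avoids by quoting Shalom--Vaserstein.
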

\begin{proof}
Let $\Gamma$ act on $\mathcal{T}$. Let $F$ be a finite subset of $\Gamma$. Every element of $F$ can be written as a product of elementary matrices. Let $A$ be the (finitely generated) subring of $\mathbf{k}$ generated by all entries of those matrices. Then $F\subset \mathsf{EL}_d(A)$, the subgroup of $\mathsf{SL}_d(A)$ generated by elementary matrices. By the Shalom-Vaserstein Theorem (see \cite{ej}), $\mathsf{EL}_d(A)$ has Kazhdan's Property~(T) and in particular has a fixed point in $\mathcal{T}$, so $F$ has a fixed point in $\mathcal{T}$. (There certainly exists a more elementary proof, but this one also shows that for every isometric action of $\mathsf{SL}_d(\mathbf{k})$ on a Hilbert space, every finitely generated subgroup fixes a point.)
\end{proof}

Fix the following notation: $G=\mathsf{Bir}(\mathbb{P}^2_\mathbf{k})$; $H=\mathsf{PGL}_3(\mathbf{k})=\mathsf{Aut}(\mathbb{P}^2_\mathbf{k})\subset G$; $\sigma$ is the Cremona involution, acting in affine coordinates by $\sigma(x,y)=(x^{-1},y^{-1})$. The Noether-Castelnuovo Theorem is that $G=\langle H,\sigma\rangle$. Let $C$ be the standard Cartan subgroup of $H$, that is, the semidirect product of the diagonal matrices by the Weyl group (of order 6).
Let $\mu\in H$ be the involution given in affine coordinates by $\mu(x,y)=(1-x,1-y)$.

\begin{lem}\label{cmaximal}We have $\langle C,\mu\rangle=H$.
\end{lem}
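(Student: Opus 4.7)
The plan is to produce enough unipotent elements in $\langle C,\mu\rangle$ to generate $\mathsf{PSL}_3(\mathbf{k})$, and then to observe that over an algebraically closed field $\mathbf{k}$ (the case relevant for Theorem~\ref{main}) one has $\mathsf{PSL}_3(\mathbf{k})=\mathsf{PGL}_3(\mathbf{k})=H$, which finishes the job. I will exploit the fact that $C$ contains both the diagonal torus $D$ and the Weyl group $W\cong S_3$ permuting the three homogeneous coordinates of $\mathbb{P}^2_\mathbf{k}$.

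First I would compute, for $d_{a,b}:(x,y)\mapsto(ax,by)$ in $D$, the commutator
\[
d_{a,b}^{-1}\,\mu\, d_{a,b}\,\mu\,(x,y)=\bigl(x+a^{-1}-1,\;y+b^{-1}-1\bigr),
\]
which is the affine translation by $(a^{-1}-1,\,b^{-1}-1)$. As $(a,b)$ ranges over $(\mathbf{k}^*)^2$ these translation vectors cover $(\mathbf{k}\setminus\{-1\})^2$. Since $\mathbf{k}$ is infinite, every element of $\mathbf{k}$ can be written as a sum of two elements of $\mathbf{k}\setminus\{-1\}$, so composing two such commutators yields every affine translation. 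Hence $\langle C,\mu\rangle$ contains the full translation subgroup $N\subset H$, which in matrix form equals $\{I+sE_{13}+tE_{23}:s,t\in\mathbf{k}\}$.

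Next I would conjugate $N$ by the coordinate permutations in $W\subset C$: the transposition $w_{13}$ sends the root subgroups $E_{13},E_{23}$ to $E_{31},E_{21}$, and $w_{23}$ sends them to $E_{12},E_{32}$. Combined with $N$ itself this produces every elementary transvection $E_{ij}(s)$, $i\neq j$, $s\in\mathbf{k}$. By Gauss elimination these transvections generate $\mathsf{SL}_3(\mathbf{k})$, hence $\mathsf{PSL}_3(\mathbf{k})\subset H$. Finally, because $\mathbf{k}$ is algebraically closed, $\mathbf{k}^*=(\mathbf{k}^*)^3$ forces $\mathsf{PSL}_3(\mathbf{k})=\mathsf{PGL}_3(\mathbf{k})=H$, so $\langle C,\mu\rangle=H$.

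The main technical point is the commutator computation together with the verification that the translation vectors $(a^{-1}-1,\,b^{-1}-1)$ actually generate $(\mathbf{k}^2,+)$; once these are in place, the rest of the argument reduces to the standard Chevalley-style generation of $\mathsf{SL}_3$ by its root subgroups.
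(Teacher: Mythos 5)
Your proof is correct and follows essentially the same strategy as the paper's: manufacture transvections from $\mu$ and the diagonal torus, then conjugate by the Weyl group inside $C$ to obtain all elementary matrices and hence $\mathsf{SL}_3(\mathbf{k})$. The paper takes a small shortcut at two places. First, rather than computing the commutator $d^{-1}\mu d\mu$ in affine coordinates and then verifying that the resulting translation vectors $(a^{-1}-1,b^{-1}-1)$ additively generate $\mathbf{k}^2$, it simply multiplies $\mu$ by its conjugate $d\mu d^{-1}$ (in the matrix model $\mu=\bigl(\begin{smallmatrix}-1&0&1\\0&-1&1\\0&0&1\end{smallmatrix}\bigr)$) and observes that a suitable choice of $d$ yields a single elementary matrix directly; a single elementary matrix, together with $C$-conjugation, is already enough, so there is no need to produce the whole translation subgroup $N$. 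Second, at the final step the paper appeals to the diagonal matrices already sitting inside $C$ to pass from $\mathsf{SL}_3$ to $\mathsf{PGL}_3$, whereas you invoke $\mathbf{k}^*=(\mathbf{k}^*)^3$ to get $\mathsf{PSL}_3=\mathsf{PGL}_3$; both are fine since the theorem assumes $\mathbf{k}$ algebraically closed, but the paper's variant works over any field. These are cosmetic differences; the underlying argument is the same, and your computation of the commutator is correct.
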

\begin{proof}
We only give a sketch, the details being left to the reader. In $\mathsf{GL}_3$, $\mu$ can be written as the matrix
$\begin{pmatrix} -1 & 0 & 1\\ 0 & -1 & 1\\ 0 & 0 & 1\end{pmatrix}$. Multiply $\mu$ by its conjugate by a suitable diagonal matrix to obtain an elementary matrix; conjugating by elements of $C$ provide all elementary matrices and thus we obtain all matrices with determinant one; since $C$ also contains diagonal matrices, we are done. 
\end{proof}

%The subgroup $C$ is maximal in $H$ (among abstract proper subgroups, not only algebraic ones).
%\end{lem}\begin{proof}
%Pick $w\notin C$ and let us check that $H$ is generated by $C\cup wCw^{-1}$.Let $M$ be the subgroup generated by $C^0\cup wC^0w^{-1}$. By \cite[Cor.\ 2.2.7]{Sp}, $M$ is Zariski closed. Since the normalizer of $C^0$ is $C$, the group $M$ strictly contains $C$. By \cite[\S 2.3]{BT}, it follows that $M$ contains one of the (six) weight spaces. Since the subgroup generated by $C\cup wCw^{-1}$ contains the Weyl group as well as $M$, it therefore contains all weight spaces and therefore is all of $H$. (This proof holds more generally in an arbitrary simple algebraic group.)\end{proof}

\begin{lem}\label{hpara}
Let $G$ act on $\mathcal{T}$ so that $H$ has no fixed point and has a (unique) stably fixed end. Then $G$ stably fixes this unique end.
\end{lem}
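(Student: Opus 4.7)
The plan is to use the Noether--Castelnuovo Theorem, which gives $G=\langle H,\sigma\rangle$, and thereby reduce the problem to showing that $\sigma$ preserves the $H$-invariant Busemann function $\beta$ associated to $\omega$. The starting observation is that $\sigma$ normalizes the standard Cartan subgroup $C$ of $H$: a direct check shows that $\sigma$ inverts the diagonal torus and commutes with the Weyl group. Setting $\omega^{*}:=\sigma\omega$ and $\beta^{*}:=\beta\circ\sigma^{-1}$, the function $\beta^{*}$ is a Busemann function for $\omega^{*}$, and since $\sigma^{-1}c\sigma\in C\subset H$ preserves $\beta$ for every $c\in C$, $\beta^{*}$ is $C$-invariant. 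If $\omega^{*}=\omega$, then $\beta^{*}-\beta$ is a constant $c$; applying $\sigma$ a second time and using $\sigma^{2}=\mathrm{Id}$ yields $\beta=\beta+2c$, hence $c=0$, and $\sigma$ already stably fixes $\omega$, so $G$ does.

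The heart of the argument, and the main obstacle, will be to rule out the case $\omega^{*}\neq\omega$. In that case $C$ stably fixes two distinct ends, so $C$ fixes pointwise the line $\ell:=[\omega^{*},\omega]$: the pair $(\beta,\beta^{*})$ determines both the nearest-point projection to $\ell$ and the distance to $\ell$, so $C$-invariance of both Busemann functions forces each $c\in C$ to fix $\ell$. Now I would invoke Lemma \ref{cmaximal} ($H=\langle C,\mu\rangle$). Since $\mu\in H$ stably fixes $\omega$ and $\mu$ is an involution, if $\mu$ also setwise fixed $\omega^{*}$ then -- using the elementary fact that any isometric self-bijection of a half-line is the identity, applied to $\rho\cap\mu\rho$ for a ray $\rho$ toward $\omega^{*}$ -- $\mu$ would stably fix $\omega^{*}$, and $H$ would stably fix two distinct ends, contradicting uniqueness. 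Hence $\omega^{**}:=\mu\omega^{*}$ is different from $\omega^{*}$, and also different from $\omega$ (otherwise $\omega^{*}=\mu\omega=\omega$, contradiction).

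To conclude, I would compare $\ell$ with $\mu\ell=[\omega^{**},\omega]$: these are two distinct lines in $\mathcal{T}$ sharing only the end $\omega$, so $\ell\cap\mu\ell$ is a closed ray $[q,\omega)$ emanating from some branching point $q$. Since $\mu^{2}=\mathrm{Id}$, $\mu$ preserves $\ell\cap\mu\ell$ setwise and, again by the half-line rigidity, must act there as the identity; in particular $\mu q=q$. But $q\in\ell$ is also fixed by $C$, so $H=\langle C,\mu\rangle$ fixes $q$, contradicting the hypothesis that $H$ has no fixed point on $\mathcal{T}$. This rules out $\omega^{*}\neq\omega$ and finishes the proof. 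The delicate point throughout is producing the fixed point $q$ in the hard case, and the common mechanism is the elementary observation that an isometric self-bijection of a half-line is trivial, which controls both involutions $\sigma$ and $\mu$ near the ends they setwise fix.
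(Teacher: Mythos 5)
Your proof is correct and follows the same outline as the paper's: reduce via Noether--Castelnuovo to showing $\sigma$ stably fixes $\omega$, use $\sigma C\sigma^{-1}=C$ to see $C$ stably fixes $\sigma\omega$, dispose of the case $\sigma\omega=\omega$, and in the case $\sigma\omega\neq\omega$ get a contradiction by producing a point on the line $D=[\omega,\sigma\omega]$ fixed by both $C$ and $\mu$, hence by $H=\langle C,\mu\rangle$. The only difference is cosmetic: in the hard case you construct the $H$-fixed point explicitly as the branch point $q$ of $\ell$ and $\mu\ell$ (via the detour $\mu\sigma\omega\notin\{\omega,\sigma\omega\}$ and half-line rigidity), whereas the paper gets the same conclusion directly from the definition of stable fixation, since $\mu$ fixes $x_t$ for $t$ large on the ray of $D$ toward $\omega$ and $C$ fixes all of $D$.
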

\begin{proof}
Let $\omega$ be the unique end stably fixed by $H$ (recall that if it is represented by a ray $(x_t)$, this means that for every $h\in H$ there exists $t_0=t_0(h)$ such that $h$ fixes $x_t$ for all $t\ge t_0$). Then $\sigma H\sigma^{-1}$ stably fixes $\sigma\omega$. In particular, since $\sigma C\sigma^{-1}=C$, the end $\sigma\omega$ is also stably fixed by $C$. If $\sigma\omega=\omega$, then $\omega$ is stably fixed by $\sigma$ and then by the Noether-Castelnuovo Theorem, $\omega$ is stably fixed by $G$.
Otherwise, let $D$ be the line joining $\omega$ and $\sigma\omega\neq\omega$. Since both ends of $D$ are stably fixed by $C$, the line $D$ is pointwise fixed by $C$. Also, $\mu$ stably fixes the end $\omega$ and therefore for some $t$, $x_t$ is fixed by $\mu$ and therefore, by Lemma \ref{cmaximal}, is fixed by all of $H$, contradicting the assumption.
\end{proof}

%Consider the elementary matrix $e_{12}=I_3+E_{12}\in H$; a straightforward verification shows that $H=\langle C,e_{12}\rangle$. %By Lemma \ref{cmaximal} if $w\in H-C$, then $H=\langle C,w\rangle$.

\begin{proof}[Proof of Theorem \ref{main}]
Note that $\mu\in H$ and $\mu\sigma$ has order three. It follows that $\sigma=(\mu\sigma)\mu(\mu\sigma)^{-1}$. Using the Noether-Castelnuovo Theorem, it follows that $H_1=H$ and $H_2=\sigma H\sigma^{-1}$ generate $G$. 

Consider an action of $G$ on $\mathcal{T}$. By Lemmas \ref{hfa} and \ref{hpara}, we only have to consider the case when $H$ has a fixed point; in this case, let us show that $G$ has a fixed point. Assume the contrary.
Let $\mathcal{T}_i$ be the set of fixed points of $H_i$ $(i=1,2)$; they are exchanged by $\sigma$ and since $\langle H_1,H_2\rangle=G$, we see that the two trees $\mathcal{T}_1$ and $\mathcal{T}_2$ are disjoint. Let $\mathcal{S}=[x_1,x_2]$ be the minimal segment joining
the two trees $(x_i\in\mathcal{T}_i)$ and $s>0$ its length. Then $\mathcal{S}$ is pointwise fixed by $C \subset H_1\cap H_2$ and reversed by $\sigma$.

\noindent{\em Claim.} For all $k\ge 1$, the distance of $x_1$ with $(\sigma\mu)^kx_1$
is exactly $sk$.

The claim is clearly a contradiction since $(\sigma\mu)^3=1$.
To check the claim, let us apply Lemma \ref{trealign} to the sequence $((\sigma\mu)^kx_1)$: namely to check that $$d((\sigma\mu)^kx_1,(\sigma\mu)^\ell x_1)=|k-\ell|s$$ for all $k,\ell$ it is enough to 
check it for $|k-\ell|\le 2$; by translation it is enough to check it for $k=1,2$ and $\ell=0$.
For $k=1$, $d(\sigma \mu x_1,x_1)=d(\sigma x_1,x_1)=d(x_2,x_1)=s$. 
Since $\langle C,\mu\rangle=H$ by Lemma \ref{cmaximal}, the image of $[x_1,x_2]$ by $\mu$ is a segment $[x_1,\mu x_2]$ intersecting the segment $[x_1,x_2]$ only 
at $x_1$; in particular, $d(x_2,\mu x_2)=2s$. Hence, under the assumptions of the claim
$$d(\sigma \mu \sigma \mu x_1,x_1)=d(\mu \sigma x_1,\sigma x_1)= d(\mu x_2,x_2)=2s.$$
This proves the claim for $k=2$ and the proof is complete.
\end{proof}

%$g_1,\dots,g_k\in H-C$, the distance of $x_1$ with $\sigma g_1\dots\sigma g_kx_1$ 
%, taking $k=3$ and $g_1=g_2=g_3=\mu$ yields a contradiction.
%By Lemma \ref{cmaximal}, $C$ is a maximal subgroup of $H$; iIt follows that the stabilizer in $H$ of any element of $\mathcal{S}-\{x_1,x_2\}$ is exactly $C$. 

For reference we include

\begin{pro}\label{auxiliary}
If $\mathbf{k}$ is algebraically closed, the group $\mathsf{PGL}_2(\mathbf{k})$ has Property $(\textnormal{FA})_\infty$ but, unless $\mathbf{k}$ is an algebraic closure of a finite field, does not satisfy $(\textnormal{F}\mathbf{R})_\infty$.
\end{pro}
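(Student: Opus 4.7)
The plan is to verify both halves by way of characterization (A.1): a group has $(\mathrm{FA})_\infty$ (resp.\ $(\mathrm{FR})_\infty$) precisely when every single element fixes a point on every simplicial (resp.\ complete real) tree. This reduces the question to an inspection of individual elements of $\mathsf{PGL}_2(\mathbf{k})$, and to producing, in the negative half, one explicit bad action.

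For the $(\mathrm{FA})_\infty$ part, I would first establish that every element $g$ of $\mathsf{PGL}_2(\mathbf{k})$ is either torsion or divisible. By Jordan normal form over the algebraically closed field $\mathbf{k}$, $g$ is conjugate either to a diagonal class $\mathrm{diag}(a,1)$ or to the unipotent $u = \bigl(\begin{smallmatrix}1&1\\0&1\end{smallmatrix}\bigr)$. In the diagonal case, $\mathbf{k}^{\times}$ is divisible (since $x^n = a$ has a root for every $n$), so $\mathrm{diag}(b,1)$ with $b^n = a$ is an $n$-th root. The unipotent $u$ admits $\bigl(\begin{smallmatrix}1&1/n\\0&1\end{smallmatrix}\bigr)$ as an $n$-th root in characteristic $0$, whereas in characteristic $p > 0$ one has $u^p = I$, so $u$ is torsion. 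Now a torsion element is elliptic on any simplicial tree (its orbit is finite, hence admits a circumcenter). A divisible element $g$ cannot be hyperbolic, for otherwise its translation length $\ell(g)$ is a positive integer and, writing $g = h_n^n$, the power $h_n$ must also be hyperbolic (powers of elliptic isometries are elliptic), giving $\ell(h_n) = \ell(g)/n \ge 1$, which collapses as soon as $n > \ell(g)$. Invoking Lemma~\ref{eqofdef} (the equivalence of (A.1)--(A.3) in the simplicial setting), one concludes $(\mathrm{FA})_\infty$.

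For the failure of $(\mathrm{FR})_\infty$ when $\mathbf{k}$ is not an algebraic closure of a finite field, the strategy is to produce a nontrivial real-valued valuation on $\mathbf{k}$ and feed it into a Bruhat--Tits construction. Under the hypothesis, $\mathbf{k}$ contains either $\mathbb{Q}$ (in characteristic $0$) or a subfield $\mathbb{F}_p(t)$ for some $t \in \mathbf{k}$ transcendental over the prime field (in characteristic $p$); in each case the base subfield carries a nontrivial discrete valuation (a $p$-adic one, or the $t$-adic one). By Chevalley's extension theorem this valuation extends to $v\colon \mathbf{k}^{\times} \to \mathbb{R}$, whose value group $\Gamma = v(\mathbf{k}^{\times})$ is a nontrivial, and (because $\mathbf{k}$ is algebraically closed) divisible, subgroup of $\mathbb{R}$. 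The $\Lambda$-tree formalism of Morgan--Shalen and Chiswell associates to $(\mathbf{k}, v)$ a $\Gamma$-tree of $\mathcal{O}_v$-lattice classes modulo scaling; its metric completion $\mathcal{T}_v$ is a complete $\mathbb{R}$-tree on which $\mathsf{PGL}_2(\mathbf{k})$ acts by isometries, the diagonal torus acting by translation along a common axis with translation lengths read off from $v$. Taking any $a \in \mathbf{k}^{\times}$ with $v(a) \neq 0$ (e.g.\ $a = t$), the element $\mathrm{diag}(a,1)$ translates $\mathcal{T}_v$ by $|v(a)| > 0$ and so fixes no point, contradicting (A.1) for real trees.

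The main obstacle is the setup of the real tree $\mathcal{T}_v$ in the non-discrete valuation setting: the classical simplicial Bruhat--Tits tree works only for discrete valuations, and for a densely valued $\mathbf{k}$ one must go through $\Gamma$-trees and their metric completions, checking both that the limiting space is a genuine complete $\mathbb{R}$-tree and that the isometric action of $\mathsf{PGL}_2(\mathbf{k})$ survives the completion with $\mathrm{diag}(a,1)$ genuinely hyperbolic (rather than, say, fixing an end). This machinery is standard in the $\Lambda$-tree literature and can be invoked as a black box, but it is the nontrivial technical ingredient of the argument; everything else—the semisimple/unipotent dichotomy, the elementary translation-length bookkeeping for the $(\mathrm{FA})_\infty$ half, and the appeal to Lemma~\ref{eqofdef}—is routine once the observation ``torsion or divisible'' is in hand.
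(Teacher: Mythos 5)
Your argument for $(\textnormal{FA})_\infty$ is essentially the paper's, just with more explicit bookkeeping. You use the Jordan-form dichotomy (every class in $\mathsf{PGL}_2(\mathbf{k})$ is torsion or divisible) to rule out hyperbolic action on a simplicial tree, while the paper observes directly that $x^2$ (resp.\ $x^{2p}$ in characteristic $p$) is always divisible and therefore has translation length $0$; both rest on the same fact that translation lengths on a simplicial tree are integers, and both then appeal to the analogue of Lemma~\ref{eqofdef}. This half is fine.

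The failure of $(\textnormal{F}\mathbf{R})_\infty$ is also the paper's route in spirit --- produce a nontrivial real-valued valuation on $\mathbf{k}$ and let $\mathsf{PGL}_2(\mathbf{k})$ act on the associated Bruhat--Tits real tree --- but there is a genuine gap in your valuation step. You invoke Chevalley's extension theorem to extend a discrete valuation from $\mathbf{Q}$ or $\mathbf{F}_p(t)$ directly to $\mathbf{k}$ and assert the result takes values in $\mathbf{R}$. Chevalley's theorem only produces a valuation ring of $\mathbf{k}$ extending the given one; when the extension $\mathbf{Q}\subset\mathbf{k}$ (or $\mathbf{F}_p(t)\subset\mathbf{k}$) is transcendental of large degree, the value group of the extension can have rank $>1$ and need not embed in $\mathbf{R}$ (and one cannot in general pass to a rank-one coarsening, since maximal proper convex subgroups need not exist). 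The paper avoids this by choosing the valued subfield more carefully: pick a transcendence basis $I$ of $\mathbf{k}$, isolate $x_0\in I$, set $\mathbf{L}=\mathbf{k}(I\setminus\{x_0\})$, and put the $x_0$-adic (discrete, rank-one) valuation on $\mathbf{L}(x_0)$; since $\mathbf{k}$ is now algebraic over $\mathbf{L}(x_0)$, any extension of that valuation to $\mathbf{k}$ has value group inside the divisible hull $\mathbf{Q}$, hence stays real-valued, and the $\bar{\mathbf{Q}}$ case is handled separately via a $p$-adic embedding. Your argument needs this (or an equivalent Gauss-extension step along the full transcendence basis) before the valuation can be fed into the real-tree construction. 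The remaining appeal to the lattice-class $\Lambda$-tree and its metric completion, and the hyperbolicity of $\mathrm{diag}(a,1)$ with $v(a)\neq 0$, is correct and is exactly what the paper uses as well.
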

\begin{proof}
If $\mathbf{k}$ has characteristic zero, the group $\mathsf{PGL}_2(\mathbf{k})$ has the property that the square of every element is divisible (i.e.\ has $n$th roots for all $n>0$). This implies that no element can act hyperbolically on a discrete tree: indeed, in the automorphism group of a tree, the translation length of any element is an integer and the translation length of $x^n$ is $n$ times the translation length of $x$. If $\mathbf{k}$ has characteristic $p$ the same argument holds: for every $x$, $x^{2p}$ is divisible.

On the other hand, let $I$ be a transcendence basis of $\mathbf{k}$ and assume it nonempty, and $x_0\in I$. Set $\mathbf{L}=\mathbf{k}(I-\{i_0\})$, so that $\mathbf{k}$ is an algebraic closure of $\mathbf{L}(x_0)$. The nontrivial discrete valuation of $\mathbf{L}(\!(x_0)\!)$ uniquely extends to a nontrivial, $\mathbf{Q}$-valued valuation on an algebraic closure. It restricts to a non-trivial $\mathbf{Q}$-valued valuation on $\mathbf{k}$.

The remaining case is the case of an algebraic closure of the rational field $\mathbf{Q}$; pick any prime $p$ and restrict the $p$-valuation from an algebraic closure of $\mathbf{Q}_p$.

Now if $F$ is any field valued in $\mathbf{R}$, then $\mathsf{PGL}_2(F)$ has a natural action on a real tree, on which an element $\textnormal{diag}(a,a^{-1})$, for $|a|>1$, acts hyperbolically.

(If $\mathbf{k}$ is algebraic over a finite field, then $\mathsf{PGL}_2(\mathbf{k})$ is locally finite and thus satisfies $(\textnormal{FA})_\infty$.)
\end{proof}

\begin{lem}The three definitions of $(\textnormal{F}\mathbf{R})_\infty$ in the introduction are equivalent.\label{eqofdef}
\end{lem}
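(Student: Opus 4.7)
The plan is to prove the cycle (A.1) $\Rightarrow$ (A.2) $\Rightarrow$ (A.3) $\Rightarrow$ (A.1). The implication (A.3) $\Rightarrow$ (A.1) is immediate: under (A.3), either $\Gamma$ has a global fixed point, in which case every element does, or $\Gamma$ stably fixes an end $\omega$, and then each $g\in\Gamma$ pointwise fixes a terminal subray of any ray representing $\omega$, so in particular has a fixed point. The implication (A.2) $\Rightarrow$ (A.1) is of course trivial by applying (A.2) to cyclic subgroups, but I include it only to note that the direction I actually need is the converse.

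For (A.1) $\Rightarrow$ (A.2), I will use the classification of isometries of a complete real tree: every isometry is either elliptic, with nonempty closed fixed subtree, or hyperbolic, with a translation axis and no fixed point. Under (A.1) every element is elliptic. Given a finitely generated subgroup $\langle g_1,\dots,g_n\rangle$, the products $g_ig_j$ are also elliptic, and the standard real-tree lemma asserts that if $g$, $h$ and $gh$ are all elliptic then $\mathrm{Fix}(g)\cap\mathrm{Fix}(h)\neq\emptyset$ (otherwise the bridge between the two closed fixed subtrees would force $gh$ to translate by twice its length). The Helly property for closed subtrees of a $0$-hyperbolic space then gives $\bigcap_i\mathrm{Fix}(g_i)\neq\emptyset$, providing a global fixed point for the subgroup.

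The key step is (A.2) $\Rightarrow$ (A.3). Assume $\Gamma$ acts on the complete real tree $\mathcal{T}$ with no global fixed point; I must produce a stably fixed end. I direct the family $\mathcal{F}$ of finitely generated subgroups of $\Gamma$ by inclusion; by (A.2), for each $H\in\mathcal{F}$ the set $F_H:=\mathrm{Fix}(H)$ is a nonempty closed subtree, $H\subseteq H'$ forces $F_{H'}\subseteq F_H$, and $\bigcap_{H\in\mathcal{F}}F_H=\mathrm{Fix}(\Gamma)=\emptyset$. Fix $x_0\in\mathcal{T}$ and let $p_H$ be the unique nearest-point projection of $x_0$ onto $F_H$. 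A basic real-tree fact says that if one closed subtree is contained in another, the projection to the bigger subtree lies on the segment joining $x_0$ to the projection onto the smaller one; applied here, $H\subseteq H'$ yields $p_H\in[x_0,p_{H'}]$, so the arc $R:=\bigcup_H[x_0,p_H]$ is totally ordered from $x_0$. I then check that $\sup_H d(x_0,p_H)=\infty$: if not, monotonicity along $R$ forces the net $(p_H)$ to be Cauchy, completeness of $\mathcal{T}$ gives a limit $p$, and closedness of each $F_H$ puts $p$ in $\bigcap_H F_H=\emptyset$, a contradiction. Hence $R$ is a ray; call its end $\omega$. For any $g\in\Gamma$, choose $H\in\mathcal{F}$ with $g\in H$; then $g$ pointwise fixes $F_H$, hence fixes $p_{H'}$ for every $H'\supseteq H$, and since an isometry of a real tree pointwise fixes the unique geodesic between two of its fixed points, $g$ pointwise fixes the entire subray $[p_H,\omega)\subseteq R$. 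This is exactly stable fixation of $\omega$ by $\Gamma$.

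The main obstacle is this last implication: the family $\mathcal{F}$ is a directed net, possibly uncountable (as is the case for the Cremona group), so one must argue with monotone nets rather than sequences. The argument genuinely requires completeness of $\mathcal{T}$ together with uniqueness of nearest-point projections to closed subtrees in order to extract a ray whose end is stably fixed; the $0$-hyperbolicity is what turns the a priori branching system of projections into a single totally ordered arc.
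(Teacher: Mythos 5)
Your proof is correct and follows essentially the same route as the paper's sketch: (A.1) $\Rightarrow$ (A.2) is the Serre argument (pairwise intersection of fixed subtrees via the elliptic-product lemma, then Helly), and (A.2) $\Rightarrow$ (A.3) builds the ray $R=\bigcup_H[x_0,p_H]$ from nearest-point projections onto the directed family of fixed subtrees and invokes completeness, exactly as the paper does with the segments $\mathcal{S}_F$. You merely spell out more carefully why the projections line up along a single geodesic and why the Cauchy-net argument works, which is consistent with the paper's terse sketch.
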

\begin{proof}[Sketch of proof]
The implications (A.3)$\Rightarrow$(A.2)$\Rightarrow$(A.1) are clear. (A.1)$\Rightarrow$(A.2) is proved for trees in \cite[Chap.\ I \S 6.5]{Se}, the argument working for real trees. Now assume (A.2) and let us prove (A.3). Fix a point $x_0$. For every finite subset $F$ of the group, let $\mathcal{S}_F$ be the segment joining $x_0$ to the set of $F$-fixed points. Then the union of $\mathcal{S}_F$, when $F$ ranges over finite subsets of the group, is a geodesic emanating from 0. If it is bounded, its other extremity (which exists by completeness) is a fixed point. Otherwise, it defines a stably fixed end.
\end{proof}

\bibliographystyle{plain}
\bibliography{references-nsgc}

\def\cprime{$'$}
\begin{thebibliography}{10}

\bibitem{Akhiezer:Book}
Dmitri~N. Akhiezer.
\newblock {\em Lie group actions in complex analysis}.
\newblock Friedr. Vieweg \& Sohn, 1995.

\bibitem{Blanc:2009}
J{\'e}r{\'e}my Blanc.
\newblock Sous-groupes alg\'ebriques du groupe de {C}remona.
\newblock {\em Transform. Groups}, 14(2):249--285, 2009.

\bibitem{Blanc:2010}
J{\'e}r{\'e}my Blanc.
\newblock Groupes de {C}remona, connexit\'e et simplicit\'e.
\newblock {\em Ann. Sci. \'Ec. Norm. Sup\'er. (4)}, 43(2):357--364, 2010.

\bibitem{Boucksom-Favre-Jonsson:2008}
S{\'e}bastien Boucksom, Charles Favre, and Mattias Jonsson.
\newblock Degree growth of meromorphic surface maps.
\newblock {\em Duke Math. J.}, 141(3):519--538, 2008.

\bibitem{Bridson-Haefliger:Book}
Martin~R. Bridson and Andr{\'e} Haefliger.
\newblock {\em Metric spaces of non-positive curvature}, volume 319.
\newblock Springer-Verlag, Berlin, 1999.

\bibitem{Cantat:Preprint}
Serge Cantat.
\newblock Sur les groupes de transformations birationnelles des surfaces.
\newblock {\em Ann. of Math. (2)}, 174(1):299--340, 2011.

\bibitem{Cantat-Dolgachev}
Serge Cantat and Igor Dolgachev.
\newblock Rational surfaces with large groups of automorphisms.
\newblock {\em J. Amer. Math. Soc.}, 25(3):863--905, 2012.

\bibitem{Cerveau-Deserti:2009}
Dominique Cerveau and Julie D{\'e}serti.
\newblock Transformations birationnelles de petit degr\'e.
\newblock {\em Cours Sp\'ecialis\'es, Soci\'et\'e Math\'ematique de France, to
  appear.}

\bibitem{Chaynikov}
Vladimir Chaynikov.
\newblock On the generators of the kernels of hyperbolic group presentations.
\newblock {\em Algebra Discrete Math.}, 11(2):18--50, 2011.

\bibitem{Ch}
Ian Chiswell.
\newblock {\em Introduction to {$\Lambda$}-trees}.
\newblock World Scientific. World Scientific, Singapore, 2001.

\bibitem{Coble:Book}
Arthur~B. Coble.
\newblock {\em Algebraic geometry and theta functions}, volume~10 of {\em
  American Mathematical Society Colloquium Publications}.
\newblock American Mathematical Society, Providence, R.I., 1982.
\newblock Reprint of the 1929 edition.

\bibitem{CDP:Book}
Michel Coornaert, Thomas Delzant, and Athanase Papadopoulos.
\newblock {\em G\'eom\'etrie et th\'eorie des groupes}, volume 1441 of {\em
  Lecture Notes in Mathematics}.
\newblock Springer-Verlag, 1990.

\bibitem{Cossec-Dolgachev:book}
Fran{\c{c}}ois Cossec and Igor Dolgachev.
\newblock {\em Enriques surfaces. {I}}, volume~76 of {\em Progress in
  Mathematics}.
\newblock Birkh\"auser, 1989.

\bibitem{DahGui}
Fran\c{c}ois Dahmani, Vincent Guirardel, and Denis Osin.
\newblock Hyperbolic embeddings and rotating families in groups acting on
  hyperbolic spaces.
\newblock {\em arXiv:1111.7048}, 2011.

\bibitem{Danilov:1974}
Vladimir Danilov.
\newblock Non-simplicity of the group of unimodular automorphisms of an affine
  plane.
\newblock {\em Mat. Zametki}, 15:289--293, 1974.

\bibitem{Delzant:1996}
Thomas Delzant.
\newblock Sous-groupes distingu\'es et quotients des groupes hyperboliques.
\newblock {\em Duke Math. J.}, 83(3):661--682, 1996.

\bibitem{Deserti:2006bis}
Julie D{\'e}serti.
\newblock Groupe de {C}remona et dynamique complexe: une approche de la
  conjecture de {Z}immer.
\newblock {\em Int. Math. Res. Not.}, pages Art. ID 71701, 27, 2006.

\bibitem{Deserti:2006}
Julie D{\'e}serti.
\newblock Sur les automorphismes du groupe de {C}remona.
\newblock {\em Compos. Math.}, 142(6):1459--1478, 2006.

\bibitem{Deserti:2007}
Julie D{\'e}serti.
\newblock Le groupe de {C}remona est hopfien.
\newblock {\em C. R. Math. Acad. Sci. Paris}, 344(3):153--156, 2007.

\bibitem{Diller-Favre:2001}
Jeffrey Diller and Charles Favre.
\newblock Dynamics of bimeromorphic maps of surfaces.
\newblock {\em Amer. J. Math.}, 123(6):1135--1169, 2001.

\bibitem{Dolgachev:1984}
Igor Dolgachev.
\newblock On automorphisms of {E}nriques surfaces.
\newblock {\em Invent. Math.}, 76(1):163--177, 1984.

\bibitem{Dolgachev:1986}
Igor Dolgachev.
\newblock Infinite {C}oxeter groups and automorphisms of algebraic surfaces.
\newblock In {\em The {L}efschetz centennial conference, {P}art {I} ({M}exico
  {C}ity, 1984)}, volume~58 of {\em Contemp. Math.}, pages 91--106. Amer. Math.
  Soc., 1986.

\bibitem{Dolgachev:2008}
Igor Dolgachev.
\newblock Reflection groups in algebraic geometry.
\newblock {\em Bull. Amer. Math. Soc. (N.S.)}, 45(1):1--60, 2008.

\bibitem{Dolgachev-Ortland:1989}
Igor Dolgachev and David Ortland.
\newblock Point sets in projective spaces and theta functions.
\newblock {\em Ast\'erisque}, (165), 1988.

\bibitem{Dolgachev-Zhang:2001}
Igor Dolgachev and De-Qi Zhang.
\newblock Coble rational surfaces.
\newblock {\em Amer. J. Math.}, 123(1):79--114, 2001.

\bibitem{Enriques:1894}
Federigo Enriques.
\newblock {C}onferenze di geometria. {F}ondamenti di una geometria
  iperspaziale.
\newblock {\em lit., Bologna}, 1894-95.

\bibitem{ej}
Mikhail Ershov and Andrei Jaikin-Zapirain.
\newblock Property ({T}) for noncommutative universal lattices.
\newblock {\em Invent. Math.}, 179(332):303--347, 2010.

\bibitem{Favre:Bourbaki}
Charles Favre.
\newblock Le groupe de {C}remona et ses sous-groupes de type fini.
\newblock {\em Ast\'erisque}, (332):Exp. No. 998, vii, 11--43, 2010.
\newblock S{\'e}minaire Bourbaki. Volume 2008/2009.

\bibitem{Frohlich-Taylor:Book}
Albrecht Fr{\"o}hlich and Martin~J. Taylor.
\newblock {\em Algebraic number theory}.
\newblock Cambridge University Press, 1993.

\bibitem{Furter-Lamy:Preprint}
Jean-Philippe Furter and St{\'e}phane Lamy.
\newblock Normal subgroup generated by a plane polynomial automorphism.
\newblock {\em Transform. Groups}, 15(3):577--610, 2010.

\bibitem{Ghys-delaHarpe:Book}
{\'E}tienne Ghys and Pierre de~la Harpe.
\newblock {\em Sur les groupes hyperboliques d'apr\`es {M}ikhael {G}romov},
  volume~83 of {\em Progr. Math.}
\newblock Birkh\"auser Boston, 1990.

\bibitem{Gizatullin:1980}
Marat Gizatullin.
\newblock Rational {$G$}-surfaces.
\newblock {\em Izv. Akad. Nauk SSSR Ser. Mat.}, 44(1):110--144, 239, 1980.

\bibitem{Gizatullin:1994}
Marat Gizatullin.
\newblock The decomposition, inertia and ramification groups in birational
  geometry.
\newblock In {\em Algebraic geometry and its applications ({Y}aroslavl\cprime,
  1992)}, Aspects Math., E25, pages 39--45. Vieweg, Braunschweig, 1994.

\bibitem{Guedj:Panorama}
Vincent Guedj.
\newblock Propri{\'e}t{\'e}s ergodiques des applications rationnelles.
\newblock {\em Panorama de la Soci{\'e}t{\'e} math{\'e}matique deFrance, to
  appear}, 30:1--130, 2007.

\bibitem{Halphen:1882}
George Halphen.
\newblock Sur les courbes planes du sixi\`eme degr\'e \`a neuf points doubles.
\newblock {\em Bull. Soc. Math. France}, 10:162--172, 1882.

\bibitem{Hartshorne:book}
Robin Hartshorne.
\newblock {\em Algebraic geometry}.
\newblock Springer-Verlag, 1977.
\newblock Graduate Texts in Mathematics, No. 52.

\bibitem{HNN}
Graham Higman, B.~H. Neumann, and Hanna Neumann.
\newblock Embedding theorems for groups.
\newblock {\em J. London Math. Soc.}, 24:247--254, 1949.

\bibitem{Hubbard:Book}
John~Hamal Hubbard.
\newblock {\em Teichm\"uller theory and applications to geometry, topology, and
  dynamics. {V}ol. 1}.
\newblock Matrix Editions, Ithaca, NY, 2006.

\bibitem{Jung:1942}
Heinrich W.~E. Jung.
\newblock \"{U}ber ganze birationale {T}ransformationen der {E}bene.
\newblock {\em J. Reine Angew. Math.}, 184:161--174, 1942.

\bibitem{KSC:book}
J{\' a}nos Koll{\' a}r, Karen~E. Smith, and Alessio Corti.
\newblock {\em Rational and Nearly Rational Varieties}.
\newblock Cambridge University Press, 2004.

\bibitem{Lamy:2002}
St{\'e}phane Lamy.
\newblock Une preuve g\'eom\'etrique du th\'eor\`eme de {J}ung.
\newblock {\em Enseign. Math. (2)}, 48(3-4):291--315, 2002.

\bibitem{Lamy:HDR}
St{\'e}phane Lamy.
\newblock Groupes de transformations birationnelles de surfaces.
\newblock {\em M\'emoire d'habilitation}, 2010.

\bibitem{Lazarsfeld:Book}
Robert Lazarsfeld.
\newblock {\em Positivity in algebraic geometry. {I}}, volume~48.
\newblock Springer-Verlag, Berlin, 2004.
\newblock Classical setting: line bundles and linear series.

\bibitem{Lyndon-Schupp:Book}
Roger~C. Lyndon and Paul~E. Schupp.
\newblock {\em Combinatorial group theory}.
\newblock Classics in Mathematics. Springer-Verlag, Berlin, 2001.
\newblock Reprint of the 1977 edition.

\bibitem{Manin:Book}
Yuri~I. Manin.
\newblock {\em Cubic forms}, volume~4 of {\em North-Holland Mathematical
  Library}.
\newblock North-Holland Publishing Co., Amsterdam, second edition, 1986.

\bibitem{Mumford:1974}
David Mumford.
\newblock Algebraic geometry in mathematical developments arising from
  {H}ilbert problems.
\newblock In {\em ({P}roc. {S}ympos. {P}ure {M}ath., {V}ol. {XXVIII},
  {N}orthern {I}llinois {U}niv., {D}e {K}alb, {I}ll., 1974)}, pages 431--444.
  Amer. Math. Soc., Providence, R. I., 1976.

\bibitem{Nguyen:these}
Dat~Dang Nguyen.
\newblock Composantes irr{\'e}ductibles des transformations de {C}remona de
  degr{\'e}~$d$.
\newblock {\em Th{\`e}se de Math{\'e}matique, Univ. Nice, France}, 2009.

\bibitem{Ol}
Alexander~Yu. Ol{\cprime}shanski{\u\i}.
\newblock {${\rm SQ}$}-universality of hyperbolic groups.
\newblock {\em Mat. Sb.}, 186(8):119--132, 1995.

\bibitem{Osin:2010}
Denis Osin.
\newblock Small cancellations over relatively hyperbolic groups and embedding
  theorems.
\newblock {\em Ann. of Math. (2)}, 172(1):1--39, 2010.

\bibitem{Se}
Jean-Pierre Serre.
\newblock {\em Arbres, amalgames, {${\rm SL}\sb{2}$}}.
\newblock Soci\'et\'e Math\'ematique de France, Paris, 1977.
\newblock Avec un sommaire anglais, R\'edig\'e avec la collaboration de Hyman
  Bass, Ast\'erisque, No. 46.

\bibitem{Serre:Bourbaki}
Jean-Pierre Serre.
\newblock Le groupe de {C}remona et ses sous-groupes finis.
\newblock {\em Ast\'erisque}, (332):Exp. No. 1000, vii, 75--100, 2010.
\newblock S{\'e}minaire Bourbaki. Volume 2008/2009.

\end{thebibliography}

\end{document}